\definecolor{myblue}{RGB}{38, 115, 227} 
\definecolor{myred}{RGB}{186, 0, 38}
\newtheorem{Xbase}{X}[section]
\theoremstyle{plain}
\newtheorem{theorem}[Xbase]{Theorem}
\newtheorem*{theoremstar}{Theorem}
\newtheorem{corollary}[Xbase]{Corollary}
\newtheorem{lemma}[Xbase]{Lemma}
\newtheorem{proposition}[Xbase]{Proposition}
\newtheorem{definition}[Xbase]{Definition}
\newtheorem{example}[Xbase]{Example}
\newtheorem{remark}[Xbase]{Remark}
\newtheorem{problem}[Xbase]{Problem}
\newtheorem{setup}[Xbase]{Setup}
\newtheorem*{claimstar}{Claim}
\newtheorem{summary}[Xbase]{Summary}
\newtheorem{notation}[Xbase]{Notation}
\theoremstyle{nonumberplain}
\newtheorem{proof}{Proof}
\theoremstyle{plain}
\newtheorem{case}{Case}
\newtheorem{subcase}{Subcase}[case]
\newcommand{\EquivListLabel}{(\roman*)}
\newcommand{\StatementListLabel}{(\alph*)}
\newcommand{\DefListLabel}{(\arabic*)}
\newlist{prooflist}{enumerate}{1}
\setlist[prooflist]{
    label=\StatementListLabel,
    wide,
    labelindent=0pt
}
\newcommand{\ZZ}{\mathbb{Z}}
\newcommand{\CC}{\mathbb{C}}
\newcommand{\PP}{\mathbb{P}}
\newcommand{\I}{\mathcal{I}}
\newcommand{\LL}{\mathcal{L}}
\newcommand{\OO}{\mathcal{O}}
\newcommand{\X}{\mathcal{X}}
\newcommand{\Y}{\mathcal{Y}}
\newcommand{\p}{\mathfrak{p}}
\newcommand{\mf}[1]{\mathfrak{#1}}
\newcommand{\Fs}{\mathscr{F}}
\DeclareMathOperator{\Aut}{Aut}
\DeclareMathOperator{\CaCl}{CaCl}
\DeclareMathOperator{\Char}{char}
\DeclareMathOperator{\Cl}{Cl}
\DeclareMathOperator{\coker}{coker}
\DeclareMathOperator{\Frac}{Frac}
\DeclareMathOperator{\Hom}{Hom}
\DeclareMathOperator{\SheafExt}{\mathcal{E}\!xt}
\DeclareMathOperator{\id}{id}
\DeclareMathOperator{\im}{im}
\DeclareMathOperator{\Pic}{Pic}
\DeclareMathOperator{\Proj}{Proj}
\DeclareMathOperator{\Spec}{Spec}
\DeclareMathOperator{\Supp}{Supp}
\DeclareMathOperator{\Trace}{Tr}
\DeclareMathOperator{\Res}{Res}
\DeclareMathOperator{\tr}{tr}
\DeclareMathOperator{\Ann}{ann}
\DeclareMathOperator{\Div}{Div}
\DeclareMathOperator{\Graph}{Graph}
\newcommand{\oo}[1]{\overline{#1}}
\newcommand{\ssep}{\mid}
\title{\Huge Semistable Reduction of Plane Quartics}
\author{\Large \scshape Max Schwegele\thanks{This article is the author's master's thesis, completed at Universität Ulm under the supervision of Stefan Wewers, which led to the articles \cite{hyperelliptic} and \cite{quarticpaper}.}}
\date{} 
\begin{document}

\maketitle

\begin{abstract}
    The Stable Reduction Theorem guarantees that any smooth, projective, geometrically irreducible curve of genus $g \geq 2$ over a discretely valued field admits a unique stable model after a finite field extension. Computing this model is a central problem in arithmetic geometry. For non-hyperelliptic genus $3$ curves, which are canonically embedded as plane quartics, methods like admissible reduction become challenging in small residue characteristics. This thesis establishes a precise connection between the abstractly defined stable model and computationally accessible GIT-stable plane models. We prove that a GIT-stable plane model of a smooth plane quartic exists if and only if its stable reduction is non-hyperelliptic. When this condition holds, we show that the stable model is the unique minimal semistable model that dominates the GIT-stable model. The corresponding domination morphism is geometrically explicit: it contracts the $1$-tails of the stable reduction to cusps on the special fiber of the GIT-stable model and is an immersion elsewhere. This result provides a geometric framework for computing the stable model by first finding a GIT-stable model and then resolving its cuspidal singularities.
\end{abstract}

\tableofcontents

\section*{Introduction}
\markboth{Introduction}{Introduction}
\addcontentsline{toc}{section}{Introduction}

The moduli space of smooth projective curves of genus $g \ge 2$ is a central object of study, but it lacks the crucial property of being compact. A groundbreaking achievement by Deligne and Mumford was to construct its canonical compactification by enlarging the space to include certain singular curves, which they termed ``stable'' \cite{deligne-mumford}. The allowed singularities are the mildest possible: ordinary double points (nodes). To ensure the finiteness of automorphisms, an additional condition is imposed: any smooth rational component must intersect the rest of the curve at three or more points.

A cornerstone of this theory is the Stable Reduction Theorem. In our context, it states that if $K$ is a field complete with respect to a discrete valuation $v_K$ (such as a $p$-adic number field), with ring of integers $\OO_K$ and residue field $k$, then any smooth, absolutely irreducible curve $X/K$ of genus $g \ge 2$ admits a model $\X$ over a finite extension $L/K$ whose special fiber $\X_s \eqqcolon \oo{X}$ is a stable curve. This stable model is unique, and its special fiber $\oo{X}$ is called \emph{the} stable reduction of $X$. The geometry of this reduction encodes deep arithmetic information about the original curve and its Jacobian, such as its conductor (see, for example, \cite{bouw2017computing}). Consequently, developing effective methods for computing the stable model is a highly motivating challenge in arithmetic geometry.

A powerful technique for computing the stable reduction is the theory of \emph{admissible reduction} (for background, see \cite{liu1999models}, \cite{liu2006stable} and \cite[p. 12]{wewers2024semistable}). This method computes the stable model of a curve $X$ by realizing it as a cover $\varphi\colon X \to \PP^1_K$ of degree $n$. The theory works if the residue characteristic $p$ does not divide the order of the monodromy group of the cover; this holds, for instance, if $p$ is larger than the degree. This has been successfully applied to superelliptic curves \cite{bouw2017computing} and the important sub-loci of hyperelliptic curves (covers of degree $2$) and genus $3$ Picard curves (covers of degree $3$) \cite{bouw2020conductor}. The method can also be generalized to curves with a non-trivial automorphism group; for example, a complete answer for the stable reduction of Ciani curves of genus $3$, which admit a faithful action of the Klein group $C_2 \times C_2$, is given in \cite{bouw2021reduction}. If $p = n$, one can also use the methods developed in \cite{ossen2024semistable, ossen2025semistable}. However, there are still critical cases. One is tempted to choose $n$ as small as possible, i.e., equal to the gonality of the curve. The smallest case where this method is not sufficient is for non-hyperelliptic curves of genus $3$ when $p=2$. This provides the primary motivation for this thesis, which focuses on the stable reduction of non-hyperelliptic curves of genus $3$.

To tackle this specific class of curves, we first recall their concrete geometric realization. A non-hyperelliptic curve of genus $3$ is canonically embedded as a smooth plane quartic in $\PP^2_K$. Such a curve $X$ can be realized as a degree-4 cover of $\PP^1_K$ in many ways, for instance by projecting from a point not on the curve. If $X$ has a $K$-rational point, we can even realize it as a degree-3 cover. For $p > 3$, we can therefore apply the method of admissible reduction, which is likely the most efficient way to compute the stable reduction of $X$. For $p=3$, one could use the results of \cite{ossen2024semistable} after passing to a suitable finite extension, but this may become very inefficient if the required extension is large. This motivates finding a method that works uniformly for all characteristics.

Our approach is to connect the geometric notion of Deligne-Mumford stability with the algebraic framework of Geometric Invariant Theory (GIT), developed by Mumford with the aim of explicitly constructing moduli spaces as quotients of parameterizing schemes \cite{mumford1994geometric}. In our context, we consider the space of ternary quartics under the action of $\mathrm{SL}_3$. A quartic over the residue field is GIT-stable if its only singularities are ordinary nodes ($A_1$) and cusps ($A_2$); the notion of GIT-semistability is slightly more relaxed (see Definition \ref{def:git-stability}). We apply these notions to models of our curve $X$: a plane model $\X_0 \subset \PP^2_{\OO_K}$ is called GIT-(semi)stable if its special fiber has this property. It is known that after a suitable field extension, $X$ always admits a GIT-semistable plane model; such a field extension and models are computable for all primes $p$ using the results of \cite{stern2025models}. A GIT-stable model, however, does not have to exist, but if one exists, it is the unique GIT-semistable model up to isomorphism.

The connection between GIT-semistable models and the stable reduction has been explored before. The most comprehensive partial answer is given in \cite[Theorem 5.2]{van2025reduction}, which determines the combinatorial type of the special fiber from the reduction type of one GIT-semistable model. However, this result is only proved for $p > 7$. Moreover, the proof relies on methods from Invariant Theory that are not easily made explicit for computation and does not provide a direct algorithm to reconstruct the stable model $\X$ from a GIT-semistable model.

The central contribution of this thesis is to establish a precise bridge between these two notions of stability, formally stated in the following main theorem.

\begin{theoremstar}[Main Theorem]
    Let $X$ be a smooth plane quartic over $K$ whose stable model $\X$ exists. The following conditions are equivalent:
    \begin{enumerate}[label=\EquivListLabel]
        \item The stable reduction $\oo{X}$ is non-hyperelliptic.
        \item A GIT-stable plane model $\X_0$ of $X$ exists.
    \end{enumerate}
    Furthermore, when these equivalent conditions hold, the stable model $\X$ is the unique minimal semistable model dominating the GIT-stable model $\X_0$. The corresponding domination morphism
    \[
        \phi\colon \X \to \X_0 \subset \PP_{\OO_K}^2
    \]
    induces a morphism on the special fiber that contracts the $1$-tails\footnote{An irreducible component of arithmetic genus one attached to the rest of the curve at a single node.} of $\oo{X}$ to cusps on the special fiber $\X_{0,s}$ and is an immersion elsewhere.
\end{theoremstar}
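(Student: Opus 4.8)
The plan is to prove the two implications separately and to read off the description of $\phi$ from the constructions; the object I would build both maps from is a twist of the relative dualizing sheaf. Write $E_1,\dots,E_r$ for the $1$-tails of $\oo{X}$ — these are among the components of the special fiber, and I use the same letters for their reduced closures in $\X$ — and set $\LL \coloneqq \omega_{\X/\OO_K}(E_1+\dots+E_r)$. By adjunction $\LL|_{E_i}\cong\omega_{E_i}\cong\OO_{E_i}$, while on the complementary components $\LL$ restricts to the twist of $\omega$ by the attaching points of the tails, which turns the canonical pencil of a genus-$2$ core into a base-point-free $g^2_4$; hence $\LL|_{\oo{X}}$ has degree $2g-2=4$, and when $\oo{X}$ is non-hyperelliptic it has $h^0=3$ and is globally generated. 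I would also record the abstract contraction $c\colon\oo{X}\to\oo{X}^{\mathrm{ps}}$ onto the pseudostable curve obtained from $\oo{X}$ by replacing each $1$-tail with a cusp: this is a morphism of Gorenstein curves of arithmetic genus $3$, it satisfies $c^{*}\omega_{\oo{X}^{\mathrm{ps}}}\cong\LL|_{\oo{X}}$, and the classification of stable genus-$3$ curves recalled earlier says precisely that $\oo{X}$ is non-hyperelliptic if and only if $\omega_{\oo{X}^{\mathrm{ps}}}$ is very ample, in which case $\oo{X}^{\mathrm{ps}}$ is a plane quartic whose only singularities are the nodes of the core and the cusps coming from the $1$-tails.

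\textbf{$(\mathrm{i})\Rightarrow(\mathrm{ii})$.} Assume $\oo{X}$ non-hyperelliptic. Cohomology and base change (the Euler characteristic being constant along $\Spec\OO_K$) makes $\pi_{*}\LL$ locally free of rank $3$ with formation commuting with base change, and $\LL$ relatively globally generated, so $\LL$ defines a morphism $\phi\colon\X\to\PP(\pi_{*}\LL)\cong\PP^{2}_{\OO_K}$. Its restriction to the generic fiber is the canonical embedding of the smooth plane quartic $X$ (for any plane quartic $\OO_X(1)\cong\omega_X$), and its restriction to the special fiber is $\oo{X}\xrightarrow{\;c\;}\oo{X}^{\mathrm{ps}}\hookrightarrow\PP^{2}_k$, i.e. it contracts the $1$-tails to the cusps and is a closed immersion away from them. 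Let $\X_0\coloneqq\phi(\X)$ be the scheme-theoretic image: it is an integral arithmetic surface, flat over $\OO_K$, cut out by a primitive form whose degree is forced to be $4$ by the generic fiber, so its special fiber is a plane curve of degree $4$ containing the reduced quartic $\oo{X}^{\mathrm{ps}}$, hence equals it. Since $\X_{0,s}=\oo{X}^{\mathrm{ps}}$ has only nodes and cusps, $\X_0$ is GIT-stable by Definition~\ref{def:git-stability}, which gives $(\mathrm{ii})$ and simultaneously the asserted description of $\phi$.

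\textbf{$(\mathrm{ii})\Rightarrow(\mathrm{i})$ and the remaining assertions.} Assume a GIT-stable plane model $\X_0$ exists, with special fiber the plane quartic $Q\coloneqq\X_{0,s}$ having only nodes and cusps. I would analyse $\X_0$ as an arithmetic surface in $\PP^{2}_{\OO_K}$: it is regular along the generic fiber, and at a cusp of $Q$ its complete local ring has the shape $\OO_K[[u,v]]/(v^{2}-u^{3}+\pi(\cdots))$, so the minimal modification producing a semistable special fiber there replaces the cuspidal branch by a smooth genus-$1$ curve meeting the strict transform of $Q$ in a single node — i.e. by a $1$-tail, whose $j$-invariant is read off from the lower-order terms. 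Patching these pictures yields a semistable model $\X'$ with a morphism $\X'\to\X_0$ contracting the new $1$-tails to the cusps and immersive elsewhere; counting components (the normalization of $Q$, loops at its nodes, and the new $1$-tails) shows $\X'$ has arithmetic genus $3$ and no destabilizing components, hence is the stable model, so $\X'=\X$ by uniqueness. This identifies $\X\to\X_0$ with the asserted contraction and exhibits the pseudostable model $\oo{X}^{\mathrm{ps}}\cong Q$ as a plane quartic, whence $\oo{X}$ is non-hyperelliptic by the classification. Finally, $\X$ is minimal among semistable models dominating $\X_0$ simply because every semistable model dominates the stable model $\X$, which in turn dominates $\X_0$.

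\textbf{Main obstacle.} The step I expect to be the technical heart is the local study at the cusps of $Q$ in the second implication: showing that the minimal semistable modification of the arithmetic surface $\X_0$ near a cuspidal point of its special fiber is exactly the insertion of an elliptic $1$-tail at a single node, and controlling whether a ramified base change is forced (the hypothesis that $\X$ already exists over $\OO_K$ should rule this out). This is where Weierstrass normal forms and the semistable reduction theory of genus-one fibrations (Kodaira's classification, Tate's algorithm) enter. A secondary point requiring care, in the first implication, is checking that $\LL=\omega_{\X/\OO_K}(E_1+\dots+E_r)$ is globally generated on every non-hyperelliptic stable genus-$3$ curve and that $\X_{0,s}$ is the reduced canonical model rather than a thickening; both follow from the finite classification but should be verified type by type.
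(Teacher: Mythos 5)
Your overall strategy --- twist $\omega_{\X/\OO_K}$ by the $1$-tails to contract them to cusps, and conversely resolve the cusps of the GIT-stable model --- is the same as the paper's, but two steps in the forward implication fail as written. First, the Weil divisor $E_1+\dots+E_r$ with unit coefficients is Cartier only when every attachment node has thickness $1$: by Lemma~\ref{lem:Cartier_Weil} the coefficients of the tail and of the core component must agree modulo the thickness $d_i$ of the attachment node, so the correct twist is $D=\sum_i d_i\oo{X}_i$. Second, and more seriously, the assertion that $\LL|_{\oo{X}}$ has $h^0=3$ and that $\pi_*\LL$ commutes with base change is false once $r\ge 2$: one computes $h^0(\oo{X},\oo{\LL})=r+2$ (see \eqref{eq:dimension_h0}), so $H^1(\X,\LL)$ has $\pi$-torsion and the map on the special fiber is governed by the \emph{proper} $3$-dimensional subspace $V\subsetneq H^0(\oo{X},\oo{\LL})$ of sections that lift to $\X$. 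Identifying $V$ with the subspace $W$ of sections whose residues vanish at the attachment points --- which is exactly what makes the images of the tails cusps rather than worse singularities --- is the technical heart of the paper's proof (Proposition~\ref{prop:subspace_V_W}) and uses the residue/reciprocity theory on the arithmetic surface $\X$. Your proposal omits this entirely, and a ``type by type'' check of the special fiber cannot supply it, because $V$ depends on the model $\X$ (e.g.\ on which sections are integral), not only on the curve $\oo{X}$.

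For the converse you correctly flag the local analysis at the cusps as the main obstacle, but the route you sketch is not the paper's and has a concrete defect: the inserted exceptional component need not be a \emph{smooth} genus-$1$ curve (pig tails occur, and the paper notes that elliptic-versus-pig is precisely the information the GIT-stable model does not determine), and a Weierstrass/Tate analysis of $\OO_K[\![u,v]\!]/(v^2-u^3+\pi(\cdots))$ would still leave you to verify that no extra rational components or a ramified base change are needed. The paper sidesteps all local computation: it invokes Liu's stable hull of the normal model $\X_0$ (Lemma~\ref{lem:stable_hull}), identifies the strict transform of $\X_{0,s}$ with its partial normalization at the cusps --- a stable pointed curve of genus $3-r$ that is non-hyperelliptic by Lemma~\ref{lem:structure_GIT_stable_quartic} --- and then forces each connected exceptional fiber over a cusp to be a $1$-tail by a genus count combined with the relative-minimality inequality $\deg\omega_{\X/\OO_K}|_Z>0$. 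That global argument is what actually delivers stability of the hull, the identification with the stable model, and the non-hyperellipticity of $\oo{X}$; your component count would need the same inputs to be made rigorous.
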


This work is structured as follows. Chapter \ref{chap:1} covers the essential background for this thesis. We begin by reviewing models of curves and the stable reduction theorem. We then explain the connection between non-hyperelliptic genus $3$ curves and plane quartics. Building on this, we formalize the concept of a GIT-stable model for a smooth plane quartic, concluding with the formal statement of our main theorem. Chapter \ref{chap:2} is dedicated to a detailed exploration of what it means for a singular, stable curve to be hyperelliptic, establishing the precise combinatorial and geometric conditions required for the proof. Finally, Chapter \ref{chap:3} presents the proof of the main theorem, which proceeds in two parts. First, assuming the stable reduction is non-hyperelliptic, we construct a GIT-stable plane model by defining a contraction map from the abstract stable model. This map is built by twisting the dualizing sheaf, effectively contracting the so-called ``$1$-tails'' of the stable reduction to cusps. Conversely, assuming a GIT-stable model exists, we prove that the stable reduction must be non-hyperelliptic by realizing the stable model as the minimal semistable model, or ``stable hull,'' that dominates the given plane model.

\paragraph{A Note on the Text}
I would like to acknowledge the use of Google's Gemini AI for its assistance in refining the language in this thesis.

\paragraph{Conventions}

Throughout this thesis, we adhere to the following conventions. Concerning the geometry of curves, the unqualified term \emph{genus} invariably refers to the arithmetic genus, which we denote by $g$ (rather than the alternative notation $p_a$); the geometric genus will be explicitly specified when needed. Furthermore, when discussing components of stable curves, the term ``rational component'' implicitly denotes a ``smooth rational component'' (i.e., isomorphic to $\PP^1_k$ when working over an algebraically closed field $k$).

Regarding sheaves and divisors, the terms \emph{line bundle} and \emph{invertible sheaf} are used interchangeably. We employ the modern language of linear series, following the conventions of \cite[\href{https://stacks.math.columbia.edu/tag/0CCM}{Section 0CCM}]{stacks-project}: a \emph{linear series} is defined as a pair $(\LL, V)$, consisting of an invertible sheaf $\LL$ and a finite-dimensional vector subspace $V \subseteq H^0(C, \LL)$ of its global sections. The \emph{complete linear system} associated with $\LL$ is denoted by $|\LL|$, representing the pair $(\LL, H^0(C, \LL))$.
\section{Statement of Main Theorem} \label{chap:1}

Throughout this thesis, we assume the reader is familiar with the basics of algebraic geometry, as presented in \cite{hartshorne}, and also with the concepts of a more arithmetic nature as in \cite{liu}.

In this chapter, we begin by recalling the notion of a model and the semistable reduction theorem. We then explain the connection between non-hyperelliptic genus $3$ curves and plane quartics, introduce the concept of GIT-semistable models, and conclude by stating our main theorem.

\subsection{Models of Curves and Semistable Reduction}

We establish the standard setup and notation used in this thesis.

\begin{setup} \label{setup:field}
    Let $(K, v_K)$ be a complete discretely valued field with valuation ring $\OO_K$, a fixed uniformizer $\pi$, and an algebraically closed residue field $k$. Let $X$ be a smooth, projective, and geometrically irreducible curve of genus $g$ over $K$.
\end{setup}

To study the reduction of the curve $X$ modulo $\pi$, we must extend it to a scheme over $\OO_K$. Such an extension is called a \emph{model}, and the resulting reduction depends heavily on this choice. We define this concept formally as follows.

\begin{definition}
    An $\OO_K$-model (or simply a model) of $X$ is a flat, proper $\OO_K$-scheme $\X$ equipped with an isomorphism $\X_\eta := \X \otimes_{\OO_K} K \simeq X$. The special fiber of the model, $\X_s := \X \otimes_{\OO_K} k$, is called the reduction of $X$ (with respect to $\X$).
\end{definition}

\begin{remark}
    For simplicity, the isomorphism $\X_\eta \simeq X$ between the generic fiber and the curve $X$ is often treated as an equality. We will frequently write $\X_\eta = X$ and view $X$ as an open subscheme of its model $\X$.
\end{remark}

The flatness of the model $\X$ over $\OO_K$ implies that $\X$ is an integral scheme, since its generic fiber $X$ is integral. Furthermore, properness and flatness ensure that the special fiber, $\X_s$, is a projective curve over $k$ with the same arithmetic genus as $X$. However, the special fiber can have a more complicated structure than the generic fiber; it might be reducible or even non-reduced. We do not require models to be normal, although we note that in some of the cited literature, this is a required property for a model. The goal is often to find a model whose special fiber has desirable properties, such as being smooth or having only mild singularities. To compare different models, we introduce the notion of a morphism between them.

\begin{definition}
    Let $\X$ and $\X'$ be two models of $X$. A morphism $\X \to \X'$ between the models is a morphism of $\OO_K$-schemes that is compatible with the isomorphism of the generic fiber to $X$. Since the generic fiber is dense, at most one such morphism can exist. In this case, we say that the model $\X'$ is dominated by $\X$. We call $\X$ and $\X'$ isomorphic if each model is dominated by the other.
\end{definition}

It is straightforward to check that domination induces a partial order on the isomorphism classes of models of $X$.

Finding a model with a \emph{smooth} reduction is not always possible, even after a finite extension of the base field $K$. However, a fundamental result of Deligne and Mumford (see \cite[Corollary 2.7]{deligne-mumford}) is that one can always achieve a reduction with a well-behaved structure: a \emph{semistable curve}. A semistable curve is a reduced curve whose only singularities are ordinary double points (nodes)\footnote{Some authors additionally require that every smooth rational component meets the rest of the curve in at least two points. We adopt the more general definition where this is not required.}.

\begin{theorem}[Semistable Reduction Theorem]
    There exists a finite field extension $L/K$ such that the curve $X_L := X \otimes_K L$ has a semistable model, i.e., a model whose special fiber is a semistable curve. Furthermore, if the genus is $g \geq 2$, there exists a unique minimal semistable model (with respect to domination), called the \emph{stable model} of $X$.
\end{theorem}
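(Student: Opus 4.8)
\emph{Proof proposal.} The plan is to prove the two assertions separately: first that $X_L$ admits \emph{some} semistable model after a finite extension $L/K$, and then, assuming $g \ge 2$, that there is a unique minimal one. For the first, I would start from an arbitrary model — take a projective embedding $X \hookrightarrow \PP^N_K$ and let $\X_0$ be its scheme-theoretic closure in $\PP^N_{\OO_K}$ — and improve it. By resolution of singularities for two-dimensional excellent schemes (Lipman, Abhyankar) followed by repeated blow-ups of the singular locus of the special fiber, one obtains a \emph{regular} model $\X_1$ whose special fiber is a normal crossings divisor $\X_{1,s} = \sum_i m_i C_i$ with the $C_i$ smooth and meeting transversally.

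The heart of the argument is to eliminate the multiplicities $m_i$ by a ramified base change. Replacing $K$ by $L = K(\pi^{1/e})$ for $e$ a suitable common multiple of the $m_i$, taking the normalization of the base-changed surface and resolving again, one obtains a regular normal crossings model with strictly smaller multiplicities; the computation is purely local and toric, since near a point of $C_i \cap C_j$ the regular total space has the form $\Spec \OO_K[x,y]/(\pi - u\,x^{m_i} y^{m_j})$ and the effect of base change and normalization is an explicit toric modification there. Iterating until all multiplicities equal $1$ produces a regular model whose special fiber is a reduced normal crossings divisor; such a fiber inside a regular arithmetic surface has only nodes, hence is a semistable curve. (Two alternatives I would keep in reserve: the route of Deligne–Mumford, deducing the statement from Grothendieck's semistable reduction theorem for $\mathrm{Jac}(X)$ via Néron models and Raynaud's description of the relative Picard functor; and the purely combinatorial approach of Artin–Winters via the numerics of degenerate fibers.)

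The step I expect to be the main obstacle is the \emph{termination} of this ``resolve, base-change, normalize'' loop: one base change lowers the multiplicities but destroys regularity, and re-resolving can reintroduce components, so one must produce a well-founded invariant of the pair (regular model, normal crossings special fiber) — essentially $\max_i m_i$, refined by $\sum_i (m_i - 1)$ and the combinatorics of the dual graph — that strictly decreases at each stage, and one must separately handle wild ramification (the case $\Char k \mid e$) by enlarging $L$ further. Everything else in the existence proof is formal; this is where the real work lies.

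For the second assertion, assume $g \ge 2$ and fix a semistable model $\X$ over $\OO_L$; by the theory of minimal regular models of arithmetic surfaces we may take $\X$ regular. I would then run the canonical contraction on the special fiber: contract every smooth rational component meeting the rest of $\X_s$ in at most one point — in the regular model such a component is a $(-1)$-curve, so Castelnuovo's criterion makes the contraction a morphism of $\OO_L$-schemes — and iterate; then contract every chain of smooth rational $(-2)$-curves meeting the rest in exactly two points to a single node, which creates an $A_n$ surface singularity but keeps the special fiber semistable. Because $g \ge 2$ the result is non-empty and has every rational component at least trivalent, i.e.\ it is a stable curve, so $\X$ dominates a semistable model $\X^{\mathrm{st}}$ with stable special fiber, and the same procedure applied to any semistable model lands on a model with stable special fiber. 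Finally I would deduce minimality and uniqueness from rigidity: a stable curve $\oo{X}$ over the algebraically closed field $k$ satisfies $H^0(\oo{X}, \mathcal{T}_{\oo{X}}) = 0$, so it has no infinitesimal automorphisms — equivalently, $\overline{\mathcal M}_g$ is separated — which forces the model with stable special fiber to be unique and to be dominated by every semistable model, hence the minimum of the domination order among semistable models.
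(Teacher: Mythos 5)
The paper does not prove this theorem; it is quoted as a black box with a citation to Deligne--Mumford \cite[Corollary 2.7]{deligne-mumford}, so there is no internal proof to compare against. Judged on its own terms, your proposal follows the classical geometric route (regular normal-crossings model, ramified base change, normalization, iterate), and you have correctly located the weak point yourself --- but locating it is not the same as closing it, and as written the existence half has a genuine gap there. The tame step is fine: if the special fiber is $\sum m_i C_i$ with normal crossings and $e = \operatorname{lcm}(m_i)$ is prime to $\Char k$, then one base change $K(\pi^{1/e})$ followed by normalization already yields a model with reduced special fiber and only $A_n$ singularities, whose minimal resolution is semistable --- no iteration or decreasing invariant is needed. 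The problem is entirely the wild case $\Char k \mid m_i$: there the local rings after normalization are not toric in the way you describe, the multiplicities need not drop, and no invariant of the form $\max_i m_i$ or $\sum_i(m_i-1)$ is known to decrease under your loop. This is precisely why every complete proof abandons this strategy at that point and instead goes through the Jacobian and Grothendieck's semistable reduction theorem for abelian varieties (the Deligne--Mumford route you keep ``in reserve''), through the Artin--Winters combinatorial analysis, or through non-archimedean/rigid methods. So the reserve route is not an alternative --- it is the proof.

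The uniqueness half is essentially sound in outline: contracting $(-1)$-curves and chains of $(-2)$-curves in a regular semistable model produces a model with stable special fiber, and rigidity of stable curves ($H^0(\oo{X},\mathcal{T}_{\oo{X}})=0$, i.e.\ \cite[Lemma 1.13]{deligne-mumford}) gives uniqueness. Two cautions: deducing uniqueness from separatedness of $\overline{\mathcal{M}}_g$ risks circularity, since that separatedness is usually \emph{proved} via uniqueness of stable models, so you should argue directly from the isomorphism-scheme being finite and unramified over the trait. And minimality in the paper's sense (every semistable model dominates the stable one) needs an extra factorization argument --- that the contraction from the minimal desingularization of an arbitrary semistable model $\Y$ to the stable model factors through $\Y$ itself --- which is exactly the content of the ``stable hull'' machinery the paper invokes later (Lemma \ref{lem:stable_hull}, citing \cite{liu2006stable}) and which your sketch passes over.
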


From now on, we assume $g \ge 2$ and that $K$ has been replaced by a finite extension over which a semistable model of $X$ exists. The Semistable Reduction Theorem then guarantees a unique minimal semistable model, which we call the \emph{stable model} and denote by $\X$. The uniqueness of this model justifies calling its special fiber, $\oo{X} := \X_s$, \emph{the} stable reduction of $X$. As its name suggests, $\oo{X}$ is a stable curve: a semistable curve whose automorphism group is finite (see Chapter \ref{chap:2} for details). Geometrically, this means that every smooth rational component of $\oo{X}$ must intersect the other components in at least three points.

An important property of the stable model $\X$ is that it is always a \emph{normal} scheme. This follows from its special fiber being reduced (see \cite[Lemma 1.1]{liu1999models}). While normal, the stable model is not necessarily regular; its singularities are described precisely by the following proposition.

\begin{proposition}[{{\cite[Corollary 10.3.22]{liu}}}] \label{prop:thickness}
    The singularities of the stable model $\X$ are located exclusively at the nodes of the special fiber $\oo{X}$. Locally at such a node $p \in \oo{X}$, the completed local ring of $\X$ is given by
    \[
        \widehat{\OO}_{\X, p} \simeq \OO_K[\![u,v]\!]/(uv - \pi^d)
    \]
    for a unique integer $d \ge 1$, called the \emph{thickness} of the node. The model $\X$ is regular at $p$ if and only if this thickness is $d = 1$.
\end{proposition}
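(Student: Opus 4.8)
The plan is to reduce everything to a local computation on the completed local ring $A := \widehat{\OO}_{\X,p}$ and to read off its isomorphism type from the known structure of its reduction modulo $\pi$. First I would dispose of the claim that singularities occur only at nodes: the generic fiber $X$ is smooth, so $\operatorname{Sing}(\X)$ consists of finitely many closed points of $\X_s = \oo{X}$; and at any point where $\oo{X}$ is smooth over $k$, flatness of $\X/\OO_K$ together with smoothness of the fiber makes $\X \to \Spec\OO_K$ smooth there, hence $\X$ regular there because $\OO_K$ is regular. Since $\oo{X}$ is semistable, its only non-smooth points are nodes, which settles the first assertion.

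Now fix a node $p$ and set $A := \widehat{\OO}_{\X,p}$. As $k$ is algebraically closed, $A/\pi A \cong \widehat{\OO}_{\oo{X},p} \cong k[\![x,y]\!]/(xy)$. The ring $A$ is a complete Noetherian local $\OO_K$-algebra which is flat over $\OO_K$ (so $\pi$ is a nonzerodivisor) and normal — $\X$ is excellent, being of finite type over the complete discrete valuation ring $\OO_K$, so completion preserves normality — hence $A$ is a normal domain. Lifting $x,y$ to elements $U,V$ of $\m_A$ and using completeness and Nakayama, I obtain a surjection $R := \OO_K[\![U,V]\!] \twoheadrightarrow A$; after a suitable choice of the lifts the induced surjection $R/\pi R = k[\![U,V]\!] \twoheadrightarrow A/\pi A$ has kernel $(UV)$, so the kernel of $R \to A$ contains an element $f = UV - \pi\phi$ with $\phi \in R$. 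I would then show this kernel is exactly $(f)$. Since $R$ is regular and $f$ a nonzerodivisor, $R/(f)$ is a hypersurface, hence Cohen--Macaulay with no embedded primes; and $\pi$ lies in no minimal prime of $(f)$, since such a prime would then contain $\pi$ and $U$ (or $V$), forcing height $\ge 2$. So $\pi$ is a nonzerodivisor on $R/(f)$. The surjection $R/(f)\twoheadrightarrow A$ is an isomorphism modulo $\pi$ (both sides are $k[\![U,V]\!]/(UV)$), so its kernel $\mathfrak q$ lies in $\pi\cdot(R/(f))$; cancelling $\pi$, a nonzerodivisor on both rings, gives $\mathfrak q = \pi\mathfrak q$, and Nakayama yields $\mathfrak q = 0$, so $A \cong R/(UV - \pi\phi)$.

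Finally I would normalize the equation. Writing $\phi = c + U\rho(U) + V\psi(U,V)$ with $c = \phi(0,0) \in \OO_K$, the coordinate change $U \mapsto U - \pi\psi$, $V \mapsto V - \pi\rho$ turns $UV - \pi\phi$ into $UV - \pi(c + \pi\psi\rho)$, where the new error past the constant has gained a factor of $\pi$; iterating and passing to the $\pi$-adic limit produces an automorphism of $R$ carrying $f$ to $UV - \pi c_\infty$ for some $c_\infty \in \OO_K$. As $A$ is a domain, $c_\infty \ne 0$; setting $d := v_K(c_\infty) + 1 \ge 1$ and absorbing the unit $c_\infty\pi^{-(d-1)}$ into $U$ gives $A \cong \OO_K[\![u,v]\!]/(uv - \pi^d)$. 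In this form $uv - \pi^d \notin \m_R^2$ precisely when $d = 1$, which is exactly the condition for $A = R/(uv-\pi^d)$ to be regular; and $d$ is an invariant of $A$ alone — for instance $\Cl(A) \cong \ZZ/d\ZZ$, or $d-1$ is the length of the exceptional chain in the minimal resolution of $\Spec A$ — so it is well defined.

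I expect the main obstacle to be the last paragraph: verifying that the iterated coordinate changes converge $\pi$-adically, that each step genuinely gains a power of $\pi$ in the error term, and that the composite substitution is an honest automorphism of $\OO_K[\![U,V]\!]$; the well-definedness of $d$ is easy but should be stated carefully. A cleaner, if less elementary, route replaces this step by the deformation theory of a node: $A$ is a flat deformation of $k[\![x,y]\!]/(xy)$ over $\OO_K$, the versal such deformation is $k[\![x,y,t]\!]/(xy - t)$ over $k[\![t]\!]$, and since $A$ is a domain the classifying homomorphism $k[\![t]\!] \to \OO_K$ must send $t$ to a nonzero element, necessarily $\pi^d$ times a unit, giving the same conclusion directly. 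Identifying the kernel as principal in the third step also needs some care with the nonzerodivisor and Nakayama bookkeeping, but is routine once organized as above.
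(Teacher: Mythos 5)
The paper gives no proof of this proposition at all—it defers entirely to the citation [Liu, Corollary 10.3.22]—and your argument is correct and essentially reproduces the proof found in that reference: smoothness of $\X \to \Spec \OO_K$ away from the nodes, the presentation $\widehat{\OO}_{\X,p} \simeq \OO_K[\![U,V]\!]/(UV - \pi\phi)$ obtained by lifting the nodal equation and killing the kernel via the nonzerodivisor/Nakayama bookkeeping, and the $\pi$-adically convergent coordinate changes reducing $\phi$ to a constant, with normality of $\X$ (hence integrality of the completed local ring, via excellence) forcing that constant to be nonzero. The only slip is in your optional deformation-theoretic aside: in mixed characteristic the versal deformation of the node must be taken over $\OO_K[\![t]\!]$ (or $W(k)[\![t]\!]$) rather than $k[\![t]\!]$, since there is no $k$-algebra homomorphism $k[\![t]\!] \to \OO_K$; this does not affect your main argument.
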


The normality of the stable model is a crucial property, as it ensures a well-defined theory of Weil divisors on $\X$, a tool that will be essential in Chapter \ref{chap:3}.

If the goal is to compute the stable reduction $\oo{X}$, a natural first step is to determine its \emph{combinatorial type}\footnote{See Definition \ref{def:associated_graph_comb_type} for a formal definition.}, also known as the \emph{reduction type} of $X$. This describes the ``skeleton'' of the curve: its irreducible components, their geometric genera, and how they intersect.

\begin{example} \label{ex:comb_types_g=2}
    For genus $g=2$, there are seven distinct combinatorial types of stable curves, as shown in Figure \ref{fig:stable_genus_2}. The diagrams follow the style used in \cite{bommelcayley} and were drawn using the Hobby-Editor \cite{hobbyeditor}.
    \begin{figure}[htbp]
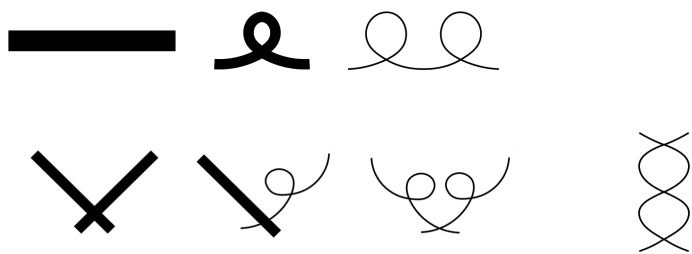

        \centering
        \begin{tabular}{cccc}
            \adjustbox{valign=c}{\scalebox{1.75}{\HobbyCurve{2}}} & 
            \adjustbox{valign=c}{\scalebox{1.75}{\HobbyCurve{1n}}} & 
            \adjustbox{valign=c}{\scalebox{1.75}{\HobbyCurve{0nn}}} & 
            \\
            [1.5em]
            \adjustbox{valign=c}{\scalebox{1.75}{\HobbyCurve{ee}}} & 
            \adjustbox{valign=c}{\scalebox{1.75}{\HobbyCurve{me}}} & 
            \adjustbox{valign=c}{\scalebox{1.75}{\HobbyCurve{mm}}} & 
            \adjustbox{valign=c}{\scalebox{1.75}{\HobbyCurve{0---0}}}
        \end{tabular}
        \caption{The $7$ combinatorial types of stable curves of genus $2$.}
        \label{fig:stable_genus_2}
    \end{figure}
    The seven configurations can be grouped into three irreducible types, three types formed by two curves joined at a single node, and a final type, a binary curve, in which two smooth rational curves meet at three distinct points. In the figure, the geometric genus of each component ($0$, $1$, or $2$) is indicated by line thickness, from thinnest to thickest. This classification can be verified using the formula relating the arithmetic genus of a curve to the geometric genera of its components (see \cite[Proposition 7.5.4]{liu}).
\end{example}

\begin{remark} \label{rem:comb_types}
    The number of combinatorial types of stable curves grows rapidly with the genus. The sequence for small genera is known ($g \leq 8$):
    \[
        7, 42, 379, 4555, 69808, 1281678, 27297406
    \]
    This is sequence \href{https://oeis.org/A174224}{A174224} in the On-Line Encyclopedia of Integer Sequences (OEIS) and may be computed with the algorithm in \cite{maggiolo2011generating}.
\end{remark}

\subsection{\texorpdfstring{Non-Hyperelliptic Genus $3$ Curves as Plane Quartics}{Non-Hyperelliptic Genus 3 Curves as Plane Quartics}}

Having established the general framework of stable reduction, we now focus on the specific case of interest for this thesis: \emph{non-hyperelliptic curves of genus $g=3$}. A fundamental classification for such curves (which, unless stated otherwise, we assume to be smooth, projective, and geometrically irreducible as in Setup~\ref{setup:field}) is the distinction between hyperelliptic and non-hyperelliptic, which we now recall.

\begin{definition}[{{\cite[Definition 7.4.7]{liu}}}]
    A smooth, projective, geometrically irreducible curve $X$ of genus $g \ge 2$ is called \emph{hyperelliptic} if it admits a finite, separable morphism of degree $2$ to a smooth projective conic, $T$.\footnote{Some authors also admit the case $g = 1$, but there is a reason to exclude the case or only allow pointed genus $1$ curves.}
\end{definition}

\begin{remark}
    We have $T \simeq \PP^1_K$ if and only if $T$ admits a $K$-rational point (see \cite[Proposition 7.4.1]{liu}). Since we may pass to a finite extension of $K$, we may assume for simplicity that $T = \PP^1_K$.
\end{remark}

This morphism $X \to \PP^1_K$ is unique up to automorphisms of $\PP^1_K$. The geometric distinction between hyperelliptic and non-hyperelliptic curves is captured by the canonical map $\Phi_X\colon X \to \PP_K^{g-1}$, induced by the canonical sheaf $\omega_{X/K} = \Omega_{X/K}^1$. The behavior of this map differs fundamentally depending on whether the curve is hyperelliptic. For a hyperelliptic curve, the canonical map is never an embedding; it factors through the 2-to-1 map to $\PP^1_K$ (see \cite[Proposition 7.4.34]{liu}). For non-hyperelliptic curves, the situation is completely different, as described by the following theorem.

\begin{theorem} \label{thm:canonical_embedding}
    Let $X$ be a smooth, projective, geometrically irreducible curve with genus $g \geq 2$. The canonical map $\Phi_X\colon X \to \PP_K^{g-1}$ is a closed immersion if and only if $X$ is not hyperelliptic.
\end{theorem}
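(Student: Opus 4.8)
The plan is to deduce the theorem from the Riemann--Roch theorem together with the standard criterion for when a complete linear system on a smooth projective curve defines a closed immersion. Since being a closed immersion can be checked after the faithfully flat base change $\Spec \oo{K} \to \Spec K$, and since a curve is hyperelliptic over $K$ if and only if it is hyperelliptic over $\oo{K}$ (the hyperelliptic pencil, when it exists, is unique, hence Galois-stable, hence descends; and over $\oo{K}$ every conic is $\PP^1$), we may assume $K = k = \oo{K}$. First I would recall the criterion: for a line bundle $\LL$ on a smooth projective curve $X$ over an algebraically closed field, the complete linear system $|\LL|$ defines a closed immersion $X \hookrightarrow \PP(H^0(X,\LL)^\vee)$ if and only if $h^0(X, \LL(-D)) = h^0(X,\LL) - 2$ for every effective divisor $D$ of degree $2$ on $X$. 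This single condition simultaneously encodes base-point-freeness (so that $\Phi_X$ is a morphism at all), injectivity on closed points, and injectivity on tangent vectors.

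Next I would apply this with $\LL = \omega_{X/K}$, for which $h^0 = g$ and $\deg = 2g-2$. For an effective divisor $D$ of degree $2$, Serre duality gives $h^0(\omega_{X/K}(-D)) = h^1(\OO_X(D))$, and Riemann--Roch for $\OO_X(D)$ reads $h^0(\OO_X(D)) - h^1(\OO_X(D)) = 2 - g + 1$, so that
\[
    h^0(\omega_{X/K}(-D)) = h^1(\OO_X(D)) = h^0(\OO_X(D)) + g - 3.
\]
Hence the closed-immersion criterion holds precisely when $h^0(\OO_X(D)) = 1$ for every effective $D$ of degree $2$. Since $D$ is effective we always have $h^0(\OO_X(D)) \ge 1$; and $h^0(\OO_X(D)) \le 2$, since $h^0 = 3$ would exhibit $X$ as $\PP^1$, contradicting $g \ge 2$ (alternatively, invoke Clifford's inequality). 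Therefore the criterion \emph{fails} if and only if there exists an effective divisor $D$ of degree $2$ with $h^0(\OO_X(D)) = 2$, i.e. if and only if $X$ carries a $g^1_2$.

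It then remains to identify the existence of a $g^1_2$ with hyperellipticity. If $h^0(\OO_X(D)) = 2$ for some effective $D$ of degree $2$, the pencil $|D|$ is base-point free (a base point would leave a degree-$1$ pencil, forcing $X \cong \PP^1$) and defines a morphism $X \to \PP^1$ of degree $\deg D = 2$; this map is automatically separable when $g \ge 2$, since a purely inseparable degree-$2$ morphism is the relative Frobenius and so preserves the genus, which would be impossible onto $\PP^1$. Thus $X$ is hyperelliptic. Conversely, a finite separable degree-$2$ morphism $X \to \PP^1$ pulls $\OO_{\PP^1}(1)$ back to a degree-$2$ bundle with $h^0 \ge 2$. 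Combining the two steps: $\Phi_X$ is a closed immersion $\iff$ no $g^1_2$ exists $\iff$ $X$ is not hyperelliptic. As a consistency check, for hyperelliptic $X$ one knows the canonical map factors as $X \to \PP^1 \hookrightarrow \PP^{g-1}$ through the rational normal curve of degree $g-1$, hence is everywhere $2$-to-$1$ onto its image and visibly not an immersion.

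The main obstacle is not a single deep step but the careful bookkeeping at the edges of the hypotheses. One must use $g \ge 2$ exactly where it is needed: to exclude $h^0(\OO_X(D)) = 3$, to force base-point-freeness of a putative $g^1_2$, and to upgrade a degree-$2$ pencil to a \emph{separable} degree-$2$ morphism in characteristic $2$. One must also justify the reduction to $\oo{K}$ carefully for both notions involved --- descent of the closed-immersion property along a faithfully flat base change, and descent of hyperellipticity (uniqueness of the hyperelliptic pencil, plus the passage from an arbitrary smooth conic $T$ to $\PP^1$ over a finite extension). Finally, a minor but genuine point is to state the closed-immersion criterion in precisely the packaged form $h^0(\LL(-D)) = h^0(\LL) - 2$ for all effective degree-$2$ divisors $D$, and to confirm that over an algebraically closed field this is equivalent to the classical separation-of-points-and-tangents conditions.
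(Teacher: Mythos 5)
Your proposal is correct, and it is the standard Riemann--Roch argument: reduce to $\oo{K}$, apply the criterion $h^0(\LL(-D)) = h^0(\LL)-2$ for effective degree-$2$ divisors to $\LL = \omega_{X/K}$, and identify the failure of this criterion with the existence of a $g^1_2$, i.e.\ with hyperellipticity. The paper does not reprove this classical fact but simply cites \cite[Proposition 7.4.12]{liu}, whose proof proceeds along essentially the same lines, so there is no substantive difference in approach.
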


\begin{proof}
    See \cite[Proposition 7.4.12]{liu}.
\end{proof}

As a consequence of this result, any genus $2$ curve is hyperelliptic (see also \cite[Proposition 7.4.9]{liu}). Hence, the case of our interest, genus $3$, is the smallest genus for which non-hyperelliptic curves appear.

Let $X$ be a non-hyperelliptic curve of genus $g=3$. By Theorem~\ref{thm:canonical_embedding}, its canonical map is a closed immersion
\[
    \Phi_X\colon X \hookrightarrow \PP_K^{g-1} = \PP_K^2.
\]
Since $\omega_{X/K}$ has degree $2g-2 = 4$, this map realizes $X$ as a smooth plane quartic. Conversely, any smooth plane quartic has genus $g = \frac{(4-1)(4-2)}{2} = 3$ (\cite[Example 7.3.22]{liu}), and the adjunction formula (see \cite[Theorem 9.1.37]{liu})
\[
    \omega_{X/K} \simeq \omega_{\PP_K^2/K}|_X \otimes_{\OO_X} \OO_{\PP^2_K}(X)|_X \simeq \OO_{\PP^2_K}(-3)|_X \otimes_{\OO_X} \OO_{\PP^2_K}(4)|_X \simeq \OO_{\PP^2_K}(1)|_X
\]
shows that its embedding is given by the canonical map; this implies, in particular, that the curve is non-hyperelliptic.

This correspondence establishes a fundamental equivalence that is central to our topic:
\begin{center}
    Studying non-hyperelliptic genus $3$ curves \\ is the same as studying smooth plane quartics.
\end{center}

Hence, we may assume that our curve $X$ is explicitly given by a homogeneous degree-4 equation in $\PP^2_K$:
\[
    X: F(x,y,z) = 0.
\]

The Semistable Reduction Theorem ensures the existence of a unique stable model $\X$ for the curve $X$. This fact motivates the central problem of this work.

\begin{problem}
    Given a smooth plane quartic $X$, compute its stable reduction $\oo{X}$, or at least determine its combinatorial type.
\end{problem}

This problem can be approached in several ways, such as using analytic methods on the Berkovich space of the curve, performing explicit resolutions of singularities, or applying the theory of admissible reductions. The approach taken in this thesis is more algebraic and relies on what are known as plane models.

Since $X$ is a plane quartic over $K$, the most direct way to construct a model is to find a suitable equation over the ring of integers $\OO_K$. This leads to the concept of a plane model.

\begin{definition}
    A \emph{plane model} $\X_0$ of $X$ is a closed subscheme of the projective plane $\PP^2_{\OO_K}$ that is flat over $\OO_K$ and whose generic fiber is isomorphic to $X$.
    Explicitly, such a model is defined by a primitive, homogeneous, degree-4 polynomial $G \in \OO_K[x,y,z]$:
    \[
        \X_0 : G(x,y,z) = 0.
    \]
\end{definition}

The key insight, explored in recent work such as \cite{van2025reduction}, is that there is a direct connection between the stable model $\X$ and plane models $\X_0$ that are (semi)stable in the sense of \emph{Geometric Invariant Theory} (GIT). GIT provides a powerful framework for classifying algebraic objects under group actions. For our purposes, a detailed exposition of GIT is not necessary; instead, we will use its classification of plane quartics as a working definition.

\begin{definition}[see {\cite[Lemma 1.4]{artebani2009compactification}\cite[p.~80]{mumford1994geometric}}]
\label{def:git-stability}
    Let $C \subset \PP_k^2$ be a plane quartic over the residue field $k$.
    \begin{enumerate}[label=\DefListLabel]
        \item $C$ is called \emph{GIT-semistable} if it has no triple points and is not the union of a cubic with an inflectional tangent.
        \item $C$ is called \emph{GIT-stable} if its singularities are at most of type $A_1$ (nodes) and $A_2$ (cusps).
        \item A plane model $\X_0$ of $X$ is called \emph{GIT-(semi)stable} if its special fiber, $\X_{0,s}$, is GIT-(semi)stable in the sense above.
    \end{enumerate}
\end{definition}

If $C$ has only isolated singularities, then having no triple point is equivalent to saying the curve has only $A_n$ singularities (analytically isomorphic to $y^2 = x^{n+1}$\,\footnote{In $\Char k = 2$, the normal forms for $A_n$-singularities are more complicated; see \cite{greuel1990simple}. The equation $xy = 0$ for a node ($A_1$) and the equation $y^2=x^3$ for a cusp ($A_2$) hold in any characteristic.}). Otherwise $C$ is not reduced, and having no triple points can only happen if $C$ is a double smooth conic. A classification of the possible singularity types over $\CC$, with normal forms for each case, was originally given by Hui \cite{hui}. This classification and the corresponding normal forms also hold in characteristic $p > 7$ (see \cite[Remark 2.3]{van2025reduction}). For an overview, see \cite[Table 2.1, Table 2.2]{van2025reduction}, where the GIT-semistable types are also indicated.

The goal of this thesis is to explore this connection using a more geometric approach than that of \cite{van2025reduction}, inspired by the proof of \cite[Proposition A.1]{van2025reduction}. It is known that after a suitable field extension, $X$ always has a GIT-semistable plane model, which can be computed using methods from \cite{stern2025models}. A GIT-stable model may not exist, but if it does, it is the unique GIT-semistable model up to isomorphism. In contrast, if there does not exist a GIT-stable model, there may be several non-isomorphic GIT-semistable models.

This raises the central question that motivates this work:
\begin{center}
    What is the precise relationship between the abstractly defined stable model $\X$ and a GIT-stable plane model $\X_0$?
\end{center}
By analyzing a GIT-(semi)stable model, we aim to gain direct, computable insight into the geometry of the stable reduction $\oo{X}$.

We now arrive at the central result of this thesis, which establishes a precise connection between the geometry of the stable reduction and the existence of a GIT-stable plane model.

\begin{theorem}[Main Theorem] \label{thm:main_theorem}
    The following conditions are equivalent:
    \begin{enumerate}[label=\EquivListLabel]
        \item \label{thm:main_theorem_i} The stable reduction $\oo{X}$ is non-hyperelliptic.
        \item \label{thm:main_theorem_ii} A GIT-stable plane model $\X_0$ of $X$ exists.
    \end{enumerate}
    Furthermore, when these equivalent conditions hold, the stable model $\X$ is the unique minimal semistable model dominating the GIT-stable model $\X_0$. The corresponding domination morphism
    \[
        \phi\colon \X \to \X_0 \subset \PP_{\OO_K}^2
    \]
    induces a morphism $\oo{\phi}$ on the special fiber that contracts the $1$-tails of $\oo{X}$ to cusps on $\X_{0,s}$ and is an immersion elsewhere.
\end{theorem}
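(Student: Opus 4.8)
The plan is to prove the two implications separately and then read off the ``furthermore'' part from the constructions, relying throughout on the analysis of hyperelliptic (semi)stable curves carried out in Chapter~\ref{chap:2}. The geometric mechanism in both directions is the dictionary ``$1$-tail of $\oo X$ $\leftrightarrow$ ordinary cusp of the plane quartic'': contracting a subcurve of arithmetic genus one that is attached at a single node produces an $A_2$-singularity, and conversely, the semistable resolution of an $A_2$-singularity in a special fibre inserts exactly one component of arithmetic genus one.

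\textbf{From \ref{thm:main_theorem_i} to \ref{thm:main_theorem_ii}.} Since $\oo X$ is a stable curve, every $1$-tail $T$ meets the rest of $\oo X$ in a single point that is a smooth point of the complement (the special fibre is nodal, so this point carries only two branches). Hence the finitely many $1$-tails of $\oo X$ can be contracted inside the stable model, yielding a proper flat $\OO_K$-model $\psi\colon\X\to\mathcal Z$ with $\mathcal Z_\eta=X$, whose special fibre $\mathcal Z_s$ is obtained from $\oo X$ by replacing each $1$-tail with an ordinary cusp; a local computation at the contracted tails (keeping track of node thicknesses, cf.\ Proposition~\ref{prop:thickness}) identifies $\mathcal Z_s$ as a Gorenstein curve of arithmetic genus $3$ whose only singularities are the nodes of $\oo X$ off the $1$-tails (type $A_1$) together with one cusp (type $A_2$) per $1$-tail. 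Because $\oo X$ is non-hyperelliptic, $\mathcal Z_s$ is non-hyperelliptic in the appropriate sense (Chapter~\ref{chap:2}), so $\omega_{\mathcal Z_s}$ is very ample and realises $\mathcal Z_s$ as a plane quartic (the analogue of Theorem~\ref{thm:canonical_embedding}). As $\mathcal Z$ is flat over $\OO_K$ with Gorenstein fibres, $\omega_{\mathcal Z/\OO_K}$ is invertible; by cohomology and base change $H^0(\mathcal Z,\omega_{\mathcal Z/\OO_K})$ is free of rank $3$ with formation commuting with base change (the relevant $h^0,h^1$ being constant on both fibres); and $\omega_{\mathcal Z/\OO_K}$ is relatively very ample since it is fibrewise so. It therefore gives a closed immersion $\mathcal Z\hookrightarrow\PP^2_{\OO_K}$ whose image is a plane model $\X_0$ with $\X_{0,s}\simeq\mathcal Z_s$; as the latter has only $A_1$ and $A_2$ singularities, $\X_0$ is GIT-stable. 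The composite $\phi\colon\X\xrightarrow{\psi}\mathcal Z\xrightarrow{\ \sim\ }\X_0\subset\PP^2_{\OO_K}$ is the desired domination morphism, and it is an isomorphism away from the $1$-tails.

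\textbf{From \ref{thm:main_theorem_ii} to \ref{thm:main_theorem_i}, and minimality.} Assume a GIT-stable $\X_0$ exists, so $\X_{0,s}$ is a reduced plane quartic with only nodes and cusps; GIT-stability forces every component of $\X_{0,s}$ to meet the rest transversally in distinct smooth points, and in particular no smooth rational component of $\X_{0,s}$ meets the rest in fewer than three points (a tangency or triple point would create a singularity worse than $A_2$). Since $X$ already has semistable reduction over $K$, the model $\X_0$ is dominated by a semistable model $\X_0^{\mathrm{ss}}\to\X_0$ (resolve and semistabilise, no base change needed), which in turn dominates the stable model $\X$; by the remark above, every destabilising rational tail or bridge of $\X_0^{\mathrm{ss}}$ lies over a singular point of $\X_{0,s}$ and hence maps to a point of $\X_0$, so the morphism $\X_0^{\mathrm{ss}}\to\X_0$ is constant on the fibres of $\X_0^{\mathrm{ss}}\to\X$ and thus factors through $\X$. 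Therefore $\X$ dominates $\X_0$. Tracking which components survive in $\X$, the induced map $\oo\phi\colon\oo X\to\X_{0,s}$ contracts exactly the arithmetic-genus-one components lying over the cusps of $\X_{0,s}$ --- i.e.\ the $1$-tails --- and is birational elsewhere; hence contracting the $1$-tails of $\oo X$ recovers the plane quartic $\X_{0,s}$, and by Chapter~\ref{chap:2} this forces $\oo X$ to be non-hyperelliptic. Minimality is now automatic: any semistable model dominating $\X_0$ dominates the stable model $\X$ (contract its destabilising components), while $\X$ itself dominates $\X_0$; so $\X$ is the least, hence the unique minimal, semistable model dominating $\X_0$. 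The description of $\oo\phi$ --- contraction of the $1$-tails to cusps, (closed) immersion elsewhere --- is exactly what was established in the first part.

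\textbf{The main obstacle.} The crux, and the place where Chapter~\ref{chap:2} is indispensable, is the (non-)hyperellipticity bookkeeping: that contracting $1$-tails neither creates nor destroys hyperellipticity, and that a non-hyperelliptic genus-$3$ curve with only nodes and cusps is canonically a plane quartic. Closely tied to this is the local geometry of the contraction $\psi$ at a $1$-tail --- one must verify carefully that a subcurve of arithmetic genus one attached at one node is contracted to an honest $A_2$-cusp and to nothing worse, with the correct behaviour of node thicknesses and of the dualizing sheaf (the pullback $\psi^*\omega_{\mathcal Z/\OO_K}$ is precisely the twist of $\omega_{\X/\OO_K}$ by the $1$-tails) --- together with the dual statement that GIT-stability rules out any singularity worse than a cusp and any destabilising rational component in $\X_{0,s}$. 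Granting these geometric inputs, the cohomological facts (freeness and base change for $H^0(\omega_{\mathcal Z/\OO_K})$, relative very ampleness) and the model-theoretic minimality argument are routine.
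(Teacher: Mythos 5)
Your second direction, \ref{thm:main_theorem_ii}~$\implies$~\ref{thm:main_theorem_i}, follows the paper's route: take the minimal semistable model dominating $\X_0$ (the stable hull of Lemma~\ref{lem:stable_hull}), check via B\'ezout that the strict transform of $\X_{0,s}$ is $2$-inseparable and that its cusp-pointed partial normalization is a stable, non-hyperelliptic pointed curve (Lemma~\ref{lem:structure_GIT_stable_quartic}), and conclude by the decomposition lemma. Your justification that no component of the strict transform is destabilising glosses over the bookkeeping at the cusps (a component with $r_Z$ cusps loses $2r_Z$ from $\deg\omega$ and gains back only $r_Z$ marked points, so one needs $\deg(Z)-r_Z>0$, which is the boundary case for an irreducible quartic with three cusps), but this is repairable and matches the paper's argument in substance.

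The first direction contains a genuine gap. You posit a proper flat contraction $\psi\colon\X\to\mathcal Z$ collapsing each $1$-tail, with $\mathcal Z_s$ acquiring exactly an ordinary cusp ($A_2$, $\delta=1$, Gorenstein) at each image point, and you defer this to ``a local computation.'' This is not a routine verification --- it is the main technical content of the paper. The paper never constructs $\mathcal Z$ abstractly and then embeds it; instead it builds the map to $\PP^2_{\OO_K}$ directly from the line bundle $\LL=\omega_{\X/\OO_K}\bigl(\sum_i d_i\oo X_i\bigr)$, where the coefficients must be the node thicknesses $d_i$ for the divisor to be Cartier (Lemma~\ref{lem:Cartier_Weil}) and for the local generator at the attachment node to restrict to a differential with a double pole on the core. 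Two things then have to be proved that your outline silently assumes. First, for $r\ge 2$ the restriction map $H^0(\X,\LL)\to H^0(\oo X,\oo\LL)$ is \emph{not} surjective ($h^0(\oo X,\oo\LL)=r+2$), so the map on the special fibre is given by a proper $3$-dimensional subspace $V$; identifying $V$ with the residue-vanishing subspace $W$ (Lemma~\ref{lem:subspace_W}, Proposition~\ref{prop:subspace_V_W}) requires the two-dimensional reciprocity laws on the arithmetic surface. Second, the fact that the image points are honest $A_2$ cusps and nothing worse comes from matching the conditions cutting out $W$ with the regularity conditions at an $A_2$ singularity (Proposition~\ref{prop:local_conditions}) and identifying $(\oo\LL_c,W)$ with the complete canonical system of the cuspidal curve $C'$ of Lemma~\ref{lem:cusp_construction}; only then do Catanese's theorems (via Lemma~\ref{lem:hyp_vs_quasi-hyp_v2}, whose characteristic-$2$ case needs the Frobenius factorization) give base-point-freeness, very ampleness, and the quartic image. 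A genus count alone does not pin down $\delta=1$ at each contracted tail before one knows the map exists and is an immersion away from the tails, so the order of your argument cannot be salvaged without essentially reproducing the paper's construction of the linear system.
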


To understand this theorem, we must first clarify what it means for a possibly singular, stable curve such as $\oo{X}$ to be non-hyperelliptic. This concept will be defined and explored in detail in the next chapter.

\begin{remark}
    The assumption that the field $k$ is algebraically closed is made for convenience; the weaker assumption that $k$ is perfect would suffice. All proofs can be reduced to the algebraically closed case by applying the same arguments as in \cite[Chapter 10]{liu}. Including these arguments in every proof would be repetitive and would clutter the presentation; hence, we make this simplifying assumption.
\end{remark}

\section{Hyperelliptic Curves} \label{chap:2}

In this chapter, $k$ is a fixed algebraically closed field. For our main theorem, it is necessary to explain what it means for a stable curve $C$ over $k$ to be hyperelliptic. We begin with a brief overview of the existing literature on this topic.\footnote{The material in this chapter led to the article \cite{hyperelliptic}.}

Informally, a stable curve is considered hyperelliptic if it is the ``limit'' of smooth hyperelliptic curves. While this idea can be made precise using the moduli space of stable curves, such a definition is not intrinsic, making it difficult to determine whether a given stable curve is hyperelliptic from the definition alone. In the smooth case, hyperellipticity is defined by the existence of a degree-two map $C \to \PP_k^1$. When allowing $C$ to degenerate into a stable curve, one must also allow its target $\PP_k^1$ to degenerate into a semistable curve $T$ of genus $0$, also known as a rational tree. By defining a specific class of allowed maps $C \to T$ --- the so-called (2-sheeted) admissible covers --- one can characterize these ``limits of smooth hyperelliptic curves'' (see \cite[Theorem 3.160]{harris1998moduli}). It is important to note that the cited authors work over $k = \CC$. Their proofs, being complex-analytic, do not translate directly to positive characteristic. It is likely, however, that the theory can be adapted for fields of characteristic $p \neq 2$, while the case $p=2$ is known to involve true complications.

Another approach, which we will pursue, uses a generalization of the hyperelliptic involution to define the notion of hyperellipticity intrinsically, as briefly explained in \cite[Chapter X]{ACGII}. This book shows that, again for $k = \CC$, this method also precisely characterizes ``the limits of smooth hyperelliptic curves'' (see \cite[Chapter XI, Lemma 6.14 and 6.15]{ACGII}). We generalize this definition to stable pointed curves without making any assumptions on the characteristic of the field $k$. This requires a careful approach; in characteristic $2$, the naive generalization is too strict and does not capture the correct class of stable curves. The definition can be adjusted slightly to define the correct class of curves in this case. Note that the arguments in \cite{ACGII} cannot be generalized to positive characteristic, as they rely on analytic techniques inapplicable in that setting. The proof that this intrinsic definition correctly identifies the limits of smooth hyperelliptic curves in positive characteristic is given by Maugeais \cite[Th\'eor\`eme 5.4]{maugeais2003relevement}, whose work uses a generalization of admissible covers in positive characteristic. For further exposition, see also \cite[Appendix A.2]{yamaki2004cornalba}.

A third, older, and generally non-equivalent approach to defining hyperellipticity focuses on the canonical map. If a projective, connected curve $C$ has only Gorenstein singularities, it possesses an invertible dualizing sheaf $\omega_{C/k}$, which is a generalization of the canonical sheaf for smooth curves. This sheaf defines a canonical (though generally rational) map $C \to \PP_k^{g-1}$. This approach was pioneered by Rosenlicht in 1952. Working with a specific class of irreducible curves, he defined the property of being ``quasi-hyperelliptic'' for a curve if its canonical map is not birational \cite{rosenlicht1952equivalence}. He showed that for these curves, this property is equivalent to the existence of a degree-two map $C \to \PP^1_k$, a characterization that is often used as a definition today (compare with \cite{kleiman2009canonical} and \cite{hartshorne1986generalized}). A more detailed investigation of the canonical map and the so-called pluricanonical maps (given by $\omega_{C/k}^{\otimes n}$ for $n \geq 1$) for reducible Gorenstein curves was undertaken by Catanese \cite{catanese1982pluricanonical} in 1982. His work was motivated by the fact that $\omega_{C/k}^{\otimes n}$ is very ample for $n \geq 3$ on stable curves, an essential result used by Deligne and Mumford \cite{deligne-mumford} in their 1969 construction of the moduli space of stable curves. Catanese gave a generalization of Theorem \ref{thm:canonical_embedding}, which characterizes when the canonical map is an immersion. We will utilize his results in Chapter \ref{chap:3}. However, it is crucial to note that his definition of hyperellipticity \cite[Definition 3.9]{catanese1982pluricanonical}, based on the non-birationality of the canonical map, does not in general coincide with our definition for stable curves. To avoid confusion, and motivated by Rosenlicht's terminology, we will refer to curves that are hyperelliptic in the sense of Catanese as ``quasi-hyperelliptic''. As we will see, for certain classes of stable curves, the two notions do agree.

We begin this chapter by defining stable pointed curves and then present our definition of when such curves are called hyperelliptic. This will involve a generalization of the hyperelliptic involution, for which we will prove uniqueness, similar to the smooth case. Subsequently, we will focus on the case of genus $3$ curves, classifying all stable types and characterizing the hyperelliptic property for them in concrete terms. To conclude the chapter, we will discuss the related notion of ``quasi-hyperelliptic'' curves, clarifying its connection and distinction from our primary definition. With this information, we will be equipped to prove the main theorem in the next chapter.

\subsection{Stable Pointed Curves}

We begin by defining the main objects of study.

\begin{definition}
    An algebraic curve $C$ over $k$ is called semistable if it is connected, projective, reduced, and its only singularities are ordinary double points (nodes). 
    
    A semistable curve $C$ is called stable if it satisfies the following equivalent conditions:
    \begin{enumerate}[label=\EquivListLabel]
        \item The curve has arithmetic genus $g \geq 2$ and every smooth rational component ($Z \simeq \PP_k^1$) meets the other components in at least three points.
        \item The automorphism group $\Aut(C)$ is finite.
    \end{enumerate}
\end{definition}

\begin{remark}
    In his book \cite{liu}, Liu does not require semistable curves to be connected and projective. In this thesis, however, we will require them to have these properties for convenience. Such curves are also called ``nodal curves'' in the literature. Different contexts place different demands on semistability; for example, in GIT, one often requires that every smooth rational component meets the rest of the curve in at least two points, a condition equivalent to the automorphism group being reductive.
\end{remark}

It is convenient to generalize the notion of stability to pointed curves.

\begin{definition} \label{def:stable_pointed}
    An $n$-pointed semistable curve is a datum $(C, P)$ of a semistable curve $C$ together with a set $P = \{p_1, \dots, p_n\}$ of $n$ distinct smooth points of $C$. For each irreducible component of the curve, let the set of special points be the union of its marked points and the nodes connecting it to other components.

    Such a curve is called a stable $n$-pointed curve if it satisfies the following equivalent conditions:
    \begin{enumerate}[label=\EquivListLabel]
        \item \label{def:stable_pointed_i} Every genus $0$ component has at least three special points, and every genus $1$ component has at least one special point.
        \item The subgroup of automorphisms of $C$ that fix each point in $P$, denoted $\Aut(C, P)$, is finite.
    \end{enumerate}
\end{definition}

\begin{remark}
    Note that a stable $0$-pointed curve is simply a stable curve as previously defined.
\end{remark}

The notion of stability is strongly connected to the ampleness of the canonical sheaf, also called the dualizing sheaf (see, for instance, \cite[Section 6.4]{liu}). For a more explicit description of the dualizing sheaf, we refer to Chapter \ref{chap:3}. To a semistable pointed curve $(C,P)$, we associate the so-called log-canonical sheaf $\omega_{C/k}(P)$. Here, we identify the set of points $P = \{p_1, \dots, p_n\}$ with the divisor $p_1 + \cdots + p_n$.

\begin{proposition} \label{prop:stable_via_canonical}
    Let $(C,P)$ be an $n$-pointed semistable curve. Then the following are equivalent:
    \begin{enumerate}[label=\EquivListLabel]
        \item \label{prop:stable_via_canonical_i} The curve $(C,P)$ is stable.
        \item \label{prop:stable_via_canonical_ii} The log-canonical sheaf $\omega_{C/k}(P)$ is ample.
        \item \label{prop:stable_via_canonical_iii} For every irreducible component $Z$ of $C$, we have
        \[
            \deg \omega_{C/k}(P)|_Z > 0.
        \]
        \item \label{prop:stable_via_canonical_iv} For every irreducible component $Z$ of $C$ with $s_Z$ special points, we have
        \[
            2 g(Z) - 2 + s_Z > 0.
        \]
    \end{enumerate}
    If these equivalent conditions are satisfied, then
    \[
        \deg \omega_{C/k}(P) = 2g - 2 + n \geq m > 0,
    \]
    where $g = g(C)$ is the arithmetic genus of $C$ and $m$ is the number of irreducible components of $C$.
\end{proposition}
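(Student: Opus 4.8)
The plan is to prove the cycle of implications (i) $\Rightarrow$ (ii) $\Rightarrow$ (iii) $\Rightarrow$ (iv) $\Rightarrow$ (i) and then read off the numerical assertion at the end. Everything hinges on a single computation, which I would establish first: for every irreducible component $Z$ of $C$,
\[
    \deg\bigl(\omega_{C/k}(P)|_Z\bigr) \;=\; 2g(Z) - 2 + s_Z .
\]
This follows from the adjunction description of the dualizing sheaf of a nodal curve, $\omega_{C/k}|_Z \cong \omega_{Z/k}\bigl(Z\cap\overline{C\setminus Z}\bigr)$ --- whose explicit local (``residue'') form at a node is recalled in Chapter \ref{chap:3} --- together with $\deg\omega_{Z/k}=2g(Z)-2$, $\deg\OO_C(P)|_Z=\#(P\cap Z)$, and additivity of degree in tensor products. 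Granting this formula, the equivalence (iii) $\Leftrightarrow$ (iv) is immediate: $\deg(\omega_{C/k}(P)|_Z)$ is an integer, hence $>0$ exactly when $2g(Z)-2+s_Z\ge 1$.

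For (iv) $\Rightarrow$ (i) I would simply unwind the inequality $2g(Z)-2+s_Z>0$ component by component: it is automatic when $g(Z)\ge 2$; for $g(Z)=1$ it says $s_Z\ge 1$; for $g(Z)=0$ it says $s_Z\ge 3$ --- precisely the conditions of Definition \ref{def:stable_pointed}\ref{def:stable_pointed_i}. Running the same case check in reverse gives (i) $\Rightarrow$ (ii): stability forces $\deg(\omega_{C/k}(P)|_Z)\ge 1>0$ on every component, and a line bundle on a connected projective curve over a field whose restriction to every reduced irreducible component has positive degree is ample --- the Nakai--Moishezon criterion for curves, combined with the fact that ampleness can be tested on the reduced components. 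The leftover arrow (ii) $\Rightarrow$ (iii) is the trivial half of that same principle: an ample line bundle restricts to an ample, hence positive-degree, line bundle on each integral projective curve $Z$.

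Finally, for the numerical addendum: the identity $\deg\omega_{C/k}(P)=2g-2+n$ holds for \emph{any} $n$-pointed semistable curve, not only the stable ones, since $\deg\omega_{C/k}=\chi(\omega_{C/k})-\chi(\OO_C)=(g-1)-(1-g)=2g-2$ by Serre duality (using connectedness of $C$) and $\deg\OO_C(P)=n$. Writing the total degree as the sum of the degrees on the components of $C$ (a standard property of this degree on nodal curves) gives $2g-2+n=\sum_Z\bigl(2g(Z)-2+s_Z\bigr)$ over the $m$ components; once the equivalent conditions hold, each summand is a strictly positive integer, hence $\ge 1$, so $2g-2+n\ge m$, and $m\ge 1$ since $C$ is nonempty.

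The argument is essentially formal once the restriction formula is in hand; the only genuinely imported ingredients are the local description of $\omega_{C/k}$ at a node and the ampleness criterion for line bundles on curves. The step I expect to require the most care is not a deep one but a bookkeeping one: pinning down the conventions for ``$g(Z)$'' and ``special point'' --- in particular for a component meeting a node twice (a self-node), where one should pass to the normalization of $Z$ --- so that the restriction formula, Definition \ref{def:stable_pointed}, and condition (iv) are mutually consistent.
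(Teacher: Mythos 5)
Your proposal is correct and follows essentially the same route as the paper: the restriction formula $\deg(\omega_{C/k}(P)|_Z)=2g(Z)-2+s_Z$ via adjunction at the nodes, the ampleness criterion for line bundles on reducible projective curves (the paper cites \cite[Proposition 7.5.5]{liu} where you invoke Nakai--Moishezon), the case analysis $g(Z)\ge 2$, $g(Z)=1$, $g(Z)=0$ matching Definition \ref{def:stable_pointed}, and summation of degrees over the $m$ components for the final inequality. The only cosmetic differences are that you arrange the equivalences as a cycle rather than three pairwise equivalences and derive $\deg\omega_{C/k}(P)=2g-2+n$ from Euler characteristics rather than by citation; your bookkeeping remark about self-nodes is resolved by the paper's convention that $g(Z)$ is the arithmetic genus and special points exclude self-intersections.
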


\begin{proof}
    The equivalence of \ref{prop:stable_via_canonical_ii} and \ref{prop:stable_via_canonical_iii} follows from \cite[Proposition 7.5.5]{liu}. The equivalence of \ref{prop:stable_via_canonical_i} and \ref{prop:stable_via_canonical_iv} follows directly from the definition of a stable pointed curve. Indeed, let $Z$ be an irreducible component of $C$ with $s_Z$ special points. The condition $2 g(Z) - 2 + s_Z > 0$ is automatically satisfied if $g(Z) \geq 2$; for $g(Z) = 1$, it is equivalent to $s_Z \geq 1$; and for $g(Z) = 0$, it is equivalent to $s_Z \geq 3$. These are precisely the conditions given in Definition \ref{def:stable_pointed} \ref{def:stable_pointed_i}.

    The equivalence of \ref{prop:stable_via_canonical_iii} and \ref{prop:stable_via_canonical_iv} is also clear: For an irreducible component $Z$ of $C$, let $S_Z$ be the set of its special points (the marked points on $Z$ and the nodes connecting it to other components), so that $|S_Z| = s_Z$. We have the isomorphism $\omega_{C/k}(P)|_Z \simeq \omega_{Z/k}(S_Z)$ (cf. \cite[Lemma 10.3.12]{liu}). Since $\deg(\omega_{Z/k}) = 2g(Z) - 2$ (see \cite[Corollary 7.3.31]{liu}), it follows that $\deg \omega_{C/k}(P)|_Z = 2g(Z)-2+s_Z$, establishing the equivalence.

    For the last assertion, let $Z_1, \dots, Z_m$ be the irreducible components of $C$. If the curve is stable, then condition \ref{prop:stable_via_canonical_iii} holds, so $\deg \omega_{C/k}(P)|_{Z_i} \geq 1$ for each component. Using \cite[Proposition 7.5.7]{liu}, we can sum these degrees:
    \[
        2 g - 2 + n = \deg \omega_{C/k}(P) = \sum_i \deg \omega_{C/k}(P)|_{Z_i} \geq m > 0.
    \]
\end{proof}

\begin{definition}
    In the situation of Proposition \ref{prop:stable_via_canonical}, an irreducible component $Z$ of $C$ is called an \emph{unstable component} if it fails to satisfy condition \ref{prop:stable_via_canonical_iii}. In other words, $Z$ is unstable if it is a genus $0$ component with fewer than three special points, or if it is a genus $1$ component with no special points. The latter case can only occur if the component $Z$ is the entire curve $C$, which must then be of genus $1$ with no marked points.
\end{definition}

\begin{remark} \label{rem:stable_curves_low_genus}
    Proposition \ref{prop:stable_via_canonical} implies that a stable $n$-pointed curve of genus $g$ can exist only if one of the following holds: $g = 0$ with $n \geq 3$; $g = 1$ with $n \geq 1$; or $g \geq 2$. Moreover, the proposition shows that the number of irreducible components is bounded by $2g-2+n$, which implies that for a given $(g,n)$, the number of possible combinatorial types (see Definition \ref{def:associated_graph_comb_type}) of $(C,P)$ is finite.
\end{remark}

For a more local analysis, it is convenient to decompose a semistable curve $C$ at a set of nodes $S$. Formally, we use the partial normalization $\nu\colon \tilde{C} \to C$ of $C$ at $S$. The preimage $\tilde{C}$ is a disjoint union of connected components, $\tilde{C} = C_1 \sqcup \cdots \sqcup C_m$, and we call $\{C_1, \dots, C_m\}$ the decomposition of $C$ induced by $S$. For notational simplicity, we adopt the following convention: if a point has a unique preimage under $\nu$, we denote it by the same letter. If a node $p \in S$ has two preimages lying on distinct components $C_i$ and $C_j$, we also denote both preimages by $p$; context will make it clear whether we mean the point on $C_i$ or on $C_j$. We write $S_i := \nu^{-1}(S) \cap C_i$ for the set of preimages of the nodes in $S$ that lie on the component $C_i$. If no two preimages of a node in $S$ lie on the same component $C_i$, then the restriction $\nu|_{C_i} \colon C_i \to C$ is an isomorphism onto its image. In this case, we may identify $C_i$ with the subcurve $\nu(C_i) \subset C$, and under this identification, $S_i$ is a subset of $S$. For example, if $S$ is the set of all nodes connecting distinct irreducible components of $C$, then the curves $C_i$ in the decomposition are precisely those irreducible components.

The components of a decomposed curve should themselves be viewed as pointed curves. Let $(C,P)$ be a stable $n$-pointed curve, and let $S \subset C$ be a set of nodes with the induced decomposition $\{C_1, \dots, C_m\}$. Since the points in $P$ are smooth, they are unaffected by the partial normalization at $S$, so we may identify them with their preimages on the components $C_i$. We then equip each component $C_i$ with a set of marked points $P_i := S_i \cup (P \cap C_i)$. This set consists of the preimages of the nodes in $S$ (where the curve was ``cut'') and the original marked points from $P$ that lie on $C_i$. We call the resulting collection of pointed curves $\{(C_1, P_1), \dots, (C_m, P_m)\}$ the (pointed) decomposition of $(C,P)$ induced by $S$.

\begin{lemma} \label{lem:stable_decomposition}
    Let $(C, P)$ be a semistable curve, and let $S \subset C$ be a set of nodes. Then
    \[
        (C, P) \text{ is stable } \iff (C_i, P_i) \text{ is stable for all $i = 1, \dots, m$},
    \]
    where $\{(C_1, P_1), \dots, (C_m, P_m)\}$ is the decomposition induced by $S$.
\end{lemma}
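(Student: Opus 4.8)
The plan is to deduce both implications from the numerical stability criterion of Proposition~\ref{prop:stable_via_canonical}, exploiting the fact that stability of a pointed nodal curve is tested one irreducible component at a time and that a partial normalization does not change this per-component test. As a preliminary I would check that each pair $(C_i,P_i)$ really is a semistable pointed curve in the sense of Definition~\ref{def:stable_pointed}: a partial normalization of a connected, projective, nodal curve is a disjoint union of connected, projective, nodal curves $C_1\sqcup\cdots\sqcup C_m$; the points of $P\cap C_i$ stay smooth, and the preimages in $S_i$ of the cut nodes are smooth points of $C_i$ which are pairwise distinct and distinct from the points of $P$. Hence Proposition~\ref{prop:stable_via_canonical} applies verbatim to each $(C_i,P_i)$ and to $(C,P)$ alike.

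The core of the argument is a component-wise comparison. Partial normalization neither creates nor merges irreducible components, and each irreducible component $Z$ of $C$ is connected, hence lies in exactly one $C_i$; this sets up a bijection between the irreducible components of $C$ and those of $C_1\sqcup\cdots\sqcup C_m$. I claim that under this bijection $\deg\bigl(\omega_{C/k}(P)|_Z\bigr)$ is unchanged, i.e. equals $\deg\bigl(\omega_{C_i/k}(P_i)|_Z\bigr)$ for the $C_i$ containing $Z$. Using the identification $\omega_{C/k}(P)|_Z\simeq\omega_{\widetilde Z/k}(S_Z)$ from \cite[Lemma 10.3.12]{liu} (with $\widetilde Z$ the normalization of $Z$ and $S_Z$ the marked points on $Z$ together with the preimages of the nodes of $C$ lying on $Z$), passing from $C$ to $C_i$ leaves $\widetilde Z$ untouched and merely relabels, for each node of $S$ on $Z$, the point(s) of $S_Z$ over that node as marked point(s) of $P_i$: one such point if the node joins $Z$ to a different component, two if both branches of the node lie on $Z$. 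In every case the total $|S_Z|$ is unchanged, so the degree is unchanged. Equivalently, in the language of Proposition~\ref{prop:stable_via_canonical}\ref{prop:stable_via_canonical_iv}, the quantity $2g(Z)-2+s_Z$ is preserved: cutting a node joining $Z$ to a distinct component trades one incident node for one marked point, while cutting a self-node of $Z$ drops the arithmetic genus $g(Z)$ by one and adds two marked points, and $2(-1)+2=0$.

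Granting this, the lemma follows: by Proposition~\ref{prop:stable_via_canonical} (equivalence of \ref{prop:stable_via_canonical_i} and \ref{prop:stable_via_canonical_iii}), $(C,P)$ is stable precisely when $\deg\bigl(\omega_{C/k}(P)|_Z\bigr)>0$ for every irreducible component $Z$ of $C$; by the bijection above and the invariance of this degree, that is equivalent to $\deg\bigl(\omega_{C_i/k}(P_i)|_Z\bigr)>0$ for every component $Z$ of every $C_i$, i.e. to each $(C_i,P_i)$ being stable. The only point requiring genuine care is the treatment of nodes of $S$ both of whose branches lie on a single component $Z$ — which is exactly why the branch-point / $\deg\,\omega$ formulation is cleaner than a raw count of ``nodes to other components'' — since there one must verify that the drop in the arithmetic genus of $Z$ is compensated precisely by the two new marked points. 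In the applications that matter here, such as cutting off the $1$-tails in the Main Theorem, $S$ consists only of nodes joining distinct components, so this subtlety does not even arise.
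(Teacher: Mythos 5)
Your proposal is correct and follows essentially the same route as the paper: both reduce to the per-component numerical criterion of Proposition~\ref{prop:stable_via_canonical} and verify that the quantity $2g(Z)-2+s_Z$ (equivalently $\deg\omega_{C/k}(P)|_Z$) is preserved under the partial normalization, with the same bookkeeping for self-nodes ($-2$ from the genus drop cancelling $+2$ from the two new marked points). The paper phrases this via the relations $g(Z)=g(\tilde Z)+|N_Z|$ and $s_{\tilde Z}=s_Z+2|N_Z|$, which is exactly your computation.
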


\begin{proof}
    Let $\nu \colon \tilde{C} = \bigsqcup C_i \to C$ be the partial normalization at the nodes in $S$. The map $\nu$ induces a bijection between the irreducible components of $\tilde{C}$ and $C$. Let $Z$ be an irreducible component of $C$, and let $\tilde{Z}$ be the corresponding irreducible component in the decomposition, which we can assume lies in $C_i$. By Proposition \ref{prop:stable_via_canonical} \ref{prop:stable_via_canonical_iv}, it suffices to verify that
    \begin{equation} \label{eq:stability_normalization}
        2g(Z) - 2 + s_Z = 2g(\tilde{Z}) - 2 + s_{\tilde{Z}},
    \end{equation}
    where $s_Z$ and $s_{\tilde{Z}}$ are the number of special points on $Z$ (in $(C,P)$) and on $\tilde{Z}$ (in $(C_i,P_i)$), respectively.

    The restricted morphism $\nu|_{\tilde{Z}} \colon \tilde{Z} \to Z$ is the normalization of $Z$ at the set of nodes $N_Z \subset S$ that are self-intersections of $Z$. Let $N_{\tilde{Z}}$ be the set of preimages of $N_Z$ on $\tilde{Z}$; note that $|N_{\tilde{Z}}| = 2|N_Z|$. The genus formula for normalization gives the relation $g(Z) = g(\tilde{Z}) + |N_Z|$. By comparing the definitions of special points, one finds that their numbers are related by $s_{\tilde{Z}} = s_Z + |N_{\tilde{Z}}|$. Substituting these two relations confirms the equality \eqref{eq:stability_normalization}.
\end{proof}

It is standard to associate a graph with a semistable curve $C$, called the \emph{dual graph}. Graph-theoretical terminology is often more convenient for certain arguments and allows us to formalize the so-called \emph{combinatorial type} of $C$. Our presentation is inspired by \cite[Chapter X]{ACGII}. We begin by defining what is meant by a graph in this context:

\begin{definition}
    A graph $\Gamma$ is given by the following data:
    \begin{description}[leftmargin=!,labelwidth=\widthof{\textit{Vertex assignment:}}]
        \item[Vertices:] A finite nonempty set $V = V(\Gamma)$.
        \item[Half-edges:] A finite set $L = L(\Gamma)$.
        \item[Edge-pairing:] An involution $\iota\colon L \to L$.
        \item[Vertex assignment:] A partition of $L$ indexed by $V$, i.e., an assignment to each $v \in V$ of a (possibly empty) subset $L_v \subset L$ such that $L = \bigsqcup_v L_v$.
        \item[Genus assignment:] An assignment of a nonnegative integer weight $g_v$ to each vertex $v \in V$.
    \end{description}
    A pair of distinct elements of $L$ interchanged by the involution is called an \emph{edge} of the graph, and the set of edges is denoted by $E = E(\Gamma)$. A fixed point of the involution is called a \emph{leg} of the graph, and the set of legs is denoted by $P = P(\Gamma)$. The \emph{degree} of a vertex $v \in V$, denoted $\deg(v)$, is the number of half-edges attached to it: $\deg(v) := |L_v|$. The genus of $\Gamma$ is defined by
    \[
        g(\Gamma) := \sum_{v \in V} g_v + 1 - \chi(\Gamma),
    \]
    where $\chi(\Gamma) := |V| - |E|$ is the Euler characteristic of $\Gamma$.
\end{definition}

The structure resulting from this definition is a type of weighted \emph{multigraph}. It allows for \emph{loops} (edges connecting a vertex to itself), \emph{multiple edges} between the same pair of vertices, and \emph{legs} (half-edges not paired into a full edge).

An \emph{isomorphism} between two such graphs is a pair of bijections on the sets of vertices and half-edges that preserves the entire defining structure: the edge-pairing, vertex assignment, and genus assignment.

\begin{definition} \label{def:associated_graph_comb_type}
    Let $\{(C_1, P_1), \dots, (C_m, P_m)\}$ be the pointed decomposition of a semi\-stable curve $(C, P)$ induced by a set of nodes $S$. The associated graph, denoted $\Gamma = \Graph^S(C,P)$, is defined by the following data:
    \begin{description}
        \item[Vertices:] The set of vertices is $V := \{C_1, \dots, C_m\}$.

        \item[Half-edges:] The set of half-edges is the disjoint union of the sets of marked points from the pointed decomposition: $L := \bigsqcup_{i=1}^m P_i$.

        \item[Edge-pairing:] The involution $\iota\colon L \to L$ is defined by its action on a half-edge $h \in P_i \subset L$:
            \begin{itemize}
                \item If $h \in S_i$, it is one of the two preimages of a node $s \in S$ under the partial normalization map $\nu$. The fiber $\nu^{-1}(s)$ consists of the pair $\{h, h'\}$, and we define $\iota(h) := h'$.
                \item If $h \in P \cap C_i$, it corresponds to one of the original marked points from $P$. We define $\iota(h) := h$, making it a fixed point.
            \end{itemize}

        \item[Vertex assignment:] For each vertex $v = C_i \in V$, the set of attached half-edges is $L_v := P_i$.

        \item[Genus assignment:] Each vertex $v = C_i \in V$ is assigned its arithmetic genus as a weight: $g_v := g(C_i)$.
    \end{description}
    In the special case where $S$ consists of all nodes of $C$, the associated graph $\Gamma = \Graph(C,P)$ is called the \emph{dual graph} of $(C,P)$. The \emph{combinatorial type} of $(C,P)$ is then defined as the isomorphism class of its dual graph.
\end{definition}

\begin{remark}
    \begin{enumerate}[label=\StatementListLabel]
        \item The genus of the associated graph $\Gamma = \Graph^S(C, P)$ coincides with the arithmetic genus of the curve $C$ (see \cite[Chapter X, p. 88, formula (2.8)]{ACGII}). Since $C$ is connected, its associated graph $\Gamma$ is also connected, and the genus formula may be written as
        \[
            g(\Gamma) = \sum_{v \in V} g_v + h^1(\Gamma),
        \]
        where $h^1(\Gamma)$ is the first Betti number of $\Gamma$.
        \item For the dual graph, the weight $g_v$ of a vertex is precisely the \emph{geometric genus} of the corresponding irreducible component. The combinatorial type, therefore, informally captures the arrangement of the irreducible components, their geometric genera, and how they intersect. Throughout this thesis, we will represent these types visually using diagrams, as in Example \ref{ex:comb_types_g=2}, where the thickness of a line indicates the geometric genus of the corresponding component.
    \end{enumerate}
\end{remark}

With this new graph-theoretic language, we may express the stability condition as follows.

\begin{lemma}
    Let $(C, P)$ be a semistable curve, and let $S \subset C$ be a set of nodes. If $(C,P)$ is stable, then
    \[
        2 g_v - 2 + \deg(v) > 0
    \]
    for all vertices $v$ in the graph $\Graph^S(C, P)$. The converse is also true, provided that $S$ contains all nodes connecting distinct irreducible components of $C$; this holds, in particular, if $S$ contains all nodes of $C$.
\end{lemma}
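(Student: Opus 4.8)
The plan is to pass through the dictionary of Definition~\ref{def:associated_graph_comb_type} between the graph $\Graph^S(C,P)$ and the pointed decomposition $\{(C_1,P_1),\dots,(C_m,P_m)\}$ of $(C,P)$ induced by $S$, and then apply Proposition~\ref{prop:stable_via_canonical} together with Lemma~\ref{lem:stable_decomposition}. First I would record the translation: a vertex $v$ of $\Graph^S(C,P)$ is one of the components $C_i$, with weight $g_v=g(C_i)$ and degree $\deg(v)=|L_v|=|P_i|$; hence for every $v$ the quantity in question is $2g_v-2+\deg(v)=2g(C_i)-2+|P_i|$, which, $C_i$ being a nodal (hence Gorenstein) curve, equals $\deg\omega_{C_i/k}(P_i)$.

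For the forward implication, suppose $(C,P)$ is stable. Then by Lemma~\ref{lem:stable_decomposition} each $(C_i,P_i)$ is stable, so the last assertion of Proposition~\ref{prop:stable_via_canonical} gives $2g(C_i)-2+|P_i|=\deg\omega_{C_i/k}(P_i)\ge m_i\ge 1$, where $m_i\ge 1$ is the number of irreducible components of $C_i$; equivalently, one sums the strict inequalities of Proposition~\ref{prop:stable_via_canonical}\,\ref{prop:stable_via_canonical_iii} over the components of $C_i$ using additivity of degree \cite[Proposition~7.5.7]{liu}. Either way $2g_v-2+\deg(v)>0$ for all $v$, which settles this direction.

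For the converse, I would assume that $S$ contains every node of $C$ joining two distinct irreducible components. The crucial first step is to check that each $C_i$ is then \emph{irreducible}. The partial normalization $\nu\colon\tilde C=\bigsqcup C_i\to C$ induces a bijection on sets of irreducible components; if some $C_i$ contained two distinct such components, then by connectedness of $C_i$ there would be a node of $\tilde C$ meeting two distinct components of $\tilde C$, whose image in $C$ would be a node joining two distinct irreducible components of $C$ — but such a node lies in $S$ and has therefore been separated by $\nu$, a contradiction. With $C_i$ irreducible, its own decomposition into irreducible components is trivial, the special points of $C_i$ inside $(C_i,P_i)$ are exactly the marked points $P_i$, and the hypothesis $2g_v-2+\deg(v)>0$ reads $2g(C_i)-2+s_{C_i}>0$, which is precisely condition~\ref{prop:stable_via_canonical_iv} of Proposition~\ref{prop:stable_via_canonical} for $(C_i,P_i)$. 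Hence each $(C_i,P_i)$ is stable, and Lemma~\ref{lem:stable_decomposition} yields that $(C,P)$ is stable.

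The hard part is really only the bookkeeping in the converse: one must be sure that the hypothesis on $S$ genuinely forces every $C_i$ to be irreducible — so that the single positivity number attached to the vertex $v=C_i$ coincides with the per-component positivity condition of Proposition~\ref{prop:stable_via_canonical}\,\ref{prop:stable_via_canonical_iv} — and that for an irreducible $C_i$ the set $P_i$ is exactly the set of special points used there (in particular, self-nodes of $C_i$ not in $S$ play no role, as they do not join distinct components). Everything else, including the reduction to Lemma~\ref{lem:stable_decomposition} in both directions, is then immediate.
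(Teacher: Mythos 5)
Your proof is correct and follows essentially the same route as the paper's: both directions reduce to Lemma~\ref{lem:stable_decomposition} combined with Proposition~\ref{prop:stable_via_canonical} applied to the components of the decomposition induced by $S$, with the hypothesis on $S$ used exactly as in the paper to guarantee that each $C_i$ is irreducible so that condition \ref{prop:stable_via_canonical_iv} can be read off vertex by vertex. Your explicit verification that each $C_i$ is irreducible and that $P_i$ is precisely its set of special points merely spells out details the paper leaves implicit.
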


\begin{proof}
    Let $\{(C_1, P_1), \dots, (C_m, P_m)\}$ be the decomposition induced by $S$. For the vertex $v$ corresponding to a component $C_i$, we have $g_v = g(C_i)$ and $\deg(v) = |P_i|$ by definition. The statement of the lemma follows directly from applying Lemma \ref{lem:stable_decomposition} and Proposition \ref{prop:stable_via_canonical} to each component of the decomposition. The condition on $S$ for the converse is precisely what ensures that each component $C_i$ is irreducible, allowing the implication in Proposition \ref{prop:stable_via_canonical} to be reversed.
\end{proof}

\begin{remark} \label{rem:stable_components}
    Two choices for the set of nodes $S$ are particularly useful.
    As we have seen, taking $S$ to be all nodes of $C$ yields the \emph{dual graph}. Its vertices correspond to the normalizations of the irreducible components of $C$, so their assigned weights are the \emph{geometric} genera.
    Alternatively, taking $S$ to be only the nodes connecting distinct irreducible components results in a decomposition where the vertices are the irreducible components themselves (with their self-intersections); their weights are thus the \emph{arithmetic} genera.
    The associated graph in this second case, which by construction has no loops, is sometimes called the ``component graph,'' but this term is not standard.
    This makes the component graph a coarser representation, as it does not distinguish between curves whose components have the same arithmetic genus but different geometric genera.
    The dual graph, by contrast, captures this additional detail by also translating self-intersection nodes into loops.
\end{remark}

\subsection{Hyperelliptic Stable Curves}

To define hyperellipticity for stable curves, we first recall the smooth case. A smooth curve $C$ of genus $g \ge 2$ is hyperelliptic if it admits a separable\footnote{Note that ``separable'' is automatic since otherwise $C$ would have genus $0$; see proof of \cite[Lemma 7.4.8]{liu}.} morphism to $\PP^1_k$ of degree $2$. This map is induced by a unique order-two automorphism $\sigma$, called the hyperelliptic involution. The generalization to stable curves involves replacing the target $\PP^1_k$ with a semistable curve of genus $0$. Such a curve has irreducible components that are all rational and a dual graph that is a tree; for these reasons, it is called a \emph{rational tree}.

This leads to the standard definition used in the literature, which we present here for pointed curves (cf. \cite[Chapter X, Section 3]{ACGII}).

\begin{definition} \label{def:hyperelliptic}
    A stable pointed curve $(C, P)$ over $k$ is called \emph{hyperelliptic} if it admits an involution $\sigma\colon C \to C$ that fixes every point in $P$ and whose quotient curve $T := C/\langle\sigma\rangle$ is a rational tree. When $\Char k \neq 2$, it is additionally required that $\sigma$ has only isolated fixed points. Such an automorphism $\sigma$ is called a \emph{hyperelliptic involution}.
\end{definition}

\begin{remark}[Associated Degree-2 Covers] \label{rem:degree2_map}
    In characteristic different from 2, the requirement that the fixed points of $\sigma$ be isolated is equivalent to the condition that the restriction of $\sigma$ to any irreducible component is not the identity. Consequently, the quotient morphism $\pi\colon C \to T$ is a finite map of degree two over every component. It is, however, not in general an \emph{admissible cover} in the classical sense (see \cite[Section 3.G]{harris1998moduli} and \cite[Remark 3.4]{hyperelliptic}).

    In characteristic $2$, this requirement is dropped, and $\sigma$ may be the identity on some components of $C$. This occurs precisely on certain rational components $Z \simeq \PP^1_k$; the exact conditions are given in Proposition~\ref{prop:hyperelliptic_genus2}. The quotient map $\pi$ is then of degree one over the images $T_Z := \pi(Z)$.
    
    To naturally associate a finite morphism of degree two to $\sigma$ everywhere, one must compose $\pi$ with a ``partial Frobenius'' map $F\colon T \to T'$. Here, $T'$ is another rational tree, constructed from $T$ by replacing each component $T_Z$ (where $\sigma$ was trivial) with its Frobenius twist $T_Z^{(2)}$. The map $F$ is defined component-wise as the relative Frobenius morphism $F_{T_Z/k}\colon T_Z \to T_Z^{(2)}$ on these components and the identity elsewhere; the gluing of $T'$ is defined such that $F$ is a well-defined morphism. The resulting map, $\pi' := F \circ \pi \colon C \to T'$, is a finite morphism of degree two, factoring into the separable morphism $\pi$ and the purely inseparable morphism $F$. The curve $T'$ remains a rational tree of the same combinatorial type as $T$; its components are still rational (as the Frobenius twist preserves geometric genus, $T_Z^{(2)}$ remains a smooth genus $0$ curve, and since $k$ is algebraically closed we must have $T_Z^{(2)} \simeq \PP^1_k$), and the map $F$, being purely inseparable, is a homeomorphism that preserves the combinatorial structure of $T$.
    
    Since $k$ is algebraically closed (and thus perfect), $T$ and $T'$ are isomorphic as abstract schemes. Crucially, however, they are generally not isomorphic as $k$-schemes. A $k$-isomorphism must preserve the multiset of $k$-isomorphism classes of the pointed irreducible components (components equipped with their special points). Consider a component $(T_Z, S_Z)$ where $F$ acts non-trivially. In $T'$, it is replaced by the twisted component $(T_Z^{(2)}, F(S_Z))$. The relative Frobenius generally alters the $k$-isomorphism class. For instance, if $|S_Z| = 4$, the isomorphism class is determined by the cross-ratio $\lambda$ of the four points. The corresponding points on $T_Z^{(2)}$ then have a cross-ratio of $\lambda^2$. These configurations are projectively equivalent if and only if $\lambda^2$ is in the $S_3$-orbit of $\lambda$, which in characteristic $2$ means $\lambda \in \mathbb{F}_4$. If the modulus $\lambda$ is generic ($\lambda \notin \mathbb{F}_4$), the isomorphism class of the component changes. Provided the structure of $T$ is sufficiently asymmetric, this change in component moduli implies that the multisets of invariants for $T$ and $T'$ differ. Therefore, $T$ and $T'$ are generally not isomorphic as $k$-schemes.
\end{remark}

\begin{remark}[The Moduli-Theoretic Definition]
    Consider the unpointed case. One can then define hyperelliptic stable curves from the perspective of moduli spaces. Let $\mathcal{H}_{g} \subset \mathcal{M}_{g}$ be the locus of smooth hyperelliptic curves inside the moduli space of smooth curves of genus $g$. The space of hyperelliptic stable curves is defined as the closure of this locus in the Deligne-Mumford compactification, denoted $\overline{\mathcal{H}}_{g} \subset \overline{\mathcal{M}}_{g}$.

    A non-trivial theorem establishes that this moduli-theoretic definition coincides with the geometric one presented in Definition~\ref{def:hyperelliptic}, which is based on the existence of an involution (see \cite[Theorem 5.5]{hyperelliptic}; see also \cite{ACGII} for $k = \mathbb{C}$, and \cite{maugeais2003relevement}, \cite[Appendix A.2]{yamaki2004cornalba} for general fields, including $\Char k = 2$). In other words, a stable curve admits a hyperelliptic involution if and only if it is a limit of smooth hyperelliptic curves.
\end{remark}

As in the smooth case, the hyperelliptic involution is unique if it exists (cf. \cite[Chapter X, Lemma 3.5]{ACGII}). We provide a proof of this uniqueness, as the underlying decomposition method is essential for our subsequent classification of genus $3$ curves and their hyperellipticity. To do so, we first need to introduce some combinatorial terminology, inspired by \cite{ran2014canonical}.

\begin{definition}[Combinatorial Terminology] \label{def:comb_term}
    Let $C$ be a semistable curve over $k$.
    \begin{enumerate}[label=\DefListLabel]
        \item A node $p$ is called a \emph{separating node} if the partial normalization of $C$ at $p$ disconnects the curve. (In the dual graph, the corresponding edge is a bridge.)
        \item A pair of distinct nodes $\{p, q\}$ is called a \emph{separating pair} if the partial normalization at $\{p, q\}$ disconnects the curve, while normalizing at either $p$ or $q$ individually does not. (In the dual graph, the corresponding edges form a minimal 2-edge-cut.)
        \item The curve $C$ is called \emph{inseparable}\footnote{We follow the terminology of \cite{ran2014canonical}. This geometric notion of ``inseparable'' should not be confused with the standard algebraic concept of an inseparable morphism or field extension. The same property is referred to as ``2-connected'' by Catanese \cite{catanese1982pluricanonical}.} if it has no separating nodes, and \emph{separable} otherwise. It is called \emph{2-inseparable} if it is inseparable and has no separating pairs, and \emph{2-separable} otherwise. (In terms of the dual graph, these properties correspond to the graph being 2-edge-connected and 3-edge-connected, respectively.)
        \item The curve $C$ is of \emph{semicompact type} if every node $p$ is part of at most one separating pair.
    \end{enumerate}
\end{definition}

\begin{figure}[htbp]
    \centering
    \begin{subfigure}[b]{0.24\textwidth}
        \centering
        \scalebox{1.75}{%
            \begin{tikzpicture}[scale=0.5]
                \useasboundingbox (-0.0168,-0.7627) rectangle (2.0051,1.2627);
                \clip (-0.0168,-0.7627) rectangle (2.0051,1.2627);
                
                \draw[line width = 2, name path=path1] (0, 0) to[curve through={(1.2, 0.2) (1.4, 0.4) (1.2, 0.8) (1, 0.4) (1.2, 0.2)}] (2, 0);
                \draw[line width = 2, name path=path2] (0.4, 1) to (0.4, -0.5);
                
                \fill[myblue, name intersections={of=path1 and path2, by=S}] (S) circle (0.12);
            \end{tikzpicture}%
        }
        \caption{\textcolor{myblue}{Separating node}.}
        \label{fig:1ne}
    \end{subfigure}
    \hfill
    \begin{subfigure}[b]{0.24\textwidth}
        \centering
        \scalebox{1.75}{%
            \begin{tikzpicture}[scale=0.15]
                \useasboundingbox (-2.3698,0.1242) rectangle (4.3698,6.8755);
                \clip (-2.3698,0.1242) rectangle (4.3698,6.8755);
                
                \draw[name path=path1] (-0.5, 0.5) to[curve through={(2, 3)}] (-0.5, 6.5);
                \draw[name path=path2] (2.5, 0.5) to[curve through={(0, 3)}] (2.5, 6.5);
                
                \fill[myred, name intersections={of=path1 and path2, name=S}] 
                    (S-1) circle (0.25) 
                    (S-2) circle (0.25);
            \end{tikzpicture}%
        }
        \caption{\textcolor{myred}{Separating pair}.}
        \label{fig:Z}
    \end{subfigure}
    \hfill
    \begin{subfigure}[b]{0.24\textwidth}
        \centering
        \scalebox{1.75}{%
            \begin{tikzpicture}[scale=0.2]
                \useasboundingbox (-1.5273,0.4665) rectangle (3.5273,5.53);
                \clip (-1.5273,0.4665) rectangle (3.5273,5.53);
                
                \draw (0, 0.5) to[quick curve through={(1, 1) (2, 2) (1, 3) (0, 4) (1, 5)}] (2, 5.5);
                \draw (2, 0.5) to[quick curve through={(1, 1) (0, 2) (1, 3) (2, 4) (1, 5)}] (0, 5.5);
            \end{tikzpicture}%
        }
        \caption{2-inseparable}
        \label{fig:0---0}
    \end{subfigure}
    \hfill
    \begin{subfigure}[b]{0.24\textwidth}
        \centering
        \scalebox{1.75}{%
            \begin{tikzpicture}[scale=0.5]
                \useasboundingbox (-0.0109,-0.6627) rectangle (2.0109,1.3627);
                \clip (-0.0109,-0.6627) rectangle (2.0109,1.3627);
                
                \draw[line width = 2] (0, 0) to (2, 0);
                \draw[line width = 2] (1.3, 1) to (0.1, -0.3);
                \draw[line width = 2] (0.7, 1) to (1.9, -0.3);
            \end{tikzpicture}%
        }
        \caption{Not semicompact}
        \label{fig:not_semicompact}
    \end{subfigure}

    \caption{Examples of the properties in Definition \ref{def:comb_term}.}
    \label{fig:curve_properties}
\end{figure}
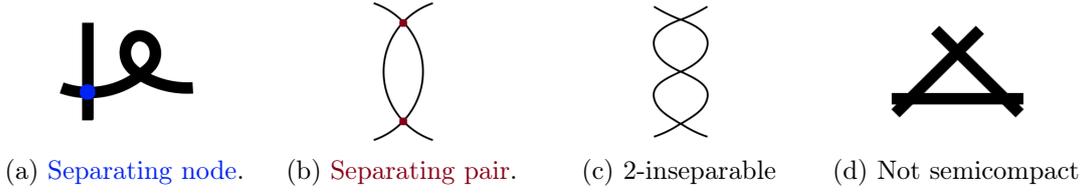

A separating node or a separating pair naturally induces a decomposition of the semistable curve $C$ into two connected subcurves, $C_1$ and $C_2$, such that $C = C_1 \cup C_2$. The intersection $C_1 \cap C_2$ consists of either the single separating node or the two nodes of the separating pair. In the terminology introduced earlier, this corresponds to the decomposition induced by the set of nodes $S$, where $S=\{p\}$ or $S=\{p,q\}$, respectively.
Given an involution $\sigma$ on $C$, we say that it \emph{respects} this decomposition if it preserves each component, i.e., $\sigma(C_i) = C_i$ for $i=1,2$.

With these notions in place, let us classify the types of nodes that appear on a hyperelliptic stable curve.

\begin{proposition}[{{cf. \cite[p. 102]{ACGII}}}] \label{prop:node_classification}
    Let $(C, P)$ be a pointed stable hyperelliptic curve with hyperelliptic involution $\sigma$, and let $\pi\colon C \to T$ be the quotient map. Any node $p \in C$ falls into one of the following three mutually exclusive categories:
    \begin{enumerate}[label=\StatementListLabel]
        \item \label{lem:properties_nodes_sep} \textbf{Separating Node.} A node $p$ is a separating node if and only if it is a fixed point of $\sigma$ at which $\sigma$ preserves the local branches. In this case, its image $\pi(p)$ is a node of $T$, and $\sigma$ respects the decomposition of $C$ induced by $p$.
        \item \label{lem:properties_nodes_pair} \textbf{Part of a Separating Pair.} A node $p$ is part of a separating pair if and only if it is not a fixed point of $\sigma$. In this case, its image under the involution, $p' := \sigma(p)$, is the unique node that forms a separating pair $\{p, p'\}$ with $p$. Both points map to a single node $\pi(p) = \pi(p')$ in $T$, and $\sigma$ respects the decomposition of $C$ induced by the pair $\{p, p'\}$.
        \item \label{lem:properties_nodes_non} \textbf{Non-Separating Fixed Node.} A node $p$ is a non-separating fixed node if and only if it is a fixed point of $\sigma$ at which $\sigma$ interchanges the local branches. In this case, $p$ is neither a separating node nor part of a separating pair, its image $\pi(p)$ is a smooth point of $T$, and the irreducible components of $C$ forming the node $p$ are either identical or both rational.
    \end{enumerate}
    Moreover, the curve $C$ is of semicompact type, and the rational tree $T = C/\langle \sigma \rangle$ is irreducible (i.e., $T \simeq \PP^1_k$), if and only if $C$ is 2-inseparable.\footnote{This fact seems to be folklore, and we do not know of an explicit reference; cf. \cite[Remark 2.14]{ran2014canonical}.}
\end{proposition}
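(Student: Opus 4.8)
The plan is to combine a local analysis of $\sigma$ at each node with the single global input that $T$ is a rational tree: its dual graph is then a tree, so every node of $T$ joins two distinct components and is a separating node of $T$. First I would record the \emph{local trichotomy} at a node $p$. Since $\sigma^2 = \mathrm{id}$, exactly one of the following, visibly mutually exclusive, situations occurs: (I) $\sigma(p) = p$ and $\sigma$ preserves the two local branches; (II) $\sigma(p) = p$ and $\sigma$ interchanges the two branches; (III) $\sigma(p) = p' \neq p$, with $p'$ again a node. Because the fibres of the quotient map $\pi$ are $\sigma$-orbits, we get $\pi^{-1}(\pi(p)) = \{p\}$ in cases (I) and (II), and $\pi^{-1}(\pi(p)) = \{p, p'\}$ in case (III). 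Computing the quotient of $\widehat{\OO}_{C,p} \simeq k[\![x,y]\!]/(xy)$ by $\sigma$ --- a diagonal involution in case (I) (with an Artin--Schreier correction if $\Char k = 2$), the swap $x \leftrightarrow y$ in case (II), and a free identification of two node germs in case (III) --- shows that $\pi(p)$ is a node of $T$ in cases (I) and (III), and a smooth point of $T$ in case (II). In case (I) one must know $\sigma$ is not the identity near $p$: if $\Char k \neq 2$ this is the isolated-fixed-point hypothesis in Definition~\ref{def:hyperelliptic}, and if $\Char k = 2$ it is automatic, since a node has only two branches, so even a component-wise-trivial $\sigma$ preserves both branches at $p$.

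Next I would transfer disconnection through $\pi$. Let $p$ be in case (I) or (III) and set $\bar p := \pi(p)$; since $\bar p$ is a separating node of the tree $T$, removing it disconnects $T$ into two nonempty connected opens $T_1^\circ, T_2^\circ$, and $\pi^{-1}$ yields $C \setminus \pi^{-1}(\bar p) = \pi^{-1}(T_1^\circ) \sqcup \pi^{-1}(T_2^\circ)$, both parts nonempty by surjectivity of $\pi$. In case (I) this means $C \setminus \{p\}$ is disconnected, which forces the partial normalization of $C$ at $p$ to be disconnected, i.e. $p$ is a separating node; in case (III) it means $C \setminus \{p,p'\}$ is disconnected, so $\{p,p'\}$ is a disconnecting pair, and minimal --- hence a separating pair --- once $p$ and $p'$ are known not to be separating nodes. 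Moreover $\sigma$ commutes with $\pi$, so it preserves each $\pi^{-1}(T_i^\circ)$ and thus respects the induced decompositions (in case (III) interchanging the two preimages of $p$ on the two sides). For the converse inclusion in (a): if $p$ is a separating node with halves $C = C_1 \cup_p C_2$, then $\sigma$ either preserves both halves --- forcing it to preserve the branches at $p$, i.e. case (I) --- or swaps them, in which case $T \simeq C_1$ would be a rational tree and $g(C) = 2g(C_1) = 0$ with no marked points (marked points are $\sigma$-fixed), contradicting stability. This establishes statement (a).

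It remains to match cases (II) and (III) with the geometric categories, and here the one delicate claim, deferred to the last paragraph, is that a case-(II) node lies in no separating pair and that a non-fixed node lies in exactly one separating pair, namely $\{p, \sigma(p)\}$. Granting this, the three categories are exhaustive and mutually exclusive, ``part of a separating pair'' $\iff$ case (III) and ``non-separating fixed node'' $\iff$ case (II); the geometric addenda then follow easily. In case (II), $\sigma$ interchanges the two branches at $p$, hence the two components through $p$ when they are distinct, restricting to an isomorphism between them; each such component then maps finitely and birationally onto a component of the rational tree $T$, which is smooth rational, so it is $\simeq \PP^1_k$ (when the node is a self-node, no condition is imposed, as the statement allows). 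Finally, by the classification the nodes in a separating pair are precisely the case-(III) nodes, each in a unique separating pair, so every node lies in at most one separating pair, i.e. $C$ is of semicompact type; and $C$ is $2$-inseparable iff it has no separating node and no separating pair iff every node is in case (II) iff no node maps to a node of $T$ iff the connected genus-$0$ curve $T$ has no nodes, i.e. $T \simeq \PP^1_k$.

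The main obstacle is this delicate claim. I would prove it by supposing $\{p, q\}$ is a separating pair with induced two-piece decomposition $C = C_1 \cup_{\{p,q\}} C_2$ and analyzing how $\sigma$ permutes $C_1, C_2$ and the branches at $p$ and at $q$. If $\sigma$ fixes both $p$ and $q$, then, as in the separating-node argument, $\sigma$ either swaps $C_1 \leftrightarrow C_2$ --- forcing $g(C) = 2g(C_1) + 1 = 1$ with no marked points, again contradicting stability --- or preserves a half while carrying one of its branches out of that half, which is absurd. The only surviving possibility (using that $p$ is in case (II), so $\sigma(p)=p$) forces a node to lie in two distinct separating pairs, which I would exclude by showing that such a configuration produces, via $\pi$, either a self-node or a cycle in the tree $T$ --- impossible. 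Making this last reduction clean, rather than descending into ever finer casework, is where I expect the real work; it is also exactly the step where the complex-analytic arguments of the classical references (e.g. \cite{ACGII}) do not transfer to positive characteristic, which is why a characteristic-free argument is needed here.
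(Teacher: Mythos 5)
Your overall strategy---read off the local trichotomy at a node, transfer disconnection statements back and forth through the quotient map using the fact that every node of the rational tree $T$ is a bridge, and use stability to forbid $\sigma$ from swapping pieces of a decomposition---is the same circle of ideas as the paper's proof; the paper merely packages the two recurring mechanisms as Lemma~\ref{lem:orbit_decomposition_preserved} (a swapped piece maps isomorphically into $T$, hence is a rational tree, hence by stability needs $\ge 3$ attaching points) and Lemma~\ref{lem:chains_of_cuts} (iterated cuts force a chain, resp.\ a cycle, with each piece carrying exactly two attaching nodes). However, there are two genuine gaps. First, in your converse for (a) the dichotomy ``$\sigma$ either preserves both halves or swaps them'' silently assumes $\sigma(p)=p$: if $\sigma(p)=p'\neq p$, then $\sigma$ carries the decomposition at $p$ to the (different) decomposition at $p'$ and need not permute $\{C_1,C_2\}$ at all, so the case of a \emph{separating node that is not fixed by $\sigma$} is never excluded. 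The paper rules it out by decomposing at $S=\{p,p'\}$: the tree structure forces a chain $C_1\mathbin{-}p\mathbin{-}C_2\mathbin{-}p'\mathbin{-}C_3$, $\sigma$ must swap $C_1$ and $C_3$, and a swapped outer piece would be a stable $1$-pointed rational tree of genus $0$, which is impossible. Without this, your later step ``once $p$ and $p'$ are known not to be separating nodes'' (needed to upgrade the disconnecting pair $\{p,p'\}$ to a \emph{separating} pair) is also unsupported.

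Second, and more seriously, the statement you yourself isolate as ``the real work''---that a node cannot lie in two distinct separating pairs, equivalently that the unique partner of a non-fixed node is $\sigma(p)$ and that a branch-swapping fixed node lies in no separating pair---is not proved; the proposed exclusion (``such a configuration produces a self-node or a cycle in $T$'') is only an intention. This claim is the substance of part (b), of the mutual exclusivity in (c), and of the semicompactness assertion, so the proof is incomplete without it. The paper's resolution is purely combinatorial and characteristic-free: if $\{p,q\}$ and $\{q,q'\}$ are both $2$-edge-cuts, the symmetric difference $\{p,q'\}$ is one too, so all pairs among $\{p,q,q',\dots\}$ are minimal $2$-edge-cuts; Lemma~\ref{lem:chains_of_cuts}~\ref{lem:chains_of_cuts_ii} then forces the decomposition at the $\sigma$-orbit $S=\{p,q,\sigma(p),\sigma(q)\}$ to be a cycle in which every piece has exactly two attaching nodes, Lemma~\ref{lem:orbit_decomposition_preserved}~\ref{lem:orbit_decomposition_preserved_ii} forces $\sigma$ to fix every piece, and for a cycle of length $>2$ this forces $\sigma$ to fix every node of $S$ individually, collapsing the configuration to $m=2$ and $q=\sigma(p)$. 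You would need to supply an argument of this kind (or an equivalent one) before the classification, and hence the final statement about $2$-inseparability and $T\simeq\PP^1_k$, can be considered established.
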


\begin{remark} \label{rem:types_of_points_fixpoints}
    \begin{enumerate}[label=\StatementListLabel]
        \item In the unpointed case, the following standard terminology is used: A non-separating fixed node (category~\ref{lem:properties_nodes_non}) is referred to as type~$\eta_0$. A separating node (category~\ref{lem:properties_nodes_sep}) is known as type~$\delta_i$; it decomposes the genus~$g$ curve into two components with genera~$i$ and~$j$ (with $i \le j$) satisfying the relation $i+j=g$. Finally, a separating pair of nodes (category~\ref{lem:properties_nodes_pair}) is of type~$\eta_i$; this pair decomposes the curve into two components with genera~$i$ and~$j$ (with $i \le j$) satisfying $i+j=g-1$.

        From a moduli-theoretic perspective, this classification of nodes provides a geometric description of the irreducible components of the boundary of the hyperelliptic locus, $\partial \mathcal{H}_g = \overline{\mathcal{H}}_g \setminus \mathcal{H}_g$, see \cite[Chapter XIII, Section 8]{ACGII}.
        \item \label{rem:types_of_points_fixpoints_ii} We will see in the proof of this result that the property that the hyperelliptic involution $\sigma$ has isolated fixed points in $\Char k \neq 2$ is not needed. The same holds true for the uniqueness result (Proposition~\ref{prop:uniqueness_hyperelliptic}).
    \end{enumerate}
\end{remark}

The proof of this result requires two lemmas regarding decompositions of the curve. We begin with the following lemma, which describes how hyperelliptic involutions act on the decomposed parts.

\begin{lemma} \label{lem:orbit_decomposition_preserved}
    Let $(C, P)$ be a stable hyperelliptic curve with a hyperelliptic involution $\sigma$. Let $S$ be a set of nodes that is $\sigma$-invariant, i.e., $\sigma(S) = S$. Let $\nu\colon \tilde{C} \to C$ be the partial normalization at $S$, and let $\{(C_1, P_1), \dots, (C_m,P_m)\}$ be the induced decomposition, and $S_i = \nu^{-1}(S) \cap C_i$.
    \begin{enumerate}[label=\StatementListLabel]
        \item \label{lem:orbit_decomposition_preserved_i} The involution $\sigma$ lifts to a unique involution of $\tilde{C}$, still denoted by $\sigma$, which permutes the components $\{C_i\}$. Consequently, $\sigma$ either fixes a component $C_i$ or swaps it with another component $C_j$ (for $i \neq j$).
        \item \label{lem:orbit_decomposition_preserved_ii} If $\sigma$ swaps $C_i$ with $C_j$ ($i \neq j$), then $C_i$ is a rational tree and $|P_i| = |S_i| \geq 3$. In particular, any component $C_i$ with $|S_i| \leq 2$ must be fixed by $\sigma$.
        \item \label{lem:orbit_decomposition_preserved_iii} If $|S| \leq 2$, the involution $\sigma$ must fix every component of the decomposition.
    \end{enumerate}
\end{lemma}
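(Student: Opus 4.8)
I would prove the three assertions in order: (i) and (iii) are short, and (ii) contains the real content. For part (i), I would invoke the universal property of partial normalization: $\nu\colon\tilde C\to C$ is, up to unique isomorphism, the only finite birational morphism that is an isomorphism over $C\setminus S$ and has $\tilde C$ regular above $S$. Since $\sigma(S)=S$, the morphism $\sigma\circ\nu$ shares these properties, so there is a unique automorphism $\tilde\sigma$ of $\tilde C$ with $\nu\circ\tilde\sigma=\sigma\circ\nu$; applying uniqueness to $\sigma^2=\id$ forces $\tilde\sigma^2=\id$. An automorphism of $\tilde C$ permutes its connected components $C_1,\dots,C_m$, and an involutive permutation is a disjoint union of fixed points and transpositions, which is exactly the stated dichotomy. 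I would also record here, for use in (ii) and (iii), that $\tilde\sigma$ preserves $\nu^{-1}(S)$, so $\tilde\sigma(C_i)=C_j$ forces $\tilde\sigma(S_i)=S_j$ and in particular $|S_i|=|S_j|$.

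\textbf{Part (ii).} Assume $\tilde\sigma(C_i)=C_j$ with $i\neq j$. First, since $\sigma$ fixes every point of $P$ but moves $C_i$ off itself, no marked point can have its preimage on $C_i$ (that preimage would be a $\tilde\sigma$-fixed point lying in $C_i\cap C_j=\emptyset$); hence $P_i=S_i$ and $|P_i|=|S_i|$. The core is to bound $g(C_i)$ by means of the quotient $q\colon C\to T$, where $T=C/\langle\sigma\rangle$ is a rational tree by the definition of hyperellipticity. Put $D_i:=\nu(C_i)$ and $D_j:=\nu(C_j)$; these are connected closed subcurves of $C$, and since $\nu$ induces a bijection on irreducible components, $D_i$ and $D_j$ share no component, so $Z:=D_i\cap D_j$ is finite, $\sigma(D_i)=D_j$, and $\sigma(Z)=Z$. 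Now $q(D_i)$ is a connected closed subcurve of the rational tree $T$, hence itself a rational tree, so $g(q(D_i))=0$. The map $q|_{D_i}\colon D_i\to q(D_i)$ is finite, and it is generically injective: for $x\in D_i\setminus Z$ the fibre $q^{-1}(q(x))$ is the orbit $\{x,\sigma(x)\}$, and $\sigma(x)\in D_j$ cannot lie in $D_i$, since otherwise $\sigma(x)\in Z$ and then $x\in\sigma(Z)=Z$. As a finite birational morphism of reduced curves does not raise the arithmetic genus (there is an injection $\OO_{q(D_i)}\hookrightarrow(q|_{D_i})_*\OO_{D_i}$ with finite-length cokernel), we get $g(D_i)\le g(q(D_i))=0$, so $D_i$ is a rational tree; since $\nu|_{C_i}\colon C_i\to D_i$ is again finite birational and $C_i$ is connected, $g(C_i)\le g(D_i)=0$ and $C_i$ is a rational tree. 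Finally $(C_i,P_i)$ is stable by Lemma~\ref{lem:stable_decomposition}, and a stable pointed curve of genus $0$ has at least three marked points (Remark~\ref{rem:stable_curves_low_genus}), whence $|S_i|=|P_i|\ge 3$; the ``in particular'' statement is its contrapositive.

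\textbf{Part (iii) and the main obstacle.} If $\sigma$ swapped some $C_i$ with $C_j$, then $|S_i|\ge 3$ by (ii) and $|S_j|=|S_i|\ge 3$ by (i), so $\nu^{-1}(S)\supseteq S_i\sqcup S_j$ would have at least six elements; but each node of $S$ contributes exactly two preimages, so $|\nu^{-1}(S)|=2|S|\le 4$, a contradiction, and (iii) follows. The step I expect to be the main obstacle is the genus bound inside (ii). The workable route is the one above: replace the possibly non-embedded component $C_i$ by the genuine subcurve $D_i=\nu(C_i)\subset C$, observe that its image in $T$ is forced to have arithmetic genus $0$ because $T$ is a rational tree, and transport this back along the chain $C_i\to D_i\to q(D_i)$ of finite birational morphisms. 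The one genuinely delicate point — which is precisely what makes $q|_{D_i}$ generically injective — is that $D_i$ and $D_j=\sigma(D_i)$ meet in only finitely many points, and this is exactly where one uses that $C_i$ and $C_j$ are \emph{distinct} connected components of $\tilde C$. Everything else is routine bookkeeping with Lemma~\ref{lem:stable_decomposition} and Proposition~\ref{prop:stable_via_canonical}.
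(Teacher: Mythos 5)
Your proof is correct, and parts (i) and (iii) coincide with the paper's argument (universal property of the partial normalization for the lift; the count $|S_i|+|S_j|\ge 6 > 4 \ge 2|S|$ for the contradiction). The only genuine divergence is in the heart of part (ii), the claim that a swapped component $C_i$ is a rational tree. The paper stays ``upstairs'': it forms the quotient $\tilde T=\tilde C/\langle\tilde\sigma\rangle$ of the partial normalization, identifies it (via equivariance of $\nu$) with the partial normalization of the rational tree $T$ at the images of $S$ --- hence a disjoint union of rational trees --- and observes that, because $C_i$ and $C_j$ are disjoint and swapped, $\tilde\pi|_{C_i}$ is an \emph{isomorphism} onto a connected component of $\tilde T$. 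You instead work ``downstairs'' with the subcurves $D_i=\nu(C_i)\subset C$ and their images in $T$, and conclude via the Euler-characteristic inequality $g(C_i)\le g(D_i)\le g(q(D_i))=0$ for finite birational morphisms of connected reduced curves. Both routes are sound; the paper's buys a clean isomorphism at the cost of constructing and identifying $\tilde T$, while yours avoids that construction but must cope with the fact that $D_i$ and $D_j$ can meet (in the finite set $Z$), so one only gets birationality plus a genus bound rather than an isomorphism. Your treatment of the two delicate points in that route --- that $Z$ is finite because $\nu$ is bijective on irreducible components, and that $q|_{D_i}$ is generically injective because orbits meet $D_i\setminus Z$ in a single point --- is exactly what is needed to make it work.
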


\begin{proof}
    \begin{prooflist}
        \item Since $\sigma\colon C \to C$ is an automorphism stabilizing the normalization locus $S$, by the universal property of normalization, $\sigma$ lifts uniquely to an automorphism $\tilde{\sigma}\colon \tilde{C} \to \tilde{C}$ such that $\nu \circ \tilde{\sigma} = \sigma \circ \nu$. Since $\sigma^2=\id_C$, we must have $\tilde{\sigma}^2=\id_{\tilde{C}}$ by uniqueness. We denote $\tilde{\sigma}$ again by $\sigma$. As an automorphism of $\tilde{C}$, it permutes the connected components $\{C_i\}$.

        \item Now, assume $\sigma(C_i) = C_j$ with $i \neq j$. We may identify $\nu^{-1}(P)$ with $P$. The (lift of the) hyperelliptic involution $\sigma$ must fix the points in $P$, and since $\sigma(C_i) = C_j$ and $C_i, C_j$ are disjoint, we conclude that $P \cap C_i$ is empty. Thus $P_i = S_i$.

        We now show that $C_i$ is a rational tree. Let $\tilde{T} := \tilde{C}/\langle\sigma\rangle$ be the quotient of the partial normalization by the lifted involution, and let $\tilde{\pi}\colon \tilde{C} \to \tilde{T}$ be the quotient map. Since $\nu\colon \tilde{C} \to C$ is $\sigma$-equivariant (as established in \ref{lem:orbit_decomposition_preserved_i}), it descends to a morphism $\tilde{\nu}\colon \tilde{T} \to T$, where $T=C/\langle\sigma\rangle$. The map $\tilde{\nu}$ is the partial normalization of $T$ at the image of the nodes $S$ under the quotient map $\pi\colon C \to T$. By definition of a hyperelliptic curve, $T$ is a rational tree, and hence $\tilde{T}$ is a disjoint union of rational trees. Let $\tilde{T}_i$ be the connected component of $\tilde{T}$ that is the image of $C_i$ (and $C_j$) under $\tilde{\pi}$. The restriction $\tilde\pi|_{C_i}\colon C_i \to \tilde{T}_i$ yields an isomorphism, so $C_i$ (and also $C_j$) is a rational tree.

        Now we use the stability condition. By Lemma~\ref{lem:stable_decomposition}, since $(C, P)$ is stable, the component $(C_i, P_i)$ must also be stable. Since $C_i$ has genus $0$, it follows that $|S_i| = |P_i| \geq 3$ (see Remark~\ref{rem:stable_curves_low_genus}).

        \item Assume for contradiction that $\sigma$ swaps components $C_i$ and $C_j$ for $i \neq j$. By Lemma~\ref{lem:orbit_decomposition_preserved}~\ref{lem:orbit_decomposition_preserved_ii}, this requires $|S_i|, |S_j| \geq 3$. Since the total number of preimages is $\sum_l |S_l| = 2|S|$, the hypothesis $|S| \leq 2$ immediately leads to a contradiction:
        \[
            6 \leq |S_i| + |S_j| \leq \sum_l |S_l| = 2|S| \leq 4.
        \]
    \end{prooflist}
\end{proof}

The proof of Proposition~\ref{prop:node_classification} involves arguments that decompose the curve at specific nodes; this process can quickly lead to confusion about the structure of the resulting decomposition. For this reason, we formulate the outcome more generally in the following lemma. Note that this lemma is essentially a graph-theoretical statement.

\begin{lemma} \label{lem:chains_of_cuts}
    Let $C$ be a semistable curve, and let $S = \{p_1, \dots, p_n\}$ be a set of $n$ distinct nodes. Let $\{(C_1, S_1), \dots, (C_m, S_m)\}$ be the decomposition induced by $S$. In both cases below, the two preimages of any node in $S$ lie on distinct components of the decomposition. This allows each component $C_i$ to be identified with a subcurve of $C$; under this identification, $S_i \subset S$.
    \begin{enumerate}[label=\StatementListLabel]
        \item \label{lem:chains_of_cuts_i} Assume each node in $S = \{p_1, \dots, p_n\}$ is a separating node of $C$. Then the decomposition has $m = n+1$ components arranged in a tree-like structure; that is, there is a unique chain of components and nodes in $S$ connecting any two distinct components.
        \item \label{lem:chains_of_cuts_ii} Assume $\{p_i, p_{i+1}\}$ are separating pairs for all $i = 1, \dots, n-1$ ($n \geq 2$). Then $\{p_i, p_j\}$ is a separating pair for any $i \neq j$. The decomposition has $m = n$ components. After relabeling, the subcurves and nodes form a cycle where $p_i$ connects $C_i$ and $C_{i+1}$ for $i=1, \dots, n-1$, and $p_n$ connects $C_n$ back to $C_1$. Consequently, the set of preimages on each component is $S_i = \{p_{i-1}, p_i\}$ for $i=1, \dots, n$, where indices are read modulo $n$.
    \end{enumerate}
\end{lemma}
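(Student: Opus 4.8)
The plan is to translate the whole statement into graph theory and then argue combinatorially, exactly as the paragraph preceding the lemma suggests. Given the set $S$, the induced decomposition $\{(C_1,S_1),\dots,(C_m,S_m)\}$ and the partial normalization $\nu\colon\tilde C_S=\bigsqcup_i C_i\to C$, I would form the graph $G$ with vertex set $\{C_1,\dots,C_m\}$ and edge set $S$, where an edge $p\in S$ joins the two components that carry the two points of $\nu^{-1}(p)$. The observation to record first is that for every $S'\subseteq S$ the connected components of the partial normalization $\tilde C_{S'}$ correspond bijectively to the connected components of the graph $G-S'$ obtained by deleting the edges of $S'$; indeed $\tilde C_{S'}$ is $\tilde C_S$ with the node-pairs indexed by $S\setminus S'$ re-glued. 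Taking $S'=\emptyset$ shows $G$ is connected (since $C$ is); taking $S'=\{p\}$ shows that ``$p$ is a separating node of $C$'' is the same as ``$p$ is a bridge of $G$''; and ``$\{p,q\}$ is a separating pair of $C$'' becomes ``$\{p,q\}$ is a minimal $2$-edge-cut of $G$'', i.e.\ neither is a bridge but $G-\{p,q\}$ is disconnected. A bridge is never a loop; and no edge $p_i$ occurring in the hypotheses is a loop either, for if $p_i$ were a loop, picking a partner $q$ with $\{p_i,q\}$ a separating pair, $G-q$ would be connected ($q$ is not a bridge) and hence so would $G-\{p_i,q\}$ (deleting a loop changes nothing), contradicting that $\{p_i,q\}$ is an edge cut. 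This is exactly the claim that the two preimages of every node of $S$ lie on distinct components, which by the identification recorded just before the lemma lets each $C_i$ be viewed as a subcurve of $C$ with $S_i\subseteq S$.

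\textbf{Part (a).} Every edge of the connected graph $G$ is now a bridge, so $G$ is a tree; hence $m=|V(G)|=|E(G)|+1=n+1$, and the unique path between two vertices of a tree is precisely the asserted unique chain of components and nodes of $S$.

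\textbf{Part (b).} Here I would invoke the classical fact (and include a short proof, or cite it): in a connected graph, two distinct non-bridge edges $e,f$ form a minimal $2$-edge-cut if and only if $f$ is a bridge of $G-e$, equivalently if and only if $e$ and $f$ lie on exactly the same set of cycles. Hence ``$e=f$ or $\{e,f\}$ is a minimal $2$-edge-cut'' is an equivalence relation on the non-bridge edges of $G$, and from $\{p_i,p_{i+1}\}$ being a separating pair for $i=1,\dots,n-1$ transitivity gives that $p_1,\dots,p_n$ all lie on the same set of cycles and that $\{p_i,p_j\}$ is a separating pair for every $i\ne j$. To determine $G$ globally, choose a simple cycle $\gamma$ through $p_1$ (one exists, as $p_1$ is a non-loop, non-bridge edge). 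Since $\gamma$ belongs to the common cycle-set of $p_1,\dots,p_n$, it passes through all of $p_1,\dots,p_n$; as $G$ has exactly $n$ edges, $\gamma$ uses every one of them, so $\gamma$ is a simple $n$-cycle meeting every edge and, by connectedness, every vertex of $G$. Therefore $G$ equals this $n$-cycle, giving $m=n$, and relabelling the vertices cyclically along $\gamma$ yields the stated arrangement, with $p_i$ joining $C_i$ and $C_{i+1}$ (indices modulo $n$) and hence $S_i=\{p_{i-1},p_i\}$.

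\textbf{The main obstacle.} The dictionary of the first step is routine provided one is careful that elements of $S$ never degenerate into loops of $G$. The genuinely combinatorial content is in part (b): proving that ``being in a common minimal $2$-edge-cut'' is transitive (the ``same cycles'' criterion), and then squeezing out of it the conclusion that $G$ must be a single $n$-cycle rather than some other graph carrying $n$ edges that pairwise form $2$-edge-cuts. Excluding such alternatives — theta-like configurations, or an $n$-cycle with extra pendant or parallel edges — is the step I expect to require the most care in the write-up.
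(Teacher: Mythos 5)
Your proposal is correct. Part~(a) and the translation into graph theory (components as vertices, nodes of $S$ as edges, separating nodes as bridges, separating pairs as minimal $2$-edge-cuts, and the exclusion of loops) coincide with the paper's argument, and your explicit justification of the dictionary via the components of $G-S'$ is if anything more careful than the paper's. Where you genuinely diverge is in the second half of part~(b). The paper obtains transitivity from the fact that the symmetric difference of two edge-cuts is an edge-cut, observes that $2$-edge-connectivity makes every $2$-edge-cut minimal, and then proves that every vertex has degree $2$ by a contradiction argument involving three edges at a vertex of degree $\ge 3$; a connected graph with all degrees $2$ is then a cycle. You instead use the equivalent characterization that two distinct non-bridge edges form a minimal $2$-edge-cut if and only if they lie on exactly the same set of cycles, so that all of $p_1,\dots,p_n$ share a common cycle-set; a single simple cycle through $p_1$ must then contain all $n$ edges of $G$ and, by connectedness, all vertices, forcing $G$ to equal that $n$-cycle. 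This finish is shorter and neatly sidesteps the degree analysis (which you correctly identify as the delicate point), at the cost of relying on a classical fact that you would need to prove or cite; the paper's route is longer but entirely self-contained. Both arguments are valid, including in the multigraph setting ($n=2$ gives the $2$-cycle of parallel edges), and both correctly handle the no-loop claim and the resulting identification $S_i=\{p_{i-1},p_i\}$.
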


\begin{proof}
We translate this problem into the language of graph theory by considering the associated graph $\Gamma := \Graph^S(C,\emptyset)$, where vertices correspond to the components $C_i$ and edges correspond to the nodes in $S$. Since $C$ is connected, $\Gamma$ is also connected. The claim in the lemma's preamble---that the two preimages of any node in $S$ lie on distinct components---is equivalent to the graph-theoretic statement that $\Gamma$ has no loops.
\begin{prooflist}
\item The assumption that each node in $S$ is a separating node means that every edge in the associated graph $\Gamma$ is a bridge. Since $\Gamma$ is connected, a graph in which every edge is a bridge must be a tree (in particular, with no loops). A tree with $n$ edges has $n+1$ vertices, which proves the lemma's assertion that the decomposition has $m=n+1$ components with a tree-like structure.

\item The lemma's assertion is equivalent to the graph-theoretic statement that $\Gamma$ is a cycle of length $n$, where for $n=2$ a cycle is understood to be two vertices connected by two parallel edges. The assumptions on $S$, in graph-theoretic terms, mean that each pair of edges corresponding to $\{p_i, p_{i+1}\}$ is a minimal 2-edge-cut. Moreover, since every edge is part of a minimal 2-edge-cut, it follows that any edge is neither a loop nor a bridge. Since $\Gamma$ is connected, this implies that it is 2-edge-connected.

We now argue that every pair of edges forms a minimal 2-edge-cut. A standard result in graph theory states that if $\{e_1, e_2\}$ and $\{e_2, e_3\}$ are edge-cuts, then so is their symmetric difference $\{e_1, e_3\}$. Since $\Gamma$ is 2-edge-connected, any 2-edge-cut is automatically minimal. Thus, by induction, it follows that any pair of edges corresponding to $\{p_i, p_j\}$ for $i \neq j$ is a minimal 2-edge-cut.

The result will follow if we can show that every vertex in $\Gamma$ has degree $2$. Since $\Gamma$ is 2-edge-connected, every vertex has a degree of at least $2$. Suppose, for contradiction, that there exists a vertex $v$ with $\deg(v) \geq 3$. Let $e_1, e_2, e_3$ be three distinct edges incident to $v$, with other endpoints $u_1, u_2, u_3$. (Since $\Gamma$ has no loops, $u_i \neq v$). Consider the cut $\{e_1, e_2\}$. Since it is a minimal 2-edge-cut, it partitions the vertices $V = V(\Gamma)$ into exactly two connected components, $V_1$ and $V_2$. Assume $v \in V_1$. Since $e_1=(v, u_1)$ and $e_2=(v, u_2)$ are the only edges between $V_1$ and $V_2$, we must have $u_1, u_2 \in V_2$. The endpoint $u_3$ must also be in $V_1$, since otherwise the edge $e_3 = (v,u_3)$ would be a third edge in the cut.

Now consider the pair $\{e_1, e_3\}$, which, as established above, must also be a minimal 2-edge-cut. Let $\Gamma' = \Gamma \setminus \{e_1, e_3\}$. Since this cut contains the edge $e_1=(v,u_1)$, the vertices $v$ and $u_1$ must lie in different connected components of $\Gamma'$. However, a path between them still exists in $\Gamma'$: since the subgraph of $\Gamma$ induced by $V_2$ is connected, there is a path $P$ between $u_1$ and $u_2$ contained entirely in $V_2$. This path $P$ does not use any of the edges $\{e_1, e_2, e_3\}$, since all three edges are incident to the vertex $v \in V_1$. The edge $e_2=(v,u_2)$ also remains in $\Gamma'$. Concatenating $e_2$ with $P$ yields a path from $v$ to $u_1$ in $\Gamma'$. This contradicts the fact that $v$ and $u_1$ must be disconnected. Therefore, the assumption of a vertex with degree $\geq 3$ is false.
\end{prooflist} 
\end{proof}

With the necessary preparations in place, we now present the proof.

\begin{proof}[of Proposition \ref{prop:node_classification}]
    We first address the final statement, assuming the classification holds.
    The property of being semicompact (Definition~\ref{def:comb_term}~(iv)) requires that for any node $p$, there is at most one other node $q$ such that $\{p, q\}$ forms a separating pair. This follows immediately from part~\ref{lem:properties_nodes_pair}, which states that if $p$ is part of a separating pair, its unique partner is $\sigma(p)$.

    The equivalence between $C$ being 2-inseparable and $T$ being irreducible also follows. $C$ is 2-inseparable if and only if it has no separating nodes (type~\ref{lem:properties_nodes_sep}) and no separating pairs (type~\ref{lem:properties_nodes_pair}).
    If $C$ is 2-inseparable, all its nodes must be of type~\ref{lem:properties_nodes_non}. By the characterization of this type, their images under $\pi$ are smooth points. Thus, $T$ has no nodes. As $T$ is a rational tree, it must be irreducible ($T \simeq \PP^1_k$).
    Conversely, if $T$ is irreducible, it has no nodes. By~\ref{lem:properties_nodes_sep} and~\ref{lem:properties_nodes_pair}, $C$ cannot have nodes of these types, as they would map to nodes in $T$. Thus, $C$ must be 2-inseparable.

    We now prove the classification of the nodes. Let $p$ be a node of $C$.

    \begin{prooflist}
        \item First, assume that $p$ is a separating node. Let $p' = \sigma(p)$; since $\sigma$ is an automorphism, $p'$ is also a separating node. We aim to show that $p$ is a fixed point of $\sigma$.
        Consider the $\sigma$-invariant set of nodes $S=\{p,p'\}$. Since $|S| \le 2$, we know from Lemma~\ref{lem:orbit_decomposition_preserved}~\ref{lem:orbit_decomposition_preserved_iii} that $\sigma$ must fix every component of the decomposition induced by $S$.
    
        Now, assume for contradiction that $p \neq p'$, which means $|S|=2$. By Lemma~\ref{lem:chains_of_cuts}~\ref{lem:chains_of_cuts_i}, the decomposition of $C$ by $S$ consists of a chain of three components, which we can label $C_1, C_2, C_3$, such that $p$ connects $C_1$ to $C_2$ and $p'$ connects $C_2$ to $C_3$. The action of $\sigma$ swaps the nodes $p$ and $p'$, and hence $\sigma(C_1) = C_3$, contradicting our finding that $\sigma$ must fix every component. Thus $p=p'$, meaning $p$ is a fixed point of $\sigma$, and the decomposition therefore consists of only two components, which are preserved by~$\sigma$.
    
        Furthermore, since $\sigma$ fixes both the node $p$ and the two components meeting there, it must preserve the branches at $p$. Consequently, the distinct branches at $p$ descend to distinct branches in the quotient, which means the image $\pi(p)$ is a node of $T$.
    
        Conversely, assume that $\sigma$ fixes $p$ and preserves its branches. As just shown, this implies its image $\pi(p)$ is a node in the rational tree $T$. Any node of a tree is a separating point. Since the fiber $\pi^{-1}(\pi(p))$ consists only of the point $p$, it follows that $p$ is a separating node of $C$.

        \item First, assume that $p$ is not a fixed point of $\sigma$. Let $p' = \sigma(p) \neq p$. Since neither $p$ nor $p'$ is a fixed point, by~\ref{lem:properties_nodes_sep} they are not separating nodes. Our aim is to show that $\{p, p'\}$ is a separating pair. The fiber of the quotient map over $\pi(p)$ is the orbit $\{p, p'\}$. Since this fiber consists of two distinct points, the image $\pi(p)$ must be a node of the tree $T$. Any node of a tree is a separating point, which implies that normalizing $C$ at $\{p, p'\}$ disconnects the curve. We thus conclude that $\{p, p'\}$ is a separating pair.

        Conversely, assume $p$ is part of a separating pair $\{p, q\}$; we recall that this implies $p$ and $q$ are distinct, non-separating nodes. We aim to show that $q = \sigma(p)$. This will establish not only that $p$ is not a fixed point, but also that $\sigma(p)$ is the unique node that forms a separating pair with $p$. Let $p' = \sigma(p)$ and $q' = \sigma(q)$, and consider the $\sigma$-invariant set $S = \{p, q, p', q'\}$. A priori, we only know that $2 \le |S| \le 4$. Since $\{p,q\}$ is a separating pair, so is its image $\{p',q'\}$. As established in the first part of this proof, if $p \neq p'$, then $\{p,p'\}$ is also a separating pair (and likewise for $\{q,q'\}$). This implies that the assumptions of Lemma~\ref{lem:chains_of_cuts}~\ref{lem:chains_of_cuts_ii} are satisfied.
    
        Thus, $S$ induces a decomposition of $C$ into $m=|S|$ components, $C_1, \dots, C_m$, arranged in a cycle where each component has exactly two attaching nodes, i.e., $|S_i|=2$. By Lemma~\ref{lem:orbit_decomposition_preserved}~\ref{lem:orbit_decomposition_preserved_ii}, the condition $|S_i|=2$ implies that $\sigma$ must fix every component, $\sigma(C_i)=C_i$. We now argue that the case $m>2$ is impossible. In a cycle with more than two components, the two nodes in any $S_i$ connect $C_i$ to two different components. Since $\sigma$ fixes $C_i$ and the adjacent components, it must fix the connecting nodes in $S_i$ individually. As this holds for all $i$, $\sigma$ must fix every point in $S$. But this would imply $p=p'$ and $q=q'$, so $|S|=2$, contradicting our assumption that $m>2$.
    
        Therefore, we must have $m=2$, which means $S=\{p,q\}$. This leaves two possibilities: either $\sigma$ fixes both $p$ and $q$, or it swaps them.
        If $\sigma$ fixes $p$, it must interchange the branches at $p$, since $p$ is a non-separating fixed node (by~\ref{lem:properties_nodes_sep}). In the decomposition, the two local branches at $p$ correspond to the components $C_1$ and $C_2$. Interchanging them would mean $\sigma(C_1)=C_2$. This contradicts our finding that $\sigma$ must fix each component.
    
        The only remaining possibility is that $p'=q$. Furthermore, since $\sigma$ fixes the components $C_1$ and $C_2$, it respects the decomposition induced by $\{p,q\} = \{p, p'\}$. This completes the proof of part~\ref{lem:properties_nodes_pair}.

        \item A node $p$ falls into this category if it is a fixed point of $\sigma$ that is not a separating node. By the characterization in part~\ref{lem:properties_nodes_sep}, this is equivalent to the condition that $\sigma$ interchanges the branches at~$p$. In this case, the quotient map $\pi$ locally identifies these two branches, meaning the image $\pi(p)$ is a smooth point of the quotient tree~$T$.

        Finally, let $Z_1$ and $Z_2$ be the irreducible components forming the node~$p$. Since $\sigma$ interchanges the branches, it must swap the components, i.e., $\sigma(Z_1)=Z_2$. If these components are distinct, the restriction of the quotient map, $\pi|_{Z_1}\colon Z_1 \to T$, is an isomorphism onto its image. As the image is an irreducible subcurve of the rational tree~$T$, it follows that $Z_1$ (and therefore $Z_2$) must be rational.
    \end{prooflist}
\end{proof}

\begin{remark}
    The stability of $(C,P)$ is a crucial hypothesis used in this proof. One can easily construct examples of semistable curves admitting a hyperelliptic involution for which the analogous characterization of the nodes fails. For instance, such a curve is not necessarily of semicompact type.
\end{remark}

Our next goal is to prove that the hyperelliptic involution on a stable curve is unique. The proof will proceed by induction on the number of separating nodes and separating pairs, thereby reducing the problem to the base case of a 2-inseparable curve. We will first explain how to handle separating nodes, followed by separating pairs, and finally the 2-inseparable base case.

\begin{lemma}[Decomposition at a Separating Node] \label{lem:decomp_sep_node}
    Let $(C, P)$ be a stable \linebreak pointed curve, and let $p \in C$ be a separating node that decomposes $(C,P)$ into two stable pointed curves, $(C_1, P_1)$ and $(C_2, P_2)$. Then $(C, P)$ is hyperelliptic if and only if both $(C_1, P_1)$ and $(C_2, P_2)$ are hyperelliptic.

    Moreover, there is a one-to-one correspondence between hyperelliptic involutions on $(C,P)$ and pairs of hyperelliptic involutions on the components. Specifically, any hyperelliptic involution $\sigma$ on $(C, P)$ restricts to unique hyperelliptic involutions on $(C_1, P_1)$ and $(C_2, P_2)$. Conversely, any pair of such involutions on the components can be uniquely ``glued'' to form a hyperelliptic involution on $(C, P)$.
\end{lemma}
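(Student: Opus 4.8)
The plan is to push everything through the partial normalization $\nu\colon \tilde{C} = C_1 \sqcup C_2 \to C$ at the separating node $p$, using the fact --- already extracted in Proposition~\ref{prop:node_classification}~\ref{lem:properties_nodes_sep} --- that a hyperelliptic involution treats a separating node very rigidly. Throughout I write $p$ also for each of its two preimages $p \in C_1$ and $p \in C_2$; since the two branches of a separating node lie on different components, $p$ is a smooth (marked) point of each $C_i$ and $P_i = (P \cap C_i) \cup \{p\}$, while $p \notin P$ because $P$ consists of smooth points. (Stability of $(C_1,P_1)$ and $(C_2,P_2)$ is itself automatic by Lemma~\ref{lem:stable_decomposition}.)

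\textbf{Restriction.} Suppose $(C,P)$ is hyperelliptic with hyperelliptic involution $\sigma$, and set $\pi\colon C \to T := C/\langle\sigma\rangle$. Since $p$ is separating, Proposition~\ref{prop:node_classification}~\ref{lem:properties_nodes_sep} gives $\sigma(p) = p$, that $\sigma$ preserves the two branches at $p$, that $\pi(p)$ is a node of the rational tree $T$, and that $\sigma(C_i) = C_i$. Hence $\sigma$ lifts to $\tilde{\sigma}$ on $\tilde{C}$ (Lemma~\ref{lem:orbit_decomposition_preserved}~\ref{lem:orbit_decomposition_preserved_i}), preserving each $C_i$ and, because it preserves branches, fixing each preimage of $p$. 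Put $\sigma_i := \tilde{\sigma}|_{C_i}$: this is an involution of $C_i$ fixing $P \cap C_i$ and fixing $p$, hence fixing $P_i$, and in $\Char k \neq 2$ it inherits isolated fixed points from $\sigma$. Finally, by (the proof of) Lemma~\ref{lem:orbit_decomposition_preserved}~\ref{lem:orbit_decomposition_preserved_ii}, $\tilde{C}/\langle\tilde{\sigma}\rangle = (C_1/\langle\sigma_1\rangle) \sqcup (C_2/\langle\sigma_2\rangle)$ is the partial normalization of $T$ at the node $\pi(p)$; normalizing a rational tree at a node yields two rational trees, so each $C_i/\langle\sigma_i\rangle$ is a rational tree. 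Thus $\sigma_i$ is a hyperelliptic involution of $(C_i,P_i)$, and it is visibly the unique involution obtained from $\sigma$ by restriction.

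\textbf{Gluing.} Conversely, let $\sigma_i$ be a hyperelliptic involution on $(C_i,P_i)$, $i = 1,2$; since $p \in P_i$, each $\sigma_i$ fixes $p$. Then $\tilde{\sigma} := \sigma_1 \sqcup \sigma_2$ fixes both points of $\nu^{-1}(p)$, hence descends to an involution $\sigma$ of $C$ with $\sigma(C_i) = C_i$ and $\sigma(p) = p$. It fixes $P$, and for $\Char k \neq 2$ its fixed locus is $\nu(\mathrm{Fix}\,\sigma_1 \cup \mathrm{Fix}\,\sigma_2)$, which is finite. For the quotient: away from $p$, $\sigma$ acts as $\sigma_i$ on $C_i$, so $C/\langle\sigma\rangle$ is obtained from the two rational trees $C_i/\langle\sigma_i\rangle$ by identifying the two points lying over $p$; these are smooth points of the respective $C_i/\langle\sigma_i\rangle$ (a quotient of a smooth germ is smooth in any characteristic), so the identification creates a node. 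The result is a connected nodal curve, all of whose components are rational, with dual graph the two trees joined by one new edge --- again a tree, of arithmetic genus $0$. Hence $\sigma$ is a hyperelliptic involution on $(C,P)$.

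\textbf{Conclusion.} Restriction and gluing are mutually inverse --- an involution of $C$ preserving each $C_i$ is determined by, and recovered from, its two restrictions --- which is precisely the claimed bijection between hyperelliptic involutions on $(C,P)$ and pairs of hyperelliptic involutions on $(C_1,P_1)$, $(C_2,P_2)$; the ``if and only if'' for hyperellipticity then follows, since the left-hand set is nonempty iff both right-hand sets are. I expect the only genuinely delicate point to be the quotient-curve bookkeeping in the gluing direction --- confirming that the points over $p$ are smooth on $C_i/\langle\sigma_i\rangle$ so that identifying them really yields an ordinary node, and that the glued curve is reduced, connected, and of genus $0$ --- and checking that this is unaffected when $\Char k = 2$ and some $\sigma_i$ is the identity on a rational component of $C_i$ (so $\pi$ has degree one there), where the quotient remains a rational tree by hypothesis and the gluing reasoning does not change.
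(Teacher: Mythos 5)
Your proposal is correct and follows essentially the same route as the paper: the restriction direction rests on Proposition~\ref{prop:node_classification}~\ref{lem:properties_nodes_sep} (the involution fixes $p$, preserves its branches and each $C_i$, and the quotient splits at the node $\pi(p)$ into two rational trees), and the gluing direction identifies the two involutions along their common fixed point $p$ and observes that joining two rational trees at smooth points gives a rational tree. Your extra care about the local structure of the quotient at $p$ and the explicit statement that restriction and gluing are mutually inverse only make explicit what the paper's proof leaves implicit.
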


\begin{proof}
    First, assume $(C, P)$ is hyperelliptic and let $\sigma$ be a hyperelliptic involution. Since $p$ is a separating node, by Proposition~\ref{prop:node_classification}~\ref{lem:properties_nodes_sep}, $p$ is a fixed point of $\sigma$ and $\sigma$ respects the decomposition induced by $p$. This means $\sigma$ fixes each component, so the restrictions $\sigma_1 = \sigma|_{C_1}$ and $\sigma_2 = \sigma|_{C_2}$ are well-defined involutions. Each $\sigma_i$ is a hyperelliptic involution on $(C_i, P_i)$, since it fixes the marked points $P_i$ and the quotient $C_i/\langle\sigma_i\rangle$ is a component of the rational tree $C/\langle\sigma\rangle$. Furthermore, if $\Char k \neq 2$, the isolated fixed-point condition for $\sigma$ implies the same for each $\sigma_i$. Thus, both $(C_1, P_1)$ and $(C_2, P_2)$ are hyperelliptic.

    Conversely, let $\sigma_1$ and $\sigma_2$ be hyperelliptic involutions on $(C_1, P_1)$ and $(C_2, P_2)$, respectively. By convention, the preimages of the node $p$ on the components are also denoted by $p$. As a marked point in both $P_1$ and $P_2$, $p$ is fixed by the respective involutions. Because the involutions agree on the point to be identified, they can be uniquely ``glued'' to define a global involution $\sigma$ on $(C, P)$. This resulting map $\sigma$ is a hyperelliptic involution, as its quotient $C/\langle\sigma\rangle$ is formed by joining the two rational trees $C_1/\langle\sigma_1\rangle$ and $C_2/\langle\sigma_2\rangle$ at a point, which is itself a rational tree. If $\Char k \neq 2$, the fixed points of $\sigma$ are the union of the isolated fixed points of $\sigma_1$ and $\sigma_2$, and are thus also isolated. Therefore, $(C, P)$ is hyperelliptic.
\end{proof}

Using the previous lemma repeatedly allows us to reduce the analysis of hyperellipticity to the case of inseparable curves. The next step is to handle separating pairs, reducing the problem further to the 2-inseparable case. This requires a specific modification of the components involved in the decomposition.

We first introduce the necessary construction.

\begin{definition} \label{def:contracted_components}
    Let $(C, P)$ be a stable pointed curve and let $S=\{p, q\}$ be a separating pair of nodes. Let $\{(C_1, P_1), (C_2, P_2)\}$ be the decomposition induced by $S$, and let $S_i$ be the set of preimages of $S$ on each component $C_i$.

    We define the associated \emph{contracted component} $(C_i', P_i')$ as follows: the curve $C_i'$ is obtained from $C_i$ by identifying the two smooth points in $S_i$ to form a new node $r_i$. The curve $C_i$ is thus the partial normalization of $C_i'$ at this new node $r_i$ (cf. \cite[Chapter 3.IV]{serre1988algebraic} for the formal construction). The new set of marked points is $P_i' := P_i \setminus S_i$.
\end{definition}

We first ensure that this construction preserves stability and then analyze the nature of the new node.

\begin{lemma} \label{lem:contracted_component_stable}
    The contracted components $(C_i', P_i')$ as defined above are stable pointed curves, and the node $r_i$ is not a separating node. Moreover, if $(C,P)$ is of semicompact type, the new node $r_i$ is not part of a separating pair on $C_i'$.
\end{lemma}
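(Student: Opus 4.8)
The plan is to verify the stability of each contracted component $(C_i', P_i')$ via the numerical criterion of Proposition~\ref{prop:stable_via_canonical}~\ref{prop:stable_via_canonical_iv}, and then to establish the two combinatorial claims about the new node $r_i$ by translating everything into the language of the dual graph. Throughout I work with one of the two components, say $C_1'$, the argument for $C_2'$ being identical. The key observation is that $C_1'$ is obtained from $C_1$ by gluing the two points of $S_1 = \{p, q\}$ (using the convention that the preimages are denoted by the same letters) into a single node $r_1$; correspondingly, on the level of dual graphs, $\Graph(C_1', P_1')$ is obtained from $\Graph(C_1, P_1)$ by removing the two legs corresponding to $p$ and $q$ and adding a single new edge between the two vertices carrying those legs (a loop if they sit on the same vertex).

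\emph{Step 1: Stability of $(C_1', P_1')$.} First I check that the $p_i$ in $P_1$ are smooth points of $C_1'$ disjoint from the new node $r_1$, so that $(C_1', P_1')$ is a well-defined pointed nodal curve; this is immediate since $P_1' = P_1 \setminus S_1$ and the marked points of $C$ are smooth points away from the nodes. Then I must show $2g(Z) - 2 + s_Z > 0$ for every irreducible component $Z$ of $C_1'$. Since $C_1$ and $C_1'$ have the same irreducible components, and the gluing operation changes the count of special points of the one or two components containing a point of $S_1$ by: losing the marked points $p, q$ but gaining the node $r_1$. In the case where $p$ and $q$ lie on distinct components $Z', Z''$ of $C_1$, each of $Z', Z''$ loses one marked point and gains one node, so $s_Z$ is unchanged, and stability of $(C_1, P_1)$ (Lemma~\ref{lem:stable_decomposition}) gives the inequality. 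In the case where $p$ and $q$ lie on the same component $Z'$, it loses two marked points and gains one self-node counted twice among its special points, so again $s_{Z'}$ is unchanged. Every other component is untouched. Hence the numerical criterion holds for $(C_1', P_1')$ and it is stable.

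\emph{Step 2: $r_1$ is not a separating node.} Pass to the dual graph. Since $\{p,q\}$ is a separating pair of $C$, removing both edges $p, q$ from $\Graph(C,P)$ disconnects it into the two pieces corresponding to $C_1$ and $C_2$; in particular the subgraph $\Graph(C_1, P_1)$, viewed inside $\Graph(C,P)$, is connected, and removing \emph{either} $p$ or $q$ alone from $\Graph(C,P)$ does \emph{not} disconnect it. The latter forces: in $\Graph(C,P)$, after deleting edge $q$, there is still a path from the $p$-end of the $C_1$-side to the $p$-end of the $C_2$-side not using $p$ --- but any such path, restricted appropriately, yields a path within the $C_1$-part connecting the two endpoints of the leg $p$ that avoids $q$. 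Equivalently: in $\Graph(C_1, P_1)$ the two leg-endpoints of $p$ are joined by a path avoiding $q$. Translating into $\Graph(C_1', P_1')$, this says precisely that the new edge $r_1$ lies on a cycle, hence is not a bridge, i.e.\ $r_1$ is not a separating node of $C_1'$.

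\emph{Step 3: $r_1$ is not part of a separating pair, assuming semicompact type.} Suppose for contradiction that $\{r_1, t\}$ is a separating pair of $C_1'$ for some node $t \neq r_1$. Since $C_1$ is the partial normalization of $C_1'$ at $r_1$ and $t \ne r_1$, the node $t$ is also a node of $C_1 \subseteq C$. Deleting the edges $r_1$ and $t$ from $\Graph(C_1', P_1')$ disconnects it; undoing the contraction (replacing edge $r_1$ by the two legs $p, q$), this means that deleting $t$ from $\Graph(C_1, P_1)$ together with both leg-endpoints of $p,q$ already disconnects it into two pieces $A, B$, with $p$ having one endpoint in $A$ and one in $B$, and likewise for $q$ (and $t$ not a bridge of $C_1$, else deleting $t$ alone would disconnect). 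Re-attaching the $C_2$-side of $C$ at $p$ and $q$: the set $\{p, q\}$ still separates, and now so does $\{p, t\}$ (because deleting both edges leaves $A$ on one side and $B \cup C_2$ on the other, since on the $C_2$-side $p$ and $q$ are joined) --- and, symmetrically, so does $\{q, t\}$ --- while deleting $t$ alone from $C$ does not disconnect. Thus in $C$ the node $t$ lies in \emph{two} distinct separating pairs, namely $\{p, t\}$ and $\{q, t\}$, contradicting the assumption that $(C,P)$ is of semicompact type.

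\emph{Main obstacle.} The numerical bookkeeping in Step~1 is routine. The genuine care is needed in Steps~2 and~3: the statements are about $C$ versus the surgically modified curves $C_i'$, and one must be precise about how a separating node / separating pair of $C_i'$ corresponds, after re-attaching the other half $C_{3-i}$ across the pair $\{p,q\}$, to a separating configuration of $C$. The cleanest way to keep this straight --- and the approach I would commit to --- is to phrase all three steps purely graph-theoretically: bridges and minimal $2$-edge-cuts in $\Graph(C,P)$ versus in the graph obtained by deleting two parallel-ish edges and contracting, using repeatedly the elementary fact (already invoked in the proof of Lemma~\ref{lem:chains_of_cuts}) that the symmetric difference of two edge-cuts is an edge-cut. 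The ``semicompact'' hypothesis is used exactly once, and exactly where one expects: to rule out the bad case in Step~3 where a third node conspires with each of $p$ and $q$ separately.
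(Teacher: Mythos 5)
Your proof is correct, and for the essential claim (the semicompact part) it follows the same strategy as the paper: assume $\{r_1,t\}$ is a separating pair of $C_1'$ and show that $t$ then forms a separating pair with \emph{each} of $p$ and $q$ in $C$, contradicting semicompactness. For the first two claims, however, the paper takes a much shorter route that you might prefer. For stability, it simply observes that the decomposition of $(C_i', P_i')$ at the single node $\{r_i\}$ is exactly $(C_i, P_i)$, so Lemma~\ref{lem:stable_decomposition} applies in the reverse direction; your Step~1 in effect re-proves that lemma by hand in this special case (and note a small bookkeeping slip: under the paper's definition, a self-node is \emph{not} a special point of its component, so when $p,q$ lie on the same component $s_{Z'}$ drops by $2$ while the arithmetic genus $g(Z')$ rises by $1$ --- the quantity $2g(Z')-2+s_{Z'}$ is still unchanged, so your conclusion stands). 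For the claim that $r_i$ is not separating, the paper's argument is immediate from the definition: normalizing $C_i'$ at $r_i$ yields the connected curve $C_i$; your Step~2's detour through the fact that deleting $q$ alone does not disconnect $C$ is unnecessary, since connectivity of $\Graph(C_1,P_1)$ alone already puts the new edge $r_1$ on a cycle. Finally, in Step~3 you should justify minimality of the cut $\{p,t\}$ by showing $t$ is not a bridge of $C$ (not merely of $C_1$ or $C_1'$): since $t$ lies on a cycle in $\Graph(C_1',P_1')$, that cycle either avoids $r_1$ (hence lives in $\Graph(C)$) or uses $r_1$, in which case replacing the edge $r_1$ by the path through $C_2$ via $p$ and $q$ gives a cycle through $t$ in $\Graph(C)$; this is a one-line fix, and the paper itself leaves this level of detail implicit.
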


\begin{proof}
    By Lemma~\ref{lem:stable_decomposition}, $(C_i', P_i')$ is stable since its decomposition at $\{r_i\}$ consists of the single stable component $(C_i, P_i)$. Furthermore, since the normalization of $C_i'$ at $r_i$ is the connected curve $C_i$, the node $r_i$ is by definition not separating.

    Now assume $C$ is of semicompact type. Suppose there exists another node $t_i \in C_i'$ such that $\{r_i, t_i\}$ forms a separating pair on $C_i'$. Let $t \in C_i$ be the node on the original curve corresponding to $t_i$. The condition on the contracted curve implies that on the original curve $C$, the node $t$ forms a separating pair with both $p$ and $q$. This contradicts the assumption that $C$ is of semicompact type. Therefore, $r_i$ cannot be part of a separating pair on $C_i'$.
\end{proof}

\begin{lemma}[Decomposition at a Separating Pair] \label{lem:decomp_sep_pair}
    Let $(C, P)$ be a stable \linebreak pointed curve of semicompact type, and let $\{p,q\}$ be a separating pair. Let $(C_i', P_i')$ be the associated contracted components with new nodes $r_i$ (Definition~\ref{def:contracted_components}). Then $(C, P)$ is hyperelliptic if and only if both $(C_1', P_1')$ and $(C_2', P_2')$ are hyperelliptic.

    Moreover, any hyperelliptic involution $\sigma$ on $(C,P)$ induces unique hyperelliptic involutions on the contracted components $(C_i', P_i')$. Conversely, any pair of hyperelliptic involutions on the contracted components can be uniquely lifted and glued to form a hyperelliptic involution on $(C,P)$.
\end{lemma}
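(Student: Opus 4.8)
The plan is to prove the stated one‑to‑one correspondence directly; the ``if and only if'' is then immediate, and this route avoids a circular use of the uniqueness of hyperelliptic involutions (Proposition~\ref{prop:uniqueness_hyperelliptic}), towards which this lemma is a step. I keep the convention that the two preimages of $p$ (resp.\ $q$) on $C_1 \sqcup C_2$ are both written $p$ (resp.\ $q$), so that $S_i = \{p,q\} \subset C_i$ and $C$ is recovered by gluing $C_1$ to $C_2$ along these two pairs of points. For the forward direction I would start with a hyperelliptic involution $\sigma$ on $(C,P)$: by Proposition~\ref{prop:node_classification}~\ref{lem:properties_nodes_pair} we have $\sigma(p)=q$ and $\sigma$ respects the decomposition at $\{p,q\}$, so $\sigma$ restricts to an involution $\sigma_i$ of $C_i$ that swaps the two points of $S_i$. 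Since it swaps them, $\sigma_i$ descends along the contraction $C_i \to C_i'$ to an involution $\sigma_i'$ of $C_i'$ fixing the new node $r_i$ and interchanging its two branches. It then remains to verify that $\sigma_i'$ is a hyperelliptic involution on $(C_i', P_i')$ --- stable by Lemma~\ref{lem:contracted_component_stable} --- namely: it fixes $P_i' = P \cap C_i$; the local branch‑swap model (the quotient of $\Spec k[x,y]/(xy)$ by $x \leftrightarrow y$ is smooth) identifies $C_i'/\langle\sigma_i'\rangle$ with $C_i/\langle\sigma_i\rangle$, which is the $C_i$‑component of the partial normalization of the rational tree $C/\langle\sigma\rangle$ at the node $\pi(p)$ (by the descent argument used in the proof of Lemma~\ref{lem:orbit_decomposition_preserved}), hence a rational tree; and, when $\Char k \neq 2$, the fixed locus of $\sigma_i'$ is that of $\sigma|_{C_i}$ together with $r_i$, still isolated.

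For the converse, let $\tau_i$ be a hyperelliptic involution on $(C_i', P_i')$. The decisive point is that $r_i$ is neither a separating node of $C_i'$ nor part of a separating pair on $C_i'$ --- the first by Lemma~\ref{lem:contracted_component_stable}, the second again by Lemma~\ref{lem:contracted_component_stable}, using that $(C,P)$ is of semicompact type. Hence the trichotomy of Proposition~\ref{prop:node_classification} forces $r_i$ to be a non‑separating fixed node of $\tau_i$, so $\tau_i$ fixes $r_i$ and swaps its branches. Lifting $\tau_i$ along the normalization $C_i \to C_i'$ at $r_i$ therefore yields an involution $\tilde\tau_i$ of $C_i$ that swaps the two points of $S_i$. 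These lifts glue to a single involution $\sigma$ of $C$ --- the gluing is consistent because each $\tilde\tau_i$ sends the class of $p$ to the class of $q$ --- and $\sigma$ fixes $P = P_1' \sqcup P_2'$, has quotient equal to the union of the rational trees $C_i/\langle\tilde\tau_i\rangle$ glued at a single point (hence again a rational tree, by the same local model), and in characteristic $\neq 2$ has isolated fixed locus (the union of the isolated fixed loci of $\tau_1, \tau_2$ with the points $r_i$ deleted). So $\sigma$ is a hyperelliptic involution on $(C,P)$.

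Finally, the assignments $\sigma \mapsto (\sigma_1',\sigma_2')$ and $(\tau_1,\tau_2) \mapsto \sigma$ are mutually inverse, since the lift of the descent (and the descent of the lift) of an automorphism along a partial normalization is the automorphism itself, by the uniqueness clause in the universal property of normalization; this gives the claimed bijection, and in particular the equivalence. I expect the converse direction to be the main obstacle: the real content is (a)~showing that $\tau_i$ must fix $r_i$, which is exactly where the semicompactness hypothesis enters, and (b)~checking that gluing the lifted involutions does not reintroduce a node in the quotient arising from the separating pair $\{p,q\}$, so that the quotient remains a rational tree. Routing all the local analysis at $p$, $q$ and $r_i$ through the single, characteristic‑independent branch‑swap model is what makes the $\Char k = 2$ case go through uniformly, rather than demanding a separate argument there.
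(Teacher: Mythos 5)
Your proposal is correct and follows essentially the same route as the paper's proof: the forward direction via Proposition~\ref{prop:node_classification}~\ref{lem:properties_nodes_pair} (so $\sigma$ swaps $p$ and $q$, restricts to each $C_i$, and descends to the contraction), and the converse via Lemma~\ref{lem:contracted_component_stable} plus the node trichotomy to force $\tau_i$ to fix $r_i$ with interchanged branches, then lifting and gluing. Your explicit check that the two assignments are mutually inverse, and the uniform local branch-swap model for identifying $C_i'/\langle\sigma_i'\rangle$ with $C_i/\langle\sigma_i\rangle$, are slightly more detailed than the paper but do not change the argument.
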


\begin{proof}
    First, assume $(C, P)$ is hyperelliptic with involution $\sigma$. Since $\{p,q\}$ is a separating pair, by Proposition~\ref{prop:node_classification}~\ref{lem:properties_nodes_pair}, we must have $q=\sigma(p)$. The proposition also states that $\sigma$ respects the decomposition, meaning it fixes each component $C_i$. The restriction $\sigma_i = \sigma|_{C_i}$ is therefore an involution on $(C_i, P_i)$. Since $\sigma$ swaps $p$ and $q$, the restriction $\sigma_i$ must swap the two points in the set $S_i$.

    This involution $\sigma_i$ on $C_i$ descends to a well-defined involution $\sigma_i'$ on the contracted component $C_i'$. Because $\sigma_i$ swaps the two points of $S_i$ that are identified to form the new node $r_i$, the descended involution $\sigma_i'$ must fix $r_i$. The quotient $C_i'/\langle\sigma_i'\rangle$ is isomorphic to $C_i/\langle\sigma_i\rangle$, which is a component of the rational tree $C/\langle\sigma\rangle$ and thus is itself a rational tree. Furthermore, if $\Char k \neq 2$, the isolated fixed-point condition for $\sigma$ is inherited by the descended involution $\sigma_i'$. Therefore, $(C_i', P_i')$ is hyperelliptic.

    Conversely, assume both $(C_1', P_1')$ and $(C_2', P_2')$ are hyperelliptic with involutions $\sigma_1'$ and $\sigma_2'$, respectively. By Lemma~\ref{lem:contracted_component_stable}, the new nodes $r_1$ and $r_2$ are non-separating and not part of any separating pair on their respective curves (since we assumed $C$ is of semicompact type). By Proposition~\ref{prop:node_classification}, this implies that each $r_i$ must be a fixed point of $\sigma_i'$ and that $\sigma_i'$ must interchange the branches at $r_i$.

    Lifting the involution $\sigma_i'$ from $C_i'$ to its normalization $C_i$ gives an involution $\sigma_i$ on $(C_i, P_i)$. The fact that $\sigma_i'$ interchanges the branches at $r_i$ means that the lifted involution $\sigma_i$ must swap the two points in $S_i$. We can now ``glue'' these involutions to define a global involution $\sigma$ on $(C,P)$. Since $\sigma_1$ swaps the preimages of $\{p,q\}$ on $C_1$ and $\sigma_2$ does the same on $C_2$, the global involution $\sigma$ is well-defined and swaps the nodes $p$ and $q$.

    The quotient $C/\langle\sigma\rangle$ is formed by gluing the two rational trees $C_1'/\langle\sigma_1'\rangle$ and $C_2'/\langle\sigma_2'\rangle$ along the images of the nodes $r_1$ and $r_2$. Since the involutions interchange the branches at these nodes, their images are smooth points in the quotients. Gluing two rational trees at a pair of smooth points results in a rational tree. If $\Char k \neq 2$, the fixed points of $\sigma$ are the union of the isolated fixed points of $\sigma_1'$ and $\sigma_2'$, and are thus also isolated. Therefore, $(C,P)$ is hyperelliptic.
\end{proof}

\begin{remark}
    The assumption that the curve is of semicompact type is crucial in the preceding lemma. To see why, consider the non-semicompact curve from Figure~\ref{fig:not_semicompact}. If this curve is decomposed at a separating pair, its contracted components can be shown to be hyperelliptic (see Lemma~\ref{lem:always_hyperelliptic}). Nevertheless, the original curve itself cannot be hyperelliptic, because Proposition~\ref{prop:node_classification} establishes that any stable hyperelliptic curve must be of semicompact type.
\end{remark}

We now address the 2-inseparable case and show that such a curve admits at most one hyperelliptic involution.

\begin{lemma}\textbf{(cf. \cite[Chapter X, Lemma 3.5]{ACGII}, \cite[Proposition 2.12]{ran2014canonical})} \label{lem:unique_2_inse}
    Let $(C, P)$ be an n-pointed stable, 2-inseparable, hyperelliptic curve. Then its structure must be one of the following two types:
    \begin{enumerate}[label=\StatementListLabel]
        \item $C$ is irreducible.
        \item $C$ is a reducible curve called a \emph{binary curve}\footnote{cf.\ \cite[Proposition 7]{coelho2023gonality} for the hyperelliptic condition.}: the union of two smooth rational components connected at $g(C)+1 \ge 3$ nodes. In this case, there are no marked points ($n=0$).
    \end{enumerate}
    In both cases, the hyperelliptic involution on $(C, P)$ is unique.
\end{lemma}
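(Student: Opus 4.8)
The plan is to split the argument into a structural part, which pins down $C$, and a uniqueness part, which feeds on the structural description.

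\emph{Structure.} The starting point is Proposition~\ref{prop:node_classification}. Since $(C,P)$ is $2$-inseparable it has no separating node and no separating pair, so every node of $C$ is of type~\ref{lem:properties_nodes_non}: a fixed point of $\sigma$ at which $\sigma$ interchanges the two branches. I would pass to the dual graph $\Gamma$ of $C$ and read off the action of $\sigma$: it fixes every edge as a set but exchanges its two half-edges, and by tracking the branch of a node lying on a given component one sees that for every edge with distinct endpoints $u,v$ necessarily $\sigma(u)=v$, while every loop forces its vertex to be $\sigma$-fixed. Two cases result. If some vertex is $\sigma$-fixed, all edges incident to it are loops, so connectedness of $\Gamma$ forces a single vertex and $C$ is irreducible (case (a)). If no vertex is fixed, the vertices are interchanged in pairs, every edge joins the two members of one pair, and connectedness forces $\Gamma$ to consist of exactly two vertices joined by $d$ parallel edges and no loops. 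Then the two components $Z_1,Z_2$ are distinct, hence both rational by Proposition~\ref{prop:node_classification}~\ref{lem:properties_nodes_non}; since $\Gamma$ has no loops each $Z_i$ is smooth, and each maps finitely and birationally onto the smooth curve $T\simeq\PP^1_k$ (a general fiber of $C\to T$ has one point on $Z_1$ and one on $Z_2$), so $Z_1\simeq Z_2\simeq\PP^1_k$; $3$-edge-connectedness of $\Gamma$ (as $C$ is $2$-inseparable) forces $d\ge 3$; the genus formula gives $d=g(C)+1$; and a marked point, living on one $Z_i$, would be carried by $\sigma$ to the other component and so could not be fixed, whence $n=0$. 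This is the binary curve of case (b).

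\emph{Uniqueness.} Let $\sigma,\tau$ be two hyperelliptic involutions of $(C,P)$ and set $\rho := \sigma\circ\tau\in\Aut(C,P)$; I would show $\rho=\id$. Each of $\sigma,\tau$ fixes every node and swaps its branches, so $\rho$ fixes every node \emph{and} preserves its branches, and $\rho$ fixes $P$ pointwise. Lifting along the normalization $\nu\colon\tilde{C}\to C$, which is a disjoint union of smooth proper curves, $\rho$ lifts to an automorphism $\tilde\rho$ fixing, on every component, all preimages of nodes and all marked points; by the structural part $\sigma$ and $\tau$ induce the same permutation of components (trivial in case (a), the transposition in case (b)), so $\tilde\rho$ restricts to an automorphism of each connected component $\tilde{C}_j$. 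Since a morphism of models is unique, it suffices to prove $\tilde\rho|_{\tilde{C}_j}=\id$ for all $j$; this then descends to $\rho=\id$, i.e.\ $\sigma=\tau$. To dispose of $\tilde\rho|_{\tilde{C}_j}$ I would distinguish by the genus of $\tilde{C}_j$. If $g(\tilde{C}_j)\ge 2$ --- which, by the structural part, occurs only for $\tilde{C}_j=\tilde{C}$ in case (a) --- then $\tilde{C}_j/\langle\tilde\sigma\rangle$ is a smooth proper curve birational to $C/\langle\sigma\rangle\simeq\PP^1_k$, hence isomorphic to $\PP^1_k$, so $\tilde{C}_j$ is smooth hyperelliptic and $\tilde\sigma|_{\tilde{C}_j}$, $\tilde\tau|_{\tilde{C}_j}$ are both \emph{the} hyperelliptic involution of $\tilde{C}_j$; they coincide and $\tilde\rho|_{\tilde{C}_j}=\id$. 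If $g(\tilde{C}_j)\le 1$, I would use rigidity: by the stability of $(C,P)$ together with Remark~\ref{rem:stable_curves_low_genus}, the node-preimages and marked points on $\tilde{C}_j$ number at least three when $g(\tilde{C}_j)=0$ and at least one when $g(\tilde{C}_j)=1$; in the first case an automorphism of $\PP^1_k$ fixing three points is trivial, and in the second $\tilde\rho|_{\tilde{C}_j}=\tilde\sigma\tilde\tau$ is the composition of two involutions with rational quotient, hence a translation of the genus-$1$ curve $\tilde{C}_j$, which is trivial as soon as it has a fixed point.

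The structural part is short once Proposition~\ref{prop:node_classification} is in hand, so I expect the real work to be in the uniqueness argument for low-genus normalization components, and above all in the genus-$1$ case in characteristic $2$: there one must check that every involution of a genus-$1$ curve with rational quotient is still of the shape $x\mapsto -x+c$ --- equivalently that $\Aut(\tilde{C}_j,O)$ contains a unique involution, including for the supersingular curve --- so that $\tilde\rho|_{\tilde{C}_j}$ is genuinely a translation and a single fixed point suffices to kill it. The other place that demands care, rather than new ideas, is keeping the branch-chasing in $\Gamma$ and the descent of involutions along $\nu$ fully rigorous.
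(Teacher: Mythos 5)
Your proposal is correct and follows essentially the same route as the paper: the structure is read off from Proposition~\ref{prop:node_classification} (every node is a non-separating fixed node whose branches are swapped), and uniqueness is settled by splitting on the genus of the normalization, using the classical result for $g\ge 2$ and rigidity of $\PP^1_k$-automorphisms fixing three points for $g=0$. Your two variations --- phrasing the component analysis in dual-graph language, and handling $g=1$ by observing that $\tilde\sigma\tilde\tau$ is a translation with a fixed point rather than comparing quotient genera --- are minor repackagings, and the one point you flag (that $-1$ is the only involution in $\Aut(E,O)$, also in characteristics $2$ and $3$) is true and is used at the same level of implicitness in the paper's own proof.
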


\begin{proof}
Let $\sigma$ and $\tau$ be two hyperelliptic involutions on $(C, P)$. We aim to show they are identical.

Since $(C, P)$ is a 2-inseparable stable curve, we can apply Proposition~\ref{prop:node_classification}. The quotient curve $T := C/\langle\sigma\rangle$ must be irreducible, which for a rational tree means $T \simeq \PP^1_k$. The same holds for the quotient by $\tau$. Furthermore, 2-inseparability implies $C$ has no separating nodes (type~\ref{lem:properties_nodes_sep}) and no separating pairs (type~\ref{lem:properties_nodes_pair}). Consequently, every node of $C$ must be of type~\ref{lem:properties_nodes_non}: it is a fixed point of the involution, and the involution interchanges the branches at the node. This holds for both $\sigma$ and $\tau$.

Let $\nu\colon \tilde{C} \to C$ be the normalization of $C$, and let $\tilde{\sigma}$ and $\tilde{\tau}$ be the unique lifted automorphisms on $\tilde{C}$. Both lifts must fix the preimages of the marked points $P$ and swap the two preimages over each node of $C$.

By the universal property of normalization, the map $\nu$ is $\sigma$-equivariant, so it descends to a proper morphism on the quotients, $\bar{\nu}\colon \tilde{C}/\langle\tilde{\sigma}\rangle \to C/\langle\sigma\rangle$. Because $\tilde{C}$ is a smooth curve, its quotient $\tilde{C}/\langle\tilde{\sigma}\rangle$ is also a smooth proper curve. The target $C/\langle\sigma\rangle \simeq \PP^1_k$ is smooth as well. Since $\bar{\nu}$ is a birational morphism between smooth proper curves, it must be an isomorphism. Thus, we have established that $\tilde{C}/\langle\tilde{\sigma}\rangle \simeq \PP^1_k$ and, by the exact same argument, $\tilde{C}/\langle\tilde{\tau}\rangle \simeq \PP^1_k$.

\begin{case}
    $C$ is irreducible.
    The normalization $\tilde{C}$ is a single smooth, connected curve. We analyze the possibilities for its genus, $g(\tilde{C})$.
    \begin{description}
        \item[$g(\tilde{C}) \ge 2$:] Since $g(\tilde{C}) \ge 2$ and we have shown that $\tilde{C}/\langle\tilde{\sigma}\rangle \simeq \PP^1_k$, the lift $\tilde{\sigma}$ is, by definition, a hyperelliptic involution on the smooth curve $\tilde{C}$. The same holds for $\tilde{\tau}$. By the classical theorem, a smooth curve of genus $g \ge 2$ has at most one hyperelliptic involution. Therefore, we must have $\tilde{\sigma} = \tilde{\tau}$.

        \item[$g(\tilde{C}) = 1$:] The normalization $\tilde{C}$ is an elliptic curve. For $C$ to be stable, there must be at least one special point on $\tilde{C}$. We seek involutions on $\tilde{C}$ whose quotient is $\PP^1_k$.
        
        If there is a marked point $p \in P$ on $\tilde{C}$, both $\tilde{\sigma}$ and $\tilde{\tau}$ must fix it. There is a unique involution on an elliptic curve with a fixed point that yields a genus $0$ quotient (namely, the map $x \mapsto -x$ in terms of the group law with origin $p$). Thus $\tilde{\sigma} = \tilde{\tau}$.
        
        If there are no marked points, there is at least one node on $C$. Let $\{q, q'\}$ be its preimages on $\tilde{C}$. Both $\tilde{\sigma}$ and $\tilde{\tau}$ must swap these two points. While two distinct involutions might swap these points (if their difference is a 2-torsion point), they yield quotients of different genera. The reflection-type involution (e.g., $x \mapsto (q+q')-x$) yields a genus $0$ quotient, whereas a translation-type involution yields a genus $1$ quotient. Since we established that the quotient must be $\PP^1_k$, the involution is uniquely determined. Thus, $\tilde{\sigma} = \tilde{\tau}$.

        \item[$g(\tilde{C}) = 0$:] The normalization $\tilde{C}$ is isomorphic to $\PP^1_k$. The stability of $(C,P)$ requires that there are at least three special points on $\tilde{C}$. An automorphism of $\PP^1_k$ is uniquely determined by its action on three distinct points. Since $\tilde{\sigma}$ and $\tilde{\tau}$ have the exact same action (fixing preimages of $P$ and swapping preimages of nodes) on this set of at least three points, the automorphisms must be identical: $\tilde{\sigma} = \tilde{\tau}$.
    \end{description}
    In all subcases, the lifted involutions are identical, which implies $\sigma = \tau$.
\end{case}

\begin{case}
    $C$ is reducible.
    First, we determine the structure of $C$. The involution $\sigma$ acts on the set of irreducible components of $C$. Since the quotient curve $C/\langle\sigma\rangle \simeq \PP^1_k$ is irreducible, this action must be transitive.
    
    Because $C$ is reducible, it consists of at least two components. A transitive action of a group of order 2 (generated by $\sigma$) on a set of size $\ge 2$ implies that the set has exactly two elements. Thus, $C$ consists of exactly two irreducible components, $C_1$ and $C_2$, which are swapped by $\sigma$ (and by the exact same group-theoretic argument, they are swapped by $\tau$).
    
    This structure implies that there can be no marked points ($n=0$), as any marked point would have to be fixed by the involution, contradicting the fact that the components containing them are swapped. Let $\pi\colon C \to \PP^1_k$ be the quotient map. Since $C_1$ and $C_2$ are swapped, the restrictions $\pi|_{C_1}$ and $\pi|_{C_2}$ are both surjective birational maps onto $\PP^1_k$. A birational morphism to a smooth proper curve is an isomorphism, so the components must be smooth and rational: $C_1 \simeq C_2 \simeq \PP^1_k$. For the unpointed curve $C$ to be stable, each rational component must have at least three special points, which must be the nodes where they intersect. Combining this stability condition with the genus formula, it follows that the components meet at $g(C)+1 \ge 3$ nodes.
    
    Finally, we prove uniqueness. The lifts $\tilde{\sigma}$ and $\tilde{\tau}$ are automorphisms of the normalization $\tilde{C} = C_1 \sqcup C_2$. Their restrictions to $C_1$, $\tilde{\sigma}|_{C_1}$ and $\tilde{\tau}|_{C_1}$, are isomorphisms from $C_1$ to $C_2$. An isomorphism between copies of $\PP^1_k$ is uniquely determined by its action on three points. Since they connect at $\ge 3$ nodes, and both isomorphisms must map the preimages of the nodes on $C_1$ to the corresponding preimages on $C_2$, they are identical. Thus $\tilde{\sigma} = \tilde{\tau}$, which implies $\sigma=\tau$.
\end{case}
\end{proof}

Putting these results together, we deduce the general uniqueness theorem.

\begin{proposition} \label{prop:uniqueness_hyperelliptic}
    Let $(C, P)$ be an n-pointed stable hyperelliptic curve. Then its hyperelliptic involution is uniquely determined.
\end{proposition}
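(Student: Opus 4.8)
I would prove this by induction on the number $n$ of nodes of $C$, successively decomposing $(C,P)$ at its separating nodes and separating pairs until the $2$-inseparable case is reached, which is precisely Lemma~\ref{lem:unique_2_inse}. Two preliminary remarks keep the bookkeeping clean: first, by Proposition~\ref{prop:node_classification} every stable hyperelliptic curve is of semicompact type, so the standing hypothesis of Lemma~\ref{lem:decomp_sep_pair} is automatically satisfied; second, since $(C,P)$ is stable, Lemma~\ref{lem:stable_decomposition} ensures that every decomposition of it produces stable pieces, and Lemma~\ref{lem:contracted_component_stable} does the same for contracted components, so stability need never be checked by hand.

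For the induction, I would assume the statement for all stable hyperelliptic pointed curves with fewer than $n$ nodes and take $(C,P)$ with $n$ nodes. If $C$ has neither a separating node nor a separating pair it is $2$-inseparable and Lemma~\ref{lem:unique_2_inse} applies directly. Otherwise there are two exhaustive cases. If $C$ has a separating node $p$, it decomposes $(C,P)$ into stable pointed curves $(C_1,P_1)$ and $(C_2,P_2)$, both hyperelliptic by Lemma~\ref{lem:decomp_sep_node}; as the preimage of $p$ becomes a smooth marked point on each piece, each $C_i$ has at most $n-1$ nodes, so the induction hypothesis gives each piece a unique hyperelliptic involution, and the bijection of Lemma~\ref{lem:decomp_sep_node} between hyperelliptic involutions on $(C,P)$ and pairs of hyperelliptic involutions on the pieces transports this to $(C,P)$.

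If instead $C$ is inseparable but has a separating pair $\{p,q\}$, then --- being semicompact --- Lemma~\ref{lem:decomp_sep_pair} applies and decomposes $(C,P)$ into contracted components $(C_1',P_1')$ and $(C_2',P_2')$ with new nodes $r_1,r_2$, again both hyperelliptic. The nodes of $C$ other than $p,q$ are split between the two un-contracted pieces $C_1,C_2$, so each $C_i$ carries at most $n-2$ of them; contracting restores a single node $r_i$, whence each $C_i'$ has at most $n-1<n$ nodes and the induction hypothesis applies. The bijective correspondence of Lemma~\ref{lem:decomp_sep_pair} then yields uniqueness on $(C,P)$ as before.

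The only step I expect to require genuine care is the strict decrease of the node count in this last case: contracting manufactures a new node, and one must check that this is outweighed by the disappearance of the two nodes of the separating pair together with the partition of the remaining nodes between the two components, so that no piece is as large as $C$. Everything else is a formal consequence of the decomposition lemmas, and --- consistently with Remark~\ref{rem:types_of_points_fixpoints} --- the argument never uses the isolated-fixed-point hypothesis in characteristic $\neq 2$.
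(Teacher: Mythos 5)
Your proposal is correct and follows essentially the same route as the paper: induction using the decomposition Lemmas~\ref{lem:decomp_sep_node} and~\ref{lem:decomp_sep_pair} to reduce to the 2-inseparable base case of Lemma~\ref{lem:unique_2_inse}. The only difference is cosmetic --- you induct on the total number of nodes and verify its strict decrease (including the contracted-component count), whereas the paper inducts on the number of separating nodes and separating pairs and leaves the bookkeeping implicit; your accounting is valid.
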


\begin{proof}
    The proof is by induction on the number of separating nodes and separating pairs. Lemma~\ref{lem:unique_2_inse} establishes the base case of a 2-inseparable curve. The decomposition lemmas, Lemma~\ref{lem:decomp_sep_node} and Lemma~\ref{lem:decomp_sep_pair}, provide the inductive step, as they establish a one-to-one correspondence between involutions on a curve and the involutions on its simpler components.
\end{proof}

\begin{remark}
    With the decomposition lemmas (Lemma~\ref{lem:decomp_sep_node} and Lemma~\ref{lem:decomp_sep_pair}), one can decompose any stable pointed curve of semicompact type into its 2-inseparable parts and determine if they are hyperelliptic using the criteria established in Lemma~\ref{lem:unique_2_inse} and its proof (see Figure \ref{fig:decomposition}). In the hyperelliptic case, the 2-inseparable parts are then in bijection with the irreducible components of the associated rational tree.
\end{remark}

Let us conclude this section by explaining the distinct behavior of hyperellipticity in characteristic $2$. For many combinatorial types, the property of being hyperelliptic is independent of the characteristic of the underlying field; it depends only on the presence of certain rational components.

\begin{proposition} \label{prop:hyperelliptic_genus2}
    Let $(C, P)$ be a stable pointed curve, and let $\sigma$ be an involution on $C$ (i.e., $\sigma^2 = \id_C$) that fixes every point in $P$ and for which the quotient curve $T = C/\langle \sigma \rangle$ is a rational tree. Let $Z \subset C$ be an irreducible component. Then the following are equivalent:
    \begin{enumerate}[label=\EquivListLabel]
        \item \label{prop:hyperelliptic_genus2_i} The restriction of the automorphism $\sigma$ to $Z$ is the identity.
        \item \label{prop:hyperelliptic_genus2_ii} The component $Z$ is rational ($Z \simeq \PP^1_k$) and intersects the rest of the curve only at separating nodes.
    \end{enumerate}
    In particular, $(C,P)$ is hyperelliptic if and only if either $\Char k = 2$ or it does not contain any rational component $Z$ satisfying property~\ref{prop:hyperelliptic_genus2_ii}.
\end{proposition}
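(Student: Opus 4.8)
The plan is to establish the equivalence \ref{prop:hyperelliptic_genus2_i}$\,\Leftrightarrow\,$\ref{prop:hyperelliptic_genus2_ii} and then deduce the concluding ``in particular''. Write $\pi\colon C\to T$ for the quotient map. The key input is the classification of nodes in Proposition~\ref{prop:node_classification}~\ref{lem:properties_nodes_sep}, used in both directions: a node $p$ is separating \emph{if and only if} $\sigma$ fixes $p$ and preserves the two branches at $p$, in which case $\sigma$ respects the decomposition of $C$ induced by $p$. Although that proposition is stated for hyperelliptic curves, Remark~\ref{rem:types_of_points_fixpoints}~\ref{rem:types_of_points_fixpoints_ii} notes that its proof — hence this node classification — applies to the present $\sigma$ with no hypothesis on $\Char k$ or on the fixed locus of $\sigma$; this is precisely why one cannot simply invoke $\sigma$ as ``the'' hyperelliptic involution here.

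For \ref{prop:hyperelliptic_genus2_i}$\,\Rightarrow\,$\ref{prop:hyperelliptic_genus2_ii}: if $\sigma|_Z=\id_Z$ then every point of $Z$ has trivial $\sigma$-orbit, so $\pi|_Z\colon Z\to T_Z:=\pi(Z)$ is finite and bijective, and on the dense open subset $U\subseteq Z$ of points that are not nodes of $C$ the curve $C$ locally coincides with $Z$ and $\sigma$ acts trivially, so $\pi|_U$ is an isomorphism onto its image; hence $\pi|_Z$ is birational. As $T_Z$ is an irreducible component of the rational tree $T$, it is a smooth rational curve, and a finite birational morphism onto a normal curve is an isomorphism, so $Z\simeq T_Z\simeq\PP^1_k$ (in particular $Z$ is smooth, with no self-node). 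Now let $p$ be a node at which $Z$ meets the rest of $C$; one of its two branches lies in $Z$, hence is fixed pointwise by $\sigma$, so $\sigma$ fixes $p$ and preserves that branch, hence both branches. By the converse direction of Proposition~\ref{prop:node_classification}~\ref{lem:properties_nodes_sep}, $p$ is separating, establishing \ref{prop:hyperelliptic_genus2_ii}.

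For \ref{prop:hyperelliptic_genus2_ii}$\,\Rightarrow\,$\ref{prop:hyperelliptic_genus2_i}: suppose $Z\simeq\PP^1_k$ meets the rest of $C$ only at separating nodes, and let $p_1,\dots,p_r$ be the nodes of $C$ on $Z$ and $q_1,\dots,q_t\in P$ the marked points on $Z$; stability (Proposition~\ref{prop:stable_via_canonical}~\ref{prop:stable_via_canonical_iv}) gives $r+t\ge 3$. If $r=0$ then $C=Z\simeq\PP^1_k$ with $t\ge 3$ marked points, all fixed by $\sigma$, so $\sigma=\id$. If $r\ge 1$, Lemma~\ref{lem:chains_of_cuts}~\ref{lem:chains_of_cuts_i} applied to the separating nodes $p_1,\dots,p_r$ shows that $Z$ is the centre of a star of $r+1$ subcurves $Z,B_1,\dots,B_r$, with $B_i$ attached to $Z$ through $p_i$ alone. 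By Proposition~\ref{prop:node_classification}~\ref{lem:properties_nodes_sep} each $p_i$ is fixed by $\sigma$, and $\sigma$ respects the decomposition $C=A_i\cup B_i$ with $A_i=Z\cup\bigcup_{j\ne i}B_j$; therefore
\[
    \sigma(Z)\subseteq\bigcap_{i=1}^r A_i=Z,
\]
so $\sigma(Z)=Z$. Then $\sigma|_Z$ is an automorphism of $\PP^1_k$ fixing the $r+t\ge 3$ distinct points $p_1,\dots,p_r,q_1,\dots,q_t$, hence $\sigma|_Z=\id_Z$, establishing \ref{prop:hyperelliptic_genus2_i}.

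For the final assertion, recall from Definition~\ref{def:hyperelliptic} that $\sigma$ is a hyperelliptic involution iff, when $\Char k\ne 2$, it has isolated fixed points. The fixed locus of $\sigma$ is finite precisely when $\sigma$ restricts to a nontrivial automorphism of every irreducible component (a nontrivial automorphism of an irreducible curve has proper, hence finite, fixed locus), i.e. precisely when no component satisfies \ref{prop:hyperelliptic_genus2_i}; by the equivalence just proved this is the same as saying $C$ contains no rational component satisfying \ref{prop:hyperelliptic_genus2_ii}. Hence when $\Char k\ne 2$, the curve $(C,P)$ — which already carries the involution $\sigma$ with rational-tree quotient — is hyperelliptic iff it has no such component, while when $\Char k=2$ the fixed-point condition is vacuous and $(C,P)$ is hyperelliptic outright; combining the two cases gives the stated criterion. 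I expect the main obstacle to be the step $\sigma(Z)=Z$ in \ref{prop:hyperelliptic_genus2_ii}$\,\Rightarrow\,$\ref{prop:hyperelliptic_genus2_i}, where the separating-node hypothesis must be used in full strength via the ``respects the decomposition'' clause of Proposition~\ref{prop:node_classification}, together with the bookkeeping needed to legitimately apply that node classification to a $\sigma$ that is not a priori a hyperelliptic involution.
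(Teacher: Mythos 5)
Your proof is correct and follows essentially the same route as the paper's: both directions rest on the node classification of Proposition~\ref{prop:node_classification}~\ref{lem:properties_nodes_sep} (made applicable to a not-necessarily-hyperelliptic $\sigma$ via Remark~\ref{rem:types_of_points_fixpoints}~\ref{rem:types_of_points_fixpoints_ii}), on stability forcing at least three special points on $Z$ so that an automorphism of $\PP^1_k$ fixing them is the identity, and on identifying the isolated-fixed-point condition with $\sigma$ being nontrivial on every component. Your star-decomposition argument via Lemma~\ref{lem:chains_of_cuts} merely makes explicit the step ``$\sigma$ fixes the separating nodes, hence preserves $Z$'' that the paper states tersely, and your appeal to the intrinsic nature of condition~\ref{prop:hyperelliptic_genus2_ii} replaces the paper's use of the uniqueness result in the final paragraph; neither change alters the substance of the argument.
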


\begin{proof}
The characterization of nodes in Proposition~\ref{prop:node_classification} applies, since its arguments do not require the involution to have isolated fixed points (Remark~\ref{rem:types_of_points_fixpoints}~\ref{rem:types_of_points_fixpoints_ii}).

\ref{prop:hyperelliptic_genus2_i} $\implies$ \ref{prop:hyperelliptic_genus2_ii}: Assume $\sigma|_Z$ is the identity. Then the quotient map $\pi|_Z\colon Z \to T$ is an isomorphism onto a component of the rational tree $T$, so $Z$ must be rational. Any node $p$ connecting $Z$ to the rest of the curve is fixed by $\sigma$ with its branches preserved. By Proposition~\ref{prop:node_classification}~\ref{lem:properties_nodes_sep}, $p$ must be a separating node.

\ref{prop:hyperelliptic_genus2_ii} $\implies$ \ref{prop:hyperelliptic_genus2_i}: Conversely, assume $Z \simeq \PP^1_k$ intersects the rest of the curve only at separating nodes $S_Z$. By Proposition~\ref{prop:node_classification}~\ref{lem:properties_nodes_sep}, any such node is fixed by $\sigma$, so $\sigma$ must preserve the component $Z$. The restriction $\sigma|_Z$ is thus an automorphism of $Z$ that fixes all points in $S_Z$ and in $P \cap Z$. Since the stability of $(C,P)$ requires $|S_Z \cup (P \cap Z)| \ge 3$, and the only automorphism of $\PP^1_k$ with at least three fixed points is the identity, we conclude that $\sigma|_Z = \id_Z$.

By the uniqueness of an involution fixing the marked points with a rational tree as quotient (Proposition~\ref{prop:uniqueness_hyperelliptic} and Remark~\ref{rem:types_of_points_fixpoints}~\ref{rem:types_of_points_fixpoints_ii}), the pointed curve $(C,P)$ is hyperelliptic if and only if the given involution $\sigma$ is itself a hyperelliptic involution. In $\Char k = 2$, this holds by definition. In $\Char k \neq 2$, the definition additionally requires that $\sigma$ has isolated fixed points, which is equivalent to it not being the identity on any irreducible component. The final claim of the proposition is then a direct consequence.
\end{proof}

\begin{remark}\label{rem:lcrt_catanese}
A component as described in \ref{prop:hyperelliptic_genus2_ii} is what Catanese \cite[Definition 3.2]{catanese1982pluricanonical} terms a "loosely connected rational tail".
\end{remark}

\subsection{\texorpdfstring{Stable Curves of Genus $g = 3$}{Stable Curves of Genus g = 3}}

We now consider the case of genus $g=3$ in detail. As mentioned in Remark~\ref{rem:comb_types}, there are 42 distinct combinatorial types of stable curves of this genus. To describe them, it is useful to observe that they are all constructed from a simpler ``core'' by attaching standard components called ``tails''.

\begin{definition}
    Let $C$ be a stable curve over $k$, and let $E$ be an irreducible component of $C$.
    \begin{enumerate}[label=\DefListLabel]
        \item $E$ is called a \emph{$1$-tail} if it is attached to the rest of the curve at a single node and has an arithmetic genus of $1$.
        \item An \emph{elliptic tail} is a $1$-tail that is smooth (i.e., has geometric genus $1$).
        \item A \emph{pig tail}\footnote{This is an actual term used in the literature; see, for instance, \cite[p. 122, 175]{harris1998moduli}.} is a $1$-tail that has a single node (i.e., has geometric genus $0$).
    \end{enumerate}
    The node at which a $1$-tail $E$ is attached is called the \emph{attachment point} of $E$.
\end{definition}

\begin{remark} \label{rem:elliptic_pig_tails}
    By Proposition~\ref{prop:stable_via_canonical}, any stable $1$-pointed curve $(E, p)$ of genus $1$ can have at most $2(1)-2+1 = 1$ irreducible components. Thus, $E$ must be irreducible and is therefore either smooth or has a single node. By a slight abuse of notation, we also call the pointed curve $(E, p)$ an \emph{elliptic tail} if $E$ is smooth, and a \emph{pig tail} if $E$ has a node.
\end{remark}

\begin{lemma} \label{lem:core_tail_lemma}
    Let $C$ be a stable curve over $k$ of genus $3$, and let $r$ be the number of $1$-tails of $C$. Then $0 \leq r \leq 3$. The curve has exactly $r$ separating nodes, which are precisely the attachment points of the $1$-tails. The partial normalization of $C$ at these nodes results in the disjoint union of $r$ $1$-tails and an inseparable curve $C_c$ of genus $g(C_c) = 3 - r$.
\end{lemma}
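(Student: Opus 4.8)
The plan is to set up a bijection between the separating nodes of $C$ and the $1$-tails of $C$, and then read off the whole structure by applying the combinatorial Lemma~\ref{lem:chains_of_cuts}~\ref{lem:chains_of_cuts_i} to the set of all separating nodes.

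First I would analyze a single separating node $p$. Partial normalization at $p$ splits $C$ into two connected curves $C_1 \sqcup C_2$, and since this operation does not change the arithmetic genus (the associated graph has two vertices and one edge, hence $h^1 = 0$), we get $g(C_1) + g(C_2) = 3$. By Lemma~\ref{lem:stable_decomposition}, $(C_1,\{p\})$ and $(C_2,\{p\})$ are stable $1$-pointed curves, so each has positive genus; hence $\{g(C_1),g(C_2)\} = \{1,2\}$. The genus-$1$ piece, say $C_1$, is a stable $1$-pointed curve of genus $1$, hence irreducible by Remark~\ref{rem:stable_curves_low_genus}; and because the two preimages of $p$ lie on the two distinct pieces, the partial normalization restricts to an isomorphism on $C_1$, identifying it with an irreducible component of $C$ of arithmetic genus $1$ attached to the rest of $C$ at the single node $p$ --- that is, with a $1$-tail having attachment point $p$. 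Conversely, if $E$ is any $1$-tail with attachment point $p$, the closure of $C\setminus E$ is connected (it meets $E$ only at $p$), so partial normalization at $p$ disconnects $C$ and $p$ is separating.

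Together these show that sending a separating node $p$ to the genus-$1$ piece obtained by normalizing at $p$ is a well-defined map from separating nodes to $1$-tails (well-defined because $1\neq 2$ singles out that piece), which is injective (the attachment point of the image is $p$) and surjective (by the converse just proved). Hence $C$ has exactly $r$ separating nodes, and they are precisely the attachment points of the $r$ $1$-tails. Now take $S$ to be this set of $r$ nodes. By Lemma~\ref{lem:chains_of_cuts}~\ref{lem:chains_of_cuts_i}, partial normalization of $C$ at $S$ yields a disjoint union of $r+1$ components whose associated graph is a tree, so their arithmetic genera sum to $g(C)=3$. Each $1$-tail is attached to the rest of $C$ only through a node of $S$, so it is cut off as one of these $r+1$ components; there are $r$ such, each of genus $1$, leaving exactly one component $C_c$ with $g(C_c)=3-r$, and in particular $0\le r\le 3$. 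Finally, $C_c$ is inseparable: any separating node $q$ of $C_c$ lies outside $S$, yet partial normalization of $C$ at $S\cup\{q\}$ would have one more vertex and one more edge than the tree above and still be connected, hence also a tree, forcing $q$ to be a separating node of $C$ --- contradicting that $S$ contains them all.

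I do not anticipate a genuinely hard step; the difficulty is organizational. The most delicate point is the identification in the second paragraph --- that the genus-$1$ piece produced by a separating node really is an irreducible component of $C$ (so that calling it a $1$-tail is legitimate) --- which relies on the bound on the number of components of a low-genus stable pointed curve together with the fact that the two preimages of a separating node lie on different pieces. The remaining ingredients --- additivity of arithmetic genus over a tree-like decomposition and the inseparability of $C_c$ --- are immediate from Lemma~\ref{lem:chains_of_cuts} and the genus formula for the associated graph.
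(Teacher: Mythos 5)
Your proof is correct and follows essentially the same route as the paper's: decompose at a separating node, use stability of the two pointed pieces to force genera $\{1,2\}$, identify the genus-$1$ piece as a $1$-tail, establish the bijection with separating nodes, and apply Lemma~\ref{lem:chains_of_cuts}~\ref{lem:chains_of_cuts_i} to the full set of separating nodes. The only difference is that you spell out a few steps the paper leaves implicit (the irreducibility of the genus-$1$ piece, via the component bound, and the inseparability of the core $C_c$), which is harmless.
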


\begin{definition}
    For a stable curve $C$ over $k$ of genus $3$, we call the curve $C_c$ from Lemma~\ref{lem:core_tail_lemma} the \emph{core} of $C$.
\end{definition}

\begin{proof}
    Any attachment point of a $1$-tail is by definition a separating node. Conversely, let $p$ be a separating node of $C$, and let $(C_1, p)$ and $(C_2, p)$ be the decomposition induced by $p$. Let their arithmetic genera be $g_1$ and $g_2$, with $g_1 \leq g_2$. The genus is given by $g(C) = g_1 + g_2 = 3$. Since $C$ is stable, the pointed curve $(C_1, p)$ must also be stable. By Remark~\ref{rem:stable_curves_low_genus}, this requires its genus to be $g_1 \geq 1$, forcing $g_1 = 1$ and $g_2 = 2$. This means the component $C_1$ is, by definition, a $1$-tail. As the other component $C_2$ has genus $2$, it is not a $1$-tail, and hence $p$ is the attachment point of just one $1$-tail.

    It follows that $C$ has exactly $r$ separating nodes, which are precisely the attachment points of its $1$-tails. By Lemma~\ref{lem:chains_of_cuts}~\ref{lem:chains_of_cuts_i}, the partial normalization at the $r$ separating nodes yields a decomposition into $r+1$ connected components: the $r$ $1$-tails and a remaining curve, the core $C_c$. By construction, the core $C_c$ has no separating nodes, i.e., it is inseparable. Since attaching a $1$-tail to a curve increases its genus by one, we have the relation $g(C) = g(C_c) + r$. As $g(C)=3$, this gives $g(C_c) = 3-r$, which forces $0 \leq r \leq 3$ since the genus of the core must be non-negative.
\end{proof}

To efficiently reference the 42 types, we use a compact naming convention, largely following \cite{bommelcayley}. The name precisely describes the curve's combinatorial structure:

\begin{itemize}
    \item A number (\texttt{0}, \texttt{1}, \texttt{2}, or \texttt{3}) indicates an irreducible component of that \emph{geometric} genus.
    \item The letter \texttt{n} appended to a genus number (e.g., \texttt{0n}) signifies a self-intersection (a node) on that component.
    \item The letter \texttt{e} denotes an \emph{elliptic tail} attached to the previously mentioned component.
    \item The letter \texttt{m} denotes a \emph{pig tail} (from ``multiplicative reduction'') attached to the previously mentioned component.
    \item Special characters describe intersections between two components: \texttt{=} for two intersection points, \texttt{-{}-{}-} for three, and \texttt{-{}-{}-{}-} for four.
    \item \texttt{Z} is a convenient shorthand for the common configuration \texttt{0=0}, a binary curve with two nodes.
    \item Two exceptional graphs have special names, following the notation in \cite{bouw2021reduction}: \texttt{CAVE} and \texttt{BRAID}.
\end{itemize}

\begin{remark}
    In the naming convention used in works such as \cite{bommelcayley} and \cite{van2025reduction}, the hyperelliptic property is incorporated directly into the notation for the reduction type. For instance, they use \texttt{(2n)\textsubscript{H}} for a hyperelliptic curve of geometric genus $2$ with one node, while \texttt{2n} denotes the corresponding non-hyperelliptic type. In this thesis, we adopt a different convention, treating hyperellipticity as a separate property of a combinatorial type rather than as part of its name.
\end{remark}

\begin{proposition}[Classification of stable curves of genus $3$] \label{prop:classification}
  There are exactly $42$ combinatorial types of stable curves of genus $3$. They may be classified according to the properties of their core, yielding two main families: those with a 2-inseparable core (27 types) and those with a 2-separable core (15 types):
  \needspace{10cm}
    \begin{description}
        \item Core is 2-inseparable (27 cases)
        \begin{description}
            \item Core is irreducible (20 cases)
            \par
            \vspace{0.5em}
            \centerline{%
            \begin{tabular}{Sc|Sc|Sc|Sc}
            \small0 1-tails (4) & \small1 1-tail (6) & \small2 1-tails (6) & \small3 1-tails (4) \\
            \hline
            \begin{tabular}{@{}cc@{}}
                \scalebox{0.9}{\HobbyCurve{3}} & \scalebox{0.9}{\HobbyCurve{2n}} \\[-0.40em]
                \footnotesize\texttt{3} & \footnotesize\texttt{2n} \\[-0.125em]
                \scalebox{0.9}{\HobbyCurve{1nn}} & \scalebox{0.9}{\HobbyCurve{0nnn}} \\[-0.40em]
                \footnotesize\texttt{1nn} & \footnotesize\texttt{0nnn}
            \end{tabular}
            &
            \begin{tabular}{@{}cc@{}}
                \scalebox{0.9}{\HobbyCurve{2e}} & \scalebox{0.9}{\HobbyCurve{2m}} \\[-0.40em]
                \footnotesize\texttt{2e} & \footnotesize\texttt{2m} \\[-0.125em]
                \scalebox{0.9}{\HobbyCurve{1ne}} & \scalebox{0.9}{\HobbyCurve{1nm}} \\[-0.40em]
                \footnotesize\texttt{1ne} & \footnotesize\texttt{1nm} \\[-0.125em]
                \scalebox{0.9}{\HobbyCurve{0nne}} & \scalebox{0.9}{\HobbyCurve{0nnm}} \\[-0.40em]
                \footnotesize\texttt{0nne} & \footnotesize\texttt{0nnm}
            \end{tabular}
            &
            \begin{tabular}{@{}cc@{}}
                \scalebox{0.9}{\HobbyCurve{1ee}} & \scalebox{0.9}{\HobbyCurve{1me}} \\[-0.40em]
                \footnotesize\texttt{1ee} & \footnotesize\texttt{1me} \\[-0.125em]
                \scalebox{0.9}{\HobbyCurve{1mm}} & \scalebox{0.9}{\HobbyCurve{0nee}} \\[-0.40em]
                \footnotesize\texttt{1mm} & \footnotesize\texttt{0nee} \\[-0.125em]
                \scalebox{0.9}{\HobbyCurve{0nme}} & \scalebox{0.9}{\HobbyCurve{0nmm}} \\[-0.40em]
                \footnotesize\texttt{0nme} & \footnotesize\texttt{0nmm}
            \end{tabular}
            &
            \begin{tabular}{@{}cc@{}}
                \scalebox{0.9}{\HobbyCurve{0eee}} & \scalebox{0.9}{\HobbyCurve{0mee}} \\[-0.40em]
                \footnotesize\texttt{0eee} & \footnotesize\texttt{0mee} \\[-0.125em]
                \scalebox{0.9}{\HobbyCurve{0mme}} & \scalebox{0.9}{\HobbyCurve{0mmm}} \\[-0.40em]
                \footnotesize\texttt{0mme} & \footnotesize\texttt{0mmm}
            \end{tabular}
            \end{tabular}
            }
    
            \item Core is reducible (7 cases)
                \par
                \centerline{%
                \renewcommand{\arraystretch}{1.2}
                \begin{tabular}{Sc|Sc}
                \small0 1-tails (5) & \small1 1-tail (2) \\
                \hline
                \begin{tabular}{@{}c@{}}
                    \setlength{\tabcolsep}{1pt}
                    \begin{tabular}{ccccc}
                    \scalebox{0.9}{\HobbyCurve{1---0}} & \scalebox{0.9}{\HobbyCurve{0---0n}} & \scalebox{0.9}{\HobbyCurve{0----0}} & \scalebox{0.9}{\HobbyCurve{CAVE}} & \scalebox{0.9}{\HobbyCurve{BRAID}} \\[-0.60em]
                    \footnotesize\texttt{1-{}-{}-0} & \footnotesize\texttt{0-{}-{}-0n} & \footnotesize\texttt{0-{}-{}-{}-0} & \footnotesize\texttt{CAVE} & \footnotesize\texttt{BRAID}
                    \end{tabular}
                \end{tabular}
                &
                \begin{tabular}{@{}c@{}}
                    \begin{tabular}{cc}
                    \scalebox{0.9}{\HobbyCurve{0---0e}} & \scalebox{0.9}{\HobbyCurve{0---0m}} \\[-0.60em]
                    \footnotesize\texttt{0-{}-{}-0e} & \footnotesize\texttt{0-{}-{}-0m}
                    \end{tabular}
                \end{tabular}
                \end{tabular}
                }
        \end{description}
    
        \item Core is 2-separable (15 cases)
        \begin{description}
            \item Core has 2 components (10 cases)
                \par
                \vspace{0.5em}
                \centerline{%
                \renewcommand{\arraystretch}{1.2}
                \begin{tabular}{Sc|Sc|Sc}
                \small0 1-tails (3) & \small1 1-tail (4) & \small2 1-tails (3) \\
                \hline
                \begin{tabular}{@{}c@{}}
                    \setlength{\tabcolsep}{3pt}
                    \begin{tabular}{ccc}
                    \scalebox{0.9}{\HobbyCurve{1=1}} & \scalebox{0.9}{\HobbyCurve{1=0n}} & \scalebox{0.9}{\HobbyCurve{0n=0n}} \\[-0.50em]
                    \footnotesize\texttt{1=1} & \footnotesize\texttt{1=0n} & \footnotesize\texttt{0n=0n}
                    \end{tabular}
                \end{tabular}
                &
                \begin{tabular}{@{}c@{}}
                    \setlength{\tabcolsep}{2pt}
                    \begin{tabular}{cccc}
                    \scalebox{0.9}{\HobbyCurve{1=0e}} & \scalebox{0.9}{\HobbyCurve{1=0m}} & \scalebox{0.9}{\HobbyCurve{0n=0e}} & \scalebox{0.9}{\HobbyCurve{0n=0m}} \\[-0.50em]
                    \footnotesize\texttt{1=0e} & \footnotesize\texttt{1=0m} & \footnotesize\texttt{0n=0e} & \footnotesize\texttt{0n=0m}
                    \end{tabular}
                \end{tabular}
                &
                \begin{tabular}{@{}c@{}}
                    \setlength{\tabcolsep}{3pt}
                    \begin{tabular}{ccc}
                    \scalebox{0.9}{\HobbyCurve{0e=0e}} & \scalebox{0.9}{\HobbyCurve{0m=0e}} & \scalebox{0.9}{\HobbyCurve{0m=0m}} \\[-0.50em]
                    \footnotesize\texttt{0e=0e} & \footnotesize\texttt{0m=0e} & \footnotesize\texttt{0m=0m}
                    \end{tabular}
                \end{tabular}
                \end{tabular}
                }
    
            \item Core has at least 3 components (5 cases)
                \par
                \vspace{0.5em}
                \centerline{%
                \renewcommand{\arraystretch}{1.2}
                \begin{tabular}{Sc|Sc}
                \small0 1-tails (3) & \small1 1-tail (2) \\
                \hline
                \begin{tabular}{@{}c@{}}
                    \setlength{\tabcolsep}{3pt}
                    \begin{tabular}{ccc}
                    \scalebox{0.9}{\HobbyCurve{Z=1}} & \scalebox{0.9}{\HobbyCurve{Z=0n}} & \scalebox{0.9}{\HobbyCurve{Z=Z}} \\[-0.50em]
                    \footnotesize\texttt{Z=1} & \footnotesize\texttt{Z=0n} & \footnotesize\texttt{Z=Z}
                    \end{tabular}
                \end{tabular}
                &
                \begin{tabular}{@{}c@{}}
                    \begin{tabular}{cc}
                    \scalebox{0.9}{\HobbyCurve{Z=0e}} & \scalebox{0.9}{\HobbyCurve{Z=0m}} \\[-0.50em]
                    \footnotesize\texttt{Z=0e} & \footnotesize\texttt{Z=0m}
                    \end{tabular}
                \end{tabular}
                \end{tabular}
                }
        \end{description}
    \end{description}
The geometric genus of each component ($0$, $1$, $2$, or $3$) is indicated by its line thickness, from thinnest to thickest.
\end{proposition}

The diagrams in the classification above are adapted from \cite[Figure 2.2]{bommelcayley}, with slight modifications made using the Hobby-Editor tool \cite{hobbyeditor}. The enumeration of all stable types of a genus $g$ curve is a purely combinatorial problem and is well known (see for example \cite[Theorem 2.2.12]{chan2013tropical}). For completeness, we provide a proof demonstrating how all 42 types for genus $3$ can be systematically derived using the ``core and tails'' structure.

\begin{proof}
    Let $C$ be a genus $3$ stable curve with $r$ $1$-tails and its core $C_c$. By definition, the core is an inseparable curve of genus $g(C_c) = 3-r$. Note that by Proposition \ref{prop:stable_via_canonical}, $C$ can have at most $2 g(C) - 2 = 4$ irreducible components, and hence the core $C_c$ can have at most $4-r$.

    \begin{case}
        The Core is 2-Inseparable
        
        We can classify the combinatorial types of $C_c$ by analyzing its component graph, defined as $\Gamma_c := \Graph^S(C_c, \emptyset)$, where $S$ is the set of nodes connecting distinct irreducible components. Its vertices correspond to the $n \le 4-r$ irreducible components of $C_c$, with weights given by their arithmetic genera $g_1, \dots, g_n$. Its edges correspond to the $i:=|S|$ nodes in $S$ that connect distinct components. By construction, this graph has no legs, as there are no marked points, and no loops, as $S$ only contains nodes connecting distinct components. The genus of the core is given by the genus of its graph:
        \[
            g(C_c) = g(\Gamma_c) = \sum_{j=1}^n g_j + 1 - \chi(\Gamma_c) =
            \sum_{j=1}^n g_j + 1 + i - n.
        \]
        Substituting $g(C_c) = 3-r$, we get our main equation for this case: 
        \[ \sum_{j=1}^n g_j = 2+n-i-r. \]

        \begin{subcase}
            The Core is Irreducible ($n=1$)

            The core is a single irreducible component, so $i=0$. The equation gives its arithmetic genus as $g(C_c) = 3-r$. It can have a geometric genus $g_c$ from $0$ to $3-r$, with a corresponding number of self-intersection nodes $d$ such that $g_c+d=3-r$. A systematic enumeration of these possibilities for each $r \in \{0,1,2,3\}$, combined with the possible types of attached $1$-tails (elliptic or pig tail), yields the \emph{20 types} with an irreducible core.
        \end{subcase}

        \begin{subcase}
            The Core is Reducible ($n \ge 2$)

            Since $C_c$ is 2-inseparable, its component graph $\Gamma_c$ must be 3-edge-connected. This implies every vertex has degree at least $3$, so $2i \ge 3n$.
            
            \begin{description}
                \item[$n=2$:] Requires $i \ge 3$. The genus formula is $g_1+g_2 = 4-i-r$. Since $g_j \ge 0$, we must have $i \le 4$. This leaves two possibilities:
                \begin{description}
                    \item[$i=3$:] Then $g_1+g_2=1-r$. This requires $r=0$ (for $\{g_1,g_2\}=\{1, 0\}$, giving types \texttt{1-{}-{}-0} and \texttt{0n-{}-{}-0}) or $r=1$ (for $g_1=g_2=0$, giving types \texttt{0-{}-{}-0e} and \texttt{0-{}-{}-0m}).
                    \item[$i=4$:] Then $g_1+g_2=-r$. This requires $r=0, g_1=g_2=0$. This is the type \texttt{0-{}-{}-{}-0}.
                \end{description}
                \item[$n=3$:] Requires $2i \ge 9$, so $i \ge 5$. The formula is $g_1+g_2+g_3=5-i-r$, which forces $i=5$, $r=0$, and $g_1=g_2=g_3=0$. Hence $\Gamma_c$ coincides with the dual graph of $C$ and is uniquely determined as the unique 3-vertex graph with no loops and $5$ edges where each vertex has a degree of at least $3$, which is the dual graph of \texttt{CAVE}.
                \item[$n=4$:] Requires $i \ge 6$. The formula is $\sum g_j = 6-i-r$, forcing $i=6$, $r=0$, and all $g_j=0$. Here again, the component graph $\Gamma_c$ coincides with the dual graph of $C$ and is uniquely determined as the complete graph $K_4$ (the unique 4-vertex loop-less graph with $6$ edges where each vertex has degree $3$), which is the dual graph of \texttt{BRAID}.
            \end{description}
            This analysis yields the \emph{7 types} with a reducible, 2-inseparable core.
        \end{subcase}
    \end{case}

    \begin{case}
        The Core is 2-Separable

        If the core $C_c$ is 2-separable, it possesses a separating pair $\{p,q\}$, which is also a separating pair for the entire curve $C$. The decomposition of $C$ induced by this pair yields two pointed curves, $(D_1;p,q)$ and $(D_2;p,q)$, which must themselves be stable (Lemma \ref{lem:stable_decomposition}). The genus formula for a decomposition at a separating pair is $g(C) = g(D_1) + g(D_2) + 1$. Since $g(C)=3$, it follows that $g(D_1) + g(D_2) = 2$. Stability for a $2$-pointed curve $(D; p, q)$ requires $2g(D)-2+2 > 0$, hence $g(D) \ge 1$. Therefore, the only possibility is $g(D_1) = 1$ and $g(D_2) = 1$.
        
        The classification of these 15 types thus reduces to enumerating all combinations of two stable, $2$-pointed curves of genus $1$. These building blocks are classified in the following claim.

        \begin{claimstar}[Classification of stable $2$-pointed curves of genus 1]
            Let $(D;p,q)$ be a stable $2$-pointed curve of arithmetic genus $g(D)=1$. Then its combinatorial type must be one of the following 5 possibilities, categorized by structure:
            \begin{enumerate}[label=\DefListLabel]
                \item \label{class_2-point_1} \textbf{Irreducible} (2 types): $D$ is smooth (type \texttt{1}) or nodal (type \texttt{0n}).
                \item \label{class_2-point_2} \textbf{Reducible and Separable} (2 types): $D$ consists of a $1$-tail attached to a rational component, with both marked points $p,q$ on the rational component (type \texttt{0e} or \texttt{0m}).
                \item \label{class_2-point_3} \textbf{Reducible and Inseparable} (1 type): $D$ is a binary curve with two nodes (type \texttt{Z}), and $p,q$ lie on distinct components.
            \end{enumerate}
        \end{claimstar}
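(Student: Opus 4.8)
The plan is to begin from the bound on the number of irreducible components supplied by Proposition~\ref{prop:stable_via_canonical}: a stable $n$-pointed curve of genus $g$ has at most $2g-2+n$ irreducible components, which for $(g,n)=(1,2)$ equals $2$ (this is also recorded in Remark~\ref{rem:stable_curves_low_genus}). So $D$ has either one or two irreducible components, and I would treat these two cases in turn. In both, the only tools needed are the genus formula relating the arithmetic genus of $D$ to the geometric genera of its components and the numbers of nodes (as in \cite[Proposition 7.5.4]{liu}, already invoked in Example~\ref{ex:comb_types_g=2}), together with the component-wise stability criterion of Proposition~\ref{prop:stable_via_canonical}~\ref{prop:stable_via_canonical_iv}.

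If $D$ is irreducible, then since it is semistable it is smooth or nodal, and the difference between its arithmetic and geometric genus equals its number of nodes; as the arithmetic genus is $1$, there are exactly two possibilities: $D$ is smooth of genus $1$ (type \texttt{1}) or $D$ is rational with a single node (type \texttt{0n}). Both are automatically stable with two marked points because $2g(D)-2+n=2>0$, and $p,q$ can be any two distinct smooth points. This gives case~\ref{class_2-point_1}. If $D$ has two irreducible components $D_1,D_2$, write $g_i$ for their arithmetic genera and let $\delta\ge 1$ be the number of nodes joining them; the genus formula gives $g_1+g_2+\delta-1=1$, i.e.\ $g_1+g_2+\delta=2$. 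If $\delta=1$, then $\{g_1,g_2\}=\{1,0\}$, so one component is $\simeq\PP^1_k$ and the other is an irreducible genus~$1$ curve meeting it at one node, i.e.\ a $1$-tail, which (being irreducible of arithmetic genus $1$) is either an elliptic tail (type \texttt{0e}) or a pig tail (type \texttt{0m}); the rational component already carries one special point, namely the attaching node, so stability forces both $p$ and $q$ onto it, and the attaching node is a separating node, so $D$ is separable. This is case~\ref{class_2-point_2}. If $\delta=2$, then $g_1=g_2=0$, so $D$ is a binary curve with two nodes (type \texttt{Z}); stability forces exactly one marked point on each component, and since normalizing at either node leaves $D$ connected through the other, $D$ is inseparable. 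This is case~\ref{class_2-point_3}. Finally $\delta\ge 3$ forces $g_1+g_2\le -1$, which is impossible. Altogether this yields $2+2+1=5$ combinatorial types, as claimed.

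I do not anticipate a genuine obstacle; the computation is short and elementary. The one point requiring a little care is the stability bookkeeping in the reducible case: one must remember that a smooth rational component needs at least three special points (its marked points together with the nodes attaching it to the rest of $D$), whereas self-nodes of a component do not count among its special points. This is precisely what pushes both marked points onto the rational component when $\delta=1$ and distributes them one per component when $\delta=2$. It is also worth spelling out, in the $\delta=1$ case, why the genus~$1$ component must be irreducible (a reducible one would force $D$ to have at least three components, contradicting the bound $m\le 2$), so that it really is an elliptic or pig tail in the sense defined above.
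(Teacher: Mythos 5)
Your proposal is correct and follows essentially the same route as the paper: bound the number of components by $2g-2+n=2$ via Proposition~\ref{prop:stable_via_canonical}, split into the irreducible and two-component cases, and use the genus formula $g_1+g_2=2-\delta$ together with the component-wise stability criterion to pin down the marked-point distribution. The extra details you flag (irreducibility of the genus-$1$ component, the special-point bookkeeping) are handled implicitly in the paper via Remark~\ref{rem:elliptic_pig_tails} but are worth making explicit.
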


        \begin{proof}[of Claim]
            By Proposition \ref{prop:stable_via_canonical}, the curve $D$ has at most $2g(D) - 2 + |\{p,q\}| = 2$ irreducible components. The irreducible case corresponds to the two types listed. If $D$ is reducible, it must consist of two components $D_1 \cup D_2$ with arithmetic genera $g_1, g_2$ connected by $i$ nodes. The genus formula gives $g_1 + g_2 = 2-i$, so $i \in \{1,2\}$.
            \begin{description}
                \item[$i=1$ (Separable):] We have $g_1+g_2=1$, so we may assume $g_1=0$ and $g_2=1$. By the stability condition, the genus-0 component $D_1$ requires at least three special points. As it has only one connecting node, it must contain both marked points $p$ and $q$. The genus-1 component $D_2$, which has only one special point (the connecting node), is therefore a $1$-tail. This structure yields the types \texttt{0e} and \texttt{0m}.
                \item[$i=2$ (Inseparable):] The formula $g_1 + g_2 = 0$ forces $g_1=g_2=0$. Thus, $D$ is of type \texttt{Z}. For stability, each rational component has two connecting nodes and requires a third special point. Therefore, $p$ and $q$ must lie on distinct components.
            \end{description}
        \end{proof}

        The 15 combinatorial types with a 2-separable core are formed by combining two of the 5 component types from the claim (with replacement), yielding $\binom{5+2-1}{2} = 15$ pairings. These correspond to the types listed in the classification table. The $1$-tails of the resulting curve $C$ are inherited from any components of type \ref{class_2-point_2} used in the pairing. The core of $C$ has more than $2$ irreducible components if and only if at least one chosen component is of type \ref{class_2-point_3} (\texttt{Z}); this accounts for 5 of the 15 cases. The remaining 10 pairings result in a core with exactly $2$ components. This completes the classification for types with 2-separable cores. \hfill\proofSymbol
    \end{case}
\end{proof}

\begin{remark} \label{rem:g3_semicompact}
    The preceding classification shows that a stable curve of genus $3$ has at most one separating pair (none if its core is 2-inseparable, and exactly one if its core is 2-separable). Consequently, every stable genus $3$ curve is of semicompact type.
\end{remark}

We now have the tools to determine which of the 42 types of stable genus $3$ curves are hyperelliptic. Our strategy is to apply the decomposition lemmas, which reduce the problem to analyzing the hyperellipticity of the resulting lower-genus components.

We will first address the case of curves with a 2-separable core. The following lemma provides the crucial result for this analysis, establishing that the specific building blocks that arise in this case are always hyperelliptic.

\begin{lemma} \label{lem:always_hyperelliptic}
    The following types of stable curves are always hyperelliptic:
    \begin{enumerate}[label=\StatementListLabel]
        \item \label{lem:always_hyperelliptic_i} Any stable $1$-pointed curve of arithmetic genus $1$.
        \item \label{lem:always_hyperelliptic_ii} Any stable (unpointed) curve of arithmetic genus $2$.
    \end{enumerate}
\end{lemma}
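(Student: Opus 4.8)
The plan is to produce a hyperelliptic involution by hand in every case, using the decomposition lemmas to reduce part (b) to part (a) whenever a separating node is present.

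For part (a): by Remark~\ref{rem:elliptic_pig_tails} the curve $E$ is irreducible and is either smooth or has a single node. If $E$ is smooth, the inversion $[-1]$ for the group law with origin $p$ fixes $p$ and has quotient $E/\langle[-1]\rangle\cong\PP^1_k$. If $E$ is a pig tail, I pass to the normalization $\nu\colon\tilde E\cong\PP^1_k\to E$, let $q_1,q_2$ be the preimages of the node and $\tilde p\notin\{q_1,q_2\}$ the preimage of $p$, and pick the involution $\tau$ of $\PP^1_k$ that fixes $\tilde p$ and interchanges $q_1$ and $q_2$ (in a coordinate with $q_1=0$, $q_2=\infty$ it is $z\mapsto \tilde p^{\,2}/z$; equivalently it is the inversion on $E^{\mathrm{sm}}\cong\mathbb G_m$ normalised so that $p\leftrightarrow 1$). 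Since $\tau$ swaps $q_1$ and $q_2$ it descends to an involution $\sigma$ of $E$ fixing $p$; it fixes the node and interchanges its two branches, so $E/\langle\sigma\rangle$ is smooth (the smoothness at the image of the node being the local computation $(k[\![x,y]\!]/(xy))^{\langle x\leftrightarrow y\rangle}=k[\![x+y]\!]$), irreducible, and of genus $0$, hence $\cong\PP^1_k$ --- a rational tree. As $\sigma$ is not the identity on the irreducible curve $E$, its fixed points are isolated when $\Char k\ne 2$ (Remark~\ref{rem:degree2_map}). Hence $\sigma$ is a hyperelliptic involution.

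For part (b), let $C$ be a stable curve of arithmetic genus~$2$. If $C$ has a separating node $p$, decompose it there into stable pointed curves $(C_1,p)$ and $(C_2,p)$ (Lemma~\ref{lem:stable_decomposition}); from $g(C_1)+g(C_2)=g(C)=2$ and the fact that a stable $1$-pointed curve has genus $\ge 1$ (Remark~\ref{rem:stable_curves_low_genus}) we get $g(C_1)=g(C_2)=1$, so both are hyperelliptic by part~(a), and then $C$ is hyperelliptic by Lemma~\ref{lem:decomp_sep_node}. This disposes of the three types consisting of two genus-$1$ curves glued at a node. If $C$ has no separating node, the classification of genus-$2$ stable curves (Example~\ref{ex:comb_types_g=2}) leaves exactly four possibilities: the smooth genus-$2$ curve, which is hyperelliptic classically (\cite[Proposition~7.4.9]{liu}); the irreducible curve of geometric genus~$1$ with one node; the irreducible rational curve with two nodes; and the binary curve $Z_1\cup Z_2$ of two copies of $\PP^1_k$ meeting at three nodes. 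The two middle types are handled as in part~(a): normalise, choose an involution $\tau$ of $\tilde C$ interchanging every glued pair --- namely $\tau=t_{q_1+q_2}\circ[-1]$ on a genus-$1$ normalisation with glued pair $\{q_1,q_2\}$, and $\tau(z)=ab/z$ on $\tilde C=\PP^1_k$ with glued pairs $\{0,\infty\}$ and $\{a,b\}$ --- and descend $\tau$ to $C$ as before. For the binary curve, let $\phi\colon Z_1\xrightarrow{\ \sim\ }Z_2$ be the unique isomorphism carrying the three attaching points on $Z_1$ to those on $Z_2$, and set $\sigma|_{Z_1}=\phi$, $\sigma|_{Z_2}=\phi^{-1}$; then $\sigma^2=\mathrm{id}$, $\sigma$ interchanges the branches at each node, $C/\langle\sigma\rangle\cong\PP^1_k$ (identifying $x\in Z_1$ with $\phi(x)\in Z_2$ collapses $C$ onto $Z_1$ and makes the three gluings trivial), and $\sigma$ is the identity on no component, so it is a hyperelliptic involution in every characteristic.

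The point that will require the most care is verifying, uniformly in the characteristic, that each quotient really is $\PP^1_k$. Two sub-issues arise: first, that the quotient of a genus-$1$ normalisation $\tilde C$ by the chosen $\tau$ has genus~$0$ --- in characteristic $2$ the invariant-differentials argument breaks down, so I would instead use Riemann--Hurwitz with the different, observing that a fixed point of $\tau$ (which the chosen $\tau$ always has) forces the different to have positive degree and hence the quotient genus to drop below $g(\tilde C)\le 1$; second, that re-gluing --- equivalently, that $C/\langle\sigma\rangle$ is smooth at the images of the interchanged nodes --- again produces a smooth curve, which is the characteristic-free local computation $(k[\![x,y]\!]/(xy))^{\langle x\leftrightarrow y\rangle}=k[\![x+y]\!]$ quoted above. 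Neither is deep, but both must be carried out without appealing to $\Char k\ne 2$. As an alternative for part~(b) one could instead invoke $\mathcal H_2=\mathcal M_2$ together with the theorem identifying $\overline{\mathcal H}_g$ with the stable curves admitting a hyperelliptic involution, but the explicit construction above is self-contained and characteristic-uniform.
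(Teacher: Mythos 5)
Your proof is correct, but it reaches the two non-trivial conclusions by a genuinely different route than the paper. For part (a) the paper does not build the involution by hand: it notes that $|\OO_C(2p)|$ is base-point-free by Riemann--Roch, obtains a finite degree-$2$ morphism $f\colon C\to\PP^1_k$, shows $f$ is separable (a purely inseparable degree-$2$ map would force $C$ to be smooth of genus $0$), and takes $\sigma$ to be the Galois involution of $f$, with $\sigma(p)=p$ because $f$ is ramified at $p$. For the 2-inseparable case of part (b) the paper bypasses the case analysis entirely: the canonical map $\Phi_C\colon C\to\PP^1_k$ is a morphism by Theorem~\ref{thm:catanese_base_points}, it cannot be a closed immersion for genus reasons, and Corollary~\ref{cor:canonical_embedding_general} then forces $C$ to be hyperelliptic. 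The separating-node case of (b) is handled identically in both proofs. Your approach buys self-containedness: you never invoke Section~\ref{sec:quasi-hyp} or Catanese's theorems, and every involution is exhibited explicitly, at the price of a case-by-case analysis and the characteristic-$2$ checks you correctly flag (both of which go through: the fixed point of $t_{q_1+q_2}\circ[-1]$ exists because $[2]$ is surjective on an elliptic curve, which makes $\tau$ a translation-conjugate of $[-1]$, and the invariant computation $(k[\![x,y]\!]/(xy))^{\langle x\leftrightarrow y\rangle}=k[\![x+y]\!]$ is characteristic-free). The paper's approach buys uniformity and reusability: the description of $\sigma$ in part (a) as the involution attached to the linear system $|\OO_C(2p)|$ is quoted again in the proof of Proposition~\ref{prop:2-insep_geom_condition_hyperelliptic}, so the more abstract formulation does downstream work that an explicit formula would not. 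The only point I would ask you to spell out is the parenthetical existence of a fixed point of $\tau=t_{q_1+q_2}\circ[-1]$; it is true, but it deserves the one-line justification via surjectivity of $[2]$.
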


\begin{proof}
This proof relies on techniques from Section~\ref{sec:quasi-hyp}, applying Corollary~\ref{cor:canonical_embedding_general} and adapting the argument from the proof of Corollary~\ref{cor:all_the_same}.
\begin{prooflist}
\item Let $(C,p)$ be a stable $1$-pointed curve of arithmetic genus $g=1$. By Remark~\ref{rem:elliptic_pig_tails}, $C$ is irreducible. By the Riemann-Roch theorem, the linear system $|\mathcal{O}_C(2p)|$ is base-point-free and induces a finite morphism $f\colon C \to \PP^1_k$ of degree $2$. Following the argument in Corollary~\ref{cor:all_the_same}, this morphism is separable and is therefore induced by a unique involution $\sigma$. (In the smooth case, where $(C,p)$ is an elliptic curve with $p$ as the origin, this is the inversion map $x \mapsto -x$). By construction, the morphism $f$ is ramified at $p$, which must therefore be a fixed point of the involution $\sigma$. Therefore, $\sigma$ is a hyperelliptic involution for the pointed curve $(C,p)$.

\item A stable curve $C$ of arithmetic genus $g=2$ is either separable or 2-inseparable, as it cannot have a separating pair (since the genera of the resulting components would have to sum to 1, while the stability condition for $2$-pointed curves requires the genus of each to be at least 1).
\begin{description}
    \item[Separable case:] If $C$ is separable, it possesses a unique separating node $p$. The decomposition at $p$ yields two components. Since stability forbids a genus $0$ component with only one attachment point, both components must be stable $1$-pointed curves of genus $1$. By part \ref{lem:always_hyperelliptic_i}, these components are hyperelliptic. The decomposition lemma (Lemma~\ref{lem:decomp_sep_node}) then implies that $C$ is hyperelliptic.

    \item[2-Inseparable case:] If $C$ is 2-inseparable, we consider the canonical map $\Phi_C\colon C \to \PP_k^{g-1} = \PP_k^1$. Since $C$ is 2-inseparable (and thus inseparable), the canonical system is base-point-free by Theorem~\ref{thm:catanese_base_points}, so $\Phi_C$ is a well-defined morphism.
    
    If $\Phi_C$ were a closed immersion, $C$ would be isomorphic to its image $\PP^1_k$. This is impossible since $g(C)=2$ and $g(\PP^1_k)=0$. Therefore, $\Phi_C$ is not a closed immersion. By Corollary~\ref{cor:canonical_embedding_general}, it follows that $C$ must be hyperelliptic.
\end{description}
\end{prooflist}
\end{proof}

\begin{proposition} \label{prop:2-sep-hyp}
    A stable curve $C$ of genus $3$ with a 2-separable core is always hyperelliptic.
\end{proposition}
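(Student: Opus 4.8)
The plan is to reduce the statement to the genus-$2$ case via the separating-pair decomposition lemma. By Remark~\ref{rem:g3_semicompact} every stable genus~$3$ curve is of semicompact type, so Lemma~\ref{lem:decomp_sep_pair} is available, and this is the engine of the proof.

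First I would locate the separating pair. Since the core $C_c$ is $2$-separable, it has a separating pair $\{p,q\}$. The only nodes of $C$ not already present in $C_c$ are the attachment points of $1$-tails, and these are separating \emph{nodes}, hence irrelevant to whether the partial normalization at $\{p,q\}$ disconnects the curve; therefore $\{p,q\}$ is a separating pair of $C$ as well (and the unique one, by Remark~\ref{rem:g3_semicompact}). Decomposing $C$ at $\{p,q\}$ exactly as in the $2$-separable case of the proof of Proposition~\ref{prop:classification} yields two stable $2$-pointed curves $(D_1;p,q)$ and $(D_2;p,q)$, and the genus identity $g(C)=g(D_1)+g(D_2)+1$ together with the stability bound $g(D_i)\ge 1$ forces $g(D_1)=g(D_2)=1$.

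Next I would pass to the contracted components $(D_i',P_i')$ of Definition~\ref{def:contracted_components}, where $D_i'$ is obtained from $D_i$ by identifying the two preimages of $\{p,q\}$ to a single node $r_i$, and $P_i'=P_i\setminus S_i=\emptyset$. By Lemma~\ref{lem:contracted_component_stable} each $(D_i',\emptyset)$ is a stable curve, and identifying two smooth points on the connected arithmetic-genus-$1$ curve $D_i$ raises the arithmetic genus by one, so $g(D_i')=2$. Thus each $D_i'$ is a stable \emph{unpointed} curve of genus $2$. By Lemma~\ref{lem:always_hyperelliptic}~(b), every such curve is hyperelliptic, so both $(D_1',\emptyset)$ and $(D_2',\emptyset)$ are hyperelliptic; Lemma~\ref{lem:decomp_sep_pair} then produces a hyperelliptic involution on $C$ by gluing, which proves the proposition.

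The argument is essentially bookkeeping once Lemmas~\ref{lem:decomp_sep_pair} and~\ref{lem:always_hyperelliptic} are in hand; the closest thing to an obstacle is the verification that the core's separating pair survives as a separating pair of $C$ with the correct genus splitting, and that contraction produces precisely a stable, unpointed, genus-$2$ curve with no leftover marked points. An alternative, more hands-on route would be to run through the $15$ combinatorial types listed in Proposition~\ref{prop:classification} one by one — each assembled from two of the five genus-$1$ $2$-pointed building blocks of the Claim — but the uniform argument above is cleaner and avoids case analysis.
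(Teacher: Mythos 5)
Your argument is correct and follows essentially the same route as the paper: reduce via the decomposition machinery (here Lemma~\ref{lem:decomp_sep_pair}, justified by semicompactness from Remark~\ref{rem:g3_semicompact}) to pieces covered by Lemma~\ref{lem:always_hyperelliptic}. The only difference is one of bookkeeping: you stop after a single decomposition at the separating pair of the full curve $C$ and invoke only the genus-$2$ case of Lemma~\ref{lem:always_hyperelliptic}, whereas the paper decomposes all the way to the $2$-inseparable pieces and uses both cases of that lemma; the two are logically equivalent since the genus-$2$ case of Lemma~\ref{lem:always_hyperelliptic} itself absorbs the separating-node step.
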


\begin{proof}
As a stable curve of genus $3$, $C$ is of semicompact type (Remark~\ref{rem:g3_semicompact}). The decomposition lemmas (Lemma~\ref{lem:decomp_sep_node} and Lemma~\ref{lem:decomp_sep_pair}) can therefore be applied, and they establish that $C$ is hyperelliptic if and only if each of the 2-inseparable curves resulting from its decomposition is hyperelliptic. As the proof of Proposition~\ref{prop:classification} for the 2-separable core case shows, these 2-inseparable components must be either stable (unpointed) curves of genus $2$ or stable $1$-pointed curves of genus $1$. Since both of these types are always hyperelliptic by Lemma~\ref{lem:always_hyperelliptic}, the condition is satisfied, and $C$ is therefore hyperelliptic.
\end{proof}

\begin{example}
    To illustrate the decomposition process, consider a curve $C$ of type \texttt{Z=0e}. As illustrated in Figure~\ref{fig:decomposition}, the process involves two steps.

    First, decomposing $C$ at its unique separating node yields two pointed curves: an elliptic tail (a stable $1$-pointed curve of geometric genus $1$) and the core, a pointed curve of type \texttt{Z=0}. Second, this core is decomposed at its separating pair. Following the procedure in Lemma~\ref{lem:decomp_sep_pair} (which involves forming contracted components), this second step yields a genus $2$ binary curve (type \texttt{0-{}-{}-0}) and a pig tail (a stable $1$-pointed nodal curve of genus $1$, type \texttt{0n}).

    Thus, the 2-inseparable components of the original curve $C$ are a genus $2$ binary curve, an elliptic tail, and a pig tail. As shown in Lemma~\ref{lem:always_hyperelliptic}, all of these components are hyperelliptic, which implies that $C$ itself is hyperelliptic.
    
    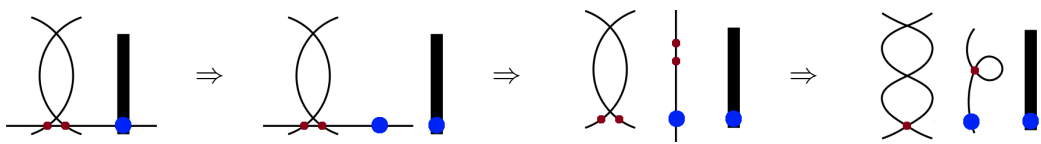
\begin{figure}[h!]
        \centering
        \adjustbox{valign=c}{%
            \scalebox{1.9}{%
                \begin{tikzpicture}[scale=0.12]
                    \useasboundingbox (-2.2,-3) rectangle (7.2,7.5);
                    \clip (-2.2,-3) rectangle (7.2,7.5);
                    
                    \draw[name path=P1] (-0.5, -0.5) to[curve through={(2, 3)}] (-0.5, 6.5);
                    \draw[name path=P2] (2.5, -0.5) to[curve through={(0, 3)}] (2.5, 6.5);
                    \draw[name path=P3] (-2, 0) to (7, 0);
                    \draw[line width = 2, name path=P4] (5, -0.5) to (5, 5.5);
                    
                    \fill[myred, name intersections={of=P1 and P3, by=Spair1}] (Spair1) circle (0.25);
                    \fill[myred, name intersections={of=P2 and P3, by=Spair2}] (Spair2) circle (0.25);
                    \fill[myblue, name intersections={of=P3 and P4, by=Snode}] (Snode) circle (0.5);
                \end{tikzpicture}%
            }%
        }
        \quad $\leadsto$ \quad
        \adjustbox{valign=c}{%
            \scalebox{1.9}{%
                \begin{tikzpicture}[scale=0.12]
                    \useasboundingbox (-2.2,-3) rectangle (9.2,7.5);
                    \clip (-2.2,-3) rectangle (9.2,7.5);
                    
                    \path[name path=P3dummy] (-2, 0) to (7, 0);
                    \path[name path=P4dummy] (5, -0.5) to (5, 5.5);
                    \path[name intersections={of=P3dummy and P4dummy, by=Snode}];
                    
                    \draw[name path=P1] (-0.5, -0.5) to[curve through={(2, 3)}] (-0.5, 6.5);
                    \draw[name path=P2] (2.5, -0.5) to[curve through={(0, 3)}] (2.5, 6.5);
                    \draw (-2, 0) to (7, 0); 
                    
                    \begin{scope}[shift={(3.5,0)}]
                        \draw[line width = 2] (5, -0.5) to (5, 5.5);
                    \end{scope}
                    
                    \fill[myred, name intersections={of=P1 and P3dummy, by=Spair1}] (Spair1) circle (0.25);
                    \fill[myred, name intersections={of=P2 and P3dummy, by=Spair2}] (Spair2) circle (0.25);
                    \fill[myblue] (Snode) circle (0.5);
                    \fill[myblue] (Snode) ++(3.5,0) circle (0.5);
                \end{tikzpicture}%
            }%
        }
        \quad $\leadsto$ \quad
        \adjustbox{valign=c}{%
            \scalebox{1.9}{%
                \begin{tikzpicture}[scale=0.12]
                    \useasboundingbox (-0.7,-3) rectangle (9.2,7.5);
                    \clip (-0.7,-3) rectangle (9.2,7.5);
                    
                    \path[name path=P3dummy] (-2, 0) to (7, 0);
                    \path[name path=P4dummy] (5, -0.5) to (5, 5.5);
                    \path[name intersections={of=P3dummy and P4dummy, by=Snode}];
                    
                    \draw[name path=P1] (-0.5, -0.5) to[curve through={(2, 3)}] (-0.5, 6.5);
                    \draw[name path=P2] (2.5, -0.5) to[curve through={(0, 3)}] (2.5, 6.5);
                    \path[name intersections={of=P1 and P3dummy, by=Spair1}];
                    \path[name intersections={of=P2 and P3dummy, by=Spair2}];
                    
                    \fill[myred] (Spair1) circle (0.25);
                    \fill[myred] (Spair2) circle (0.25);
                    
                    \draw (5, 7) to (5, -2);
                    
                    \path let \p1=(Spair1), \p2=(Snode) in coordinate (Spair1_rot) at (\x2, \x2 - \x1);
                    \path let \p1=(Spair2), \p2=(Snode) in coordinate (Spair2_rot) at (\x2, \x2 - \x1);
                    \fill[myred] (Spair1_rot) circle (0.25);
                    \fill[myred] (Spair2_rot) circle (0.25);
                    \fill[myblue] (Snode) circle (0.5);
                    
                    \begin{scope}[shift={(3.5,0)}]
                        \draw[line width = 2] (5, -0.5) to (5, 5.5);
                    \end{scope}
                    \fill[myblue] (Snode) ++(3.5,0) circle (0.5);
                \end{tikzpicture}%
            }%
        }
        \quad $\leadsto$ \quad
        \adjustbox{valign=c}{%
            \scalebox{1.9}{%
                \begin{tikzpicture}[scale=0.12]
                    \useasboundingbox (-0.7,-3) rectangle (9.2,7.5);
                    \clip (-0.7,-3) rectangle (9.2,7.5);
                    
                    \coordinate (Snode) at (5,0); 
                    
                    \draw (2.5, -0.5) to[curve through={(1, 0) (0, 1) (1, 2) (2, 3.5)}] (-0.5, 6.5);
                    \draw (-0.5, -0.5) to[curve through={(1, 0) (2, 1) (1, 2) (0, 3.5)}] (2.5, 6.5);
                    
                    \fill[myred] (1, 0) circle (0.25);
                    
                    \draw (5, -2) to[out angle = 90, curve through={(5.1, 4) (5.7, 4.5) (6.7, 4) (5.7, 3.5) (5.1, 4)}] (5, 7);
                    \fill[myred] (5.1, 4) circle (0.25);
                    
                    \fill[myblue] (4.8, 0) circle (0.5);
                    
                    \begin{scope}[shift={(3.5,0)}]
                        \draw[line width = 2] (5, -0.5) to (5, 5.5);
                    \end{scope}
                    
                    \fill[myblue] (5, 0) ++(3.5,0) circle (0.5);
                \end{tikzpicture}%
            }%
        }
        \caption{The decomposition process of a curve of type \texttt{Z=0e}. Separating nodes are marked in \textcolor{myblue}{blue} and separating pairs are marked in \textcolor{myred}{red}.}
        \label{fig:decomposition}
    \end{figure}
\end{example}

We now address the case of stable genus $3$ curves with a 2-inseparable core. For one case of the following proposition, we will use a direct generalization of Theorem~\ref{thm:canonical_embedding}, which characterizes hyperellipticity in terms of the canonical map for (possibly singular) 2-inseparable curves. This will be explained in more detail in the subsequent Section~\ref{sec:quasi-hyp}.

\begin{proposition} \label{prop:2-insep_geom_condition_hyperelliptic}
    Let $C$ be a stable curve of genus $3$ with a 2-inseparable core $C_c$ and $r$ $1$-tails. Let $x_1, \dots, x_r$ be the attachment points on the core. The hyperelliptic property of $C$ is determined as follows:
    \begin{description}
        \item[Core is irreducible:] Then $0 \leq r \leq 3$.
        \begin{description}
            \item[$r=0$:] $C$ is hyperelliptic if and only if the canonical map $\Phi_C\colon C \to \PP^2_k$ is not a closed immersion.
            \item[$r=1$:] $C$ is hyperelliptic if and only if the attachment point $x_1$ is a Weierstrass point on the genus $2$ curve $C_c$, i.e., $h^0(C_c, \OO_{C_c}(2x_1)) > 1$. Equivalently, $\omega_{C_c/k} \simeq \mathcal{O}_{C_c}(2x_1)$.
            \item[$r=2$:] $C$ is hyperelliptic if and only if there is an isomorphism of invertible sheaves $\mathcal{O}_{C_c}(2x_1) \simeq \mathcal{O}_{C_c}(2x_2)$ on the genus $1$ curve $C_c$.
            \item[$r=3$:] $C$ is hyperelliptic if and only if $\Char k = 2$.
        \end{description}
        \item[Core is reducible:] Then $0 \leq r \leq 1$.
        \begin{description}
            \item[$r=0$:] $C$ is hyperelliptic if and only if it is a binary curve of type \texttt{0-{}-{}-{}-0} and the set of four node preimages on one of its normalized components is projectively equivalent to the corresponding set of preimages on the other (i.e., they have the same cross-ratio).
            \item[$r=1$:] $C$ is never hyperelliptic.
        \end{description}
    \end{description}
\end{proposition}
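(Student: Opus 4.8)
The plan is to reduce, in every case, to deciding whether the pointed core $(C_c,\{x_1,\dots,x_r\})$ is hyperelliptic, and then to run a short case analysis. First I would use that $C$ is of semicompact type (Remark~\ref{rem:g3_semicompact}), so the decomposition lemmas apply. By Lemma~\ref{lem:core_tail_lemma} the $r$ separating nodes of $C$ are exactly the attachment points $x_1,\dots,x_r$, and normalising there yields the $r$ $1$-tails together with the core $C_c$, which we regard as marked at $x_1,\dots,x_r$. Applying Lemma~\ref{lem:decomp_sep_node} once per separating node (all intermediate curves being stable by Lemma~\ref{lem:stable_decomposition}), $C$ is hyperelliptic if and only if $(C_c,\{x_1,\dots,x_r\})$ and all $r$ $1$-tails are hyperelliptic; since a stable $1$-pointed genus $1$ curve is always hyperelliptic by Lemma~\ref{lem:always_hyperelliptic}~\ref{lem:always_hyperelliptic_i}, this gives
\[
    C\ \text{hyperelliptic}\iff (C_c,\{x_1,\dots,x_r\})\ \text{hyperelliptic}.
\]
It then remains to analyse the right-hand side using that $C_c$ is $2$-inseparable of genus $3-r$.

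\emph{Reducible core.} By Lemma~\ref{lem:unique_2_inse}, a $2$-inseparable stable hyperelliptic pointed curve is either irreducible or a binary curve carrying no marked points. Hence if $C_c$ is reducible and $r\ge 1$ (the types \texttt{0-{}-{}-0e}, \texttt{0-{}-{}-0m}) the pointed core has a marked point and cannot be hyperelliptic, so $C$ is never hyperelliptic. If $r=0$ then $C$ itself is reducible and $2$-inseparable, and being hyperelliptic forces it to be a binary curve with $g(C)+1=4$ nodes; among the five reducible $2$-inseparable genus $3$ types only \texttt{0-{}-{}-{}-0} qualifies. For \texttt{0-{}-{}-{}-0} I would argue that a hyperelliptic involution $\sigma$ must swap the two rational components: it cannot preserve both (else every connecting node would be separating by Proposition~\ref{prop:node_classification}~\ref{lem:properties_nodes_sep}, against $2$-inseparability), nor be the identity on one of them (the components meet only at non-separating nodes, so Proposition~\ref{prop:hyperelliptic_genus2} excludes this). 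Since $C$ has no separating pair, Proposition~\ref{prop:node_classification}~\ref{lem:properties_nodes_pair} then forces $\sigma$ to fix each of the four nodes individually. Writing the normalisation as $\PP^1_k\sqcup\PP^1_k$ with the two preimages of each node labelled compatibly, such a $\sigma$ exists exactly when there is an isomorphism of the two copies of $\PP^1_k$ carrying one ordered quadruple of node preimages to the other --- that is, exactly the cross-ratio condition in the statement (cf.\ \cite[Proposition~7]{coelho2023gonality}); one checks directly that any such $\sigma$ has rational-tree quotient and, in $\Char k\neq 2$, isolated fixed points.

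\emph{Irreducible core.} Now $C_c$ is irreducible of genus $3-r$. For $r=0$ we have $C_c=C$, a $2$-inseparable genus $3$ curve, and the claim is precisely the generalised canonical-embedding criterion of Corollary~\ref{cor:canonical_embedding_general}. For $r=1$, $C_c$ is an irreducible stable genus $2$ curve, hence hyperelliptic with a unique hyperelliptic involution $\sigma_0$ (Lemma~\ref{lem:always_hyperelliptic}~\ref{lem:always_hyperelliptic_ii}, Proposition~\ref{prop:uniqueness_hyperelliptic}); any hyperelliptic involution of $(C_c,x_1)$ is in particular one of $C_c$, hence equals $\sigma_0$, so $(C_c,x_1)$ is hyperelliptic iff $\sigma_0$ fixes the smooth point $x_1$, iff $x_1$ is a ramification point of the quotient (= canonical) map $f_0\colon C_c\to C_c/\langle\sigma_0\rangle\simeq\PP^1_k$, iff $2x_1\sim f_0^*\OO_{\PP^1_k}(1)\simeq\omega_{C_c/k}$; a Riemann--Roch computation shows this is equivalent to $h^0(C_c,\OO_{C_c}(2x_1))>1$. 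For $r=2$, $C_c$ is an irreducible stable genus $1$ curve, so each $(C_c,x_i)$ is hyperelliptic with a unique hyperelliptic involution $\sigma_i$ (Lemma~\ref{lem:always_hyperelliptic}~\ref{lem:always_hyperelliptic_i}); by uniqueness $(C_c,\{x_1,x_2\})$ is hyperelliptic iff $\sigma_1=\sigma_2$, iff $\sigma_1$ fixes $x_2$, iff (arguing with the fibres of $f_1\colon C_c\to C_c/\langle\sigma_1\rangle\simeq\PP^1_k$, which make up the complete $g^1_2$ attached to $\OO_{C_c}(2x_1)$) $2x_2\sim 2x_1$, i.e.\ $\OO_{C_c}(2x_1)\simeq\OO_{C_c}(2x_2)$. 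For $r=3$, $C_c\simeq\PP^1_k$ with three marked points; the only automorphism fixing three points is the identity, which satisfies the definition of a hyperelliptic involution precisely when $\Char k=2$ (the isolated-fixed-point requirement being dropped there). This is exactly the list of conditions asserted.

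\emph{Main obstacle.} The bookkeeping is routine; the genuinely delicate points are the characteristic-$2$ behaviour --- where the definition permits $\sigma|_Z=\id$ on suitable rational components, which is exactly what makes the $r=3$ irreducible-core case special and what must be ruled out in the \texttt{0-{}-{}-{}-0} case --- and the need to keep the genus $1$ and genus $2$ sub-analyses valid over an arbitrary field (the Riemann--Roch and Abel--Jacobi arguments above are characteristic-free, but this has to be verified). Finally, the $r=0$ irreducible case is essentially outsourced to the generalised canonical-embedding theorem of Section~\ref{sec:quasi-hyp}, so it depends on material developed only later.
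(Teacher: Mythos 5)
Your proposal is correct and follows essentially the same route as the paper: reduce to the pointed core via the decomposition lemma and the fact that $1$-tails are always hyperelliptic, then treat each case using Corollary~\ref{cor:canonical_embedding_general} ($r=0$ irreducible), the uniqueness of the hyperelliptic involution together with the fiber description of the degree-two map ($r=1,2$), Proposition~\ref{prop:hyperelliptic_genus2} ($r=3$), and Lemma~\ref{lem:unique_2_inse} for the reducible core. Your extra detail in the \texttt{0-{}-{}-{}-0} case (why $\sigma$ must swap the components and fix each node individually) only makes explicit what the paper leaves implicit, and your flagged concerns (characteristic~$2$ and the forward reference to Section~\ref{sec:quasi-hyp}) match how the paper actually handles these points.
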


\begin{proof}
    Let $P_c := \{x_1, \dots, x_r\}$ be the set of attachment points on the 2-inseparable core $C_c$. By Lemma~\ref{lem:core_tail_lemma}, these are precisely the separating nodes of $C$. Decomposing $C$ at these nodes yields the pointed core $(C_c, P_c)$ and $r$ $1$-tails. A $1$-tail, when viewed as a stable $1$-pointed curve of genus $1$, is always hyperelliptic (Lemma~\ref{lem:always_hyperelliptic}~\ref{lem:always_hyperelliptic_i}). Therefore, repeated application of the decomposition lemma (Lemma~\ref{lem:decomp_sep_node}) shows that $C$ is hyperelliptic if and only if its pointed core $(C_c, P_c)$ is hyperelliptic.

    The problem is thus reduced to analyzing the hyperellipticity of the pointed core $(C_c, P_c)$, which is a 2-inseparable stable pointed curve of arithmetic genus $g(C_c) = 3-r$. The combinatorial types of these pointed cores are implicitly given by the classification in Proposition~\ref{prop:classification}; we simply no longer distinguish between cases that differ only by the type of $1$-tail attached (elliptic versus pig tail).

    \begin{description}[leftmargin=0em]
        \item[Core is irreducible:] In this case, the number of $1$-tails can be $r=0, 1, 2,$ or $3$.

        \begin{description}
            \item[$r=0$:] Here $C = C_c$ is an irreducible (hence 2-inseparable) stable curve of genus $3$. By Corollary~\ref{cor:canonical_embedding_general}, $C$ is hyperelliptic if and only if the canonical map $\Phi_C\colon C \to \PP^2_k$ is not a closed immersion.
            
            \item[$r=1$:] The core $C_c$ has genus $2$. By Lemma~\ref{lem:always_hyperelliptic}~\ref{lem:always_hyperelliptic_ii}, $C_c$ is always hyperelliptic. Let $\sigma$ be its unique hyperelliptic involution. The pointed curve $(C_c, \{x_1\})$ is hyperelliptic if and only if $\sigma$ fixes the attachment point $x_1$.

            We characterize this condition using the canonical sheaf $\omega_{C_c/k}$. The canonical map $\Phi_{C_c}\colon\allowbreak C_c \to \PP^1_k$ is the quotient morphism by $\sigma$ (cf. Corollary~\ref{cor:all_the_same}). The fibers of $\Phi_{C_c}$ are the effective divisors $D$ such that $\mathcal{O}_{C_c}(D) \simeq \omega_{C_c/k}$. Geometrically, the fiber containing a point $x$ is the divisor $x + \sigma(x)$.
            The condition $\sigma(x_1) = x_1$ holds if and only if $\Phi_{C_c}$ is ramified at $x_1$, meaning the fiber is the divisor $2x_1$. Thus, $\sigma(x_1) = x_1$ is equivalent to the isomorphism $\mathcal{O}_{C_c}(2x_1) \simeq \omega_{C_c/k}$.

            We now relate this to the definition of a Weierstrass point, i.e., $h^0(C_c, \mathcal{O}_{C_c}(2x_1)) > 1$. Since $C_c$ is a stable curve, it is Gorenstein, and the Riemann-Roch theorem applies:
            \[ h^0(\mathcal{O}_{C_c}(2x_1)) - h^0(\omega_{C_c/k}(-2x_1)) = \deg(2x_1) - g(C_c) + 1 = 2 - 2 + 1 = 1. \]
            We have $h^0(\mathcal{O}_{C_c}(2x_1)) > 1$ if and only if $h^0(\omega_{C_c/k}(-2x_1)) \geq 1$. Since $\omega_{C_c/k}(-2x_1)$ has degree $0$, the existence of a non-zero section is equivalent to $\omega_{C_c/k}(-2x_1) \simeq \mathcal{O}_{C_c}$, which in turn is equivalent to $\omega_{C_c/k} \simeq \mathcal{O}_{C_c}(2x_1)$.

            \item[$r=2$:] The core $C_c$ has genus $1$. By Lemma~\ref{lem:always_hyperelliptic}~\ref{lem:always_hyperelliptic_i}, the $1$-pointed curve $(C_c, x_1)$ is hyperelliptic. Let $\sigma$ be its unique hyperelliptic involution. The $2$-pointed curve $(C_c, \{x_1, x_2\})$ is hyperelliptic if and only if this involution also fixes $x_2$.

            As argued in the proof of Lemma \ref{lem:always_hyperelliptic}~\ref{lem:always_hyperelliptic_i}, the involution $\sigma$ is induced by the finite degree $2$ map $f\colon C_c \to \PP^1_k$ defined by the invertible sheaf $\mathcal{L} = \mathcal{O}_{C_c}(2x_1)$, and $f$ is the quotient map by $\sigma$. The fibers of $f$ are the effective divisors $D$ such that $\mathcal{O}_{C_c}(D) \simeq \mathcal{L}$. Geometrically, the fiber containing $x$ is the divisor $x + \sigma(x)$. Thus, for all $x \in C_c$, we have an isomorphism $\mathcal{O}_{C_c}(x + \sigma(x)) \simeq \mathcal{O}_{C_c}(2x_1)$.
            The condition $\sigma(x_2)=x_2$ means $f$ is ramified at $x_2$, so the fiber is $2x_2$. This is equivalent to the condition $\mathcal{O}_{C_c}(2x_2) \simeq \mathcal{O}_{C_c}(2x_1)$.

            \item[$r=3$:] The core $C_c$ is an irreducible curve of genus $0$, hence $C_c \simeq \PP^1_k$. The resulting stable pointed curve $(C_c, \{x_1, x_2, x_3\})$ is, by Proposition~\ref{prop:hyperelliptic_genus2}, hyperelliptic if and only if $\Char k = 2$.
        \end{description}

        \item[Core is reducible:] In this case, the core $C_c$ is reducible and 2-inseparable, which by the classification in Proposition~\ref{prop:classification} occurs only when $r=0$ or $r=1$. To analyze these cases, we apply Lemma~\ref{lem:unique_2_inse}, which states that a pointed, stable, 2-inseparable, hyperelliptic curve must either be irreducible or an unpointed binary curve (the union of two smooth rational components connected by at least three nodes).

        \begin{description}
            \item[$r=0$:] The core $C = C_c$ is a 2-inseparable stable curve of genus $3$. If it is hyperelliptic, it must be a binary curve by Lemma~\ref{lem:unique_2_inse} with $g(C) + 1 = 4$ nodes; that is, it must be of type \texttt{0-{}-{}-{}-0}. The hyperelliptic involution on such a curve necessarily swaps the two rational components. The existence of such an involution is equivalent to an isomorphism between the two normalized components that maps the set of four node preimages on the first component to the corresponding set on the second. This holds if and only if the two sets of four points are projectively equivalent.

            \item[$r=1$:] The set of marked points $P=\{x_1\}$ is non-empty. By Lemma~\ref{lem:unique_2_inse}, since $C_c$ is reducible, the pointed curve $(C_c, P)$ cannot be hyperelliptic.
        \end{description}
    \end{description}
\end{proof}

\begin{remark} \label{rem:char2_dependency}
    The four combinatorial types with a genus-$0$ core and three attached tails ($r=3$) are the only instances among all 42 stable types of genus $3$ where the hyperelliptic property depends on the characteristic-dependent condition in Definition~\ref{def:hyperelliptic}. This follows from Proposition~\ref{prop:hyperelliptic_genus2}, which states that this condition is relevant only if a curve has a rational component that meets the rest of the curve only at separating nodes. For a stable genus $3$ curve, the separating nodes are precisely the attachment points of $1$-tails (see Lemma~\ref{lem:core_tail_lemma}), and stability requires at least three such points on any rational component. These conditions force the curve to be of the type with three $1$-tails attached to a genus-$0$ core.
\end{remark}

We can now summarize the classification of hyperelliptic stable curves of genus $3$.

\begin{proposition}[Hyperelliptic Classification for Genus 3] \label{prop:classification_hyp_3}
    Let $C$ be a stable curve of genus $3$ and let $C_c$ be its core. The hyperelliptic property of $C$ can be characterized based on its combinatorial type and, in some cases, the underlying field characteristic or the curve's specific geometry. The 42 combinatorial types are classified as follows:
    \begin{enumerate}[label=\StatementListLabel]
        \item \label{prop:classification_hyp_3_i} \textbf{Combinatorially Hyperelliptic (15 types):}
        If the core $C_c$ is 2-separable, then $C$ is always hyperelliptic, regardless of the characteristic or the geometric moduli.

        \item \label{prop:classification_hyp_3_ii} \textbf{Combinatorially Non-Hyperelliptic (6 types):}
        If the core $C_c$ is 2-inseparable and reducible, but $C$ is not a binary curve (i.e., of type \texttt{0-{}-{}-{}-0}), then $C$ is never hyperelliptic. (This includes cases where a $1$-tail is attached to a reducible core, and cases like \texttt{CAVE} or \texttt{BRAID}).

        \item \label{prop:classification_hyp_3_iii} \textbf{Characteristic Dependence (4 types):}
        If the core $C_c$ is rational ($C_c \simeq \PP^1_k$, corresponding to the case with three $1$-tails), then $C$ is hyperelliptic if and only if $\Char k = 2$.

        \item \label{prop:classification_hyp_3_iv} \textbf{Geometric Dependence (17 types):}
        In the remaining cases, the hyperellipticity depends on the specific geometric moduli of the curve. This includes:
        \begin{itemize}
            \item Curves with an irreducible core and $0 \leq r \leq 2$ tails (16 types).
            \item The binary curve of type \texttt{0-{}-{}-{}-0} (1 type).
        \end{itemize}
        The specific conditions are detailed in Proposition~\ref{prop:2-insep_geom_condition_hyperelliptic} (e.g., the behavior of the canonical map, Weierstrass points, linear equivalence of divisors, or cross-ratios).
    \end{enumerate}
\end{proposition}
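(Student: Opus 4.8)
The plan is to assemble the results proved earlier in this section; the statement is essentially a consolidation of Propositions~\ref{prop:2-sep-hyp} and~\ref{prop:2-insep_geom_condition_hyperelliptic}, sorted according to the classification of Proposition~\ref{prop:classification}. First I would record that, by Remark~\ref{rem:g3_semicompact}, every stable genus~$3$ curve is of semicompact type, so that the decomposition machinery of Lemmas~\ref{lem:decomp_sep_node} and~\ref{lem:decomp_sep_pair} applies without restriction; this is what justifies the uniform core-based analysis that follows.

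Next I would split the $42$ combinatorial types into the two families of Proposition~\ref{prop:classification}. For the $15$ types whose core is $2$-separable, Proposition~\ref{prop:2-sep-hyp} gives directly that the curve is hyperelliptic, independently of characteristic and of the geometric moduli; these are exactly the types of category~\ref{prop:classification_hyp_3_i}. For the remaining $27$ types, whose core is $2$-inseparable, I would go through Proposition~\ref{prop:2-insep_geom_condition_hyperelliptic} subcase by subcase. If the core is reducible and either $r=0$ with $C$ not the binary curve \texttt{0-{}-{}-{}-0} (the four types \texttt{1-{}-{}-0}, \texttt{0-{}-{}-0n}, \texttt{CAVE}, \texttt{BRAID}) or $r=1$ (the two types \texttt{0-{}-{}-0e}, \texttt{0-{}-{}-0m}), the proposition states that $C$ is never hyperelliptic; these $6$ types form category~\ref{prop:classification_hyp_3_ii}. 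If the core is irreducible with $r=3$ (equivalently $C_c\simeq\PP^1_k$; $4$ types), the proposition gives that $C$ is hyperelliptic if and only if $\Char k = 2$; these $4$ types form category~\ref{prop:classification_hyp_3_iii}. The remaining $2$-inseparable cases are the irreducible cores with $0\le r\le 2$ ($4+6+6=16$ types) together with the binary curve \texttt{0-{}-{}-{}-0} ($1$ type); for all of these, Proposition~\ref{prop:2-insep_geom_condition_hyperelliptic} makes hyperellipticity depend on the geometric moduli (non-immersivity of the canonical map when $r=0$, a Weierstrass condition when $r=1$, the relation $\OO_{C_c}(2x_1)\simeq\OO_{C_c}(2x_2)$ when $r=2$, and equality of cross-ratios for \texttt{0-{}-{}-{}-0}) and never on the characteristic; these $17$ types form category~\ref{prop:classification_hyp_3_iv}.

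Finally I would verify, as a consistency check, that the four categories partition the whole list: $15 + 6 + 4 + 17 = 42$, matching the count recalled in Remark~\ref{rem:comb_types}. The argument requires no new ideas, and the only points demanding care are purely combinatorial: singling out the binary curve \texttt{0-{}-{}-{}-0} among the seven reducible $2$-inseparable types (so that it is assigned to the geometric-dependence category rather than to the never-hyperelliptic one), and confirming that the $r=3$ characteristic dichotomy is not secretly overridden by a geometric condition in characteristic $\neq 2$. The latter is already handled inside Proposition~\ref{prop:2-insep_geom_condition_hyperelliptic}, since a $\PP^1_k$ with three marked points has no moduli, so I expect no real obstacle here: the substance of the proof lives entirely in Propositions~\ref{prop:2-sep-hyp} and~\ref{prop:2-insep_geom_condition_hyperelliptic}, and the present statement merely repackages them.
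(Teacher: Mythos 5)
Your proposal is correct and matches the paper's own proof, which simply cites Proposition~\ref{prop:2-sep-hyp} for case (a) and Proposition~\ref{prop:2-insep_geom_condition_hyperelliptic} for cases (b)--(d); your case-by-case count ($15+6+4+17=42$) is accurate and consistent with the classification in Proposition~\ref{prop:classification}.
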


\begin{proof}
    The classification follows directly from Proposition~\ref{prop:2-sep-hyp} (for case \ref{prop:classification_hyp_3_i}) and Proposition~\ref{prop:2-insep_geom_condition_hyperelliptic} (for cases \ref{prop:classification_hyp_3_ii}--\ref{prop:classification_hyp_3_iv}).
\end{proof}

\begin{remark}
    Case~\ref{prop:classification_hyp_3_iv} depends on geometric moduli, but in $\operatorname{char} k = 2$, wild ramification prevents these conditions from being fulfilled in some cases. This affects exactly $3$ of these $17$ types. Specifically, curves consisting of an irreducible core with geometric genus $0$ and one node, and exactly two $1$-tails, are never hyperelliptic in characteristic $2$ (see \cite[Example 4.10]{hyperelliptic}).
\end{remark}

\subsection{Quasi-Hyperelliptic Curves} \label{sec:quasi-hyp}

We introduce here a related, older notion of hyperellipticity applicable to more general curves, which we call \emph{quasi-hyperelliptic}, following Rosenlicht \cite{rosenlicht1952equivalence}. The results discussed in this section are primarily from Catanese \cite{catanese1982pluricanonical}.

The definition is motivated by Theorem~\ref{thm:canonical_embedding}. Classically, if $C$ is a smooth projective curve of genus $g \ge 2$, the global sections of the canonical sheaf, $\omega_{C/k} = \Omega_{C/k}^1$, define the canonical map $\Phi_C$ from $C$ into the projective space associated with the dual of $H^0(C, \omega_{C/k})$, which can be identified with $\PP_k^{g-1}$. For reducible curves with singularities, the correct analogue is obtained by replacing the sheaf of 1-forms, $\Omega_{C/k}^1$, with the \emph{dualizing sheaf}, $\omega_{C/k}$ (a detailed description is provided in Chapter~\ref{chap:3}). For the sections of this sheaf to define a map to projective space, one must assume that $\omega_{C/k}$ is an invertible sheaf, which is equivalent to the condition that $C$ is a \emph{Gorenstein curve}.

Let $C$ be a projective, reduced, connected Gorenstein curve over an algebraically closed field $k$. For such curves, there is still a canonical, though generally rational, map $\Phi_C$. For this map to be a morphism defined on all of $C$, there must not be any components along which all sections of $\omega_{C/k}$ vanish. This leads naturally to the following definition.

\begin{definition}
    A projective, reduced, and connected Gorenstein curve $C$ is said to be \emph{canonically positive} if the dualizing sheaf $\omega_{C/k}$ is ample. Equivalently (see \cite[Proposition 7.5.5]{liu}), for each irreducible component $Z$ of $C$, the degree of the restricted dualizing sheaf, $\omega_{C/k}|_Z$, is positive.
\end{definition}

Clearly, the property of being canonically positive is a necessary condition for a pluricanonical map to be an embedding. This condition can also be thought of as a generalization of stability; indeed by Proposition \ref{prop:stable_via_canonical} a semistable curve is stable if and only if it is canonically positive.

Catanese proved several results concerning the pluricanonical maps, i.e., the maps associated with the sheaves $\omega_{C/k}^{\otimes n}$ for $n \geq 1$. Among other results, he showed that for a canonically positive curve $C$, the sheaf $\omega_{C/k}^{\otimes n}$ is very ample for $n \geq 3$. This generalizes the well-known result of Deligne and Mumford concerning the very-ampleness of $\omega_{C/k}^{\otimes n}$ for $n \geq 3$ on stable curves~\cite[Theorem 1.2]{deligne-mumford}. However, our primary interest is the case $n=1$. For this case, Catanese provided a generalization of Theorem~\ref{thm:canonical_embedding}, which holds under suitable conditions on the connectedness of $C$ and relies on a generalized notion of hyperellipticity, which we call quasi-hyperelliptic.

For the rest of this section, $C$ will denote a projective, reduced, connected, and canonically positive Gorenstein curve over $k$. We first generalize the combinatorial notions of separating nodes and separating pairs.

\begin{definition}
    We define the notions of a \emph{separating node}, a \emph{separating pair}, and of a curve being \emph{inseparable} and \emph{2-inseparable} for the curve $C$ analogously to Definition~\ref{def:comb_term}. Moreover, we call $C$ \emph{very strongly connected} if it is 2-inseparable and there exists no decomposition into two curves $C_1, C_2$ such that $C_1 \cap C_2$ is a single point.
\end{definition}

\begin{remark}
    Our terminology for connectedness differs from that of Catanese. He refers to what we call \emph{separating nodes} as ``disconnecting nodes'' and our \emph{inseparable curves} as ``2-connected curves''~\cite[Remark 3.1]{catanese1982pluricanonical}. Furthermore, he calls our \emph{2-inseparable curves} ``strongly connected curves''~\cite[Definition 3.7]{catanese1982pluricanonical}. However, the term \emph{very strongly connected} and its definition are taken directly from his work~\cite[Definition 3.21]{catanese1982pluricanonical}. For the special case of stable curves, the notions of being 2-inseparable and very strongly connected are clearly equivalent.
\end{remark}

The presence of separating nodes is the reason why the canonical map $\Phi_C$ is only a rational map and not necessarily a morphism, as the following theorem clarifies.

\begin{theorem}[{{\cite[Theorem D]{catanese1982pluricanonical}}}] \label{thm:catanese_base_points}
    The base locus of $|\omega_{C/k}|$ consists precisely of the separating nodes and certain rational components (see~\cite[Definition 3.2]{catanese1982pluricanonical}). Consequently, the linear system $|\omega_{C/k}|$ is base-point-free if and only if $C$ is inseparable.
\end{theorem}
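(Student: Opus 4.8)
The plan is to reduce the statement to Riemann--Roch and Serre duality on the Gorenstein curve $C$, where $\omega_{C/k}$ is invertible by hypothesis, together with the standard behaviour of the dualizing sheaf under partial normalization at a node. Since $C$ is canonically positive, $g := p_a(C) \geq 2$, so $h^0(C, \omega_{C/k}) = g \geq 2$ and $|\omega_{C/k}|$ is a nontrivial linear system; moreover $h^1(C, \omega_{C/k}) = h^0(C, \OO_C) = 1$ as $C$ is connected. For an effective Cartier divisor $p$ of degree $1$ --- in particular a smooth point --- the evaluation sequence $0 \to \omega_{C/k}(-p) \to \omega_{C/k} \to \omega_{C/k}|_p \to 0$ combined with Serre duality shows that $p$ lies in the base locus if and only if $h^1(\omega_{C/k}(-p)) = 2$, equivalently $h^0(\OO_C(p)) = 2$. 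The question thus becomes purely one of existence of a non-constant rational function on $C$ with at worst a simple pole at $p$.

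I would first dispose of the nodes. For a node $p$ let $\nu\colon \widetilde{C} \to C$ be the partial normalization at $p$, with $\nu^{-1}(p) = \{p_1, p_2\}$. From $\nu^* \omega_{C/k} \simeq \omega_{\widetilde{C}/k}(p_1 + p_2)$ and the residue description $H^0(C, \omega_{C/k}) = \{ s \in H^0(\widetilde{C}, \omega_{\widetilde{C}/k}(p_1+p_2)) : \Res_{p_1} s + \Res_{p_2} s = 0\}$, one gets $\nu^*(\omega_{C/k}(-p)) \simeq \omega_{\widetilde{C}/k}$, so sections of $\omega_{C/k}(-p)$ correspond exactly to sections of $\omega_{\widetilde{C}/k}$, the residue condition being automatically satisfied. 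Hence $h^0(\omega_{C/k}(-p)) = h^0(\widetilde{C}, \omega_{\widetilde{C}/k})$, which by the genus formula for a nodal union equals $g - 1$ when $p$ is non-separating and $g$ when $p$ is separating. Comparing with $h^0(\omega_{C/k}) = g$ yields: a node is a base point if and only if it is a separating node.

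Next come the smooth points. Let $p$ be a smooth point on a component $Z$ and suppose $f \in H^0(\OO_C(p))$ is non-constant. On each component $Z' \neq Z$ the function $f$ is regular, hence constant, so $f|_Z$ is non-constant and lies in $H^0(Z, \OO_Z(p))$; a Riemann--Roch count on $Z$ together with a local check at any singular point of $Z$ forces $Z \simeq \PP^1_k$ with $f|_Z$ an isomorphism onto $\PP^1_k$. Regularity of $f$ at the nodes on $Z$ then means that the constant value of $f$ on each connected piece of $C$ attached to $Z$ must equal the value of the injective function $f|_Z$ at the relevant node; two distinct nodes of $Z$ cannot be assigned the same value, so no piece can be attached to $Z$ at two different nodes --- that is, every node on $Z$ is a separating node of $C$, so $Z$ is a loosely connected rational tail in the sense of Catanese (Remark~\ref{rem:lcrt_catanese}). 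Conversely, for such a $Z \simeq \PP^1_k$ one builds a section with a simple pole at any prescribed point of $Z$ by taking a coordinate on $Z$ and extending it by the correct constant over each of the pieces attached to $Z$, which are pairwise disjoint by the separating hypothesis; thus every point of $Z$ is a base point. Combining the two analyses, $\operatorname{Bs}|\omega_{C/k}|$ is exactly the union of the separating nodes and the loosely connected rational tails of $C$.

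Finally, for the stated consequence: if $C$ is inseparable it has no separating nodes, and then it has no loosely connected rational tail either, since such a tail is a proper $\PP^1_k$-subcurve --- proper because $p_a(C) \geq 2 > 0$ --- which must meet the rest of $C$, necessarily along a separating node; so the base locus is empty and $|\omega_{C/k}|$ is base-point-free. Conversely a separating node always gives a base point. I expect the main obstacle to be the smooth-point analysis: matching the function-theoretic condition $h^0(\OO_C(p)) = 2$ with the combinatorial description of Catanese's loosely connected rational tails, in particular verifying that the coordinate on $Z$ genuinely extends to a global rational function and that no singular $Z$ can occur. The node case and the final deduction are routine once the duality bookkeeping is in place.
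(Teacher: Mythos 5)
The paper gives no argument here at all: the statement is quoted verbatim from Catanese (Theorem D of \cite{catanese1982pluricanonical}) and the ``proof'' is the citation. So you are supplying a proof where the paper defers to the literature. The parts you do carry out are correct: the Serre-duality reduction ($p$ smooth is a base point iff $h^0(\OO_C(p))=2$), the treatment of a node via the partial normalization and the residue description of $\omega_{C/k}$ (base point iff separating), and the smooth-point analysis showing that a non-constant $f\in H^0(\OO_C(p))$ forces the component $Z$ through $p$ to be mapped isomorphically onto $\PP^1_k$ and forces each connected piece of $\overline{C\setminus Z}$ to attach to $Z$ at a single point. If $C$ were assumed nodal (e.g.\ semistable), this would be an essentially complete and correct proof.

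The gap is that the theorem is stated for an arbitrary canonically positive Gorenstein curve, and your case analysis only ever tests nodes and smooth points for membership in the base locus. Non-nodal singular points --- cusps, tacnodes, planar triple points --- are never examined, yet they are permitted by the hypotheses and actually occur in the paper's main application: Theorem~\ref{thm:main_thm_2-insep_tails} invokes this result for the curve $C'$ of Lemma~\ref{lem:cusp_construction}, whose singularities include cusps. For such a point $x$ one must still decide whether all sections of $\omega_{C/k}$ lie in $\mathfrak{m}_x\omega_{C/k}$; the same duality computation gives $h^0(\mathfrak{m}_x\omega_{C/k})=g-2+h^0(\mathcal{H}om(\mathfrak{m}_x,\OO_C))$, and one has to analyse the sheaf $\mathcal{H}om(\mathfrak{m}_x,\OO_C)$ (for a unibranch singularity such as a cusp its global sections are just the constants, so the point is not a base point), but none of this appears in your write-up. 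A second, related omission sits inside your smooth-point step: you assume the points where $Z$ meets the rest of $C$ are nodes (``regularity of $f$ at the nodes on $Z$''). In the Gorenstein generality this must be derived, not assumed: the existence of $f$, restricting to a uniformizer on $Z$ and to a constant on the attached piece $D_i$, forces $\widehat{\OO}_{C,x_i}\simeq k[\![u]\!]\times_k\widehat{\OO}_{D_i,x_i}$, and only the Gorenstein hypothesis then forces $D_i$ to be smooth at $x_i$, i.e.\ the attachment point to be a node. Without these two steps the implication ``inseparable $\Rightarrow$ base-point-free'' --- the direction the paper actually uses --- is established only for nodal curves.
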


\begin{remark}
    For the analysis of the canonical map $\Phi_C$, it is not restrictive to assume that $C$ is inseparable. As Catanese explains, if $C$ has separating nodes, one can consider its partial normalization $\nu\colon \tilde{C} \to C$ at these nodes. Let $C_1, \dots, C_n$ be the connected components of $\tilde{C}$. Each component $C_i$ is an inseparable curve, and there is a natural isomorphism on the spaces of global sections: $H^0(C, \omega_{C/k}) \simeq \bigoplus_{i=1}^n H^0(C_i, \omega_{C_i/k})$. Geometrically, this isomorphism implies that the rational map $\Phi_C$ is given by composing the rational inverse of the normalization, $\nu^{-1}$, with the canonical morphisms $\Phi_{C_i}$ on each component $C_i$. The images of these maps lie in mutually disjoint projective subspaces~\cite[Remark 3.6]{catanese1982pluricanonical}.
\end{remark}

The next question concerns the injectivity of the canonical map $\Phi_C$, for which 2-inseparability is a necessary condition.

\begin{theorem}[{{\cite[Theorem E]{catanese1982pluricanonical}}}] \label{thm:necessary_injective}
    Assume $C$ is inseparable. Two singular points $p,q$ of $C$ have the same image under $\Phi_C$ if and only if $\{p,q\}$ is a separating pair. In particular, if $\Phi_C$ is injective, then $C$ is 2-inseparable.
\end{theorem}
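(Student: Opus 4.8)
The plan is to translate the condition $\Phi_C(p)=\Phi_C(q)$ into cohomology and then into a connectedness statement about a partial normalization. Since $C$ is assumed inseparable, Theorem~\ref{thm:catanese_base_points} shows that $|\omega_{C/k}|$ is base-point-free, so $\Phi_C\colon C\to\PP_k^{g-1}$ is a genuine morphism given by the complete linear system, where $g:=h^0(C,\omega_{C/k})$. Let $I_{\{p,q\}}\subseteq\OO_C$ be the reduced ideal sheaf of the two points. Base-point-freeness at $p$ forces the evaluation map $H^0(C,\omega_{C/k})\to(\omega_{C/k}\otimes k(p))\oplus(\omega_{C/k}\otimes k(q))$ to have rank $\geq 1$; it has rank $2$ precisely when $\Phi_C(p)\neq\Phi_C(q)$ and rank $1$ precisely when $\Phi_C(p)=\Phi_C(q)$. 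Since the kernel of this map is $H^0(C,\omega_{C/k}\otimes I_{\{p,q\}})$, this gives
\[
    \Phi_C(p)=\Phi_C(q)\iff h^0\bigl(C,\omega_{C/k}\otimes I_{\{p,q\}}\bigr)=g-1,
\]
while the other alternative gives $h^0\bigl(C,\omega_{C/k}\otimes I_{\{p,q\}}\bigr)=g-2$.

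Next I would identify this cohomology group with the canonical sections of a partial normalization. Let $\nu\colon\tilde C\to C$ be the partial normalization of $C$ at $p$ and $q$ only (all other singularities left intact); $\tilde C$ is again reduced, projective and Gorenstein. The key input is the conductor identity: with conductor ideal $\mathfrak{c}\subseteq\OO_C$ of $\nu$ one has
\[
    \nu_*\omega_{\tilde C/k}\;\cong\;\mathcal{H}om_{\OO_C}(\nu_*\OO_{\tilde C},\omega_{C/k})\;=\;\mathfrak{c}\cdot\omega_{C/k}\;\cong\;\omega_{C/k}\otimes_{\OO_C}\mathfrak{c}
\]
inside $\omega_{C/k}$, the last isomorphism using that $\omega_{C/k}$ is invertible. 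For $p,q$ nodes the conductor is the intersection of the two maximal ideals, $\mathfrak{c}=I_{\{p,q\}}$: this is the local computation for $k[\![s,t]\!]/(st)\subset k[\![s]\!]\times k[\![t]\!]$, where in Rosenlicht's description a section $f\cdot\eta_0$ of $\omega_{C/k}$ — with $\eta_0$ the differential having simple poles of residue $\pm1$ on the two branches — extends regularly to the normalized branches exactly when $f\in\mathfrak{m}_p$. Taking global sections yields $h^0(\tilde C,\omega_{\tilde C/k})=h^0(C,\omega_{C/k}\otimes I_{\{p,q\}})$. (If $p$ or $q$ were a non-nodal Gorenstein singularity one would keep the conductor in place of $I_{\{p,q\}}$; for stable curves, and more generally whenever the two points under consideration are nodes, this refinement is unnecessary.)

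It remains to read off $h^0(\tilde C,\omega_{\tilde C/k})$. Resolving a node increases $\chi(\OO)$ by exactly one, so $\chi(\OO_{\tilde C})=\chi(\OO_C)+2=3-g$. If $D_1,\dots,D_c$ are the connected components of $\tilde C$, then $h^0(D_i,\omega_{D_i/k})=h^1(D_i,\OO_{D_i})=g(D_i)$ and $\chi(\OO_{\tilde C})=\sum_i\bigl(1-g(D_i)\bigr)$, whence
\[
    h^0(\tilde C,\omega_{\tilde C/k})=\sum_{i=1}^c g(D_i)=c-\chi(\OO_{\tilde C})=g+c-3.
\]
Thus $h^0(\tilde C,\omega_{\tilde C/k})=g-1$ if and only if $c=2$, i.e.\ if and only if $\tilde C$ is disconnected, i.e.\ if and only if removing $p$ and $q$ disconnects $C$. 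Since $C$ is inseparable, no single node separates $C$, so the last condition is exactly the statement that $\{p,q\}$ is a separating pair. Chaining the three displayed equivalences proves the theorem, and the ``in particular'' follows at once: if $C$ is inseparable but not $2$-inseparable it has a separating pair of distinct nodes $p\neq q$ with $\Phi_C(p)=\Phi_C(q)$, so $\Phi_C$ is not injective.

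The main obstacle is the conductor identity $\nu_*\omega_{\tilde C/k}\cong\omega_{C/k}\otimes\mathfrak{c}$: this is where Gorensteinness is genuinely used, and it requires care about the distinction between $\omega_{\tilde C/k}$ and $\nu^!\omega_{C/k}$ and about the multiplicativity of the conductor over the two disjoint points $p$ and $q$. Everything afterward — the rank analysis of the evaluation map, the additivity of $\chi(\OO)$ under resolving nodes, and the combinatorics of which normalizations disconnect the curve — is routine.
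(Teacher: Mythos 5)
The paper does not actually prove this statement---it is quoted directly from Catanese---so there is no internal proof to compare against; I am assessing your argument on its own terms. For the case where $p$ and $q$ are both \emph{nodes}, your proof is correct and complete: the rank analysis of the joint evaluation map, the duality identity $\nu_*\omega_{\tilde C/k}\cong\omega_{C/k}\otimes\mathfrak{c}$ together with the computation $\mathfrak{c}=I_{\{p,q\}}$ at a pair of nodes, and the count $h^0(\tilde C,\omega_{\tilde C/k})=g+c-3$ (with $c\le 2$ forced by inseparability, since $c=3$ would make one of the two nodes individually separating) chain together exactly as claimed, and the ``in particular'' follows.

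The gap is in the ``only if'' direction for non-nodal singular points. The standing hypothesis of Section~\ref{sec:quasi-hyp} is that $C$ is an arbitrary canonically positive Gorenstein curve, and the theorem asserts that \emph{any} two singular points with the same image form a separating pair---in particular that they are both nodes. Your parenthetical repair (``keep the conductor in place of $I_{\{p,q\}}$'') does not close this case: the first link of your chain, $\Phi_C(p)=\Phi_C(q)\iff h^0(\omega_{C/k}\otimes I_{\{p,q\}})=g-1$, is intrinsically about the \emph{reduced} ideal sheaf, whereas the conductor identity computes $h^0(\omega_{C/k}\otimes\mathfrak{c})$; these coincide only when $\mathfrak{c}_p=\mathfrak{m}_p$ and $\mathfrak{c}_q=\mathfrak{m}_q$, i.e.\ when $\delta_p=\delta_q=1$. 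This matters here, because the paper invokes the theorem (in Lemma~\ref{lem:hyp_vs_quasi-hyp_v2} and in the proof of Theorem~\ref{thm:main_thm_2-insep_tails}) precisely to conclude that $\Phi_{C'}$ is injective on the singular locus of the \emph{cuspidal} curve $C'$. Fortunately your method extends to cusps with no real change: for $k[\![t^2,t^3]\!]\subset k[\![t]\!]$ the conductor is $t^2k[\![t]\!]=\mathfrak{m}_p$, so the identification $h^0(\tilde C,\omega_{\tilde C/k})=h^0(C,\omega_{C/k}\otimes I_{\{p,q\}})$ still holds; a cusp is unibranch, so normalizing at it never disconnects, whence $c=1$, $h^0=g-2$, the evaluation map has rank $2$, and the images are distinct---consistent with the theorem, since a pair containing a cusp is never a separating pair. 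You should make this case explicit. For singularities with $\delta\ge 2$ (tacnodes, etc.), which the section's hypotheses allow, your argument yields no conclusion and one needs Catanese's finer analysis; alternatively, state and prove the theorem only for curves whose singularities are nodes and cusps, which is all the paper uses.
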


We now discuss quasi-hyperelliptic curves, which are those for which the canonical map $\Phi_C$ is not birational.

\begin{definition} \label{def:quasi-hyperelliptic}
    A curve $C$ is called \emph{quasi-hyperelliptic} if there exist two smooth points $x, y$ (possibly $x = y$) such that $h^0(\OO_{C}(x+y)) = 2$.
\end{definition}

\begin{proposition}[{{\cite[Proposition 3.10]{catanese1982pluricanonical}}}] \label{prop:quasi-hyperelliptic}
    Assume $C$ is inseparable. Then the following conditions are equivalent:
    \begin{enumerate}[label=\EquivListLabel]
        \item $C$ is quasi-hyperelliptic.
        \item The canonical map $\Phi_C$ is not birational onto its image.
        \item Two smooth points of $C$ have the same image under $\Phi_C$, or $\Phi_C$ fails to be an embedding at a smooth point.
    \end{enumerate}
\end{proposition}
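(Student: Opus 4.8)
The plan is to establish $(i)\Leftrightarrow(iii)$ by a geometric Riemann--Roch analysis at smooth points, and then to close the cycle via $(i)\Rightarrow(ii)\Rightarrow(iii)$. Three standing facts are used throughout: since $C$ is inseparable, $|\omega_{C/k}|$ is base-point-free and $\Phi_C\colon C\to\PP_k^{g-1}$ is a morphism with non-degenerate image (Theorem~\ref{thm:catanese_base_points}); since $C$ is canonically positive, no irreducible component is contracted, so $\Phi_C$ is finite onto its image; and since $C$ is Gorenstein, Riemann--Roch and Serre duality are available for invertible sheaves. In particular, for an effective divisor $D$ of degree $2$ supported on the smooth locus one has $h^0(\OO_C(D))-h^0(\omega_{C/k}(-D))=3-g$, and for a single smooth point $x$ base-point-freeness gives $h^0(\omega_{C/k}(-x))=g-1$, so that $g-2\le h^0(\omega_{C/k}(-x-y))\le g-1$ for all smooth $x,y$.

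For $(i)\Leftrightarrow(iii)$, fix smooth points $x,y$ (possibly $x=y$) and read off the dichotomy $h^0(\omega_{C/k}(-x-y))\in\{g-2,g-1\}$. When $x\ne y$, the value $g-1$ occurs exactly when every global section of $\omega_{C/k}$ vanishing at $x$ also vanishes at $y$; since the image of $\Phi_C$ spans $\PP_k^{g-1}$ with $g\ge2$, this is equivalent to $\Phi_C(x)=\Phi_C(y)$. When $x=y$, the value $g-1$ occurs exactly when every section of $\omega_{C/k}$ vanishing at $x$ vanishes to order at least $2$ there, which is equivalent to $\Phi_C$ failing to be an immersion at $x$. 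Substituting into the Riemann--Roch identity for $\OO_C(x+y)$ (resp.\ $\OO_C(2x)$), the existence of smooth $x,y$ with $h^0(\OO_C(x+y))=2$ becomes precisely the statement that $\Phi_C$ identifies two distinct smooth points or fails to be an immersion at a smooth point, which is $(i)\Leftrightarrow(iii)$.

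For $(i)\Rightarrow(ii)$, a quasi-hyperelliptic $C$ carries an invertible sheaf $\LL$ of degree $2$ with $h^0(\LL)=2$. One first checks that $|\LL|$ is base-point-free --- otherwise subtracting the base locus leaves a base-point-free pencil of degree at most $1$, which no curve of arithmetic genus $\ge2$ admits. Then for general $t\in\PP(H^0(\LL))\cong\PP_k^1$ the corresponding section cuts out an effective degree-$2$ divisor $D_t$ with $\OO_C(D_t)\simeq\LL$, supported on the smooth locus, and Riemann--Roch gives $h^0(\omega_{C/k}(-D_t))=g-1$. If $D_t$ is reduced for general $t$, the dictionary of the previous paragraph identifies the two points of $D_t$ under $\Phi_C$, producing a one-parameter family of coincidences, so $\deg\Phi_C\ge2$ onto its image; if $D_t$ is non-reduced for general $t$ --- which forces $\Char k=2$ --- then $\Phi_C$ fails to be an immersion at the general smooth point, its cotangent map is generically zero, and $\Phi_C$ is again of degree $\ge2$ onto its image. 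In both cases $\Phi_C$ is not birational onto its image, giving $(ii)$. Conversely, $(ii)$ says $\deg\Phi_C\ge2$ onto its image; if the extension of function fields has a separable part of degree $\ge2$, a general fibre of $\Phi_C$ contains two distinct points which may be chosen in the dense smooth locus, and otherwise the extension is purely inseparable of degree a power of $\Char k$, so $\Phi_C$ factors through a Frobenius, has identically vanishing cotangent map, and is an immersion at no point. Either way $(iii)$ holds, closing the cycle $(i)\Rightarrow(ii)\Rightarrow(iii)\Rightarrow(i)$.

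I expect the main obstacle to be the bookkeeping for \emph{reducible, singular} Gorenstein curves and for inseparable phenomena when $\Char k=2$: one must check that the classical geometric Riemann--Roch reasoning at smooth points (linking $h^0(\omega_{C/k}(-x-y))$ to separation of points and to the immersion property) survives the absence of integrality, that the pencil defining the $g^1_2$ is genuinely base-point-free (equivalently, that no canonically positive curve of genus $\ge2$ carries a base-point-free pencil of degree $\le1$), and that the degenerate case where this degree-$2$ map --- or $\Phi_C$ itself --- is generically purely inseparable is correctly folded into the second alternative of $(iii)$. By contrast, the finiteness of $\Phi_C$ onto its image and the non-degeneracy of its image are routine consequences of canonical positivity and Theorem~\ref{thm:catanese_base_points}.
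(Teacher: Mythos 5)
The paper does not prove this proposition at all: it is imported verbatim from Catanese's article (his Proposition 3.10), so there is no in-paper argument to compare yours against. Your proof is, in substance, the standard geometric Riemann--Roch argument, and the overall structure --- the dictionary between $h^0(\omega_{C/k}(-x-y))=g-1$ at smooth points and the failure of $\Phi_C$ to separate points or tangent vectors, followed by the pencil argument for (i)$\Rightarrow$(ii) and the separable/inseparable dichotomy for (ii)$\Rightarrow$(iii) --- is sound; the equivalence (i)$\Leftrightarrow$(iii) as you present it is complete.

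The one step whose justification does not work as written is the base-point-freeness of $|\LL|$ in (i)$\Rightarrow$(ii). ``Subtracting the base locus'' presupposes that the base locus is a Cartier divisor, which can fail at a singular point of a reducible Gorenstein curve, and the nonexistence of a base-point-free pencil of degree $1$ on a canonically positive curve is itself not obvious in the reducible case (such a pencil would map one component isomorphically to $\PP^1_k$ and contract the rest, and ruling this out requires the intersection pattern forced by canonical positivity). Both issues evaporate if you argue differently: the canonical section of $\OO_C(x+y)$ vanishes exactly along the divisor $x+y$, so any base point of $|\OO_C(x+y)|$ must be $x$ or $y$, hence a \emph{smooth} point; and $x$ is not a base point because Riemann--Roch together with the base-point-freeness of $|\omega_{C/k}|$ (Theorem~\ref{thm:catanese_base_points}, which is where the inseparability hypothesis enters) gives $h^0(\OO_C(y)) = 2-g+h^0(\omega_{C/k}(-y)) = 1 < 2$. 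With that repair, the rest of your argument --- general members of the pencil avoiding the finitely many singular points and the finitely many contracted components, and the characteristic-$2$ branch where the general member is non-reduced --- goes through.
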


A quasi-hyperelliptic curve $C$ admits a map of degree $2$ to $\PP^1_k$. However, this map is not necessarily finite, as it may be constant on some components of $C$. This distinction motivates the following more restrictive notion, following the terminology of Catanese.

\begin{definition}[{{\cite[Definition 3.18]{catanese1982pluricanonical}}}]
    An \emph{honestly hyperelliptic curve} is an inseparable Gorenstein curve $C$ that admits a finite morphism $C \to \PP^1_k$ of degree $2$.
\end{definition}

Quasi-hyperelliptic curves can be characterized as those containing an honestly hyperelliptic subcurve, with any remaining components satisfying certain conditions (see~\cite[Theorem F]{catanese1982pluricanonical} for details). If one additionally assumes the curve is \emph{very strongly connected}, then the result simplifies as follows and is an analogue of Lemma~\ref{lem:unique_2_inse}.

\begin{proposition} \label{prop:hyp_vs_quasi-hyp}
    Assume $C$ is very strongly connected. Then $C$ is quasi-hyperelliptic if and only if it is honestly hyperelliptic. In this case, the structure of $C$ is one of the two following types:
    \begin{enumerate}[label=\StatementListLabel]
        \item $C$ is irreducible.
        \item $C$ is a binary curve.
    \end{enumerate}
    Furthermore, the canonical map $\Phi_C$ factors through the degree $2$ morphism $f\colon C \to \PP^1_k$, and its image is a rational normal curve in $\PP_k^{g-1}$.
\end{proposition}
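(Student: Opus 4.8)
The plan is to establish the equivalence together with the structural description in three steps, using throughout that a very strongly connected curve is inseparable, so that Proposition~\ref{prop:quasi-hyperelliptic} applies: $C$ is quasi-hyperelliptic if and only if the canonical map $\Phi_C\colon C\to\PP^{g-1}_k$ — a genuine morphism by Theorem~\ref{thm:catanese_base_points} — is not birational onto its image. I also use the Gorenstein Riemann--Roch identities $h^0(\OO_C)=1$, $\chi(\OO_C)=1-g$, and $\deg\omega_{C/k}=2g-2$ (the last from Serre duality).

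For the implication ``honestly hyperelliptic $\Rightarrow$ quasi-hyperelliptic'', let $f\colon C\to\PP^1_k$ be finite of degree $2$. Then $f_*\OO_C$ is locally free of rank $2$ on $\PP^1_k$, so $f_*\OO_C\simeq\OO_{\PP^1_k}(a)\oplus\OO_{\PP^1_k}(b)$ with $a\ge b$; injectivity of the unit $\OO_{\PP^1_k}\to f_*\OO_C$ (valid since $f$ is surjective and $\PP^1_k$ is reduced) forces $a\ge0$, then $h^0(f_*\OO_C)=h^0(\OO_C)=1$ forces $a=0$, and $\chi(f_*\OO_C)=\chi(\OO_C)=1-g$ forces $b=-g-1$. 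The morphism $f$ is separable, as a purely inseparable degree-$2$ map would exhibit $C$ as a Frobenius twist of $\PP^1_k$, contradicting canonical positivity; hence a general fibre $\{x,y\}$ of $f$ consists of two distinct smooth points (avoiding the finite branch locus and $\operatorname{Sing}(C)$), and by the projection formula $h^0(\OO_C(x+y))=h^0(f^*\OO_{\PP^1_k}(1))=h^0(\OO_{\PP^1_k}(1))+h^0(\OO_{\PP^1_k}(-g))=2$. Thus $C$ is quasi-hyperelliptic.

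Conversely, assume $C$ is quasi-hyperelliptic. Since $\Phi_C$ is defined by the complete, base-point-free system $|\omega_{C/k}|$, its image $Y:=\Phi_C(C)$ is non-degenerate in $\PP^{g-1}_k$ and $\deg(\Phi_C^*\OO_{\PP^{g-1}_k}(1))=\deg\omega_{C/k}=2g-2$. Every non-degenerate curve in $\PP^{g-1}_k$ has degree at least $g-1$ on each irreducible component, so since $\Phi_C$ is not birational onto $Y$, the value $2g-2$ leaves only the possibility that $Y$ is irreducible of degree exactly $g-1$, that the induced map $f\colon C\to Y$ is generically $2{:}1$, and that $Y$ is a curve of minimal degree, i.e.\ a rational normal curve; thus $Y\simeq\PP^1_k$ and $\Phi_C$ factors as $f$ followed by the $(g-1)$-uple embedding $\PP^1_k\hookrightarrow\PP^{g-1}_k$, which is the ``furthermore''. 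Moreover $f$ is finite: by canonical positivity $\deg\bigl((f|_Z)^*\OO_Y(1)\bigr)=\deg(\omega_{C/k}|_Z)>0$ for every component $Z$, so $f$ is non-constant on each component and has finite fibres, whence $C$ is honestly hyperelliptic. Finally, writing $\sum_Z\deg(f|_Z)=2$ over the components $Z$ of $C$ with each summand $\ge1$, we see $C$ has at most two components: if $C$ is irreducible we are in case (a), and if $C=Z_1\cup Z_2$ then each $f|_{Z_i}$ is finite of degree $1$, hence finite and birational onto the normal curve $\PP^1_k$, hence an isomorphism, so $Z_1\simeq Z_2\simeq\PP^1_k$ and $C$ is a binary curve (case (b)); here very strong connectedness rules out the gluing at a single point, and $2$-inseparability the gluing at two points.

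The step I expect to be the main obstacle is the middle one: from ``$\Phi_C$ not birational onto its image'' and the numerical constraint $\deg(\Phi_C^*\OO_{\PP^{g-1}_k}(1))=2g-2$ one must genuinely exclude a reducible image $Y$ (with $\Phi_C$ birational onto it) and then invoke the classification of varieties of minimal degree to recognise $Y$ as a rational normal curve. This is exactly the content of Catanese's Theorem~F specialised to very strongly connected curves, so in practice I would either cite it directly or else carry out the degree bookkeeping componentwise, showing that non-degeneracy together with the value $2g-2$ admits no configuration other than a double cover of a rational normal curve.
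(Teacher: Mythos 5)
Your route is genuinely different from the paper's, which proves this proposition almost entirely by citing Catanese's Theorem~F: a quasi-hyperelliptic curve contains an honestly hyperelliptic subcurve $C'$ meeting any residual component in at most two points, and very strong connectedness forces $C=C'$. Your attempt to replace this by self-contained degree bookkeeping has two concrete problems. First, in the forward direction, the claim that $f$ is separable is false. In characteristic $2$ a canonically positive, very strongly connected Gorenstein curve can admit a finite \emph{purely inseparable} degree-$2$ map to $\PP^1_k$: the tricuspidal genus-$3$ curve obtained from $(\PP^1_k,\{x_1,x_2,x_3\})$ by the construction of Lemma~\ref{lem:cusp_construction} is exactly such a curve (see the case $\sigma=\id$ in the proof of Lemma~\ref{lem:hyp_vs_quasi-hyp_v2}); it is not a Frobenius twist of $\PP^1_k$. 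The conclusion survives only because Definition~\ref{def:quasi-hyperelliptic} explicitly allows $x=y$: a general fibre of an inseparable $f$ is the divisor $2x$ for a smooth point $x$, and your projection-formula computation still gives $h^0(\OO_C(2x))=2$. So this step is repairable, but as written it is wrong, and precisely in the characteristic this paper is most careful about.

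Second, and more seriously, the converse hinges on the assertion that a non-degenerate curve in $\PP^{g-1}_k$ has degree at least $g-1$ \emph{on each irreducible component}. That is true only for irreducible non-degenerate curves; a reducible non-degenerate curve can contain lines. Your numerics therefore do not exclude, for instance, a reducible image in which two components of $C$ map birationally onto the same low-degree component while $\Phi_C$ is birational elsewhere --- configurations that Catanese eliminates using very strong connectedness (the residual components meet the honestly hyperelliptic subcurve in at most two points), not by degree counts. You flag this yourself and offer the citation of Theorem~F as a fallback, but that fallback \emph{is} the paper's entire proof, so the self-contained portion of your argument does not actually close. If you want an independent argument, the missing ingredient is precisely the structural part of Theorem~F; once the image is known to be an irreducible rational normal curve and $f$ is generically $2{:}1$, your concluding steps (finiteness of $f$ from canonical positivity, the bound of two components from $\sum_Z\deg(f|_Z)=2$, and the identification of the two-component case as a binary curve) are sound.
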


\begin{proof}
    The fact that an honestly hyperelliptic curve is quasi-hyperelliptic is clear from the definitions. The converse is a consequence of~\cite[Theorem F]{catanese1982pluricanonical}. A quasi-hyperelliptic curve $C$ contains an honestly hyperelliptic subcurve $C'$, which is the locus where the map $f\colon C \to \PP^1_k$ is non-constant. Catanese shows that if $C \neq C'$, any remaining connected component must intersect $C'$ in at most two points, counted with multiplicity (cf. proof of \cite[Proposition 3.14 b)]{catanese1982pluricanonical}). This configuration conflicts with the \emph{very strongly connected} property, which forces the equality $C=C'$. Thus, $C$ must be honestly hyperelliptic itself. The assertion about the structure and the factorization is also part of~\cite[Theorem F]{catanese1982pluricanonical}.
\end{proof}

\begin{remark} \label{rem:honestly_hyperelliptic}
    Catanese also shows that the intersection nodes of the binary curve satisfy a certain cross-ratio condition; cf.~\cite[Proposition 3.14]{catanese1982pluricanonical}.
\end{remark}

This leads to a direct connection with our primary notion of hyperellipticity for stable curves.

\begin{corollary} \label{cor:all_the_same}
Let $C$ be a stable, 2-inseparable curve. Then the following are equivalent:
\begin{enumerate}[label=\EquivListLabel]
\item \label{item:hyp} $C$ is hyperelliptic.
\item \label{item:quasi} $C$ is quasi-hyperelliptic.
\item \label{item:honest} $C$ is honestly hyperelliptic.
\end{enumerate}
\end{corollary}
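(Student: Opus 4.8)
The plan is to go around the triangle, proving (i)\,$\Rightarrow$\,(iii), (iii)\,$\Leftrightarrow$\,(ii), and (iii)\,$\Rightarrow$\,(i). First I would record that a stable $2$-inseparable curve $C$ is a projective, reduced, connected, canonically positive (Proposition~\ref{prop:stable_via_canonical}) Gorenstein curve, and that for stable curves ``$2$-inseparable'' coincides with ``very strongly connected''. Hence the equivalence (ii)\,$\Leftrightarrow$\,(iii) is precisely Proposition~\ref{prop:hyp_vs_quasi-hyp}, which moreover tells us that in this situation $C$ is either irreducible or a binary curve and that $\Phi_C$ factors through a finite degree-$2$ morphism $f\colon C\to\PP^1_k$. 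Only the two implications (i)\,$\Rightarrow$\,(iii) and (iii)\,$\Rightarrow$\,(i) then remain.

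For (i)\,$\Rightarrow$\,(iii): let $\sigma$ be a hyperelliptic involution and $\pi\colon C\to T:=C/\langle\sigma\rangle$ the quotient. Since $C$ is $2$-inseparable, Proposition~\ref{prop:node_classification} forces $T\simeq\PP^1_k$. The map $\pi$ is finite (quotient of a projective curve by a finite group), so it is enough to check $\deg\pi=2$, i.e. that $\sigma$ is not the identity on any irreducible component. In $\Char k\neq 2$ this is part of Definition~\ref{def:hyperelliptic}. In $\Char k=2$, Proposition~\ref{prop:hyperelliptic_genus2} says $\sigma|_Z=\id_Z$ can occur only for a rational component $Z$ meeting the rest of $C$ solely at separating nodes; but $C$ is inseparable, and being stable it is not $\simeq\PP^1_k$, so no such $Z$ exists. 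Thus $\pi$ is a finite degree-$2$ map and $C$, an inseparable Gorenstein curve, is honestly hyperelliptic.

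For (iii)\,$\Rightarrow$\,(i): by (iii)\,$\Rightarrow$\,(ii) and Proposition~\ref{prop:hyp_vs_quasi-hyp} we may assume $C$ is irreducible or binary and fix a finite degree-$2$ morphism $f\colon C\to\PP^1_k$; in both cases I construct an involution $\sigma$ with $C/\langle\sigma\rangle\simeq\PP^1_k$ (a rational tree), the isolated-fixed-point condition in $\Char k\neq 2$ being automatic since the $\sigma$ produced is non-trivial on every component. If $C=Z_1\cup Z_2$ is binary, finiteness of $f$ prevents contraction of a component, so $\deg(f|_{Z_i})=1$ and each $f|_{Z_i}\colon Z_i\to\PP^1_k$ is an isomorphism; define $\sigma$ to swap $Z_1,Z_2$ via $(f|_{Z_2})^{-1}\circ f|_{Z_1}$ and its inverse, check it swaps the two branches at each node (so it respects the gluing and descends to a morphism of $C$), and observe $C/\langle\sigma\rangle\simeq Z_1\simeq\PP^1_k$. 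If $C$ is irreducible, pull $f$ back along the normalization $\nu\colon\tilde C\to C$ to $\psi:=f\circ\nu\colon\tilde C\to\PP^1_k$ of degree $2$; this is separable, because a purely inseparable degree-$2$ map would be a relative Frobenius, forcing $\tilde C\simeq\PP^1_k$, and a Frobenius of $\PP^1_k$ is bijective on points, hence cannot identify the two preimages of a node of $C$ (and if $C$ is already smooth we are in the classical hyperelliptic case). So $\psi$ is Galois with involution $\tilde\sigma$; a local computation at a node (a finite degree-$2$ morphism looks like $k[[t]]\to k[[x,y]]/(xy)$, $t\mapsto x+y$ up to coordinates, hence unramified there) shows $\psi^{-1}(f(s))=\{a_s,b_s\}$ is reduced and $\tilde\sigma$ swaps $a_s\leftrightarrow b_s$; therefore $\tilde\sigma$ is constant on the fibres of $\nu$, descends to an involution $\sigma$ on $C$, and $C/\langle\sigma\rangle=\tilde C/\langle\tilde\sigma\rangle\simeq\PP^1_k$. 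Either way $C$ is hyperelliptic.

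The main obstacle is the implication (iii)\,$\Rightarrow$\,(i) in characteristic $2$: one must exclude purely inseparable degree-$2$ maps and, more delicately, verify that the Galois involution on the normalization descends to the singular curve $C$, for which the local structure of a finite degree-$2$ morphism at a node is the key point. The binary case and the direction (i)\,$\Rightarrow$\,(iii) are comparatively routine once Propositions~\ref{prop:node_classification}, \ref{prop:hyperelliptic_genus2} and \ref{prop:hyp_vs_quasi-hyp} are available.
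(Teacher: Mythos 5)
Your proposal is correct and follows essentially the same route as the paper: the equivalence (ii)$\,\Leftrightarrow\,$(iii) via Proposition~\ref{prop:hyp_vs_quasi-hyp}, the quotient map argument for (i)$\,\Rightarrow\,$(iii), and the irreducible/binary case split with the same purely-inseparable-forces-smooth-forces-genus-$0$ contradiction for (iii)$\,\Rightarrow\,$(i). The only minor differences are that you construct the involution on the normalization and descend it through the nodes, whereas the paper takes the Galois involution directly on $k(C)$ as an automorphism of $C$, and that you spell out the characteristic-$2$ verification in (i)$\,\Rightarrow\,$(iii) (via Proposition~\ref{prop:hyperelliptic_genus2}) that the paper leaves implicit.
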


\begin{proof}
    A stable, 2-inseparable curve is very strongly connected, so the equivalence of~\ref{item:quasi} and~\ref{item:honest} follows from Proposition~\ref{prop:hyp_vs_quasi-hyp}.

    To show that~\ref{item:hyp} $\implies$ \ref{item:honest}, assume $C$ is hyperelliptic with hyperelliptic involution $\sigma$. The quotient map $f\colon C \to T := C/\langle \sigma \rangle$ is finite of degree $2$. By 2-inseparability, Lemma~\ref{prop:node_classification} ensures the quotient $T$ is isomorphic to $\PP_k^1$. Thus, $C$ is honestly hyperelliptic.

    To show that~\ref{item:honest} $\implies$ \ref{item:hyp}, assume $C$ is honestly hyperelliptic, admitting a finite degree $2$ morphism $f\colon C \to \PP^1_k$. We must construct the hyperelliptic involution $\sigma$. Since $C$ is a stable, 2-inseparable, and honestly hyperelliptic curve, it is very strongly connected. Thus, by Proposition~\ref{prop:hyp_vs_quasi-hyp}, its structure is either irreducible or a binary curve.

    \begin{case}
        $C$ is irreducible. The morphism $f$ corresponds to a degree $2$ extension of function fields, $k(C)/k(\PP^1_k)$. This extension must be separable. If it were not, it would be purely inseparable (as its degree is a prime, 2), and $\Char k = 2$. Then the morphism $f\circ\nu\colon \tilde{C} \to \PP^1_k$, where $\nu\colon \tilde{C} \to C$ is the normalization, must also be purely inseparable and hence a bijection on $k$-rational points. Thus, $C$ cannot have any nodes; since those are its only singularities, it must be smooth over $k$. By~\cite[Proposition 7.4.21]{liu}, it follows that $g(C) = g(\PP^1_k) = 0$, which contradicts the stability of $C$. Therefore, the extension is separable. A separable extension of degree $2$ is a Galois extension. Its Galois group is of order 2, and the non-trivial element induces the unique automorphism $\sigma$ on $C$ for which $f \circ \sigma = f$. This $\sigma$ is our hyperelliptic involution.
    \end{case}
    
    \begin{case}
        $C$ is a binary curve. By definition, $C$ is the union of two smooth rational components, $C_1$ and $C_2$. Since the morphism $f$ is finite, its restrictions to the components, $f_1\colon C_1 \to \PP^1_k$ and $f_2\colon C_2 \to \PP^1_k$, must be surjective. As the total degree is the sum $\deg(f) = \deg(f_1) + \deg(f_2) = 2$, it follows that $\deg(f_1) = \deg(f_2) = 1$. A degree-one morphism to the smooth curve $\PP^1_k$ is an isomorphism. We can thus define the involution $\sigma$ as the unique automorphism that swaps the two components via these isomorphisms, i.e., $\sigma|_{C_1} = f_2^{-1} \circ f_1$ and $\sigma|_{C_2} = f_1^{-1} \circ f_2$.
    \end{case}
    
    In both cases, we have constructed an involution $\sigma$ such that the quotient $C/\langle \sigma \rangle$ is isomorphic to $\PP^1_k$. Since $\PP^1_k$ is a rational tree and $\sigma$ is not the identity on any component, it is a hyperelliptic involution. Thus, $C$ is hyperelliptic.
\end{proof}

Catanese proved the following generalization of Theorem~\ref{thm:canonical_embedding}.

\begin{theorem}[{{\cite[Theorem G]{catanese1982pluricanonical}}}] \label{thm:canonical_embedding_catanese}
    Assume that the curve $C$ is very strongly connected and that for every singular point $x \in C$ where the conductor ideal $\mathcal{C}_x\,$\footnote{$\mathcal{C}_x := \mathrm{Ann}_{\mathcal{O}_{C,x}}(\nu_*\mathcal{O}_{\tilde{C},x} / \mathcal{O}_{C,x})$ is the largest ideal of the local ring $\mathcal{O}_{C,x}$ that is also an ideal in its normalization $\nu_*\mathcal{O}_{\tilde{C},x}$, where $\nu\colon \tilde{C} \to C$ is the normalization morphism.} is not contained in $\mathfrak{m}_x^2$, the singularity is an $A_n$-singularity for some $n \geq 1$. Then the canonical map $\Phi_C$ is a closed immersion if and only if $C$ is not quasi-hyperelliptic.
\end{theorem}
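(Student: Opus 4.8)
The plan is to test the classical criterion for very ampleness directly on the dualizing sheaf. Write $\omega \coloneqq \omega_{C/k}$ and $g \coloneqq h^0(C,\omega)$; since $C$ is very strongly connected it is in particular inseparable, so $|\omega|$ is base-point-free by Theorem~\ref{thm:catanese_base_points} and $\Phi_C \colon C \to \PP^{g-1}_k$ is a genuine morphism from a projective curve. Recall that $\Phi_C$ is a closed immersion precisely when $\omega$ is very ample, i.e.\ when for every zero-dimensional closed subscheme $Z \subset C$ of length at most $2$ the restriction $\rho_Z \colon H^0(C,\omega) \to H^0(Z,\omega|_Z)$ is surjective. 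Using the sequence $0 \to \I_Z \otimes \omega \to \omega \to \omega|_Z \to 0$, the identity $h^1(\omega) = h^0(\OO_C) = 1$ (Serre duality, valid since $C$ is Gorenstein), and $h^1(\omega|_Z) = 0$, surjectivity of $\rho_Z$ is equivalent to $h^1(C, \I_Z \otimes \omega) = 1$. The backward implication of the theorem is then immediate: if $C$ is quasi-hyperelliptic, Proposition~\ref{prop:quasi-hyperelliptic} shows $\Phi_C$ either identifies two smooth points or fails to be an immersion at a smooth point, so it is not a closed immersion. The work is the forward direction, so from now on I assume $C$ is \emph{not} quasi-hyperelliptic and verify surjectivity of every $\rho_Z$.

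First I would clear the subschemes $Z$ supported entirely at smooth points of $C$. There $\I_Z$ is the invertible sheaf $\OO_C(-Z)$, so $\I_Z \otimes \omega = \omega(-Z)$ and Serre duality gives $h^1(\omega(-Z)) = h^0(\OO_C(Z))$, which is $\ge 1$ because $Z$ is effective, with strict inequality for some such $Z$ exactly when there exist smooth points $x,y$ (possibly equal) with $h^0(\OO_C(x+y)) = 2$ --- that is, exactly when $C$ is quasi-hyperelliptic (Definition~\ref{def:quasi-hyperelliptic}). Hence, under our assumption, $h^1(\I_Z \otimes \omega) = 1$ for all such $Z$: $\Phi_C$ separates the smooth points of $C$ and is an immersion at each of them.

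The genuinely delicate case is a length-$\le 2$ subscheme $Z$ meeting the singular locus. Separation of a singular point from other points is handled combinatorially: by Theorem~\ref{thm:necessary_injective}, $2$-inseparability of $C$ means there are no separating pairs, so two distinct singular points never share an image, and the same section-vanishing estimate on a neighbourhood of a singular point rules out a collision with a smooth point. What remains is a length-$2$ subscheme $Z$ concentrated at a single singular point $x$. Here $\I_Z \otimes \omega$ need not be invertible, so one passes to $\widehat{\OO}_{C,x}$, trivialises $\omega$ near $x$, and reformulates surjectivity of $\rho_Z$ as the assertion that the local ring of the scheme-theoretic image of $\Phi_C$ surjects onto $\OO_{C,x}$, equivalently that the cotangent map of $\Phi_C$ is onto $\mathfrak{m}_x/\mathfrak{m}_x^2$. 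This is where the hypotheses enter: they force $x$ to be an $A_n$-singularity, so $\widehat{\OO}_{C,x} \cong k[\![u,v]\!]/(v^2 - u^{n+1})$ has embedding dimension $2$, and the condition $\mathcal{C}_x \not\subseteq \mathfrak{m}_x^2$ together with the Rosenlicht description of $\omega$ (rational differentials with vanishing total residue at each point) produces local sections of $\omega$ whose leading terms span $\mathfrak{m}_x/\mathfrak{m}_x^2$; base-point-freeness away from the separating locus (Theorem~\ref{thm:catanese_base_points}) lets one realise these as restrictions of global sections without creating new base points. In the two-branch cases ($n$ odd) one checks in addition that $\Phi_C$ separates the two branches at $x$, which is again a residue computation on $\widehat{\OO}_{C,x}$. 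Putting the cases together gives $h^1(\I_Z \otimes \omega) = 1$ for every admissible $Z$, so $\Phi_C$ is a closed immersion.

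\textbf{The main obstacle} is precisely this local analysis at the singular points: pinning down $\omega_{C,x}$ at an $A_n$-singularity precisely enough --- via residues, or the different of the normalization --- to exhibit the sections whose leading terms generate $\mathfrak{m}_x/\mathfrak{m}_x^2$, and then lifting them to global sections using base-point-freeness off the separating nodes. The ``very strongly connected'' hypotheses are built to eliminate every competing obstruction: separating nodes would destroy base-point-freeness (Theorem~\ref{thm:catanese_base_points}), a separating pair would collide two singular points (Theorem~\ref{thm:necessary_injective}), and a one-point decomposition would let a component detach from the image --- so that once the $A_n$ local computation is done, the only remaining way for $\Phi_C$ to fail to be a closed immersion is the quasi-hyperelliptic one already handled above.
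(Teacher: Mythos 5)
First, a point of comparison: the paper does not prove this statement at all. It is imported verbatim as \cite[Theorem G]{catanese1982pluricanonical}, so there is no in-paper argument to measure your proposal against; you are in effect reproving a result of Catanese that the thesis treats as a black box. Your global strategy --- reduce very ampleness of $\omega_{C/k}$ to surjectivity of $\rho_Z$ for length-$\le 2$ subschemes, dispose of the subschemes supported at smooth points via Serre duality and Definition~\ref{def:quasi-hyperelliptic}, and invoke Theorem~\ref{thm:necessary_injective} to separate pairs of singular points --- is indeed the correct skeleton of Catanese's proof, and the backward implication and the smooth-point case are handled correctly.

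However, there are genuine gaps in the forward direction, concentrated exactly where the theorem's hypotheses live. First, you misread the hypothesis: it does \emph{not} force every singular point to be an $A_n$-singularity; it is a conditional, constraining only those $x$ with $\mathcal{C}_x \not\subseteq \mathfrak{m}_x^2$. Your case analysis therefore silently omits the singular points where $\mathcal{C}_x \subseteq \mathfrak{m}_x^2$, for which a separate (and in Catanese's paper, nontrivial) argument shows that $\Phi_C$ is automatically an immersion; without that, the ``length-$2$ subscheme concentrated at a singular point'' case is simply not covered in general. Second, the step you flag as the main obstacle --- exhibiting, at an $A_n$-point with $\mathcal{C}_x \not\subseteq \mathfrak{m}_x^2$, global sections of $\omega_{C/k}$ whose images span $\mathfrak{m}_x/\mathfrak{m}_x^2$, and separating the two branches when $n$ is odd --- is only described, not carried out; this residue computation is the entire content of Theorem~G beyond Theorems~D and~E, so asserting that it ``produces'' the required sections is assuming the conclusion. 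Third, the separation of a singular point from a smooth point is dismissed with ``the same section-vanishing estimate,'' but neither Definition~\ref{def:quasi-hyperelliptic} (which involves only smooth points) nor Theorem~\ref{thm:necessary_injective} (which involves only pairs of singular points) covers this configuration, and one must argue it separately. As it stands the proposal is a plausible roadmap for Catanese's proof rather than a proof; given that the thesis itself only cites the result, the honest course is either to do the same or to execute the local analysis in full.
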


\begin{remark}
    Catanese conjectured that the technical assumption on the singular points is always satisfied, and he provided a proof for a special case of unibranch singularities (see~\cite[Remark 3.23, Proposition 3.26]{catanese1982pluricanonical}). However, to the author's knowledge, this conjecture remains unproven in general.
\end{remark}

The assumptions of the preceding theorem are fulfilled for a stable, 2-inseparable curve, as its singularities are nodes ($A_1$-singularities) and for stable curves, 2-in\-sep\-a\-ra\-bil\-i\-ty coincides with being very strongly connected. Using Corollary~\ref{cor:all_the_same}, we thus arrive at the following key result, which generalizes the classical statement for smooth curves (Theorem~\ref{thm:canonical_embedding}) to the case of 2-inseparable stable curves.

\begin{corollary} \label{cor:canonical_embedding_general}
    Let $C$ be a 2-inseparable stable curve of genus $g \geq 2$. The canonical map $\Phi_C \colon C \to \PP_k^{g-1}$ is a closed immersion if and only if $C$ is not hyperelliptic.
\end{corollary}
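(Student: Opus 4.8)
The plan is to obtain the statement as a direct consequence of Catanese's Theorem~\ref{thm:canonical_embedding_catanese} together with the dictionary between quasi-hyperellipticity and hyperellipticity recorded in Corollary~\ref{cor:all_the_same}. Since both of those results have already been established, the work reduces entirely to checking that their hypotheses apply to the stable, $2$-inseparable curve $C$.

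First I would verify that $C$ lies in the class of curves to which Catanese's theory applies: it is projective, reduced and connected by the definition of a stable curve; it is Gorenstein because its only singularities are nodes, which are hypersurface singularities; and it is canonically positive, since by Proposition~\ref{prop:stable_via_canonical} a semistable curve of genus $g \ge 2$ is stable if and only if $\omega_{C/k}$ is ample. Next I would confirm the two hypotheses of Theorem~\ref{thm:canonical_embedding_catanese}. The first requires that $C$ be very strongly connected; for a stable curve this property coincides with being $2$-inseparable, which is exactly our assumption. The second requires that at every singular point $x$ for which the conductor ideal satisfies $\mathcal{C}_x \not\subseteq \mathfrak{m}_x^2$ the singularity be an $A_n$-singularity for some $n \ge 1$; but every singular point of a stable curve is a node, i.e.\ an $A_1$-singularity, so this condition holds vacuously, independently of the behaviour of the conductor. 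Theorem~\ref{thm:canonical_embedding_catanese} therefore applies and yields that $\Phi_C$ is a closed immersion if and only if $C$ is not quasi-hyperelliptic.

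Finally I would invoke Corollary~\ref{cor:all_the_same}, which asserts that for a stable, $2$-inseparable curve the notions of being hyperelliptic, quasi-hyperelliptic, and honestly hyperelliptic all coincide. Substituting ``hyperelliptic'' for ``quasi-hyperelliptic'' in the conclusion of the previous paragraph gives precisely the desired equivalence. I do not anticipate a genuine obstacle: all the substance is already carried by Theorem~\ref{thm:canonical_embedding_catanese} and Corollary~\ref{cor:all_the_same}. The only point worth spelling out carefully is why the delicate conductor hypothesis of Theorem~\ref{thm:canonical_embedding_catanese} --- which Catanese himself could verify only in special cases --- causes no trouble here, namely that the nodal singularities of a stable curve are already of type $A_1$, so the hypothesis imposes no restriction whatsoever in our situation.
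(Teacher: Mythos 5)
Your proposal is correct and follows essentially the same route as the paper, which likewise deduces the corollary by checking that a stable, $2$-inseparable curve satisfies the hypotheses of Theorem~\ref{thm:canonical_embedding_catanese} (its singularities are nodes, hence $A_1$, and $2$-inseparability coincides with being very strongly connected for stable curves) and then translating quasi-hyperelliptic into hyperelliptic via Corollary~\ref{cor:all_the_same}. The only quibble is terminological: the conductor condition is satisfied not vacuously but because its conclusion (being an $A_n$-singularity) holds at every singular point of a stable curve.
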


\begin{remark}
    In the proof of~\cite[Proposition A.1]{van2025reduction}, the authors implicitly make the claim that the canonical map embeds 2-inseparable, non-hyperelliptic stable curves of genus $3$ into $\PP^2_k$. They proceed to verify this for the specific cases of type \texttt{2n} and \texttt{BRAID} through explicit Riemann-Roch calculations. Corollary~\ref{cor:canonical_embedding_general} provides the unified theoretical foundation for these findings, generalizing them to all genera $g \geq 2$ and thus removing the need for such a case-by-case verification.
\end{remark}
\section{Proof of Main Theorem} \label{chap:3}

Having characterized hyperelliptic stable curves in the previous chapter, we now proceed to the proof of our main result. The dualizing sheaf $\omega_{\X/\OO_K}$ of the stable model will play a central role in this proof. Although familiarity with the theory as presented in \cite[Section 6.4]{liu} and \cite[Section III.7]{hartshorne} is assumed, we will recall several key facts for the reader's convenience and provide the most explicit descriptions possible of the dualizing sheaf for curves and their models. An excellent reference for the explicit description for curves is Chapter 5 of \cite{conrad2000grothendieck} and its appendix; the primary reference for the dualizing sheaf for models of curves is the work of Morrow \cite{morrow1, morrow2}. We note that Conrad's book \cite{conrad2000grothendieck}, building on the work of Hartshorne \cite{hartshorne1966residues}, treats duality theory in its modern formulation using derived categories---a context much broader than required here. Another accessible treatment of duality theory can be found in \cite{altman1970introduction}.

This chapter is structured as follows. We begin with an exposition of the dualizing sheaf: first for a general family of curves, then more explicitly for a singular curve over an algebraically closed field, and finally for a normal model. We then explain how twists of this sheaf can be used to describe morphisms from the stable model $\X$ of our smooth plane quartic $X$ into $\PP^2_{\OO_K}$, a key step in relating the abstract stable model to a concrete plane model. Applying this technique---which involves what we will later call ``twisting the $1$-tails''---in the non-hyperelliptic case, we will construct a contraction map $\X \to \X_0 \subset \PP^2_{\OO_K}$. The analysis of this map will allow us to prove one direction of the Main Theorem \ref{thm:main_theorem}. This argument crucially relies on the results from the previous chapter; specifically, we use the connection established between hyperelliptic and quasi-hyperelliptic stable curves to apply the theorems of Catanese from Section \ref{sec:quasi-hyp}. For completeness, we note that this contraction map can also be analyzed using direct Riemann-Roch calculations; these alternative proofs are outlined in Appendix \ref{app}. Subsequently, we will prove the converse direction of the Main Theorem. A more explicit version of this latter argument would yield a method for constructing the stable model $\X$ from a given GIT-stable model $\X_0$. This will be explored in a future work.

\subsection{The Dualizing Sheaf}

The dualizing sheaf generalizes the canonical sheaf to singular schemes and families. First, we will recall the general definition of the dualizing sheaf for a family of curves. We then introduce Rosenlicht's theory of regular differentials on singular curves---a theory we will apply to the special fiber of a model. Finally, we turn to the arithmetic setting of a normal model, as in our Setup~\ref{setup:field}, and describe the analogous theory of regular differentials for the model itself.

\subsubsection{General Definition and Properties}

We begin by formulating an axiomatic definition of the dualizing sheaf. Our focus will be on families of curves over an affine base. This definition generalizes the role of the canonical sheaf in the classical Serre duality theorem and provides a powerful framework for relative duality.

\begin{definition}[Dualizing Sheaf]
    Let $f\colon X \to \Spec A$ be a proper morphism of pure relative dimension one, where $A$ is a Noetherian ring. A \emph{dualizing sheaf} for $f$ is a pair $(\omega_f, \tr_f)$, where $\omega_f$ is a quasi-coherent sheaf on $X$ and $\tr_f \colon H^1(X, \omega_f) \to A$ is an $A$-module homomorphism, such that for any quasi-coherent sheaf $\Fs$ on $X$, the natural pairing followed by the trace map
    \[
        \Hom_{\OO_X}(\Fs, \omega_f) \times H^1(X, \Fs) \to H^1(X, \omega_f) \xrightarrow{\tr_f} A
    \]
    induces an isomorphism of $A$-modules:
    \[
        \Hom_{\OO_X}(\Fs, \omega_f) \simeq H^1(X, \Fs)^\vee,
    \]
    where $M^\vee := \Hom_A(M,A)$ is the dual $A$-module.
\end{definition}

\begin{notation} \label{not:dualizing}
When the structure morphism $f\colon X \to \Spec A$ is clear from the context, we will often write $\omega_{X/A}$ instead of $\omega_f$.
\end{notation}

The dualizing sheaf $(\omega_f, \tr_f)$, if it exists, is unique up to a unique isomorphism because it represents the functor $\Fs \mapsto H^1(X, \Fs)^\vee$; see \cite[Lemma 6.4.19]{liu} and \cite[Proposition III.7.2]{hartshorne}. If $\Fs$ is a locally free sheaf, this definition recovers the more familiar form of Serre Duality via the canonical isomorphism $\Hom_{\OO_X}(\Fs, \omega_f) \simeq H^0(X, \Fs^\vee \otimes \omega_f)$.

The existence of a dualizing sheaf is guaranteed under quite general hypotheses. A standard sufficient condition, which holds for all cases in this thesis, is that $f$ is a \emph{Cohen-Macaulay morphism} (i.e., flat with Cohen-Macaulay fibers). A key property for our purposes is invertibility: $\omega_f$ is an invertible sheaf if and only if $f$ is a \emph{Gorenstein morphism}. This is the case, for example, when $f$ is a local complete intersection (l.c.i.) morphism. Following \cite{liu}, we now recall the construction of $\omega_f$ for this l.c.i. setting, which can be motivated by the classical adjunction formula.

\begin{theorem} \label{thm:dualizing_existence}
    With $f\colon X \to \Spec A$ as above, assume additionally that $f$ is a projective l.c.i.\ morphism. Then the dualizing sheaf $\omega_f$ exists and is an invertible sheaf.

    It can be constructed as follows: for any closed immersion $i\colon X \to Z$ into a scheme $Z$ that is smooth over $\Spec A$ (e.g., $Z = \PP^n_A$ for a suitable $n$), $\omega_f$ is given by the formula:
    \[
        \omega_f \simeq \det(\mathcal{C}_{X/Z})^\vee \otimes i^*(\det\Omega^1_{Z/A}),
    \]
    where $\mathcal{C}_{X/Z} = \I/\I^2$ is the conormal sheaf for the ideal sheaf $\I$ of $X$.
\end{theorem}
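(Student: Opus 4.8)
The plan is to follow the classical route in relative duality theory: establish the statement first for the smooth projective case $Z = \PP^n_A$, then transport it along a regular closed immersion $i\colon X \hookrightarrow \PP^n_A$, and finally package the two steps together with a trace map so that the resulting pair has the required universal property. Invertibility will fall out of the construction.

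\emph{Reduction and the smooth case.} Since $f$ is projective, choose a factorization $X \xrightarrow{i} \PP^n_A \xrightarrow{\pi} \Spec A$ with $\pi$ the structure morphism of projective space. For $\pi$ I would quote relative Serre duality: the sheaf $\omega_\pi := \det\Omega^1_{\PP^n_A/A} \simeq \OO_{\PP^n_A}(-n-1)$, together with the trace $\tr_\pi\colon H^n(\PP^n_A,\omega_\pi)\to A$ arising from the explicit \v{C}ech computation of $H^n(\PP^n_A,\OO(-n-1))$, is a dualizing pair for $\pi$. This is proved by reduction to an affine base and d\'evissage from line bundles, exactly as in \cite[Section III.7]{hartshorne} and \cite[Section 6.4]{liu}.

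\emph{Descent along the regular immersion.} Because $f$ is l.c.i.\ and $\pi$ is smooth, $i$ is a regular immersion, necessarily of codimension $c = n-1$, so the conormal sheaf $\mathcal{C}_{X/Z}=\I/\I^2$ is locally free of rank $c$ on $X$. Locally, where $\I$ is generated by a regular sequence, the Koszul complex provides a finite locally free $\OO_Z$-resolution of $i_*\OO_X$, and the (essentially self-dual) Koszul bookkeeping yields $\SheafExt^j_{\OO_Z}(i_*\OO_X,\OO_Z)=0$ for $j\neq c$ together with $\SheafExt^c_{\OO_Z}(i_*\OO_X,\OO_Z)\simeq i_*\bigl(\det(\mathcal{C}_{X/Z})^\vee\bigr)$. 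Twisting by the line bundle $\omega_\pi$ and using the projection formula gives
\[
    \SheafExt^c_{\OO_Z}(i_*\OO_X,\omega_\pi)\;\simeq\; i_*\bigl(\det(\mathcal{C}_{X/Z})^\vee\otimes i^*\omega_\pi\bigr),
\]
which is precisely the sheaf in the statement; it is invertible because both $\det(\mathcal{C}_{X/Z})^\vee$ and $i^*\omega_\pi$ are line bundles. Independence of the choice of $i$ (and of $n$) is then checked by comparing two such immersions through a common one, using the conormal short exact sequence and additivity of $\det$.

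\emph{The universal property and the main obstacle.} To conclude I would verify the universal property by combining relative Serre duality for $\pi$, applied to the pushforwards $i_*\Fs$, with the $\SheafExt$-computation for the regular immersion $i$ (the same Koszul argument with $i_*\Fs$ in place of $i_*\OO_X$): this identifies $\Hom_{\OO_X}(\Fs,\omega_f)$ with $H^1(X,\Fs)^\vee$ functorially in $\Fs$, first for coherent $\Fs$ and then for all quasi-coherent $\Fs$ by the usual colimit argument, and it pins down the trace $\tr_f$ as the image of $\id_{\omega_f}$; uniqueness up to unique isomorphism then follows formally, as recalled after the definition. The genuinely technical point is precisely this last step: making the composition of the two dualities rigorous --- via the local-to-global $\Ext$ spectral sequence and depth/grade bounds for $i_*\Fs$ --- and tracking the trace map through it so that it is canonical and independent of all choices. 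By comparison, the Koszul computation, the invertibility, and the reduction to $\PP^n_A$ are routine. In the write-up I would either carry out this bookkeeping in full or, consistent with this chapter's stated approach of recalling the construction from \cite{liu}, simply cite \cite[Section 6.4]{liu}.
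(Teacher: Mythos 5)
The paper does not write out a proof here; it simply cites \cite[Definition 6.4.7 and Theorem 6.4.32]{liu} and the $\SheafExt$-construction of \cite[Proposition III.7.5, Theorem III.7.11]{hartshorne}, and your sketch is precisely the argument contained in those references: relative Serre duality for $\PP^n_A$, the Koszul computation of $\SheafExt^c_{\OO_Z}(i_*\OO_X,\omega_\pi)$ for the regular immersion, and their composition via the local-to-global $\Ext$ spectral sequence. Your proposal is correct and takes essentially the same approach as the paper's (cited) proof, including your accurate identification of the trace-map bookkeeping as the one genuinely technical step.
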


\begin{proof}
    See \cite[Definition 6.4.7 and Theorem 6.4.32]{liu}. For an alternative construction using $\SheafExt$-functors, see \cite[Proposition III.7.5]{hartshorne} and the subsequent \cite[Theorem III.7.11]{hartshorne}.
\end{proof}

\begin{corollary} \label{cor:smooth_case}
    If $f\colon X \to \Spec A$ is a smooth and projective morphism with fibers of dimension 1, then its dualizing sheaf is the sheaf of relative differential 1-forms:
    \[
        \omega_f \simeq \Omega^1_{X/A}.
    \]
\end{corollary}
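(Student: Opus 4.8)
The plan is to read this off directly from the explicit construction of the dualizing sheaf in Theorem~\ref{thm:dualizing_existence}. A smooth morphism is in particular a projective l.c.i.\ morphism, so that theorem applies: choosing (via the projectivity of $f$) a closed immersion $i\colon X \hookrightarrow Z$ with $Z$ smooth over $\Spec A$ --- concretely $Z = \PP^n_A$ for a suitable $n$ --- we have
\[
    \omega_f \simeq \det(\mathcal{C}_{X/Z})^\vee \otimes i^*\!\big(\det \Omega^1_{Z/A}\big),
\]
and this is independent of the chosen embedding.

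The one place where the smoothness hypothesis is used is the shape of the conormal sequence. Because $X$ is smooth over $\Spec A$, the conormal sequence for the closed immersion $i$ is not merely right-exact but is a Zariski-locally split short exact sequence of locally free $\OO_X$-modules,
\[
    0 \longrightarrow \mathcal{C}_{X/Z} \longrightarrow i^*\Omega^1_{Z/A} \longrightarrow \Omega^1_{X/A} \longrightarrow 0,
\]
and here $\Omega^1_{X/A}$ is locally free of rank one, since $f$ is smooth with one-dimensional fibers (see \cite[Ch.~II, \S8]{hartshorne} and \cite[Ch.~6]{liu}).

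Taking top exterior powers is multiplicative along short exact sequences of locally free sheaves, so this sequence yields $\det(i^*\Omega^1_{Z/A}) \simeq \det(\mathcal{C}_{X/Z}) \otimes_{\OO_X} \det(\Omega^1_{X/A})$; moreover $i^*(\det \Omega^1_{Z/A}) = \det(i^*\Omega^1_{Z/A})$, as pullback commutes with determinants. Substituting into the formula above, the factor $\det(\mathcal{C}_{X/Z})$ cancels against its dual, leaving
\[
    \omega_f \simeq \det(\Omega^1_{X/A}) = \Omega^1_{X/A},
\]
the last equality because $\Omega^1_{X/A}$ is already a line bundle. This is the asserted isomorphism. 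There is no genuine obstacle: the argument is an immediate application of Theorem~\ref{thm:dualizing_existence} together with the standard left-exactness and local splitting of the conormal sequence for a smooth scheme, the latter being exactly the content of the infinitesimal lifting property of smooth morphisms.
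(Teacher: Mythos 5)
Your proof is correct, but it takes a different (and longer) route than the paper. The paper exploits the fact that Theorem~\ref{thm:dualizing_existence} allows \emph{any} closed immersion into a scheme smooth over $\Spec A$ --- and since $X$ itself is smooth over $\Spec A$ by hypothesis, one may take $Z = X$ with $i = \id$. Then $\mathcal{C}_{X/X} = 0$, its determinant is trivial, and the formula collapses immediately to $\omega_f \simeq \det(\Omega^1_{X/A}) \simeq \Omega^1_{X/A}$, with no exact sequence needed. You instead choose an honest projective embedding $i\colon X \hookrightarrow \PP^n_A$ and invoke the left-exactness of the conormal sequence for a smooth closed subscheme of a smooth scheme, then cancel $\det(\mathcal{C}_{X/Z})$ against its dual via multiplicativity of determinants. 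This is a perfectly valid and standard adjunction-formula computation, and it has the pedagogical merit of showing that the formula in Theorem~\ref{thm:dualizing_existence} really is independent of the embedding; but it requires the extra input of the split conormal sequence (which the identity-immersion trick sidesteps entirely), and the local splitting you emphasize is not actually needed --- exactness of a sequence of locally free sheaves already gives multiplicativity of determinants. If you want the shortest path, notice that the target $Z$ in the theorem is allowed to be $X$ itself.
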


\begin{proof}
    As a smooth morphism is a local complete intersection, Theorem~\ref{thm:dualizing_existence} applies. Choosing the identity immersion $i\colon X \to X$, the conormal sheaf $\mathcal{C}_{X/X}$ is the zero sheaf, and the construction formula simplifies to $\omega_f \simeq \det(\Omega^1_{X/A})$. Since $\Omega^1_{X/A}$ is an invertible sheaf in this case, it is isomorphic to its own determinant, proving the claim.
\end{proof}

The definition of the dualizing sheaf extends to morphisms that are not necessarily proper. For a Cohen-Macaulay morphism $f\colon X \to \Spec A$ of finite type and pure relative dimension one over a Noetherian base, the dualizing sheaf is defined as $\omega_f := \mathcal{H}^{-1}(f^!A)$ (cf. \cite[\S3.5]{conrad2000grothendieck} and \cite[Exercise 9.7]{hartshorne1966residues}).

While an exposition of the underlying theory of dualizing complexes and the functor $f^!$ is beyond our scope, we will state its essential properties. The object $\omega_f$ is a coherent sheaf whose construction is local on $X$, and may be obtained by gluing certain $\SheafExt$-sheaves. Furthermore, it is invertible if and only if $f$ is a Gorenstein morphism.

If $f$ is also a quasi-projective l.c.i.\ morphism, then $\omega_f$ can be constructed concretely as in Theorem~\ref{thm:dualizing_existence} using an immersion $i\colon X \to Z$ into a scheme $Z$ that is smooth over $\Spec A$ (in \cite[Definition 6.4.7]{liu}, this construction is used to define this same sheaf under the name canonical sheaf). Moreover, Corollary~\ref{cor:smooth_case} also holds in this context.

\begin{remark}[Functorial Properties]
The dualizing sheaf construction has important functorial properties. For our purposes, the two relevant properties are its commutativity with restriction to any open subscheme and its compatibility with base change, provided either the base change morphism or the morphism $f$ itself is flat (see, e.g., \cite[\S3.6]{conrad2000grothendieck}).
\end{remark}

\subsubsection{Regular Differentials on Singular Curves}

For the description of the dualizing sheaf on a singular curve, the classical theory of residues for smooth curves is the essential tool. We therefore begin by recalling its definition and main result.

Let $C$ be a smooth, projective, and irreducible curve over our algebraically closed field $k$, and let $k(C)$ be its function field. The theory of residues provides, for each closed point $x \in C$, a unique $k$-linear map $\Res_x \colon \Omega^1_{k(C)/k} \to k$. This map is uniquely determined by the condition that, for a choice of local uniformizer $t_x \in \OO_{C,x}$, a differential $\eta \in \Omega^1_{k(C)/k}$ with an image of the form $\sum a_n t_x^n dt_x$ under the map
\[
    \Omega^1_{k(C)/k} \hookrightarrow \Omega^1_{k(C)/k} \otimes_{\OO_{C,x}} \widehat{\OO}_{C,x} \simeq k(\!(t_x)\!) dt_x
\]
has residue $\Res_x(\eta) := a_{-1}$. The well-definedness of this map (i.e., its independence from the choice of $t_x$) is a non-trivial fact (see \cite[Chapter II, 11]{serre1988algebraic}). A key result is the following.

\begin{theorem}[Residue Theorem, see {{\cite[Chapter II, 2, Proposition 6]{serre1988algebraic}}}] \label{thm:residue_thm}
    For any meromorphic differential $\eta \in \Omega^1_{k(C)/k}$ on a smooth projective curve $C$, the \linebreak sum of its residues over all closed points is zero:
    \[
        \sum_{x \in C} \Res_x(\eta) = 0.
    \]
\end{theorem}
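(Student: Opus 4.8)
The plan is the classical reduction to $\PP^1_k$. For a fixed $\eta \in \Omega^1_{k(C)/k}$ the sum is finite, since $\eta$ has only finitely many poles and $\Res_x(\eta) = 0$ whenever $\eta$ is regular at $x$, so the statement is meaningful. The argument then splits into two parts: verifying the identity for $\PP^1_k$ by an explicit computation, and reducing the general case to it via the trace map on meromorphic differentials.

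First I would treat $C = \PP^1_k$ with affine coordinate $t$, where $\eta = R(t)\,dt$ for some $R \in k(t)$. Since $k$ is algebraically closed, partial fractions writes $R$ as a $k$-linear combination of monomials $t^j$ ($j \ge 0$) and $(t-a)^{-m}$ ($a \in k$, $m \ge 1$). The functional $\eta \mapsto \sum_x \Res_x(\eta)$ is $k$-linear and is unchanged under the substitution $t \mapsto t-a$ (which merely relabels the points of $\PP^1_k$ and replaces $\eta$ by its pullback), so it suffices to treat $\eta = t^n\,dt$ with $n \in \ZZ$. Only $t = 0$ and $t = \infty$ can contribute: one has $\Res_0(t^n\,dt) = 1$ if $n = -1$ and $0$ otherwise, while setting $s = 1/t$ gives $t^n\,dt = -s^{-n-2}\,ds$, so $\Res_\infty(t^n\,dt) = -1$ if $n = -1$ and $0$ otherwise. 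The two contributions cancel, which settles this case.

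For general $C$: since $k$ is perfect, $k(C)$ is separably generated over $k$, so there is a nonconstant $t \in k(C)$ for which $k(C)/k(t)$ is finite and \emph{separable}; it defines a finite morphism $f\colon C \to \PP^1_k$. Separability yields a canonical isomorphism $\Omega^1_{k(t)/k} \otimes_{k(t)} k(C) \simeq \Omega^1_{k(C)/k}$ and hence a trace map $\Trace_{k(C)/k(t)}\colon \Omega^1_{k(C)/k} \to \Omega^1_{k(t)/k}$ on meromorphic differentials. The key local fact is that for each closed point $y \in \PP^1_k$,
\[
    \sum_{x \in f^{-1}(y)} \Res_x(\eta) = \Res_y\bigl(\Trace_{k(C)/k(t)}(\eta)\bigr).
\]
This follows by passing to completions: one has $k(C) \otimes_{k(t)} \widehat{k(t)}_y \simeq \prod_{x \in f^{-1}(y)} k(\!(t_x)\!)$, compatibly with the trace and with the Laurent expansions defining the residues, and the identity then reduces to the functoriality of the residue under the trace for a finite separable extension of the complete discretely valued field $k(\!(s)\!)$, where $s$ is a uniformizer at $y$. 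Summing the displayed identity over all closed points $y$ of $\PP^1_k$ gives $\sum_{x \in C}\Res_x(\eta) = \sum_y \Res_y\bigl(\Trace_{k(C)/k(t)}(\eta)\bigr)$, and the right-hand side vanishes by the $\PP^1_k$ case already proved.

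The main obstacle is the local trace-compatibility identity $\Res_y \circ \Trace_{k(C)/k(t)} = \sum_{x \in f^{-1}(y)}\Res_x$: proving it carefully requires genuine work with residues in complete discretely valued fields, in particular at ramified (and, in characteristic $p$, possibly wildly ramified) points, and it is also the one place where choosing $t$ with $k(C)/k(t)$ \emph{separable} is indispensable — over a non-perfect base this could not always be arranged. Everything else is bookkeeping. One could instead identify $\eta \mapsto \sum_x \Res_x(\eta)$ with the trace $H^1(C,\omega_{C/k}) \to k$ of Serre duality, using $\dim_k H^1(C,\omega_{C/k}) = 1$; but in the present text the residue theorem feeds into the very description of the dualizing sheaf, so such an argument would be circular here.
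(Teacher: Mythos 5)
Your argument is precisely the proof in the cited reference: the paper offers no proof beyond the citation to Serre, and Serre's argument is exactly your reduction to $\PP^1_k$ by partial fractions followed by the trace-compatibility of residues along a finite separable morphism $C \to \PP^1_k$. The one step you leave unproved --- the local identity $\Res_y \circ \Trace_{k(C)/k(t)} = \sum_{x \in f^{-1}(y)} \Res_x$ over the completions, including wildly ramified points in characteristic $p$ --- is indeed where Serre invests the real work (via ``prolongation of algebraic identities''), so your assessment of where the difficulty lies is accurate.
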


On the smooth curve $C$, the canonical sheaf $\omega_{C/k} = \Omega^1_{C/k}$ is naturally a subsheaf of the sheaf of \emph{meromorphic differentials}, denoted $\underline{\Omega}^1_{k(C)/k}$. This is the constant sheaf on $C$ associated with the $k(C)$-vector space of meromorphic 1-forms, $\Omega^1_{k(C)/k}$.
A section $\eta$ of $\underline{\Omega}^1_{k(C)/k}$ over an open set $U \subset C$ belongs to $\omega_{C/k}(U)$ if and only if $\eta$ is \emph{regular} at every closed point $x \in U$. If we write $\eta$ as $\sum_n a_n t_x^n dt_x$, where $t_x$ is a local uniformizer in $\OO_{C,x}$, then $\eta$ is regular at $x$ if and only if $a_n = 0$ for all $n < 0$. Using residues, this regularity condition can be expressed as
\[
    \Res_x(s \cdot \eta) = 0
\]
for all $s \in \OO_{C,x}$. Our goal is now to generalize the description of $\omega_{C/k} \subset \underline{\Omega}^1_{k(C)/k}$ as the sheaf of regular differentials to singular, possibly reducible, curves.

Now, let $C$ be a proper, reduced, connected curve over an algebraically closed field $k$. As a reduced, one-dimensional scheme, $C$ is automatically \emph{Cohen-Macaulay}, which implies that $\omega_{C/k}$ exists, though it is not necessarily invertible. The following lemma suggests that $\omega_{C/k}$ can be identified with a suitable subsheaf of meromorphic differentials.

\begin{lemma}[{{\cite[Lemma 5.2.1]{conrad2000grothendieck}}}]
    Denote by $j\colon U \hookrightarrow C$ the inclusion of the dense open smooth locus of $C$. Then the natural map
    \[
        \omega_{C/k} \to j_* (\omega_{C/k} |_U) \simeq j_*(\Omega^1_{U/k})
    \]
    is injective.
\end{lemma}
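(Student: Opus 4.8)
The plan is to identify the kernel of the displayed map and show that it must vanish by a direct appeal to the defining duality property of $\omega_{C/k}$. First I would note that the map in question is the unit of the adjunction $(j^*, j_*)$ applied to $\omega_{C/k}$, where the target is rewritten using the identification $\omega_{C/k}|_U \simeq \Omega^1_{U/k}$ from Corollary~\ref{cor:smooth_case} (valid since the dualizing sheaf commutes with restriction to open subschemes). Over the dense open $U$ this unit map is the identity, so its kernel $\mathcal{K} := \ker\bigl(\omega_{C/k} \to j_*(\Omega^1_{U/k})\bigr)$ restricts to zero on $U$; hence $\mathcal{K}$ is a coherent subsheaf of $\omega_{C/k}$ whose support is contained in the closed complement $Z := C \setminus U$. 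Because $C$ is reduced over the algebraically closed field $k$, it is generically smooth, so $Z$ (the non-smooth locus) contains no generic point of $C$; it is therefore a proper closed subset of the $1$-dimensional scheme $C$, i.e.\ a finite set of closed points. In particular $\mathcal{K}$ is a coherent sheaf with $0$-dimensional support.

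Next I would kill $\mathcal{K}$ using duality. Suppose, for contradiction, that $\mathcal{K} \neq 0$; then the inclusion $\iota\colon \mathcal{K} \hookrightarrow \omega_{C/k}$ is a nonzero element of $\Hom_{\OO_C}(\mathcal{K}, \omega_{C/k})$. Applying the defining property of the dualizing sheaf to the quasi-coherent sheaf $\Fs = \mathcal{K}$ gives an isomorphism
\[
    \Hom_{\OO_C}(\mathcal{K}, \omega_{C/k}) \simeq H^1(C, \mathcal{K})^\vee .
\]
Since $\mathcal{K}$ has $0$-dimensional support, $H^1(C, \mathcal{K}) = 0$ (a coherent sheaf supported on finitely many closed points is acyclic in positive degrees), so $\Hom_{\OO_C}(\mathcal{K}, \omega_{C/k}) = 0$, contradicting $\iota \neq 0$. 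Therefore $\mathcal{K} = 0$, i.e.\ the natural map is injective.

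The argument is short, and I expect the only delicate points to be the following. One must make sure that $\mathcal{K}$ genuinely has $0$-dimensional support, which is exactly where reducedness of $C$ is used — the singular locus of a non-reduced curve could contain an entire irreducible component. One must also be comfortable invoking the duality isomorphism $\Hom_{\OO_C}(\Fs, \omega_{C/k}) \simeq H^1(C, \Fs)^\vee$ for the torsion, non-locally-free test sheaf $\Fs = \mathcal{K}$; this is legitimate precisely because the dualizing sheaf is \emph{defined} by this property for all quasi-coherent $\Fs$, not merely for locally free ones. Alternatively, Step~2 can be rephrased as the assertion that $\omega_{C/k}$ satisfies Serre's condition $S_1$ — a manifestation of the general fact that the dualizing sheaf of a Cohen--Macaulay scheme is a maximal Cohen--Macaulay module — from which the absence of nonzero local sections supported on the finite set $Z$ follows immediately.
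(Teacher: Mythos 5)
Your argument is correct. Note that the paper does not supply its own proof of this lemma at all --- it is quoted verbatim from Conrad \cite[Lemma 5.2.1]{conrad2000grothendieck} --- so the comparison is really with the argument in that reference, which establishes injectivity by a local computation: the dualizing sheaf of a Cohen--Macaulay curve is itself (maximal) Cohen--Macaulay, hence has depth $1$ at every closed point and therefore admits no nonzero local sections supported in dimension $0$. Your route is genuinely different and, in the context of this paper, arguably cleaner: you only use the axiomatic characterization $\Hom_{\OO_C}(\Fs, \omega_{C/k}) \simeq H^1(C,\Fs)^\vee$ exactly as it is stated in the paper's definition of the dualizing sheaf, applied to the torsion kernel $\mathcal{K}$, together with the vanishing of $H^1$ for a coherent sheaf with finite support. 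All the supporting steps check out: $\mathcal{K}$ is coherent (a quasi-coherent subsheaf of the coherent sheaf $\omega_{C/k}$ on a Noetherian scheme), it vanishes on $U$ because the unit of adjunction for an open immersion restricts to the identity, and $Z = C \setminus U$ is finite because a reduced scheme of finite type over the algebraically closed field $k$ is generically smooth, so no component of the $1$-dimensional scheme $C$ lies in $Z$. Your closing remark correctly identifies the two places where care is needed (reducedness, and applying duality to a non-locally-free test sheaf), and your reformulation via Serre's condition $S_1$ is precisely the local argument used in the cited source; the global duality argument and the local depth argument are two faces of the same fact.
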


Let $\{\xi_i\}$ be the set of generic points of $C$. The total ring of fractions of $C$ is $k(C) := \prod_i k(C_i)$, where $k(C_i) = k(\xi_i)$ is the function field of the irreducible component $C_i = \overline{\{\xi_i\}}$. The \emph{sheaf of meromorphic differentials} on $C$ is defined as the pushforward from the scheme of generic points:
\[ \underline{\Omega}^1_{k(C)/k} := j_* \Omega^1_{k(C)/k}, \] 
where $j \colon \Spec k(C) \to C$ is the natural inclusion.\footnote{Using the notation as in \cite{liu}, one may define this equivalently as $\Omega_{C/k}^1 \otimes_{\OO_C} \mathcal{K}_C$, where $\mathcal{K}_C$ is the sheaf of rational functions on $C$.} Its stalk at a point $x \in C$ can be readily verified to be
\[
    (\underline{\Omega}^1_{k(C)/k})_x = \prod_{C_i \ni x} \Omega^1_{k(C_i)/k}.
\]
We want to define a subsheaf $\omega_{C/k}^\text{reg} \subset \underline{\Omega}^1_{k(C)/k}$ of \emph{regular differentials}, generalizing the notion from the smooth case. To do this, we use the normalization $\nu \colon \tilde{C} \to C$. The total rings of fractions of $C$ and $\tilde{C}$ are canonically isomorphic, which gives a canonical isomorphism of $\OO_C$-modules:
\begin{equation} \label{eq:identification}
    \nu_\ast \underline{\Omega}^1_{k(\tilde{C})/k} \simeq \underline{\Omega}^1_{k(C)/k}.
\end{equation}
We identify these sheaves from now on. In particular, for each irreducible component, we identify
\[
    \Omega^1_{k(C_i)/k} \simeq \Omega^1_{k(\tilde{C}_i)/k},
\]
where $k(\tilde{C}_i) = k(\nu^{-1}(\xi_i))$ is the function field of the corresponding smooth component of $\tilde{C}$.

\begin{definition}
    Let $x$ be a closed point on $C$. We define the $k$-linear map
    \begin{align*}
        \Res_x \colon (\underline{\Omega}^1_{k(C)/k})_x \to k
    \end{align*}
    as follows: Consider an element
    \[
        \eta \in (\underline{\Omega}^1_{k(C)/k})_x = \prod_{C_i \ni x} \Omega^1_{k(C_i)/k} = \prod_{C_i \ni x} \Omega^1_{k(\tilde{C}_i)/k}.
    \]
    For any point $\tilde{x} \in \nu^{-1}(x)$, we denote by $\Res_{\tilde{x}}(\eta)$ the classical residue of the component of $\eta$ corresponding to the unique smooth component of $\tilde{C}$ containing $\tilde{x}$. Then we set
    \[
        \Res_x(\eta) := \sum_{\tilde{x} \in \nu^{-1}(x)} \Res_{\tilde{x}}(\eta).
    \]
\end{definition}

Note that $\Res_x(\eta)$ also makes sense if $\eta$ is a section of $\underline{\Omega}^1_{k(C)/k}$ over an open set $U$ containing $x$, via the natural map $\underline{\Omega}^1_{k(C)/k}(U) \to (\underline{\Omega}^1_{k(C)/k})_x$.

\begin{remark}[The Residue Theorem for Singular Curves]
    An immediate consequence of our definition is that the Residue Theorem (\ref{thm:residue_thm}) extends to the singular curve $C$. This holds because the sum of residues on $C$ is, by construction, the sum of all residues on its smooth normalization $\tilde{C}$, which is zero by the classical theorem.
\end{remark}

With this notion of residue, we can now generalize the condition of regularity to singular curves.

\begin{definition}
    Let $\eta$ be a section of $\underline{\Omega}^1_{k(C)/k}$ over an open set $U \subset C$. We call $\eta$ \emph{regular at a closed point} $x \in U$ if, for all $s \in \OO_{C,x}$, we have
    \begin{equation} \label{eq:regular}
        \Res_x(s \cdot \eta) = 0.
    \end{equation}
    We call the section $\eta$ \emph{regular on $U$} if it is regular at every closed point $x \in U$.
    
    We define the $\OO_C$-module $\omega_{C/k}^\text{reg}$ as the subsheaf of $\underline{\Omega}^1_{k(C)/k}$ whose sections over any open set $U \subset C$ are the differentials regular on $U$.
\end{definition}

\begin{remark} \label{rem:comparing_regularity}
    By our identification in \eqref{eq:identification}, a single meromorphic differential $\eta$ can be viewed on both $C$ and its normalization $\tilde{C}$. It is therefore meaningful to compare our regularity condition from \eqref{eq:regular} at $x \in C$ with the classical notion of regularity on the smooth curve $\tilde{C}$ at points $\tilde{x} \in \nu^{-1}(x)$.

    Clearly, if $\eta$ is regular at all points in the fiber $\nu^{-1}(x)$, then it is also regular at $x$. The converse, however, is not true for two main reasons. First, the residue at $x$ is the sum of all residues at the points above $x$, so it can be zero due to cancellation even if the individual residues are non-zero. Second, even if there is only one point $\tilde{x}$ above $x$, the condition for regularity at $x$ is still weaker; it only requires the residue condition to hold for multipliers $s \in \OO_{C,x}$, which is a proper subring of $\OO_{\tilde{C}, \tilde{x}}$ if $x$ is a singular point.

    It is now easy to see that these two notions of regularity coincide at a point $x$ if and only if $x$ is a smooth point of $C$.
\end{remark}

We have the following fundamental theorem, which identifies the abstract dualizing sheaf with the concrete sheaf of regular differentials.

\begin{theorem}[Rosenlicht]
    Let $C$ be a proper, reduced curve over an algebraically closed field $k$. Let $j\colon U \hookrightarrow C$ be the inclusion of the dense smooth locus. The dualizing sheaf $\omega_{C/k}$ and the sheaf of regular differentials $\omega_{C/k}^{\text{reg}}$ coincide as coherent subsheaves of $j_*(\Omega^1_{U/k}) \subset \underline{\Omega}_{k(C)/k}^1$.
\end{theorem}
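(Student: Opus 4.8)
The plan is to check the equality of the two subsheaves of $j_*(\Omega^1_{U/k})$ locally. Both $\omega_{C/k}$ and $\omega^{\mathrm{reg}}_{C/k}$ are contained in $j_*(\Omega^1_{U/k}) \subseteq \underline{\Omega}^1_{k(C)/k}$, and by Remark~\ref{rem:comparing_regularity} they restrict to the same sheaf $\Omega^1_{U/k}$ on the dense smooth locus $U$. Since a subsheaf of $j_*(\Omega^1_{U/k})$ is determined by its stalks, it suffices to prove $(\omega_{C/k})_x = (\omega^{\mathrm{reg}}_{C/k})_x$ inside $\underline{\Omega}^1_{k(C)/k,x} = \prod_{C_i \ni x}\Omega^1_{k(C_i)/k}$ for each of the finitely many closed points $x$ of the singular locus $\Delta := C\setminus U$. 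By definition the right-hand stalk is the $A$-submodule $\{\eta : \Res_x(a\eta)=0 \text{ for all } a\in A\}$, where $A := \OO_{C,x}$ is a reduced, one-dimensional, hence Cohen--Macaulay, local $k$-algebra. So the whole theorem reduces to an identification of the local dualizing module of $A$ with this concretely described module of regular differentials.

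First I would make the dualizing sheaf local. Embedding $C$ as a closed subscheme $i\colon C\hookrightarrow P:=\PP^N_k$, the curve $C$ is Cohen--Macaulay of pure codimension $N-1$, so $\omega_{C/k}\simeq \SheafExt^{N-1}_{\OO_P}(\OO_C,\omega_{P/k})$ (\cite[Proposition III.7.5 and Theorem III.7.11]{hartshorne}, or Theorem~\ref{thm:dualizing_existence} and the $\SheafExt$-construction mentioned there). This presents $\omega_{C/k}$ by a construction local on $P$, so its stalk at $x$ is the dualizing module of $A$ computed from any finite presentation of $A$ over a regular local ring, and I may henceforth work purely with $A$.

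Next I would obtain the residue description of this local module via the normalization. Let $\nu\colon\tilde C\to C$ be the normalization and $\tilde A:=\prod_{\tilde x\mid x}\OO_{\tilde C,\tilde x}$ the normalization of $A$ (a finite product of discrete valuation rings with the same total ring of fractions as $A$). On the smooth curve $\tilde C$ we have $\omega_{\tilde C/k}\simeq\Omega^1_{\tilde C/k}$ (Corollary~\ref{cor:smooth_case}), and Grothendieck duality for the finite morphism $\nu$ gives $\omega_{\tilde C/k}\simeq\nu^!\omega_{C/k}$ together with a trace map $\mathrm{Tr}_\nu\colon\nu_*\omega_{\tilde C/k}\to\omega_{C/k}$ compatible with the intrinsic trace maps of the two dualizing sheaves; equivalently, $\nu_*\omega_{\tilde C/k}$ is the sheaf of $\OO_C$-linear homomorphisms $\nu_*\OO_{\tilde C}\to\omega_{C/k}$. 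Unwinding these isomorphisms at the stalk over $x$, and using the Residue Theorem on $\tilde C$ (Theorem~\ref{thm:residue_thm}) to identify the intrinsic trace on $\tilde C$ with the sum of local residues, one finds that the canonical embedding $(\omega_{C/k})_x\hookrightarrow\underline{\Omega}^1_{k(C)/k,x}$ has image exactly $\{\eta : \Res_x(a\eta)=0 \text{ for all }a\in A\}$. Together with the smooth-point case this gives $\omega_{C/k}=\omega^{\mathrm{reg}}_{C/k}$.

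The hard part is this last step, namely showing that the intrinsic trace $\tr\colon H^1(C,\omega_{C/k})\to k$ is the sum-of-residues map $\eta\mapsto\sum_x\Res_x(\eta)$ --- equivalently, that the residue pairing realizes relative Serre duality on $C$. This is precisely the local-duality / residue-symbol content of the theory; I would invoke it from \cite[Chapter 5]{conrad2000grothendieck}, which carries out exactly this comparison of $\omega_{C/k}$ with the sheaf of regular differentials. An alternative, more self-contained route avoids the $\SheafExt$ description altogether: one shows directly, using the adelic presentation of $H^1(C,\Fs)$ and the pairing $(\{s_x\},\{\eta_x\})\mapsto\sum_x\Res_x(s_x\eta_x)$, that $(\omega^{\mathrm{reg}}_{C/k},\,\sum_x\Res_x)$ represents the functor $\Fs\mapsto H^1(C,\Fs)^\vee$, and then appeals to the uniqueness of the dualizing sheaf; in this approach the coherence of $\omega^{\mathrm{reg}}_{C/k}$ must be checked by hand, which is easy since $(\omega^{\mathrm{reg}}_{C/k})_x$ is sandwiched between $(\nu_*\Omega^1_{\tilde C/k})_x$ and its conductor-twist $\mathfrak{c}^{-1}(\nu_*\Omega^1_{\tilde C/k})_x$.
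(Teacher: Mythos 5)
Your outline is correct and lands in the same place as the paper: the paper's entire proof is the citation to \cite[Theorem 5.2.3]{conrad2000grothendieck}, and your sketch --- localizing at the singular points, using the $\SheafExt$ presentation of $\omega_{C/k}$, passing to the normalization via duality for the finite morphism $\nu$, and identifying the intrinsic trace with the sum-of-residues pairing --- is precisely the argument carried out there, with the one genuinely hard step (that the residue pairing realizes Serre duality on $C$) still deferred to the same source. So the two proofs are essentially the same; yours simply unfolds more of the scaffolding before making the decisive citation.
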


\begin{proof}
    This is \cite[Theorem 5.2.3]{conrad2000grothendieck}.
\end{proof}

In this setting, the trace map $\tr_{C/k}$ can also be described concretely as a ``sum of residues'' map; we refer to \cite[Chapter 5.2]{conrad2000grothendieck} for a detailed exposition.

From now on, we will write $\omega_{C/k}$ instead of $\omega_{C/k}^{\text{reg}}$. Meromorphic differentials that are regular at all closed points of $\tilde{C}$ (i.e., sections of $\nu_* \Omega^1_{\tilde{C}/k} \subset \underline{\Omega}_{k(C)/k}^1$) are also regular on $C$ by Remark \ref{rem:comparing_regularity}. This establishes the inclusion
\[
    \nu_* \Omega^1_{\tilde{C}/k} \subset \omega_{C/k},
\]
providing a lower bound for the dualizing sheaf. The following proposition provides a corresponding upper bound, showing that regular differentials can be viewed as meromorphic differentials on the normalization with poles of bounded order along a specific divisor.

\begin{proposition}
    There exists a unique minimal effective Weil divisor $D$ on the normalization $\tilde{C}$ such that
    \[
        \nu_* \Omega^1_{\tilde{C}/k} \subset \omega_{C/k} \subset \nu_* (\Omega^1_{\tilde{C}/k}(D)).
    \]
    This divisor $D$ is called the \emph{conductor divisor}. Its support is precisely the preimage under $\nu$ of the singular locus of $C$.
\end{proposition}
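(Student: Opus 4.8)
The plan is to prove both inclusions together with the minimality statement by passing to the normalization, and then to pin down the support by a purely local residue computation at each singular point. The lower inclusion $\nu_*\Omega^1_{\tilde{C}/k}\subseteq\omega_{C/k}$ is already available, being the one recorded just before the statement. For the upper bound I would consider the $\OO_{\tilde{C}}$-submodule $\widetilde{\mathcal{N}}\subseteq\underline{\Omega}^1_{k(\tilde{C})/k}$ corresponding, under the finite morphism $\nu$ and the equivalence between quasi-coherent modules on $\tilde{C}$ and quasi-coherent $\nu_*\OO_{\tilde{C}}$-modules on $C$, to the $\nu_*\OO_{\tilde{C}}$-submodule of $\nu_*\underline{\Omega}^1_{k(\tilde{C})/k}=\underline{\Omega}^1_{k(C)/k}$ generated by $\omega_{C/k}$; this is coherent because $\omega_{C/k}$ is and $\nu_*\OO_{\tilde{C}}$ is a finite $\OO_C$-algebra. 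As a torsion-free rank-one coherent subsheaf of $\underline{\Omega}^1_{k(\tilde{C})/k}$ on the regular curve $\tilde{C}$ containing the line bundle $\Omega^1_{\tilde{C}/k}$, the sheaf $\widetilde{\mathcal{N}}$ is invertible and equals $\Omega^1_{\tilde{C}/k}(D)$ for a unique effective Weil divisor $D$, so that $\omega_{C/k}\subseteq\nu_*\widetilde{\mathcal{N}}=\nu_*(\Omega^1_{\tilde{C}/k}(D))$. Minimality and uniqueness are then formal: if $D'$ is effective with $\omega_{C/k}\subseteq\nu_*(\Omega^1_{\tilde{C}/k}(D'))$, the right-hand side is a $\nu_*\OO_{\tilde{C}}$-module, hence contains $\nu_*\widetilde{\mathcal{N}}$; translating back to $\tilde{C}$ gives $\Omega^1_{\tilde{C}/k}(D)\subseteq\Omega^1_{\tilde{C}/k}(D')$, i.e. $D\le D'$.

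For $\Supp(D)\subseteq\nu^{-1}(\operatorname{Sing} C)$ I would restrict everything to the dense smooth locus $U\subseteq C$: there $\nu$ is an isomorphism onto its preimage, $\omega_{C/k}|_U=\Omega^1_{U/k}$, and $\nu_*\OO_{\tilde{C}}|_U=\OO_U$, so $\widetilde{\mathcal{N}}$ restricts to $\Omega^1_{\tilde{C}/k}$ over $\nu^{-1}(U)$ and $D$ vanishes there. It is worth noting in passing that $D$ is dominated by the conductor divisor $V(\mathfrak{c}\,\OO_{\tilde{C}})$, where $\mathfrak{c}=\mathrm{Ann}_{\OO_C}(\nu_*\OO_{\tilde{C}}/\OO_C)$ is the conductor ideal: since $\mathfrak{c}$ is an ideal of $\nu_*\OO_{\tilde{C}}$, for $a\in\mathfrak{c}$ and $\eta\in\omega_{C/k,x}$ one has $\Res_x(s\,a\eta)=\Res_x((sa)\eta)=0$ for every $s\in(\nu_*\OO_{\tilde{C}})_x$, which forces $a\eta$ to be regular on $\nu^{-1}(x)$; this yields $\mathfrak{c}\cdot\omega_{C/k}\subseteq\nu_*\Omega^1_{\tilde{C}/k}$ and hence the asserted domination. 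For Gorenstein $C$, and in particular in the situations relevant to this thesis, this dominating divisor in fact coincides with $D$.

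The remaining inclusion $\Supp(D)\supseteq\nu^{-1}(\operatorname{Sing} C)$ is the crux, and I expect it to be the main obstacle, since it forces one to exhibit regular differentials with prescribed poles rather than merely to bound them. I would argue locally at a singular point $x$ by means of the local residue pairing. After completion, write $\widehat{\OO}_{C,x}\hookrightarrow B:=\prod_{i=1}^{r}k[\![t_i]\!]$ for the completed normalization, with $t_i$ a parameter at the preimage $\tilde{x}_i$ of $x$, and use the perfect pairing $\prod_i k(\!(t_i)\!)\times\prod_i k(\!(t_i)\!)\,dt_i\to k$ given by the total sum of residues; under it, the completed stalk $\widehat{\omega}_{C/k,x}$ is exactly the annihilator of $\widehat{\OO}_{C,x}$ and $\prod_i k[\![t_i]\!]\,dt_i$ is exactly the annihilator of $B$. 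Since the coefficient of $D$ at $\tilde{x}_i$ equals $\max\{-v_{\tilde{x}_i}(\eta):\eta\in\omega_{C/k,x}\}$, it suffices to show $\omega_{C/k,x}$ is not contained in the subspace $W_i$ of differentials regular at $\tilde{x}_i$. A direct computation identifies the annihilator of $W_i$ with $k[\![t_i]\!]\,e_i$, where $e_i\in B$ is the $i$-th idempotent, so $\widehat{\omega}_{C/k,x}\subseteq W_i$ would force $e_i\in\widehat{\OO}_{C,x}$: for $r\ge 2$ this is a nontrivial idempotent in the local ring $\widehat{\OO}_{C,x}$, which is impossible, and for $r=1$ it forces $\widehat{\OO}_{C,x}=B$, i.e. $x$ smooth, contrary to assumption. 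Hence $D$ has positive coefficient at each preimage of a singular point, which together with the previous paragraph gives $\Supp(D)=\nu^{-1}(\operatorname{Sing} C)$. The technical points to handle with care are the finiteness of $\nu$ (needed so that the module equivalence on $\tilde{C}$ applies — this uses that $C$, being of finite type over the perfect field $k$, is excellent) and the bookkeeping of the residue pairing, in particular that passing to completions does not lose the information about the coefficients of $D$.
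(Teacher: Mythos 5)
Your argument is correct, but it takes a genuinely different route from the paper's. The paper simply \emph{constructs} $D$ as the conductor divisor --- the effective divisor on $\tilde{C}$ cut out by the ideal $\widetilde{\mathcal{C}}$ with $\nu_*\widetilde{\mathcal{C}} = \mathcal{C} = \Ann_{\nu_*\OO_{\tilde{C}}}(\nu_*\OO_{\tilde{C}}/\OO_C)$ --- and defers the verification of the inclusions, minimality, and support to \cite[Lemma 5.2.2]{conrad2000grothendieck}. You instead define $D$ intrinsically as the pole divisor of the $\nu_*\OO_{\tilde{C}}$-saturation $\widetilde{\mathcal{N}}$ of $\omega_{C/k}$, which makes existence, uniqueness and minimality essentially formal, and then you prove the support statement yourself via the local residue pairing and the idempotent trick; this yields a self-contained proof where the paper has only a reference. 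Two remarks. First, the step from $\widehat{\omega}_{C/k,x} \subseteq W_i$ to $e_i \in \widehat{\OO}_{C,x}$ uses the double-annihilator property $\Ann(\Ann(\widehat{\OO}_{C,x})) = \widehat{\OO}_{C,x}$, i.e.\ the perfectness of the local residue pairing for subspaces sandwiched between the lattices $\widehat{\mathcal{C}}_x$ and $B$; this is standard (it is the local content of Rosenlicht's duality already invoked in the text), but you should flag it as an input rather than leave it implicit. Second, your restriction of the identification $D = V(\mathcal{C}\,\OO_{\tilde{C}})$ to the Gorenstein case is unnecessarily cautious: the same pairing argument shows it for any reduced curve, since the smallest $B$-submodule containing $\Ann(\widehat{\OO}_{C,x})$ is the annihilator of the largest $B$-submodule contained in $\widehat{\OO}_{C,x}$, which is precisely the conductor. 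Completing this point is worthwhile, because the paper's $D$ \emph{is} the conductor divisor by construction, and the subsequent degree formula $\deg(D_x) = 2\delta_x$ in the Gorenstein example rests on that identification.
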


\begin{proof}
    We explain how the conductor divisor $D$ is constructed; see \cite[Lemma 5.2.2]{conrad2000grothendieck} for details. Viewing $\OO_C$ as a subsheaf of $\nu_*\OO_{\tilde{C}}$, the \emph{conductor ideal} is defined as
    \[
        \mathcal{C} := \Ann_{\nu_*\OO_{\tilde{C}}}(\nu_*\OO_{\tilde{C}} / \OO_C).
    \]
    This is the largest ideal sheaf within $\nu_*\OO_{\tilde{C}}$ that is also an ideal sheaf of $\OO_C$. Its support is precisely the singular locus of $C$. Since $\nu$ is a finite morphism, $\mathcal{C}$ corresponds to a unique ideal sheaf $\widetilde{\mathcal{C}}$ on $\tilde{C}$ such that $\nu_*\widetilde{\mathcal{C}} = \mathcal{C}$. This ideal sheaf $\widetilde{\mathcal{C}}$ defines a finite closed subscheme on $\tilde{C}$, which, when viewed as an effective Weil divisor, is our divisor $D$.
\end{proof}

\begin{example}
    For a \emph{Gorenstein singularity} $x \in C$, the structure of the conductor divisor simplifies. Let $D_x$ be the part of the conductor divisor $D$ supported over the point $x$, so that $D = \sum_x D_x$. The degree of this local divisor is related to the $\delta$-invariant of the singularity, $\delta_x := \dim_k((\nu_*\OO_{\tilde{C}})_x / \OO_{C,x})$, by the formula
    \[
        \deg(D_x) = 2\delta_x.
    \]
    One can show that in addition to this pole restriction, a differential must satisfy $\delta_x$ further linear conditions to be regular at $x$.

    In the simplest case of a singularity with $\delta_x=1$ (i.e., a node or a cusp), this formula completely determines the structure of the conductor divisor.
    \begin{description}
        \item[For an \emph{$A_1$ singularity (node)}] The normalization has two points $\{p_1, p_2\}$ lying over the node. The degree is distributed between them, yielding the divisor $D_x = p_1 + p_2$.
        \item[For an \emph{$A_2$ singularity (cusp)}] The singularity is unibranch, meaning there is only one point $p$ on the normalization lying over it. The divisor's degree is concentrated at this point, yielding $D_x = 2p$.
    \end{description}
\end{example}

A section of $\nu_*(\Omega_{\tilde{C}/k}^1(D))$ belongs to $\omega_{C/k}$ if and only if it is regular at every singular point $x \in C$. This is a local condition that depends only on the local ring $\OO_{C,x}$. Since our curve $C$ is of finite type over a field, its local rings are automatically \emph{excellent}, which ensures that the condition depends only on the completion $\widehat{\OO}_{C,x}$. The following proposition describes this condition explicitly for nodes and cusps.

\begin{proposition}[Local Residue Conditions] \label{prop:local_conditions}
    Let $x \in C$ be a singular point, and let $\eta$ be a local section of $\nu_*(\Omega^1_{\tilde{C}/k}(D_x))$.
    \begin{enumerate}[label=\StatementListLabel]
        \item If $x$ is an \emph{$A_1$ singularity (a node)} with preimages $\{p_1, p_2\} \subset \tilde{C}$, so that $D_x = p_1+p_2$, then $\eta$ is a section of $\omega_{C/k}$ at $x$ if and only if the sum of its residues is zero:
        \[
            \Res_{p_1}(\eta) + \Res_{p_2}(\eta) = 0.
        \]
        \item If $x$ is an \emph{$A_2$ singularity (a cusp)} with preimage $\{p\} \subset \tilde{C}$, so that $D_x=2p$, then $\eta$ is a section of $\omega_{C/k}$ at $x$ if and only if its residue is zero:
        \[
            \Res_{p}(\eta) = 0.
        \]
    \end{enumerate}
\end{proposition}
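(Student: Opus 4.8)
The plan is to reduce the statement to the classical Residue Theorem for the normalization, exploiting that both conditions are purely local at $x$ and that the conductor divisor $D_x$ has degree exactly $2\delta_x = 2$ in both cases. First I would fix a local section $\eta$ of $\nu_*(\Omega^1_{\tilde C/k}(D_x))$ near $x$; the membership $\eta \in \omega_{C/k}$ is equivalent, by definition of the sheaf of regular differentials, to the vanishing $\Res_x(s\cdot\eta) = 0$ for all $s \in \OO_{C,x}$. The key observation is that for a node or cusp, the completed local ring $\widehat{\OO}_{C,x}$ has a very explicit description ($k[\![u,v]\!]/(uv)$ or $k[\![u,v]\!]/(v^2-u^3)$), and correspondingly $\nu_*\OO_{\tilde C}$ near $x$ is generated over $\OO_{C,x}$ by finitely many explicit elements. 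So the infinitely many conditions ``$\Res_x(s\cdot\eta)=0$ for all $s$'' collapse to finitely many, indexed by a set of module generators of $\nu_*\OO_{\tilde C}/\OO_C$ together with $s=1$.

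For the node, I would work in the completion: $\tilde C$ has two branches with uniformizers $t_1, t_2$ at $p_1, p_2$, and $\OO_{C,x}$ consists of pairs $(f_1(t_1), f_2(t_2))$ with $f_1(0) = f_2(0)$. Writing $\eta = (\eta_1, \eta_2)$ with $\eta_i$ having at worst a simple pole (since $D_x = p_1 + p_2$), the condition $\Res_x(1\cdot\eta) = 0$ reads $\Res_{p_1}(\eta_1) + \Res_{p_2}(\eta_2) = 0$. It remains to check that this single condition already implies $\Res_x(s\cdot\eta)=0$ for every $s \in \OO_{C,x}$: writing $s = (f_1,f_2)$ with $f_1(0)=f_2(0)=c$, one has $\Res_{p_i}(f_i\eta_i) = c\cdot\Res_{p_i}(\eta_i)$ because $\eta_i$ has only a simple pole and $(f_i - c)$ vanishes at $p_i$, hence $\Res_x(s\cdot\eta) = c(\Res_{p_1}(\eta_1)+\Res_{p_2}(\eta_2)) = 0$. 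This gives the equivalence in part (a). For the cusp, there is a unique branch with uniformizer $t$ at $p$, and $\widehat{\OO}_{C,x} \subset k[\![t]\!]$ is the subring $k[\![t^2,t^3]\!]$ of power series with vanishing $t^1$-coefficient. With $D_x = 2p$, the differential $\eta$ may have a pole of order at most $2$ at $p$, say $\eta = (a_{-2}t^{-2} + a_{-1}t^{-1} + \cdots)\,dt$. Testing against $s=1$ forces $a_{-1} = \Res_p(\eta) = 0$; then for any $s \in k[\![t^2,t^3]\!]$, the product $s\cdot\eta$ has $t^{-1}$-coefficient equal to (constant term of $s$)$\cdot a_{-1}$ $+$ ($t^1$-coefficient of $s$)$\cdot a_{-2}$, and \emph{both} of these vanish — the first since $a_{-1}=0$, the second since $s$ has no $t^1$-term. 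Hence $\Res_x(s\cdot\eta)=0$ for all $s$, giving part (b); conversely regularity trivially implies $\Res_p(\eta) = \Res_x(1\cdot\eta) = 0$.

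The main obstacle I anticipate is not the residue bookkeeping itself but making rigorous the passage to the completion and the claim that $\nu_*\OO_{\tilde C}/\OO_{C,x}$ is generated (as $\OO_{C,x}$-module) in the stated way — i.e. that it suffices to test $s$ in a finite generating set. This is where one invokes excellence of the local rings (so that regularity is detectable after completion, as already noted in the text preceding the proposition) and the explicit normal forms for $A_1$ and $A_2$ singularities valid in every characteristic (the footnote in Definition~\ref{def:git-stability} guarantees $uv=0$ and $v^2=u^3$ work even in characteristic $2$). One should also double-check the degenerate subtlety in characteristic $2$ for the cusp: the coefficient extraction of the $t^{-1}$-term of $s\cdot\eta$ is a characteristic-free linear algebra computation, so nothing breaks, but it is worth stating explicitly that the argument uses only that $k[\![t^2,t^3]\!]$ is the kernel of the ``$t^1$-coefficient'' functional on $k[\![t]\!]$, which holds in all characteristics.
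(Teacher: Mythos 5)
Your proposal is correct and follows essentially the same route as the paper's proof: both pass to the completed local rings $k[\![t_1]\!]\times k[\![t_2]\!]$ (pairs agreeing at the origin) for the node and $k[\![t^2,t^3]\!]\subset k[\![t]\!]$ for the cusp, and both extract the $t^{-1}$-coefficient of $s\cdot\eta$ to see that the regularity condition collapses to the single stated residue condition (with the $c_1 a_{-2}$ term killed by the vanishing linear term of $s$ in the cuspidal case). Your additional remarks on excellence and characteristic-independence only make explicit what the paper states in the surrounding text, so there is nothing further to reconcile.
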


\begin{proof}
    We verify the regularity condition \eqref{eq:regular}, $\Res_x(s \cdot \eta) = 0$ for all $s \in \OO_{C,x}$, by analyzing the completed local rings $\widehat{\OO}_{C,x}$ and their normalizations.

    \begin{prooflist}
        \item The completion $\widehat{\OO}_{C,x}$ embeds into its normalization $k[\![t_1]\!] \times k[\![t_2]\!]$ as the pairs $(f(t_1), g(t_2))$ such that $f(0)=g(0)$. The differential $\eta=(\eta_1, \eta_2)$ has at most simple poles ($D_x=p_1+p_2$). Let $a_{-1} = \Res_{p_1}(\eta)$ and $b_{-1} = \Res_{p_2}(\eta)$. 
        
        For any $s \in \widehat{\OO}_{C,x}$, let $c$ be its constant term (i.e., $f(0)=g(0)=c$). Then
        \[
            \Res_x(s\cdot \eta) = \Res_{p_1}(f\eta_1) + \Res_{p_2}(g\eta_2) = c \cdot a_{-1} + c \cdot b_{-1} = c(a_{-1}+b_{-1}).
        \]
        This vanishes for all $s$ (i.e., for all $c \in k$) if and only if $a_{-1}+b_{-1}=0$.

        \item The completion $\widehat{\OO}_{C,x}$ embeds into the normalization $k[\![t]\!]$ as the subring $k[\![t^2, t^3]\!]$, consisting of power series $s=\sum c_i t^i$ lacking the linear term ($c_1=0$). The differential $\eta$ has a pole of order at most $2$ ($D_x=2p$). Let $\eta = (\sum_{j\geq -2} a_j t^j)dt$.
        
        The residue $\Res_x(s\eta) = \Res_p(s\eta)$ is the coefficient of $t^{-1}$ in the product $s\eta$, which is the sum of terms $c_i a_j$ where $i+j=-1$. The only relevant terms are $c_0 a_{-1}$ and $c_1 a_{-2}$. Since $c_1=0$ for all $s \in \widehat{\OO}_{C,x}$, the condition $\Res_x(s\eta)=0$ for all $s$ reduces to $c_0 a_{-1}=0$ for all $c_0 \in k$. This holds if and only if $a_{-1} = \Res_p(\eta) = 0$.
    \end{prooflist}
\end{proof}

For a semistable curve, Proposition~\ref{prop:local_conditions} therefore specializes to the classical description of the dualizing sheaf found in standard references on moduli theory (see, for instance, \cite[p.\ 2, Fact b)]{deligne-mumford}, \cite[p. 91]{ACGII}, \cite[p. 82]{harris1998moduli}).

\subsubsection{The Dualizing Sheaf of a Normal Model}

We now provide a more explicit description of the dualizing sheaf for a normal model $\X$ of a curve $X$ with a reduced special fiber. We work in the standard arithmetic setting established in Setup~\ref{setup:field}.

\begin{remark} \label{rem:functoriality}
    Since our model $\X$ is a normal, 2-dimensional scheme, it is a Cohen-Macaulay scheme (see \cite[Corollary 8.2.22]{liu}). As the structure morphism $f\colon \X \to \Spec \OO_K$ is flat, this implies that $f$ is a Cohen-Macaulay morphism (see \cite[\href{https://stacks.math.columbia.edu/tag/0C0X}{Lemma 0C0X}]{stacks-project} or \cite[Corollaire (6.3.5)]{EGAIV}), which guarantees the existence of the dualizing sheaf $\omega_{\X/\OO_K}$.
\end{remark}

\begin{proposition} \label{prop:normal_model_omega}
    Let $\X$ be a normal model over $\OO_K$ of the curve $X$ from Setup~\ref{setup:field}. Assume that the special fiber of $\X$ is reduced. The smooth locus of the $\OO_K$-scheme $\X$, denoted by $U$, is then the complement of a finite set of closed points in the special fiber. Let $j \colon U \hookrightarrow \X$ be the inclusion of this locus. Then the natural restriction map
    \[
        \omega_{\X/\OO_K} \to j_*(\omega_{\X/\OO_K}|_U) \simeq j_*(\Omega^1_{U/\OO_K})
    \]
    is an isomorphism.
\end{proposition}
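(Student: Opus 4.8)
The plan is to reduce the statement to two inputs: that the removed locus $\Sigma := \X \setminus U$ is a finite set of closed points of $\X$ (hence of codimension $2$), and that the relative dualizing sheaf $\omega_{\X/\OO_K}$ is a maximal Cohen--Macaulay $\OO_\X$-module. A coherent sheaf with the latter property is unchanged when one applies $j_*j^*$ across a closed subset of codimension $\ge 2$, so combining this with the explicit description of $\omega_{\X/\OO_K}$ on the smooth locus yields the claim (and, along the way, the asserted description of $U$ itself).

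First I would pin down $U$. Since $\X$ is a model, the structure morphism $f$ is flat, so by the fiberwise criterion for smoothness $f$ is smooth exactly at those points $x$ where the fiber through $x$ is smooth at $x$. Over the generic point this holds everywhere, since $X/K$ is smooth. Over the closed point, $\X_s$ is a reduced projective curve over the algebraically closed field $k$, and such a curve is smooth (equivalently regular) outside a finite set of closed points $\Sigma$; thus $U = \X \setminus \Sigma$. The points of $\Sigma$ are closed points of $\X$ that are not generic points of $\X_s$, so their local rings have dimension $2$, and $\Sigma$ has codimension $2$ in the $2$-dimensional scheme $\X$. On $U$ the morphism $f$ is smooth of relative dimension $1$, so by the compatibility of the dualizing sheaf with restriction to open subschemes together with Corollary~\ref{cor:smooth_case} we obtain $\omega_{\X/\OO_K}|_U \simeq \omega_{U/\OO_K} \simeq \Omega^1_{U/\OO_K}$. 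It therefore remains to prove that the natural map $\omega_{\X/\OO_K} \to j_*(j^*\omega_{\X/\OO_K})$ is an isomorphism.

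For this I would use Remark~\ref{rem:functoriality}: $\X$ is Cohen--Macaulay and pure of dimension $2$. Since $\OO_K$ is a discrete valuation ring, hence Gorenstein, $f^!\OO_K$ is a shift of a dualizing complex for $\X$, and the Cohen--Macaulayness of $\X$ forces this complex to have cohomology concentrated in a single degree; this identifies $\omega_{\X/\OO_K} = \mathcal{H}^{-1}(f^!\OO_K)$ with the dualizing sheaf of the Cohen--Macaulay scheme $\X$. As the dualizing module of a Cohen--Macaulay local ring is a maximal Cohen--Macaulay module, we get $\mathrm{depth}_{\OO_{\X,x}}(\omega_{\X/\OO_K,x}) = \dim \OO_{\X,x}$ at every point $x$; in particular this depth equals $2$ at the closed points of $\Sigma$. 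The exact sequence of local cohomology sheaves
\[
    0 \longrightarrow \mathcal{H}^0_\Sigma(\omega_{\X/\OO_K}) \longrightarrow \omega_{\X/\OO_K} \longrightarrow j_*(j^*\omega_{\X/\OO_K}) \longrightarrow \mathcal{H}^1_\Sigma(\omega_{\X/\OO_K}) \longrightarrow 0
\]
then has vanishing outer terms, because the stalk of $\mathcal{H}^i_\Sigma(\mathcal{F})$ at a point $x \in \Sigma$ is $H^i_{\mathfrak{m}_x}(\mathcal{F}_x)$, which vanishes for $i < \mathrm{depth}_x \mathcal{F}$. Hence the middle arrow is an isomorphism, and composing with the identification of $j^*\omega_{\X/\OO_K}$ with $\Omega^1_{U/\OO_K}$ completes the argument.

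The genuine obstacle is the assertion that $\omega_{\X/\OO_K}$ is maximal Cohen--Macaulay; once this is granted everything else is formal bookkeeping. Proving it cleanly rests on the Grothendieck-duality formalism for the exceptional pullback $f^!$ (so that $f^!\OO_K$ really is a dualizing complex on $\X$), which the text treats as a black box, so one would cite, e.g., \cite[\S3.5]{conrad2000grothendieck}. A more hands-on but longer alternative is to observe that $\omega_{\X/\OO_K}$ is invertible on the Gorenstein locus of $f$, which contains $U$, deduce that it is a reflexive sheaf from the normality of $\X$, and then invoke the codimension-$2$ extension property for reflexive sheaves on a normal scheme; but identifying $\omega_{\X/\OO_K}$ as a reflexive (in particular $S_2$) sheaf still ultimately requires the same duality input, so the reference seems hard to avoid.
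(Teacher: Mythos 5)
Your argument is correct and is essentially the one the paper defers to: the paper's ``proof'' is only a citation to Conrad, and the argument given there is exactly this reduction to the maximal Cohen--Macaulay (depth $2$) property of $\omega_{\X/\OO_K}$ at the finitely many non-smooth closed points, combined with the vanishing of local cohomology below the depth and the identification $\omega_{\X/\OO_K}|_U \simeq \Omega^1_{U/\OO_K}$ on the smooth locus. Nothing further is needed.
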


\begin{proof}
    See \cite[p.~229ff.]{conrad2000grothendieck} for the argument.
\end{proof}

For the remainder of this subsection, we fix a model $\X$ satisfying the conditions of the proposition, i.e., a normal model with a reduced special fiber. The isomorphism $\omega_{\X/\OO_K} \simeq j_*(\Omega^1_{U/\OO_K})$ from Proposition~\ref{prop:normal_model_omega} provides a more explicit description of the dualizing sheaf. Since the structure morphism $\X \to \Spec\OO_K$ is flat, the formation of the dualizing sheaf commutes with base change. Restricting this isomorphism to the special fiber $\X_s$ identifies the dualizing sheaf $\omega_{\X_s/k}$ with a specific subsheaf of meromorphic differentials; this subsheaf is precisely the sheaf of \emph{regular differentials} introduced in the previous section. On the generic fiber, we have $\omega_{\X/\OO_K}|_X = \omega_{X/K} = \Omega^1_{X/K}$, as $X$ is a smooth curve over $K$.

The function field of $\X$ is the same as that of its generic fiber, $K(X)$. The sheaf $\omega_{\X/\OO_K}$ can therefore be viewed as a subsheaf of the constant sheaf of meromorphic differentials, $\underline{\Omega}^1_{K(X)/K}$, consisting of those sections that are `regular' on the model $\X$. This notion of regularity needs to be carefully defined and requires the theory of residues on the model.

\begin{remark}
A general theory of residues on algebraic surfaces over a perfect field was developed by A. Par\v{s}in; see \cite{parvsin1976arithmetic}. We require a relative version of this theory that describes residues along the fibers of a morphism from a surface to a curve, which has been investigated in a geometric setting by D. Osipov \cite{osipov1997adele}. The most relevant reference for our arithmetic context is the work of M. Morrow \cite{morrow1, morrow2}. Morrow's results are stated for characteristic $0$ local fields (i.e., complete discretely valued fields with a finite residue field). This theory can be generalized to our setting, which makes no assumptions on the characteristic of $K$ and requires only that $k$ be perfect (a condition satisfied in our case, since $k$ is algebraically closed). A detailed development of this generalization would extend beyond the scope of this work, as we only need the theory for a single result (Proposition \ref{prop:subspace_V_W}). We will therefore limit our discussion to the necessary definitions and briefly comment on the adjustments required for this more general framework.
\end{remark}

On a 2-dimensional model, a residue is no longer defined at a single point of codimension $1$ (as is the case for a smooth curve), but is instead defined for a geometric configuration called a \emph{flag}.

\begin{definition}
A \emph{flag} on $\X$ is a pair $\xi=(x,y)$, where $y$ is a codimension $1$ point on $\X$ specializing to a codimension $2$ point $x$ on $\X$, i.e., a closed point with $x \in \overline{\{y\}}$.
\end{definition}

In geometric terms, a flag $\xi = (x,y)$ consists of a closed point $x$ (which must lie on the special fiber) and an irreducible curve $\overline{\{y\}}$ passing through $x$. The curve $\overline{\{y\}}$ is either \emph{horizontal}, meaning it is the closure of a closed point on the generic fiber $X$, or \emph{vertical}, meaning it is an irreducible component of the special fiber $\X_s$. The flag $\xi$ is called horizontal or vertical accordingly.

To define the residue map $\Res_\xi \colon \Omega^1_{K(X)/K} \to K$ for a flag $\xi = (x,y)$, we first associate with $\xi$ a $K(X)$-algebra $\widehat{K(X)}_\xi$, which is a product of fields. The construction of these fields begins at the closed point $x$ of the flag. Let $\OO_x = \OO_{\X,x}$ be the local ring, which is a 2-dimensional normal local domain. Since the model $\X$ is excellent\footnote{Since $\OO_K$ is a complete DVR, it is excellent. As $\X$ is proper (hence of finite type) over $\Spec \OO_K$, the model $\X$ is also an excellent scheme.}, its completion, $\widehat{\OO}_x$, is also a normal ring (see \cite[Scholie 7.8.3 (v)]{EGAIV}). In this completion, the extension of the prime ideal $\mathfrak{p}_y$ defining the curve $\overline{\{y\}}$ gives rise to one or more height-1 prime ideals, $\mathfrak{P}_1, \dots, \mathfrak{P}_n \subset \widehat{\OO}_x$. These primes correspond to the \emph{formal branches} of the curve $\overline{\{y\}}$ at $x$.

For each formal branch $\mathfrak{P}$, we construct an associated field $L_{\mathfrak{P}}$. We proceed by localization followed by completion. First, we localize the normal ring $\widehat{\OO}_x$ at the height-1 prime $\mathfrak{P}$. The resulting ring, $\OO_{\mathfrak{P}} := (\widehat{\OO}_x)_{\mathfrak{P}}$, is a discrete valuation ring (DVR) but is generally not complete. Second, we complete this DVR to obtain the ring $\widehat{\OO}_{\mathfrak{P}}$. Finally, we define the field $L_{\mathfrak{P}}$ as the field of fractions, $L_{\mathfrak{P}} := \Frac(\widehat{\OO}_{\mathfrak{P}})$. This entire construction, starting from $\OO_x$, involves two distinct completion steps (Completion-Localization-Completion).

By construction, $L_{\mathfrak{P}}$ is a complete discrete valuation field (CDVF). Since the completion of a local ring preserves the residue field, the residue field $l_{\mathfrak{P}}$ of $L_{\mathfrak{P}}$ is the same as that of $\OO_{\mathfrak{P}}$. By the definition of localization, this is $l_{\mathfrak{P}} = \Frac(\widehat{\OO}_x/\mathfrak{P})$. This field $l_{\mathfrak{P}}$ is itself a CDVF because $\widehat{\OO}_x/\mathfrak{P}$ is a 1-dimensional complete local domain. The final residue field (the residue field of $l_{\mathfrak{P}}$) is $k(x)$, which equals $k$ as $k$ is algebraically closed. Thus, the field $L_{\mathfrak{P}}$ is a CDVF whose own residue field is also a CDVF.\footnote{If the final residue field $k$ were finite, $L_{\mathfrak{P}}$ would be called a 2-dimensional local field.} The algebra for the flag is the product of these fields over all branches: $\widehat{K(X)}_\xi := \prod_{i=1}^n L_{\mathfrak{P}_i}$.

The product simplifies to a single field if the curve $\overline{\{y\}}$ has only one formal branch at $x$ (i.e., if $n=1$), meaning the curve is formally irreducible at $x$. This is guaranteed if the curve $C := \overline{\{y\}}$ is regular (smooth) at $x$. Indeed, regularity of $C$ at $x$ implies that its local ring $\OO_{C,x} \simeq \OO_x/\mathfrak{p}_y$ is a regular local ring. Since the model $\X$ is an excellent scheme, this local ring is also excellent. For such rings, the completion $\widehat{\OO}_{C,x}$ is again a regular local ring and therefore an integral domain. The number of formal branches $n$ is the number of minimal prime ideals of this completion; since an integral domain has only one minimal prime ideal (the zero ideal), we conclude that $n=1$. Another sufficient condition here, is that $C$ is horizontal. Indeed, the morphism $C \to \Spec \OO_K$ is proper (as $C$ is closed in the proper scheme $\X$) and has finite fibers; hence it is a finite morphism. Thus $C = \Spec R$, where $R$ is a finite $\OO_K$-algebra and a domain (since $C$ is integral). Since $\OO_K$ is a complete local ring, any finite algebra over it is a product of complete local rings. As $R$ is a domain, this product must be trivial; hence $R$ is a complete local domain. The local ring $\OO_{C,x}$ is $R$, which is already complete and a domain. Thus, $C$ is formally irreducible at $x$, implying $n=1$.

The precise structure of the field $L = L_{\mathfrak{P}}$ as an extension of $K$ depends directly on whether the flag is horizontal or vertical.

If the flag $\xi$ is horizontal, the curve $\overline{\{y\}}$ is not contained in the special fiber, so the uniformizer $\pi$ of $K$ is not in the ideal $\mathfrak{P}$. In this case, the valuation $v_L$ is trivial on $K$ (i.e., $v_L(\pi)=0$). The residue field $l$ is the residue field $\kappa(y)$ of the point $y$ on the generic fiber. It is a finite (totally ramified since we assume $k$ to be algebraically closed) extension of $K$. By the Cohen Structure Theorem, $L$ is a field of formal Laurent series over its residue field,
\[
    L \simeq l(\!(t)\!),
\]
where $t$ is some uniformizer of $L$.

If the flag $\xi$ is vertical, the curve $\overline{\{y\}}$ is a component of the special fiber, so $\pi \in \mathfrak{P}$. The valuation $v_L$ extends $v_K$. Since we assume the special fiber of $\X$ is reduced, the extension is unramified, meaning the ramification index is $e(L/K)=1$. The residue field is $l \simeq k(\!(t)\!)$. As the extension is unramified, the field $L$ is itself the unique unramified CDVF extension of $K$ with this residue field. Therefore, $L$ is isomorphic to the field of generalized Laurent series $K\{\!\{t\}\!\}$, constructed as:
\[
    K\{\!\{t\}\!\} := \left\{ \sum_{j=-\infty}^{\infty} a_j t^j \;\middle|\;
    \begin{array}{l}
        a_j \in K, \; \inf_{j \in \mathbb{Z}} v_K(a_j) > -\infty, \\
        \text{and} \; \lim_{j \to -\infty} v_K(a_j) = \infty
    \end{array}
    \right\}.
\]
Under this isomorphism, the valuation ring of $L$ is the subring $\OO_K\{\!\{t\}\!\} \subset K\{\!\{t\}\!\}$ consisting of formal series with coefficients in $\OO_K$. Its maximal ideal consists of those series whose coefficients lie in the maximal ideal of $\OO_K$.

\begin{example}[Flag at a smooth point]
Let $\xi=(x,y)$ be a flag where $x$ is a smooth point of the special fiber. The completed local ring is $\widehat{\OO}_{\X, x} \simeq \OO_K[\![t]\!]$. In this setting, the algebra for the flag, $\widehat{K(X)}_\xi$, simplifies to a single field $L$.
\begin{enumerate}[label=\StatementListLabel]
    \item If $\overline{\{y\}}$ is a horizontal curve, it corresponds to a prime ideal $\mathfrak{P} = (h(t))$ for some irreducible Weierstrass polynomial $h(t) \in \OO_K[t]$. The residue field $l = \Frac(\OO_K[\![t]\!]/(h(t)))$ is a finite extension of $K$. The associated field is $L \simeq l(\!(s)\!)$, where $s$ can be taken to be $h(t)$.
    \item If $\overline{\{y\}}$ is the unique vertical curve through $x$, it corresponds to the prime ideal $\mathfrak{P} = (\pi)$. The residue field is $l = \Frac(\OO_K[\![t]\!]/(\pi)) \simeq k(\!(t)\!)$. The associated field is $L \simeq K\{\!\{t\}\!\}$.
\end{enumerate}
\end{example}

\begin{example}[Flag at a node]
Let $\xi=(x,y)$ be a flag where $x$ is a nodal singularity of thickness $d \ge 1$ on the special fiber, described locally by $\widehat{\OO}_{\X, x} \simeq \OO_K[\![u,v]\!]/(uv - \pi^d)$.
\begin{enumerate}[label=\StatementListLabel]
    \item If $\overline{\{y\}}$ is a horizontal curve, it is regular at $x$ and corresponds to a unique formal branch. The associated field is $L \simeq l(\!(t)\!)$, where $l$ is a finite extension of $K$ and $t$ is a local parameter for the curve. The algebra for the flag is therefore the single field $\widehat{K(X)}_\xi = L$.
    \item If $\overline{\{y\}}$ is a vertical curve, the local ring of the special fiber is $\widehat{\OO}_{\X,x}/\pi\widehat{\OO}_{\X,x} \simeq k[\![u,v]\!]/(uv)$, which has two branches at $x$ given by $u=0$ and $v=0$. We have two geometric possibilities:
    \begin{description}
        \item[Intersection of two components:] The curve $\overline{\{y\}}$ corresponds to just one of the local branches (e.g., the one given by $u=0$). In this case, there is only one formal branch associated with the flag. The algebra for the flag is the single field:
        \[ \widehat{K(X)}_\xi \simeq K\{\!\{v\}\!\}. \]
        \item[Self-intersection:] The curve $\overline{\{y\}}$ is locally self-intersecting at the node, meaning it corresponds to both local branches. In this case, there are two formal branches. The algebra for the flag is a product of two fields:
        \[ \widehat{K(X)}_\xi \simeq K\{\!\{u\}\!\} \times K\{\!\{v\}\!\}. \]
    \end{description}
\end{enumerate}
\end{example}

\begin{remark}
In the special case where the fields $K$ and $k$ have the same characteristic, we can describe the field $L$ arising from a vertical flag---which we identified as being isomorphic to $K\{\!\{t\}\!\}$---in an alternative way. By the Cohen Structure Theorem, the base field $K$ is isomorphic to a field of formal Laurent series over its residue field, so $K \simeq k(\!(\pi)\!)$. Since $L$ is the unique unramified extension of $K$ with residue field $l \simeq k(\!(t)\!)$, it must be isomorphic to the iterated Laurent series field $L \simeq k(\!(t)\!)(\!(\pi)\!)$ (compare with the analogous construction for a surface over a curve in \cite[1.2, before Definition 6]{osipov1997adele}). The order of the variables here is crucial. An element in $k(\!(t)\!)(\!(\pi)\!)$ is a Laurent series in $\pi$ whose coefficients are themselves Laurent series in $t$. This is structurally different from reversing the order to get $K(\!(t)\!) = k(\!(\pi)\!)(\!(t)\!)$. By ``swapping'', i.e., by writing an element of $k(\!(t)\!)(\!(\pi)\!)$ as a series in $t$ with coefficients in $K = k(\!(\pi)\!)$, we get an isomorphism $k(\!(t)\!)(\!(\pi)\!) \simeq k(\!(\pi)\!)\{\!\{t\}\!\}$ (see \cite[Lemma 2.10]{morrow2012introduction}). The construction $\{\!\{\cdot\}\!\}$ provides a description that is valid in general, without assumptions on the characteristic.

Note that our assumption that $\X_s$ is reduced simplifies the situation significantly. In general, in the vertical case, we would need to take a finite totally ramified extension of $K\{\!\{t\}\!\}$. In the terminology of \cite{morrow1}, our assumption forces the field $L$ to be ``standard,'' i.e., of the form $K\{\!\{t\}\!\}$. The fields arising here are called 2-dimensional local fields (though it is often assumed that $k$ is finite, this can be relaxed to $k$ being perfect). For more background and proofs, we refer to \cite{morrow2012introduction} and \cite{fesenko2000invitation}.
\end{remark}

Let $L$ be a factor of the product of fields $\widehat{K(X)}_\xi$. We define the module of continuous differentials as
\[
    \Omega_{L/K}^{1,\mathrm{cts}} := \Omega_{K(X)/K} \otimes_{K(X)} L.
\]
This is a one-dimensional vector space over $L$. Alternatively, this vector space can be constructed intrinsically from $L$, without reference to the global differentials $\Omega^1_{K(X)/K}$. The module of Kähler differentials $\Omega^1_{L/K}$ is typically too large, as it does not account for the valuation of $L$; it is typically an infinite-dimensional vector space over $L$ (see \cite[II.11]{serre1988algebraic}). However, one can form its associated separated module, also called the module of continuous differentials. By \cite[Lemma 3.11]{morrow1}, this construction yields a module isomorphic to the one in our definition.

We now define a residue map for the field $L$:
\[
    \Res_{L} \colon \Omega_{L/K}^{1,\mathrm{cts}} \to K.
\]
If the flag $\xi$ is horizontal, we recall the isomorphism $L \simeq l(\!(t)\!)$ for a choice of uniformizer $t$. Any differential $\omega \in \Omega_{L/K}^{1,\mathrm{cts}}$ can be written as $\omega = \sum a_n t^n \, dt$. The residue is then defined by taking the trace of the coefficient $a_{-1}$:
\[
    \Res_{L} \colon \Omega_{L/K}^{1,\mathrm{cts}} \to K, \quad \omega = \sum a_n t^n \, dt \mapsto \Trace_{l/K}(a_{-1}).
\]
Here $\Trace_{l/K} \colon l \to K$ is the trace map for the finite extension $l/K$.

If the flag $\xi$ is vertical, we have the isomorphism $L \simeq K\{\!\{t\}\!\}$, where $t$ is a lift of a uniformizer of the residue field $l$. Any differential can again be written as $\omega = \sum a_n t^n \, dt$. In this case, the residue is defined as:
\[
    \Res_{L} \colon \Omega_{L/K}^{1,\mathrm{cts}} \to K, \quad \omega = \sum a_n t^n \, dt \mapsto -a_{-1}.
\]

\begin{remark}
The extra minus sign is inserted to ensure the reciprocity laws hold (see Theorem \ref{thm:reciprocity_laws}), creating the necessary sign asymmetry between horizontal and vertical residues. In the analogous geometric theory using 2-forms developed in \cite{osipov1997adele}, this asymmetry arises implicitly. The local residue definitions there do not feature explicit signs, but they require the 2-form to be written in a specific order of differentials. Fixing an orientation (e.g., $du \wedge dt$) forces a reordering (to $dt \wedge du = -du \wedge dt$) when applying one type of residue definition but not the other, thus introducing the required asymmetry via the anti-commutativity of the wedge product.
\end{remark}

\begin{remark}
Morrow's works \cite{morrow1, morrow2} fundamentally distinguish between the ``mixed- and equal-characteristic'' cases. In his setup, he always has $\mathrm{char}(K) = 0$ and $\mathrm{char}(k) = p$. Hence, the pair of fields $(L, l)$ itself has either characteristic $(0,0)$ (equal characteristic) or $(0,p)$ (mixed characteristic). The first case happens if and only if the flag is horizontal, and the second if and only if the flag is vertical. In our more general setup, this equivalence breaks down, and we can also have the equal characteristic case for a vertical flag. In this setting, one must distinguish between vertical and horizontal flags directly.
\end{remark}

\begin{lemma}
The definition of $\Res_L$ does not depend on the choice of the (lift of) local parameter used to establish the isomorphism $L \simeq l(\!(t)\!)$ or $L \simeq K\{\!\{t\}\!\}$.
\end{lemma}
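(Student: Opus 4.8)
The proof treats the two cases distinguished in the construction separately, in each reducing the independence statement to the classical one-dimensional fact that the residue of a meromorphic differential on a complete discretely valued field is independent of the chosen uniformizer, \cite[Chapter II, 11]{serre1988algebraic} --- precisely the input already used for smooth curves.

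\emph{Horizontal flag.} Here $L \simeq l(\!(t)\!)$ with $l/K$ finite, and by construction $\Res_L(\omega) = \Trace_{l/K}(a_{-1})$ for $\omega = \sum a_n t^n\,dt$. The trace $\Trace_{l/K}\colon l \to K$ involves no choice of parameter, so it suffices to show that the $l$-linear ``naive residue'' $\Omega_{L/K}^{1,\mathrm{cts}} \simeq L\,dt \to l$, $\omega \mapsto a_{-1}$, is independent of the uniformizer $t$; this is exactly the classical statement for $L$ over its residue field $l$, i.e.\ \cite[Chapter II, 11]{serre1988algebraic}. (If $l/K$ is inseparable one first composes with the intrinsic surjection $\Omega_{L/K}^{1,\mathrm{cts}} \twoheadrightarrow \Omega_{L/l}^{1,\mathrm{cts}}$, which leaves the argument unchanged.) This handles the horizontal case.

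\emph{Vertical flag.} Here $L \simeq K\{\!\{t\}\!\}$ with $\OO_L = \OO_K\{\!\{t\}\!\}$ and $\Res_L(\sum a_n t^n\,dt) = -a_{-1}$. The invariance proof runs parallel to the one-dimensional case, with $K$ in place of the residue field and $d\pi = 0$; I would carry it out as follows. Using $K$-linearity and continuity of the candidate residue for the higher topology on $\Omega_{L/K}^{1,\mathrm{cts}}$ attached to the two-dimensional local field $L$ --- in which the $K$-Laurent polynomials $\sum a_j t^j\,dt$ (finite sums, $a_j \in K$) are dense --- it is enough to compare the maps defined by $t$ and by a second lift $s$ of a uniformizer of the residue field on the monomials $t^n\,dt$, $n \in \ZZ$. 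The exact monomials have vanishing residue with respect to any parameter. The form $t^{-1}\,dt$ is handled by writing $t = s\,u$ with $u \in \OO_L^\times$ a unit whose reduction is a unit of $\OO_l$, so that $\tfrac{dt}{t} = \tfrac{ds}{s} + \tfrac{du}{u}$, together with the vanishing of $\mathrm{res}(\tfrac{du}{u})$ --- which holds in every characteristic by expanding $u$ as a unit of $\OO_K$ times a convergent product of elementary units $1 + a\pi^i t^j$ (the device Serre uses for principal units). The monomials $t^n\,dt$ with $n \le -2$ and $p \mid n+1$, which are not exact in residue characteristic $p$, are treated by the same argument as one dimension down. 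More conceptually, one may instead observe that in Morrow's formalism $\Res_L$ is constructed as the trace attached to relative local duality on $\Omega_{L/K}^{1,\mathrm{cts}}$, hence is coordinate-free by construction, and the formula $-a_{-1}$ is merely its value in a chosen parameter.

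The step I expect to be the real obstacle, in the mixed- and equal-characteristic generality of Setup~\ref{setup:field}, is the two-dimensional input underlying these reductions: the higher topology and the density of $K$-Laurent polynomials, the convergence of the elementary-unit factorization, and the substitute in positive characteristic for the residue theorem. These are exactly the contents of the residue calculus of \cite{morrow1} (compare \cite{osipov1997adele}) together with the reciprocity laws (cf.\ Theorem~\ref{thm:reciprocity_laws}); as a full development lies beyond our scope, the clean route is to invoke that framework and merely to check that the explicit formulas above compute the coordinate-free functional it produces.
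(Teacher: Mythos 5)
Your proposal is correct and lands in essentially the same place as the paper's proof, which is likewise a citation-based argument: the horizontal case is the classical invariance of residues on curves \cite[II.7]{serre1988algebraic} (your reduction via the trace is the intended one, and the inseparable subtlety you flag is harmless, since $\Trace_{l/K}$ vanishes identically in that case), while the vertical case is delegated to the two-dimensional residue calculus of Morrow. The one substantive difference is in how the vertical case is organized: the paper splits it according to the characteristic of the pair $(L,l)$, citing \cite[Prop.~2.19]{morrow1} for mixed characteristic and the ``prolongation of algebraic identities'' device of \cite[II.7]{serre1988algebraic} (cf.\ \cite[Proposition~4]{parvsin1976arithmetic}) for equal characteristic, whereas you sketch a uniform monomial-by-monomial computation. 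Your sketch is sound in outline, but note that the step you describe as ``the same argument as one dimension down'' for the non-exact monomials $t^n\,dt$ with $p\mid n+1$ is exactly where two dimensions differ from one: a second lift $s$ satisfies $t=su$ with $u\in\OO_K\{\!\{s\}\!\}^\times$ possibly carrying infinitely many negative powers of $s$, so the coefficient of $s^{-1}$ is a convergent infinite sum rather than a universal polynomial in finitely many coefficients, and the prolongation argument has to be combined with a truncation-and-limit step; this, together with the convergence of your elementary-unit factorization and the continuity of the residue for the higher topology, is precisely the content of Morrow's proposition. Since you explicitly fall back on invoking that framework, there is no genuine gap---your argument is at the same level of completeness as the paper's---but naming the prolongation of algebraic identities for the equal-characteristic case gives a cleaner reduction than the density-and-continuity route.
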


\begin{proof}
In the horizontal case, this is the classical statement for the invariance of residues on curves (see \cite[II.7]{serre1988algebraic}). For the vertical, mixed-characteristic case (where $\mathrm{char}(L) = 0$ and $\mathrm{char}(l) = p$), the result is given in \cite[Prop. 2.19]{morrow1}. The remaining vertical, equal-characteristic case can then be deduced using the technique of ``prolongation of algebraic identities,'' as in \cite[II.7]{serre1988algebraic}; compare also with \cite[Proposition 4]{parvsin1976arithmetic}.
\end{proof}

We now use the preceding constructions to define the residue for a flag.

\begin{definition}
Let $\xi = (x,y)$ be a flag on $\X$ and let $\widehat{K(X)}_\xi = \prod_i L_i$. The residue map $\Res_{\xi} \colon\allowbreak \Omega^1_{K(X)/K} \to K$ is defined as the composition of the natural embedding of the differentials into the product of the local continuous differentials, followed by the sum of the local residues defined above:
\[
    \Omega^1_{K(X)/K} \to \Omega^1_{K(X)/K} \otimes_{K(X)} \widehat{K(X)}_\xi \simeq \prod_i \Omega_{L_i/K}^{1,\mathrm{cts}} \xrightarrow{\sum \Res_{L_i}} K.
\]
\end{definition}

This new residue map unifies the concepts of residues on the generic and special fibers. We have not yet formally defined residues on the generic fiber $X$, as it is defined over the possibly non-algebraically closed field $K$. The definition is standard: the residue at a closed point $y \in X$ is obtained by first taking the classical residue in the completion of $K(X)$ at $y$, which yields an element in the residue field $\kappa(y)$. Since $\kappa(y)$ is a finite extension of $K$, one then applies the field trace map, $\Trace_{\kappa(y)/K} \colon \kappa(y) \to K$, to obtain an element in $K$. The following proposition connects these two definitions.

\begin{proposition} \label{prop:connection_generic_residue}
Let $\xi = (x,y)$ be a flag on $\X$ such that $\overline{\{y\}}$ is horizontal. Then for any $\eta \in \Omega^1_{K(X)/K}$ we have
\[
    \Res_\xi(\eta) = \Res_{y}(\eta) \in K,
\]
where the right-hand side is the residue at the closed point $y \in X$.
\end{proposition}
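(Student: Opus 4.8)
The plan is to reduce the identity to a single comparison of complete discretely valued fields. Since the flag $\xi=(x,y)$ is horizontal, the curve $C:=\overline{\{y\}}$ is finite over $\Spec\OO_K$, so — as established in the construction of the flag field for horizontal flags — the local ring $\OO_{C,x}$ is a complete local domain, there is exactly one formal branch ($n=1$), and hence $\widehat{K(X)}_\xi$ is a single field $L$ with residue field $l=\kappa(y)$ and $L\simeq l(\!(t)\!)$ for any uniformizer $t$. Granting this, both $\Res_\xi(\eta)$ and the residue $\Res_y(\eta)$ at the closed point $y\in X$ are computed by the \emph{same} recipe: map $\eta$ to its image in $\Omega^{1,\mathrm{cts}}_{L/K}=\Omega^1_{K(X)/K}\otimes_{K(X)}L$, expand it as $\sum_n a_n t^n\,dt$, and return $\Trace_{l/K}(a_{-1})$. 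So it suffices to prove that $L$ is canonically the completion $\widehat{K(X)}_y$ of $K(X)$ at the discrete valuation $v_y$ defined by $y$, compatibly with the embedding of $K(X)$ into each.

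To do this, I would write $\OO_x:=\OO_{\X,x}$ and let $\p_y\subset\OO_x$ be the height-one prime of $y$, so $\OO_{\X,y}=(\OO_x)_{\p_y}$ is a DVR whose completion $\widehat{\OO}_{\X,y}$ has fraction field $\widehat{K(X)}_y$. By construction $L=\Frac(\widehat{\OO}_{\P})$ with $\P=\p_y\widehat{\OO}_x$; this $\P$ is prime because $\widehat{\OO}_x/\p_y\widehat{\OO}_x\simeq\widehat{\OO}_{C,x}=\OO_{C,x}$ is a domain, and it contracts to $\p_y$ in $\OO_x$. The flat local injection $\OO_x\hookrightarrow\widehat{\OO}_x$ localizes to a flat local injection $\OO_{\X,y}=(\OO_x)_{\p_y}\hookrightarrow(\widehat{\OO}_x)_{\P}=:\OO_{\P}$ which carries a uniformizer $\pi_y$ of $\OO_{\X,y}$ to a uniformizer of $\OO_{\P}$ and induces the identity on residue fields, the residue field of $\OO_{\P}$ being $\Frac(\widehat{\OO}_x/\P)=\Frac(\OO_{C,x})=\kappa(y)$ (again using that $\OO_{C,x}$ is already complete). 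Completing, one gets a local homomorphism $\widehat{\OO}_{\X,y}\to\widehat{\OO}_{\P}$ of complete DVRs; since $\pi_y$ maps to a uniformizer and every nonzero element of the DVR $\widehat{\OO}_{\X,y}$ is a unit times a power of $\pi_y$, it is injective, and being unramified with trivial residue extension it is then surjective by a routine successive-approximation argument (lift residues to $\widehat{\OO}_{\X,y}$, subtract their images, divide by $\pi_y$, iterate, using completeness). Passing to fraction fields gives $\widehat{K(X)}_y\simeq L$ over $K(X)$.

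With this identification the module $\Omega^{1,\mathrm{cts}}_{L/K}$ and the natural map $\Omega^1_{K(X)/K}\to\Omega^{1,\mathrm{cts}}_{L/K}$ agree with those attached to the completion at $y$ (both are base change along $K(X)\to\widehat{K(X)}_y$); the coefficient $a_{-1}\in\kappa(y)$ is independent of the chosen uniformizer by the classical invariance of the residue on a smooth curve over the field $\kappa(y)$ (\cite[II.7]{serre1988algebraic}); and both $\Res_\xi$ and $\Res_y$ then apply $\Trace_{\kappa(y)/K}$ to it. Hence $\Res_\xi(\eta)=\Res_y(\eta)$. The only substantive point — and the one I expect to require care — is the field identification in the middle step: its content is that for a horizontal curve there is effectively a single completion taking place, since $\OO_{C,x}$, being finite over the complete ring $\OO_K$, is already complete, so no new formal branches and no residue-field extension can appear, and an extension of complete discretely valued fields that is unramified of residue degree one must be trivial.
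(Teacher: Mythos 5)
Your proof is correct and follows the same route as the paper, which simply observes that both $\Res_\xi$ and $\Res_y$ are by definition the trace $\Trace_{\kappa(y)/K}$ of a classical residue and cites Morrow for the compatibility. The only difference is that you explicitly verify the identification $L \simeq \widehat{K(X)}_y$ (via the completeness of $\OO_{C,x}$ as a finite $\OO_K$-algebra, forcing a single formal branch, ramification index one, and trivial residue extension), a step the paper treats as immediate from the definitions.
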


\begin{proof}
This follows immediately from the definitions, as for a horizontal flag, the residue field is $l=\kappa(y)$, and both $\Res_\xi$ and $\Res_y$ are defined as the trace of the classical residue; see also \cite[Lemma 3.8]{morrow2}.
\end{proof}

The relation to the residues on the special fiber requires considering how differentials reduce modulo $\pi$. This reduction is only meaningful for differentials that are integral.

\begin{definition}
Let $y \in \X$ be a codimension $1$ point of $\X$. A differential $\eta \in \Omega^1_{K(X)/K}$ is called \emph{integral at $y$} if it belongs to the $\OO_{\X, y}$-module $\Omega^1_{\OO_{\X,y}/\OO_K} \subset \Omega^1_{K(X)/K}$.
\end{definition}

The module $\Omega^1_{\OO_{\X,y}/\OO_K}$ is the stalk of $\Omega^1_{U/\OO_K}$ at $y$, where $U$ is the smooth locus of $\X$. By Proposition~\ref{prop:normal_model_omega}, this module is also the stalk of the dualizing sheaf $\omega_{\X/\OO_K}$ at $y$. Note that there is a well-defined reduction map $\Omega^1_{\OO_{\X,y}/\OO_K} \to \Omega^1_{\kappa(y)/k}$ to the space of meromorphic differentials on the curve $\overline{\{y\}}$.

\begin{proposition} \label{prop:connection_special_residue}
    Let $\xi = (x,y)$ be a vertical flag. If $\eta \in \Omega^1_{K(X)/K}$ is integral at $y$, then $\Res_\xi(\eta) \in \OO_K$.
    Moreover, the reduction of the residue modulo $\pi$ is given by
    \[
        \overline{\Res_\xi(\eta)} = -\Res_{x}(\overline{\eta}) \in k,
    \]
    where the right-hand side is the residue on the (possibly singular) curve $C := \overline{\{y\}}$ over the algebraically closed field $k$, and $\overline{\eta}$ is the image under the map $\Omega^1_{\OO_{\X,y}/\OO_K} \to \Omega^1_{\kappa(y)/k}$.
\end{proposition}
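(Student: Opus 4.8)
The plan is to argue entirely locally at the closed point $x$, passing at once to the complete local ring $\widehat{\OO}_{\X,x}$ --- a $2$-dimensional normal complete local domain, since $\X$ is excellent and normal. Two consequences of the standing hypothesis that $\X_s$ is reduced will be used throughout: $\pi$ is a uniformizer of the discrete valuation ring $\OO_{\X,y}$ (whose residue field is $\kappa(y)=k(C)$, $C:=\overline{\{y\}}$), and for each formal branch $\mathfrak{P}$ of $C$ at $x$ the field $L_{\mathfrak{P}}$ is unramified over $K$, hence of the standard form $L_{\mathfrak{P}}\simeq K\{\!\{t\}\!\}$ with valuation ring $\OO_{L_{\mathfrak{P}}}\simeq\OO_K\{\!\{t\}\!\}$. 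Since $y$ is a generic point it lies in the smooth locus $U$, so $\Omega^1_{\OO_{\X,y}/\OO_K}$ is free of rank $1$ over $\OO_{\X,y}$ and reduces modulo $\pi$ to $\Omega^1_{\kappa(y)/k}$. Finally, both sides of the claimed identity split as sums over the branches $\mathfrak{P}_1,\dots,\mathfrak{P}_n$: on the left $\Res_\xi(\eta)=\sum_i\Res_{L_{\mathfrak{P}_i}}(\eta)$ by definition, and on the right $\Res_x(\overline{\eta})=\sum_i\Res_{\widetilde{x}_i}(\overline{\eta})$ by the definition of the residue on the singular curve $C$, where $\widetilde{x}_i\in\widetilde{C}$ corresponds to $\mathfrak{P}_i$. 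Hence it suffices to treat a single branch $\mathfrak{P}=\mathfrak{P}_i$, with $L:=L_{\mathfrak{P}}$, and show $\Res_L(\eta)\in\OO_K$ and $\overline{\Res_L(\eta)}=-\Res_{\widetilde{x}_i}(\overline{\eta})$.

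The crucial preparatory step is to choose a single local parameter $t$ adapted simultaneously to the point $y$ and to the branch $\mathfrak{P}$ at $x$. First I would pick $\overline{t}\in\kappa(y)=k(C)$ having a simple zero at $\widetilde{x}_i$. Because $k$ is perfect, the normalization $\widetilde{C}$ is smooth over $k$, so $\overline{t}$ is a separating element of $k(C)/k$; consequently $d\overline{t}$ generates $\Omega^1_{\kappa(y)/k}$, while at the same time $\overline{t}$ is a uniformizer of the residue field $l_{\mathfrak{P}}=k(\!(\overline{t})\!)$ of $L$. Lifting $\overline{t}$ to some $t\in\OO_{\X,y}$, Nakayama's lemma applied to the finitely generated module $\Omega^1_{\OO_{\X,y}/\OO_K}$ over the local ring $\OO_{\X,y}$ (with maximal ideal $(\pi)$) shows that $dt$ generates $\Omega^1_{\OO_{\X,y}/\OO_K}$, while $t$ being a lift of the uniformizer $\overline{t}$ of $l_{\mathfrak{P}}$ puts $L$ in the standard form $K\{\!\{t\}\!\}$ that realizes the residue map $\Res_L$.

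With $t$ fixed, I would track $\eta$ through the chain of injections of discrete valuation rings
\[
    \OO_{\X,y}\ \hookrightarrow\ (\widehat{\OO}_{\X,x})_{\mathfrak{P}}\ \hookrightarrow\ \widehat{(\widehat{\OO}_{\X,x})_{\mathfrak{P}}}=\OO_L,
\]
each of which carries $\pi$ to $\pi$ and, modulo $\pi$, induces the completion map $\kappa(y)\to l_{\mathfrak{P}}$ of $C$ along the branch. Writing $\eta=f\,dt$ with $f\in\OO_{\X,y}$ --- which is legitimate exactly because $\eta$ is integral at $y$, i.e.\ $\eta\in\Omega^1_{\OO_{\X,y}/\OO_K}=\OO_{\X,y}\cdot dt$ --- the image of $f$ lands in $\OO_L\simeq\OO_K\{\!\{t\}\!\}$, so $f=\sum_n a_n t^n$ with all $a_n\in\OO_K$; therefore $\Res_L(\eta)=-a_{-1}\in\OO_K$. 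Reducing modulo $\pi$ and invoking the compatibility just noted, $\overline{f}=\sum_n\overline{a_n}\,\overline{t}^{\,n}$ is precisely the Laurent expansion of $\overline{\eta}=\overline{f}\,d\overline{t}$ along the branch at $\widetilde{x}_i$, so $\Res_{\widetilde{x}_i}(\overline{\eta})=\overline{a_{-1}}$ and hence $\overline{\Res_L(\eta)}=-\overline{a_{-1}}=-\Res_{\widetilde{x}_i}(\overline{\eta})$. Summing over the branches yields $\Res_\xi(\eta)\in\OO_K$ and $\overline{\Res_\xi(\eta)}=-\Res_x(\overline{\eta})$; the sign is exactly the one built into the definition of the vertical residue, so no spurious factor appears.

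The hard part will be the third step: making rigorous that the two-stage completion-localization-completion passage from $\OO_{\X,y}$ to $\OO_L$ respects both the integral structures (so that integrality of $\eta$ at $y$ really forces $\OO_K$-coefficients in the generalized-Laurent expansion over $L$) and reduction modulo $\pi$ (so that the branch expansion of $\overline{\eta}$ is the literal reduction of that of $\eta$). This is where the excellence of $\X$ (which makes $\widehat{\OO}_{\X,x}$ normal, so that its localization at the height-one prime $\mathfrak{P}$ is genuinely a discrete valuation ring) and the reducedness of $\X_s$ (which makes $L/K$ unramified, so that $\OO_L$ has the standard form $\OO_K\{\!\{t\}\!\}$ and $\pi$ remains a uniformizer all the way along the chain) are used essentially. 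Once these local-field identifications are under control, everything else is bookkeeping with the theory of residues on flags recalled above.
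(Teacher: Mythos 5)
Your proposal is correct and follows essentially the same route as the paper's proof: decompose over the formal branches at $x$, use the standard form $\OO_{L}\simeq\OO_K\{\!\{t\}\!\}$ (available because $\X_s$ is reduced) so that integrality of $\eta$ at $y$ forces $\OO_K$-coefficients in the branch expansion, and then match the reduction modulo $\pi$ of that expansion with the Laurent expansion of $\overline{\eta}$ at the corresponding point of the normalization via the formal-branch/geometric-branch correspondence guaranteed by excellence. Your explicit choice of a common parameter $t$ adapted to both $y$ and the branch, justified by Nakayama, is a slightly more hands-on version of the paper's definition of $\Omega^{1,\mathrm{cts}}_{\OO_{L_i}/\OO_K}$ as the base change $\Omega^1_{\OO_{\X,y}/\OO_K}\otimes_{\OO_{\X,y}}\OO_{L_i}$, but the content is the same.
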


\begin{proof}
    Let $\widehat{K(X)}_\xi = \prod_i L_i$ be the algebra associated with the flag $\xi=(x,y)$, where each field $L_i \simeq K\{\!\{t_i\}\!\}$ corresponds to a formal branch of the curve $C=\overline{\{y\}}$ at $x$. We have a natural embedding $\OO_{\X,y} \to \OO_{L_i} \simeq \OO_K\{\!\{t_i\}\!\}$, and we define the module of integral continuous differentials as\footnote{This module can also be defined intrinsically. For ``standard'' fields, such as the ones considered here, the definition via base change is equivalent to the intrinsic one (cf. \cite[Lemma 2.18]{morrow1}).}
    \[
        \Omega^{1,\mathrm{cts}}_{\OO_{L_i}/\OO_K} := \Omega^1_{\OO_{\X,y}/\OO_K} \otimes_{\OO_{\X,y}} \OO_{L_i}.
    \]
    This is a free $\OO_{L_i}$-module generated by $dt_i$ and is naturally an $\OO_{L_i}$-submodule of $\Omega^{1,\mathrm{cts}}_{L_i/K}$. Since the differential $\eta$ is integral at $y$, its image in each component $\Omega^{1,\mathrm{cts}}_{L_i/K}$ must lie within this submodule. This implies that in the local expansion $\eta_i = \sum_j a_{j,i} t_i^j dt_i$, all coefficients $a_{j,i}$ are in $\OO_K$. The residue is given by $\Res_\xi(\eta) = \sum_i (-a_{-1,i})$, which is therefore an element of $\OO_K$. This proves the first assertion.

    For the second assertion, we relate the reduction $\overline{\Res_\xi(\eta)} = -\sum_i \overline{a_{-1,i}}$ to the residue on the special fiber, $\Res_x(\overline{\eta}) = \sum_{\tilde{x} \in \nu^{-1}(x)} \Res_{\tilde{x}}(\overline{\eta})$, where $\nu\colon \tilde{C} \to C$ is the normalization of the curve $C := \overline{\{y\}}$. The key is the one-to-one correspondence between the formal branches (indexed by $i$) and the geometric branches (the points $\tilde{x} \in \nu^{-1}(x)$). This correspondence is guaranteed because the scheme $\X$ is excellent (see \cite[Theorem 6.5 (4)]{TopicsLocalAlgebra}).

    Under this correspondence, the reduction of the local expansion of $\eta$ at a formal branch is precisely the local expansion of the reduced differential $\overline{\eta}$ at the corresponding geometric branch. It follows directly that the classical residue of $\overline{\eta}$ is the reduction of the coefficient:
    \[
        \Res_{\tilde{x}_i}(\overline{\eta}) = \overline{a_{-1,i}}.
    \]
    Summing over all branches, we obtain
    \[
        \Res_x(\overline{\eta}) = \sum_i \Res_{\tilde{x}_i}(\overline{\eta}) = \sum_i \overline{a_{-1,i}}.
    \]
    Comparing this with the reduction of the arithmetic residue, we conclude
    \[
        \overline{\Res_\xi(\eta)} = -\sum_i \overline{a_{-1,i}} = -\Res_x(\overline{\eta}).
    \]
\end{proof}

We can now explicitly characterize $\omega_{\X/\OO_K}$ using these residues, generalizing the definition of regular differentials. Although this result will not be used in the remainder of the thesis, we state it for completeness as it provides an elegant generalization.

\begin{definition}[Regular Differentials on $\X$]
A meromorphic differential form \linebreak $\omega \in \Omega^1_{K(X)/K}$ is called \emph{regular} at a closed point $x \in \X$ if for every curve $\overline{\{y\}}$ passing through $x$ (i.e., for every flag $\xi = (x,y)$) and every function $g \in \OO_{\X,y}$, the residue is integral:
\[
    \Res_\xi(g \cdot \omega) \in \OO_K.
\]
It is called \emph{regular} on an open subset $U \subset \X$ if it is regular at every closed point $x \in U$.
\end{definition}

\begin{theorem} \label{thm:dualizing_sheaf_residues}
The relative dualizing sheaf $\omega_{\X/\OO_K}$ is precisely the sheaf of regular differentials on $\X$, i.e.,
\[
    \omega_{\X/\OO_K}(U) = \{ \omega \in \Omega^1_{K(X)/K} \ssep \omega \text{ is regular on } U \}.
\]
\end{theorem}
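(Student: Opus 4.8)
The plan is to localise the statement onto the relative smooth locus of $\X$, where it becomes a concrete residue computation, and then to extend it across the finitely many remaining points by a reflexivity argument. Both $\omega_{\X/\OO_K}$ and the sheaf of regular differentials are, by construction, subsheaves of the constant sheaf $\underline{\Omega}^1_{K(X)/K}$, so it suffices to compare them as such, point by point. Let $\U \subseteq \X$ denote the relative smooth locus; by Proposition~\ref{prop:normal_model_omega} there is an isomorphism
\[
    \omega_{\X/\OO_K} \;\simeq\; j_*\!\left(\Omega^1_{\U/\OO_K}\right),
\]
where $j\colon \U \hookrightarrow \X$ is the inclusion and $\X \setminus \U$ consists of finitely many closed points of $\X_s$. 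Since $\X$ is normal and $\Omega^1_{\U/\OO_K}$ is invertible on $\U$, this exhibits $\omega_{\X/\OO_K}$ as a reflexive sheaf, so its sections over any open $U$ are exactly the differentials lying in the stalk $\Omega^1_{\OO_{\X,y}/\OO_K}$ at every point $y \in U$ of codimension one.

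The heart of the proof is the local statement at a point $x \in \U$, i.e.\ a closed point at which $\X/\OO_K$ is smooth: a meromorphic differential $\omega$ is regular at $x$ in the flag sense precisely when $\omega \in \Omega^1_{\OO_{\X,x}/\OO_K}$, that is, when $\omega$ has no pole along any curve through $x$. One direction is easy: for a flag $\xi = (x,y)$ and $g \in \OO_{\X,y}$ the form $g\omega$ is again integral at $y$, so $\Res_\xi(g\omega) \in \OO_K$ by Proposition~\ref{prop:connection_special_residue} when $\xi$ is vertical, while for horizontal $\xi$ Proposition~\ref{prop:connection_generic_residue} identifies $\Res_\xi(g\omega)$ with the ordinary residue of $g\omega$ at the closed point $y$ of the generic fibre, which vanishes since $g\omega$ has no pole there. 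For the converse I would argue contrapositively, computing in $\widehat{\OO}_{\X,x} \simeq \OO_K[\![t]\!]$. If $\omega$ has a pole along the vertical component $Z$ through $x$, then some $\pi^{m}\omega$ is integral at the generic point of $Z$ with nonzero reduction $\overline{\omega} \in \Omega^1_{k(Z)/k}$; since a nonzero meromorphic differential on the curve $Z$ has nonzero residue at a suitable point after multiplication by a suitable function, Proposition~\ref{prop:connection_special_residue} yields a flag and a multiplier for which the residue is a unit of $\OO_K$, and hence, after dividing by $\pi$, lies outside $\OO_K$. If instead $\omega$ has a pole along a horizontal curve through $x$, I would use that smoothness of $\X/\OO_K$ forces the residue field $\kappa(y)/K$ to be separable, so that $\Trace_{\kappa(y)/K}$ is nondegenerate; after clearing the pole to a simple one and scaling by an element of $K$, Proposition~\ref{prop:connection_generic_residue} again produces a residue not in $\OO_K$.

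Granting this local identification, the two sheaves agree on $\U$, and it remains to compare them on all of $\X$. For $\omega_{\X/\OO_K}$ this is automatic, being reflexive with $\X \setminus \U$ of codimension two. For the sheaf of regular differentials one must check that regularity at the closed points away from the finite bad set already forces the correct behaviour at those points and in a neighbourhood of them; this is precisely where the reciprocity laws for residues on $\X$ (Theorem~\ref{thm:reciprocity_laws}) are used, to trade a residue at a bad point against residues at good points (and on the generic fibre). I expect this last step — running a clean reciprocity argument past the non-smooth locus, with the residue formalism set up in the generality needed here (arbitrary characteristics, including equicharacteristic vertical flags) — to be the main obstacle; once it is in place, the remaining work is routine bookkeeping with generization, specialisation and reflexive sheaves, together with the local computation above.
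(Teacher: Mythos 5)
The paper does not actually prove this theorem: its ``proof'' is a citation of Morrow's Theorems 1.1 and 5.7, together with an admission that those results are established under stronger hypotheses on $K$ (Morrow works with $\Char K = 0$). So you are attempting something the paper itself does not do, and your skeleton is the right one: identify $\omega_{\X/\OO_K}$ with $j_*\Omega^1_{\U/\OO_K}$, use reflexivity on the normal scheme $\X$ to reduce membership to integrality at codimension-one points, deduce $\omega_{\X/\OO_K}(U)\subseteq\{\text{regular on }U\}$ from Propositions~\ref{prop:connection_generic_residue} and~\ref{prop:connection_special_residue}, and prove the converse contrapositively by manufacturing a non-integral residue from a pole. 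When $\Char K=0$ this all goes through.

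The genuine gap is the sentence asserting that smoothness of $\X/\OO_K$ forces $\kappa(y)/K$ to be separable: this is false in equal characteristic $p$. Already on $\AA^1_{\OO_K}$ with $K=k(\!(\pi)\!)$, the horizontal curve $t^p=\pi$ passes through the (smooth) origin of the special fibre and its generic point $y$ has $\kappa(y)=K(\pi^{1/p})$, purely inseparable over $K$; hence $\Trace_{\kappa(y)/K}\equiv 0$ and the horizontal residue functional, as defined in the paper, vanishes identically at $y$. No multiplier $g$ can then detect a pole along this curve: one checks that $\omega=dt/(t^p-\pi)$ is regular, in the flag sense, at every closed point of the model, yet it is not integral at $y$ and so is not a section of $\omega_{\X/\OO_K}$ on any open set containing $y$. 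Thus in equal characteristic $p$ your contrapositive step collapses, and the displayed equality itself is in doubt with the trace-based residue; one would need a horizontal residue that does not factor through the (possibly zero) field trace. This is precisely the regime that Morrow's theorems do not cover, i.e., exactly the part of the statement that genuinely requires a new argument. A secondary point: the step you defer as the ``main obstacle'' --- pushing past the non-smooth locus via the reciprocity laws --- is not actually needed. The forward inclusion holds at the nodes directly from Proposition~\ref{prop:connection_special_residue}, and the converse for a vertical pole only has to be tested at a generically chosen smooth closed point of the offending component, which exists because $U$ meets that component in a cofinite set; Theorem~\ref{thm:reciprocity_laws} plays no role in this proof. (Note also that a pole along a horizontal curve is visible only at the single closed point where that curve meets the special fibre, so the stated equality must be read for opens $U$ containing that point --- a caveat about the statement rather than about your argument.)
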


\begin{proof}
    This is \cite[Theorem 1.1 and Theorem 5.7]{morrow1}. We note that the proof in the cited source is given under more restrictive assumptions on the field $K$, though the result is expected to hold in greater generality.
\end{proof}

\begin{remark}
For a flag $\xi = (x,y)$ on $\X$ where $\overline{\{y\}}$ is horizontal, the regularity condition is stronger than it might initially appear. The condition $\Res_\xi(g \cdot \omega) \in \OO_K$ for all $g \in \OO_{\X,y}$ is actually equivalent to $\Res_{\xi}(g \cdot \omega) = 0$. This is because $\pi$ is invertible in the DVR $\OO_{\X,y}$ (as $\pi$ does not vanish on horizontal curves). If $R = \Res_\xi(g \cdot \omega) \in \OO_K$, then we must also have $\Res_\xi((g/\pi^n) \cdot \omega) = R/\pi^n \in \OO_K$ for all $n \geq 1$, since $g/\pi^n \in \OO_{\X,y}$. The only element in the complete DVR $\OO_K$ divisible by arbitrary powers of $\pi$ is zero. Thus $R=0$.
\end{remark}

We have a generalization of the residue theorem (Theorem \ref{thm:residue_thm}), known as the Reciprocity Laws. It comes in two versions, which will be essential in the proof of Proposition \ref{prop:subspace_V_W}.

\begin{theorem}[Reciprocity Laws] \label{thm:reciprocity_laws}
For any meromorphic differential form $\omega \in \Omega^1_{K(X)/K}$, the following hold:
\begin{enumerate}[label=\StatementListLabel]
    \item \label{thm:reciprocity_laws_i} \textbf{Reciprocity Around a Point:} For any closed point $x \in \X$, we have $\Res_{(x,y)}(\omega) = 0$ for almost all curves $\overline{\{y\}}$ containing $x$, and
    \[
        \sum_{\overline{\{y\}} \ni x} \Res_{(x,y)}(\omega) = 0.
    \]
    \item \label{thm:reciprocity_laws_ii} \textbf{Reciprocity Along a Vertical Curve:} For any vertical curve $\overline{\{y\}} \subset \X$, the sum converges in $K$ and
    \[
        \sum_{x \in \overline{\{y\}}} \Res_{(x,y)}(\omega) = 0.
    \]
\end{enumerate}
\end{theorem}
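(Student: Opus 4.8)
The plan is to deduce both laws from the theory of residues on arithmetic surfaces of Par\v{s}in \cite{parvsin1976arithmetic} (in the relative geometric form of Osipov \cite{osipov1997adele}) and, in the precise shape needed here, from Morrow \cite{morrow1, morrow2}. Since those references assume that $K$ has residue characteristic $p>0$ and finite residue field, the first point to settle is that the arguments survive in our setting, where $k$ is merely perfect (in fact algebraically closed) and $K$ may have any characteristic. This is bookkeeping rather than substance: the structural inputs --- Cohen's structure theorem, excellence of $\X$, Weierstrass preparation in $\OO_K[\![t]\!]$, and the bijection between formal branches and geometric branches --- are insensitive to these hypotheses, and the residue field intervenes only through the trace $\Trace_{l/K}$ of a finite extension, which is unaffected.

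For \ref{thm:reciprocity_laws_i} I would localize at $x$ and replace $\OO_{\X,x}$ by its completion $A := \widehat{\OO}_{\X,x}$, a two-dimensional complete normal local domain whose height-one primes are precisely the formal branches at $x$ of the curves $\overline{\{y\}}$ through $x$. Finiteness of the sum is immediate: a fixed meromorphic $\omega$ is integral at all but finitely many codimension-one points, so $\Res_{(x,y)}(\omega)=0$ along all but finitely many such curves. The heart of this part is a reduction step: choose a finite injection $R := \OO_K[\![t]\!] \hookrightarrow A$ (Cohen's theorem) and use the transitivity of residues under a finite morphism --- that the sum of the flag residues over the primes of $A$ above a prime of $R$ equals the flag residue of the trace $\Trace_{\Frac(A)/\Frac(R)}(\omega)$, in the style of \cite[\S5]{morrow1} or, classically, \cite[Chapter II]{serre1988algebraic} --- to reduce to the base case $A = R = \OO_K[\![t]\!]$. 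For $\OO_K[\![t]\!]$ the height-one primes are the vertical prime $(\pi)$ and the ideals generated by the irreducible Weierstrass polynomials; there the horizontal residues assemble into a classical sum-of-residues expression that is cancelled precisely by the single vertical residue, the required sign being exactly the asymmetry built into the definitions of $\Res_L$ in the horizontal and vertical cases.

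For \ref{thm:reciprocity_laws_ii}, let $C := \overline{\{y\}}$, a proper --- possibly singular --- curve over $k$. Because $\OO_{\X,y}$ is a discrete valuation ring with uniformizer $\pi$ and $\Res_{(x,y)}$ is $K$-linear, we may write $\omega = \pi^j\eta$ with $\eta$ integral at $y$, and so assume $\omega=\eta$ is integral at $y$. Then Proposition~\ref{prop:connection_special_residue} identifies $\oo{\Res_{(x,y)}(\eta)}$ with $-\Res_x(\oo{\eta})$, the residue on $C$, so modulo $\pi$ the sum in question equals $-\sum_{x\in C}\Res_x(\oo{\eta})$, which vanishes by the Residue Theorem for the (singular) curve $C$ --- Theorem~\ref{thm:residue_thm} together with its extension to singular curves noted after its statement. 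Promoting this congruence to an identity in $K$, and establishing the $\pi$-adic convergence of the possibly infinite sum, is the technical crux: one filters $\omega$ by its order of vanishing along $C$ and runs the same argument at each level, or, equivalently, sums \ref{thm:reciprocity_laws_i} over all closed points of $C$ and reorganizes; in either form this is exactly what the main results of \cite{morrow1, morrow2} provide, once the field-theoretic generalization of the first paragraph is in place. I expect this last upgrade --- and, to a lesser extent, the sign-tracking in the $\OO_K[\![t]\!]$ computation --- to be the main obstacle.
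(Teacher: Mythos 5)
Your proposal takes essentially the same route as the paper: the paper's proof of this theorem consists of citing \cite[Theorem 4.1]{morrow1} for part~\ref{thm:reciprocity_laws_i} and \cite[Theorem 3.1]{morrow2} for part~\ref{thm:reciprocity_laws_ii} (with \cite{osipov1997adele} for the equal-characteristic case) and remarking that the proofs can be adjusted to the present, more general setting. Your sketch of what that adjustment involves---the reduction to $\OO_K[\![t]\!]$ via a finite trace for~\ref{thm:reciprocity_laws_i}, and the reduction modulo $\pi$ to the residue theorem on the special fiber together with a convergence and lifting argument for~\ref{thm:reciprocity_laws_ii}---is a faithful account of the cited arguments and of the bookkeeping the paper leaves implicit.
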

\begin{proof}
See \cite[Theorem 4.1]{morrow1} for the first statement and \cite[Theorem 3.1]{morrow2} for the second. The proofs can be adjusted to our case; see also \cite[Proposition 6, 7]{osipov1997adele} for the case $\mathrm{char}(K) = \mathrm{char}(k)$.
\end{proof}

The following lemma, a straightforward consequence of the preceding theorem, will be useful for a later proof.

\begin{lemma} \label{lem:reside_one_vertical}
Let $x$ be a closed point on the special fiber, such that $\overline{\{y\}}$ is the only vertical curve passing through $x$. Then for a meromorphic differential form $\eta \in \Omega^1_{K(X)/K}$ we have
\[
    -\Res_{(x,y)}(\eta) = \sum_{z} \Res_z(\eta),
\]
where the sum runs over all closed points $z$ of the generic fiber $X$ that specialize to $x$.
\end{lemma}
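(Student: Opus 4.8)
The plan is to obtain this identity as a direct consequence of the first reciprocity law, Theorem~\ref{thm:reciprocity_laws}~\ref{thm:reciprocity_laws_i}, applied to the closed point $x$. The codimension~$1$ points $y'$ of $\X$ with $x \in \overline{\{y'\}}$ are exactly the generic points of the irreducible curves on $\X$ passing through $x$. Each such curve is either vertical (a component of $\X_s$) or horizontal (the closure of a closed point of the generic fiber $X$). By hypothesis there is precisely one vertical curve through $x$, namely $\overline{\{y\}}$. Hence Theorem~\ref{thm:reciprocity_laws}~\ref{thm:reciprocity_laws_i} reads
\[
    \Res_{(x,y)}(\eta) \;+\; \sum_{\overline{\{y'\}} \ni x \text{ horizontal}} \Res_{(x,y')}(\eta) \;=\; 0,
\]
where, by the same theorem, all but finitely many terms of the sum vanish, so that the sum is finite.

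Next I would rewrite the horizontal part of this sum in terms of residues on the generic fiber. A horizontal curve through $x$ is, by definition, of the form $\overline{\{z\}}$ for a closed point $z \in X$, and it contains $x$ if and only if $z$ specializes to $x$; this sets up a bijection between the horizontal curves through $x$ and the closed points $z \in X$ specializing to $x$. For such a $z$, the associated flag is $\xi = (x,z)$, and Proposition~\ref{prop:connection_generic_residue} (which applies precisely because this flag is horizontal) gives $\Res_{(x,z)}(\eta) = \Res_z(\eta)$. Substituting into the displayed equation yields
\[
    \Res_{(x,y)}(\eta) \;+\; \sum_{z} \Res_z(\eta) \;=\; 0,
\]
the sum being over the closed points $z \in X$ specializing to $x$, and rearranging gives the claimed formula.

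This argument is essentially bookkeeping, and I do not anticipate a genuine obstacle: the substantive content — convergence and the vanishing of almost all the residues $\Res_{(x,y')}(\eta)$ — is already contained in Theorem~\ref{thm:reciprocity_laws}, and the passage between arithmetic and geometric residues for horizontal flags is supplied by Proposition~\ref{prop:connection_generic_residue}. The only points meriting an explicit word are that the curves through $x$ are exhausted by $\overline{\{y\}}$ together with the closures $\overline{\{z\}}$ of generic-fiber closed points specializing to $x$, and that each such $\overline{\{z\}}$ is indeed horizontal — both of which are immediate from the definitions.
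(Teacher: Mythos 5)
Your argument is exactly the paper's proof: apply the reciprocity law around the point $x$ (Theorem~\ref{thm:reciprocity_laws}~\ref{thm:reciprocity_laws_i}), note that all curves through $x$ other than $\overline{\{y\}}$ are horizontal and correspond to closed points of $X$ specializing to $x$, and convert their flag residues to generic-fiber residues via Proposition~\ref{prop:connection_generic_residue}. The proposal is correct and matches the paper's reasoning, just spelled out in slightly more detail.
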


\begin{proof}
Since $\overline{\{y\}}$ is the only vertical curve passing through $x$, all other curves passing through $x$ are horizontal, i.e., of the form $\overline{\{z\}}$ with $z$ a closed point of $X$ specializing to $x$. The lemma now immediately follows from the reciprocity law around a point together with Proposition \ref{prop:connection_generic_residue}.
\end{proof}

\subsection{Contraction Maps of the Stable Model} \label{sec:contraction}

We now apply the theory to our specific case. We use the notation as in Setup \ref{setup:field}, and assume additionally that $X \subset \PP^2_K$ is a smooth plane quartic. We further assume that its stable model $\X$ is defined over $\OO_K$, and let $\oo{X} := \X_s$ be its special fiber. If a plane model $\X_0 \subset \PP_{\OO_K}^2$ is dominated by $\X$, it provides a morphism
\[
    \phi\colon \X \to \PP^2_{\OO_K},
\]
with the property that its restriction to the generic fiber is the canonical embedding $X \subset \PP^2_K$. Any such morphism is determined by the line bundle $\LL := \phi^* \OO_{\PP^2_{\OO_K}}(1)$, which must be generated by its global sections, and a choice of three such sections. The property that $\phi$ on the generic fiber is the canonical embedding translates to
\[
    \LL|_{X} \simeq \omega_{X/K} \simeq \OO_{\PP^2_{K}}(1)|_X.
\]
Thus, the three chosen sections, when restricted to the generic fiber, must yield a $K$-basis for the three-dimensional $K$-vector space $H^0(X, \LL|_X)$. Moreover, the $\OO_K$-module $H^0(\X, \LL)$ is finitely generated and torsion-free, as $\LL$ is a line bundle on the integral scheme $\X$. Since $\OO_K$ is a principal ideal domain, this implies that $H^0(\X, \LL)$ is a free $\OO_K$-module. Since the base change $\Spec K \to \Spec \OO_K$ is flat,
\[
    H^0(\X, \LL) \otimes_{\OO_K} K \simeq H^0(X, \LL|_X),
\]
and hence the rank of $H^0(\X, \LL)$ is three. Thus, the choice of three sections must be an $\OO_K$-basis of this free module.

Now we consider the base change to the special fiber:
\[
    \oo{\phi} := \phi|_{\oo{X}} \colon \oo{X} \to \PP^2_k.
\]
This morphism corresponds to the line bundle $\oo{\LL} := \LL|_{\oo{X}}$. To analyze its global sections, we consider the standard short exact sequence arising from multiplication by $\pi$:
\[
    0 \to \LL \xrightarrow{\cdot \pi} \LL \to \oo{\LL} \to 0.
\]
This induces the long exact sequence in cohomology:
\[
    0 \to H^0(\X, \LL) \xrightarrow{\cdot \pi} H^0(\X, \LL) \to H^0(\oo{X}, \oo{\LL}) \to H^1(\X, \LL) \xrightarrow{\cdot \pi} H^1(\X, \LL) \to \dots
\]
From this, we deduce the following exact sequence of $k$-vector spaces:
\begin{equation} \label{eq:ex_seq}
    0 \to H^0(\X, \LL) \otimes_{\OO_K} k \to H^0(\oo{X}, \oo{\LL}) \to H^1(\X, \LL)[\pi] \to 0.
\end{equation}
This exact sequence allows us to draw several conclusions.

First, the dimension of the $k$-vector space $H^0(\oo{X}, \oo{\LL})$ is at least the rank of the free module $H^0(\X, \LL)$, so $\dim_k H^0(\oo{X}, \oo{\LL}) \ge 3$. This is an instance of ``upper semicontinuity'' (\cite[Theorem 5.3.20]{liu}). Moreover, equality holds if and only if $H^1(\X, \LL)$ is torsion-free. The torsion submodule $H^1(\X, \LL)[\pi]$ precisely measures the obstruction to lifting sections from $\oo{X}$ to $\X$.

Second, the morphism $\oo{\phi}$ is determined by the image of the $\OO_K$-basis of $H^0(\X, \LL)$ in $H^0(\oo{X}, \oo{\LL})$. The injection on the left of sequence \eqref{eq:ex_seq} guarantees that these images are linearly independent over $k$. This image, let's call it $V$, is therefore a 3-dimensional subspace of $H^0(\oo{X}, \oo{\LL})$. The map $\oo{\phi}$ is induced by the linear series $(\oo{\LL}, V)$, so understanding this subspace is crucial for describing the geometry of its image.

Since the line bundle $\LL$ induces the morphism $\phi$, it must be generated by its global sections. By an application of Nakayama's Lemma, this property on $\X$ is equivalent to the base-point-freeness of the linear series $(\oo{\LL}, V)$ on the special fiber $\oo{X}$. As we could not find a suitable reference for this specific statement, we state and prove it in the following general lemma for the reader's convenience.

\begin{lemma} \label{lem:global-generation_very-ample}
    Let $\mathcal{Y}$ be a proper scheme over a discrete valuation ring $\OO_K$, and let $\mathcal{F}$ be an invertible sheaf on $\mathcal{Y}$. Let $s_0, \dots, s_d$ be global sections of $\mathcal{F}$, and denote by $\mathcal{F}_s$ and $\bar{s}_i$ their respective restrictions to the special fiber $\mathcal{Y}_s$.
    \begin{enumerate}[label=\StatementListLabel]
        \item The sections $s_i$ generate $\mathcal{F}$ if and only if the restricted sections $\bar{s}_i$ generate $\mathcal{F}_s$.
        \item If these sections generate the respective sheaves, the induced morphism $f\colon \mathcal{Y} \to \PP^d_{\OO_K}$ is a closed immersion if and only if the corresponding morphism on the special fiber, $f_s\colon \mathcal{Y}_s \to \PP^d_k$, is a closed immersion.
    \end{enumerate}
\end{lemma}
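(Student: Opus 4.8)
The plan is to prove both parts via one recurring device: a coherent sheaf on a scheme proper over $\OO_K$ whose support is disjoint from the special fiber must vanish, because its support is a closed subscheme whose (necessarily closed) image in $\Spec\OO_K$ misses the closed point and is therefore empty. Note that $\mathcal{Y}$ is Noetherian since $\OO_K$ is, so all sheaves below are coherent.

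\emph{Part (a).} The ``only if'' direction is immediate, as tensoring a surjection $\OO_{\mathcal{Y}}^{\,d+1}\xrightarrow{(s_i)}\mathcal{F}$ with $\OO_{\mathcal{Y}_s}$ keeps it surjective. For the converse I would put $\mathcal{Q}:=\coker(s_i)$, tensor the right-exact sequence $\OO_{\mathcal{Y}}^{\,d+1}\to\mathcal{F}\to\mathcal{Q}\to 0$ with $\OO_{\mathcal{Y}_s}$, and use that the $\bar s_i$ generate $\mathcal{F}_s$ to conclude $\mathcal{Q}/\pi\mathcal{Q}=\mathcal{Q}\otimes_{\OO_K}k=0$. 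Nakayama's lemma at the points of $\mathcal{Y}_s$ then shows $\Supp\mathcal{Q}$ is disjoint from $\mathcal{Y}_s$, and the device forces $\mathcal{Q}=0$, i.e.\ the $s_i$ generate $\mathcal{F}$.

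\emph{Part (b).} By (a) I may assume the sections generate on both $\mathcal{Y}$ and $\mathcal{Y}_s$, producing compatible morphisms $f$ and $f_s$, with $f_s$ the base change of $f$ along $\Spec k\to\Spec\OO_K$; since closed immersions are stable under base change, the forward direction is clear. For the converse: first, $f$ is proper, being an $\OO_K$-morphism from the $\OO_K$-proper scheme $\mathcal{Y}$ into the $\OO_K$-separated scheme $\PP^d_{\OO_K}$ (factor $f$ as its graph, a closed immersion, followed by the proper projection $\mathcal{Y}\times_{\OO_K}\PP^d_{\OO_K}\to\PP^d_{\OO_K}$). Since a proper monomorphism is a closed immersion, it suffices to prove that $\Delta_f\colon\mathcal{Y}\to W:=\mathcal{Y}\times_{\PP^d_{\OO_K}}\mathcal{Y}$ is an isomorphism. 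Next, $f$ is unramified: the coherent sheaf $\Omega^1_{\mathcal{Y}/\PP^d_{\OO_K}}$ has formation commuting with the base change to $\mathcal{Y}_s$, where it equals $\Omega^1_{\mathcal{Y}_s/\PP^d_k}=0$ because $f_s$ is a closed immersion, so the device again yields $\Omega^1_{\mathcal{Y}/\PP^d_{\OO_K}}=0$. Being unramified and separated, $f$ has a diagonal that is simultaneously an open and a closed immersion, hence identifies $\mathcal{Y}$ with an open-and-closed subscheme $W_0\subseteq W$. Finally, $W$ is proper over $\OO_K$ (closed in the proper scheme $\mathcal{Y}\times_{\OO_K}\mathcal{Y}$, since $\PP^d_{\OO_K}$ is separated), and on the special fiber $\Delta_f$ restricts to $\Delta_{f_s}\colon\mathcal{Y}_s\to W_s\cong\mathcal{Y}_s\times_{\PP^d_k}\mathcal{Y}_s$, which is surjective as $f_s$, a closed immersion, is a monomorphism. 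Therefore $W\setminus W_0$ is open-and-closed in $W$ and disjoint from $W_s$, hence empty by the device; so $\Delta_f$ is surjective, thus an isomorphism, $f$ is a monomorphism, and therefore a closed immersion.

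\emph{Main obstacle.} The delicate step is the last one. From $f_s$ being a closed immersion we only learn that $\Delta_f(\mathcal{Y})$ contains the whole special fiber of $W$, and a closed subscheme of a proper $\OO_K$-scheme containing its entire special fiber need \emph{not} be the whole scheme. The argument is rescued by unramifiedness of $f$ --- itself obtained from $f_s$ through the Nakayama/properness device --- which upgrades $\Delta_f(\mathcal{Y})$ to a clopen subscheme, and a clopen subset of a proper $\OO_K$-scheme that avoids the closed point really is empty. The remaining ingredients (properness of an $\OO_K$-morphism into a separated $\OO_K$-scheme, base-change compatibility of $\Omega^1$ and of the diagonal, and that a proper monomorphism is a closed immersion) are standard and can simply be cited.
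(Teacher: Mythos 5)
Your proposal is correct. Part (a) is essentially identical to the paper's argument: cokernel of the evaluation map, right-exactness of tensoring to get $\mathcal{Q}=\pi\mathcal{Q}$, Nakayama at points of the special fiber, and properness to rule out a nonempty support avoiding $\mathcal{Y}_s$.

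For part (b) you take a genuinely different route. The paper disposes of the converse with a single citation of \cite[Proposition 4.6.7(i)]{EGAIII1} (openness on the base of the locus where the fiberwise morphism is a closed immersion, for morphisms between proper $S$-schemes), plus the observation that the only open neighborhood of the closed point of $\Spec\OO_K$ is everything. You instead reprove the relevant instance from scratch: reduce to showing $f$ is a proper monomorphism, deduce unramifiedness of $f$ from $\Omega^1_{\mathcal{Y}_s/\PP^d_k}=0$ via the same Nakayama--properness device as in (a), use that the diagonal of an unramified separated morphism is clopen, and kill the complementary clopen piece of $\mathcal{Y}\times_{\PP^d_{\OO_K}}\mathcal{Y}$ by noting it misses the special fiber (surjectivity of $\Delta_{f_s}$, since $f_s$ is a monomorphism) and applying the device once more. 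All the steps check out, including the subtle one you flag: containing the special fiber does not force a closed subscheme to be everything, but it does force a \emph{clopen} subset to be everything, and unramifiedness is exactly what upgrades the image of the diagonal to a clopen set. What each approach buys: the paper's is shorter and leans on a general EGA theorem whose proof is not reproduced; yours is longer but makes the mechanism transparent and shows that the single device behind (a) also drives (b), at the cost of invoking three other standard facts (base-change compatibility of $\Omega^1$, the clopen-diagonal characterization of unramified separated morphisms, and that proper monomorphisms are closed immersions). Either is acceptable.
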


\begin{proof}
    The scheme $\mathcal{Y}$ is Noetherian because it is proper (and thus of finite type) over the Noetherian base $S = \Spec \OO_K$.
    \begin{prooflist}
        \item The forward implication holds because generation by global sections is preserved under base change. For the converse, the sections $s_0, \dots, s_d$ define a homomorphism of $\OO_\mathcal{Y}$-modules $\alpha\colon \OO_{\mathcal{Y}}^{d+1} \to \mathcal{F}$. Let $\mathcal{C} := \coker \alpha$ be its cokernel, which is a coherent sheaf. The sections generate $\mathcal{F}$ if and only if $\mathcal{C} = 0$.

        By assumption, the restricted sections $\bar{s}_i$ generate $\mathcal{F}_s$, so the induced map $\alpha_s$ is surjective. By right-exactness of the tensor product, the restriction of the cokernel is $\mathcal{C}_s \simeq \mathcal{C}/\pi\mathcal{C} = \coker(\alpha_s)$, which is zero. Thus, we have $\mathcal{C} = \pi\mathcal{C}$.

        For any $y \in \mathcal{Y}_s$, the stalk $\mathcal{C}_y$ is a finitely generated $\OO_{\mathcal{Y},y}$-module since $\mathcal{C}$ is coherent. As $\pi$ lies in the maximal ideal of $\OO_{\mathcal{Y},y}$, Nakayama's lemma implies $\mathcal{C}_y=0$. The support of $\mathcal{C}$ is therefore a closed subset of $\mathcal{Y}$ that is disjoint from the special fiber. Since the structure morphism $\mathcal{Y} \to S$ is proper (hence closed), the image of the support in $S$ is a closed subset not containing the closed point $s$. As $S$ is the spectrum of a DVR, this image must be empty, which implies that the support of $\mathcal{C}$ is empty and hence $\mathcal{C}=0$.

        \item The forward implication holds because closed immersions are stable under base change. Conversely, assume $f_s$ is a closed immersion. Since $\mathcal{Y}$ and $\PP^d_{\OO_K}$ are proper schemes over the Noetherian base $S$, and since $f_s$ is a closed immersion, \cite[Proposition 4.6.7(i)]{EGAIII1} implies that $f$ is a closed immersion over an open neighborhood of $s$ in $S$. As $S$ is the spectrum of a DVR, the only such neighborhood is $S$ itself. Therefore, $f$ is a closed immersion.
    \end{prooflist}
\end{proof}

To describe the line bundle $\LL$ more explicitly, we use the theory of divisors on $\X$. As mentioned previously, $\X$ is a normal scheme (hence in particular regular in codimension one), which allows for a well-behaved theory of Weil divisors. However, since $\X$ may not be regular, we must carefully distinguish between Cartier and Weil divisors. On a normal scheme, the group of Cartier divisors, $\Div(\X)$, embeds into the group of Weil divisors, $Z^1(\X)$ (we adopt the notation of \cite{liu}), as the subgroup of Weil divisors that are locally principal (see \cite[Remark II.6.11.2]{hartshorne}). This local condition is only relevant at the singular points of $\X$, which are precisely the nodes of the special fiber with thickness $> 1$ (see Proposition \ref{prop:thickness}). We can reformulate this condition as the exact sequence
\[
    0 \to \Div(\X) \to Z^1(\X) \to \bigoplus_{x \in \X_{\text{sing}}} \Cl(\OO_{\X,x}).
\]
While the local class groups $\Cl(\mathcal{O}_{\mathcal{X},x})$ are in general difficult to compute, in our case we may replace them with their completions, $\Cl(\widehat{\mathcal{O}}_{\mathcal{X},x})$, which are easier to analyze (see Lemma \ref{lem:Cartier_Weil}). This replacement is justified because the stable model $\mathcal{X}$, being proper over the complete DVR $\mathcal{O}_K$, is an excellent scheme. Consequently, the local rings $\mathcal{O}_{\mathcal{X},x}$ are excellent normal local rings. For such rings, the completion is again normal (see \cite[Scholie 7.8.3 (v)]{EGAIV}) and the natural homomorphism $\Cl(\mathcal{O}_{\mathcal{X},x}) \hookrightarrow \Cl(\widehat{\mathcal{O}}_{\mathcal{X},x})$ is injective (see \cite[Corollary 6.12]{fossum1973divisor}).

\begin{remark}
    We can also consider the Picard group of $\X$, $\Pic(\X)$, which may be identified with the group $\CaCl(\X)$ of Cartier divisors modulo principal divisors. The sequence from above descends to the quotients of divisor classes, yielding the well-known exact sequence relating the Picard group and the Class group (see, for instance, \cite[Proposition 2.15]{hartshorne1994generalized}):
    \[
        0 \to \Pic(\X) \to \mathrm{Cl}(\X) \to \bigoplus_{x \in \X_{\text{sing}}} \mathrm{Cl}(\OO_{\X,x}).
    \]
\end{remark}

We now compare $\LL$ with the dualizing sheaf $\omega_{\X/\OO_K}$ by considering their difference. On the generic fiber, this gives
\[
    (\LL \otimes \omega_{\X/\OO_K}^{-1})|_X \simeq \LL|_X \otimes (\omega_{\X/\OO_K}|_X)^{-1} \simeq \omega_{X/K} \otimes \omega_{X/K}^{-1} \simeq \OO_X.
\]
A line bundle on $\X$ that is trivial on the dense generic fiber must be of the form $\OO_{\X}(D)$, where $D$ is a Cartier divisor whose support is contained entirely within the special fiber $\oo{X}$. This leads to our central identification
\[
    \LL \simeq \omega_{\X/\OO_K}(D)
\]
for some Cartier divisor $D$ with $\Supp(D) \subseteq \oo{X}$. Since the irreducible components of the special fiber, $\{\oo{X}_i\}$, are the only prime divisors of $\X$ fully contained in $\oo{X}$, the divisor $D$ must be a linear combination $D = \sum m_i \oo{X}_i$ for some integers $m_i \in \ZZ$. A natural question arises: what conditions must the coefficients $(m_i)$ satisfy for this Weil divisor to be Cartier?

\begin{lemma} \label{lem:Cartier_Weil}
    Let $\X$ be our stable model, with special fiber $\oo{X} = \sum_{i=1}^n \oo{X}_i$. A Weil divisor of the form $D = \sum m_i \oo{X}_i$ is Cartier if and only if for every node $x_0 \in \oo{X}_i \cap \oo{X}_j$, the congruence
    \[
        m_i \equiv m_j \pmod{d}
    \]
    holds, where $d$ is the thickness of the singularity $x_0$ in $\X$.
\end{lemma}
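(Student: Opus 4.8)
The plan is to localize the problem at the finitely many singular points of $\X$ and reduce it to a divisor‑class computation on the standard local model. Since $\X$ is normal, it is regular in codimension one and, by Proposition~\ref{prop:thickness}, its only singular points are the nodes of $\oo{X}$ of thickness $d\ge 2$. At every regular point the local ring is a unique factorization domain, so every Weil divisor is automatically Cartier in a neighbourhood; hence $D$ is Cartier if and only if it is locally principal at each such node. Using the exact sequence $0\to\Div(\X)\to Z^1(\X)\to\bigoplus_{x\in\X_{\mathrm{sing}}}\Cl(\OO_{\X,x})$ together with the injections $\Cl(\OO_{\X,x})\hookrightarrow\Cl(\widehat{\OO}_{\X,x})$ (valid because $\X$ is excellent, so these completions are normal; cf.\ \cite[Scholie 7.8.3]{EGAIV} and \cite[Corollary 6.12]{fossum1973divisor}), it suffices to prove: for a node $x_0\in\oo{X}_i\cap\oo{X}_j$ of thickness $d$, the image of $D$ in $\Cl(R)$, where $R:=\widehat{\OO}_{\X,x_0}\simeq\OO_K[\![u,v]\!]/(uv-\pi^d)$, vanishes if and only if $m_i\equiv m_j\pmod d$. (If $x_0$ is a self‑intersection of a single component, then $i=j$, the congruence is vacuous, and the computation below will show $D$ imposes no condition there.)

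First I would set up the local divisor arithmetic on $R$. Modulo $\pi$ the special fibre near $x_0$ is $uv=0$, so the two branches of $\oo{X}$ through $x_0$ correspond to the height‑one primes $\mathfrak{P}_i=(u,\pi)$ and $\mathfrak{P}_j=(v,\pi)$ of $R$; since $\oo{X}_i$ (resp.\ $\oo{X}_j$) is smooth at $x_0$ it is analytically irreducible there and pulls back to $\mathfrak{P}_i$ (resp.\ $\mathfrak{P}_j$) with multiplicity one. As $D$ is supported on $\oo{X}$ and only $\oo{X}_i,\oo{X}_j$ pass through $x_0$, the restriction of $D$ to $\Spec R$ is $m_i\mathfrak{P}_i+m_j\mathfrak{P}_j$. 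From $uv=\pi^d$ one reads off: $\mathfrak{P}_i,\mathfrak{P}_j$ are the only height‑one primes of $R$ containing $\pi$, and $\pi$ is a uniformizer at each (in $R_{\mathfrak{P}_i}$ one has $u=\pi^d v^{-1}$ with $v$ a unit, hence $\mathfrak{P}_iR_{\mathfrak{P}_i}=(\pi)$), so $\div(\pi)=\mathfrak{P}_i+\mathfrak{P}_j$; similarly $u$ lies only in $\mathfrak{P}_i$ among height‑one primes, with $v_{\mathfrak{P}_i}(u)=d$, so $\div(u)=d\,\mathfrak{P}_i$, and symmetrically $\div(v)=d\,\mathfrak{P}_j$. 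Therefore $[\mathfrak{P}_j]=-[\mathfrak{P}_i]$ and $d[\mathfrak{P}_i]=0$ in $\Cl(R)$, and the class of $D|_{\Spec R}$ equals $(m_i-m_j)[\mathfrak{P}_i]$.

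The \emph{if} direction is then immediate: writing $m_i-m_j=td$, one has $m_i\mathfrak{P}_i+m_j\mathfrak{P}_j=m_j(\mathfrak{P}_i+\mathfrak{P}_j)+td\,\mathfrak{P}_i=\div(\pi^{m_j}u^{t})$ when $t\ge 0$ and $=\div(\pi^{m_i}v^{-t})$ when $t\le 0$, so $D$ is locally principal at $x_0$, and being automatically Cartier elsewhere, $D$ is Cartier. For the \emph{only if} direction one needs that $(m_i-m_j)[\mathfrak{P}_i]=0$ forces $d\mid(m_i-m_j)$, i.e.\ that $[\mathfrak{P}_i]$ has order exactly $d$, equivalently $\Cl(R)\simeq\ZZ/d\ZZ$ with $[\mathfrak{P}_i]$ a generator.

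\textbf{The main obstacle is precisely this class‑group computation.} The upper bound is the easy half done above: $d[\mathfrak{P}_i]=0$, and the localization sequence $\Cl(R)\twoheadrightarrow\Cl(R[1/\pi])$ has kernel generated by $[\mathfrak{P}_i],[\mathfrak{P}_j]$, while $\Spec R[1/\pi]$ is regular of dimension one with trivial class group (both $u$ and $v$ become units, and one identifies $R[1/\pi]$ with a ring of convergent Laurent series over $K$), so $\Cl(R)$ is cyclic, generated by $[\mathfrak{P}_i]$. The content is the lower bound $m[\mathfrak{P}_i]\neq 0$ for $0<m<d$, i.e.\ the statement that $\OO_K[\![u,v]\!]/(uv-\pi^d)$ is the local ring of an $A_{d-1}$ surface singularity, whose divisor class group is $\ZZ/d\ZZ$. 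This is classical; I would either cite it directly (cf.\ \cite{fossum1973divisor}), or prove it by exhibiting the minimal resolution of $\Spec R$ as a chain of $d-1$ rational $(-2)$‑curves and identifying $\Cl(R)$ with the cokernel of their intersection matrix, the $A_{d-1}$ Cartan matrix of determinant $d$; alternatively, one can argue by hand using the presentation of $R$ as a topologically free $\OO_K$‑module on $\{u^i\}_{i\ge 0}\cup\{v^j\}_{j\ge 1}$ and a direct valuation computation showing the reflexive ideal $\mathfrak{P}_i^{(m)}$ is not principal for $0<m<d$. Granting this, $[D|_{\Spec R}]=(m_i-m_j)[\mathfrak{P}_i]=0$ if and only if $d\mid(m_i-m_j)$, and running over all nodes $x_0$ of $\oo{X}$ completes the proof.
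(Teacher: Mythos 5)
Your proposal is correct and follows essentially the same route as the paper: localize at the nodes via the exact sequence into the local class groups and their completions, work in $R=\OO_K[\![u,v]\!]/(uv-\pi^d)$, compute the divisors of $u$, $v$, and $\pi$, exhibit the explicit principal divisor of $\pi^{m_j}u^{t}$ for the ``if'' direction, and reduce the ``only if'' direction to the statement $\Cl(R)\simeq\ZZ/d\ZZ$. If anything you are more careful than the paper on the converse: the paper's claims that $D_1,D_2$ are the only prime Weil divisors of $A$ and that every rational function is a unit times a monomial in $u,v,\pi$ are not literally true (there are horizontal prime divisors through the closed point, and the monomial claim essentially presupposes the congruence), whereas you correctly isolate the genuine content --- that $[\mathfrak{P}_i]$ has order exactly $d$ --- and point to the classical $A_{d-1}$ class-group computation that the paper itself only invokes in the remark following its proof.
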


\begin{proof}
    A Weil divisor is Cartier if it is locally principal, so this question is local and only relevant at the singular points. Thus, in view of Proposition \ref{prop:thickness}, we may assume for the proof that
    \[
        \X = \mathrm{Spec}(A) \quad \text{where} \quad A := \OO_K[\![u,v]\!]/(uv - \pi^d), \quad d \geq 1,
    \]
    and $D = m_1 D_1 + m_2 D_2$, where $D_1$ and $D_2$ are the prime divisors corresponding to the components of the special fiber.
    
    It is easy to see that $D_1$ and $D_2$ are the only prime Weil divisors of $A$, so the group $Z^1(A)$ is freely generated by them. Our goal is equivalent to showing that the surjective homomorphism
    \[
        \varphi: Z^1(A) \to \ZZ/d\ZZ, \quad D = m_1 D_1 + m_2 D_2 \mapsto m_1 - m_2 \pmod{d}
    \]
    has as its kernel precisely the group of principal divisors. To show that every principal divisor is in the kernel of $\varphi$, it suffices to check the divisors of the ring elements $u, v,$ and $\pi$, as any rational function is a product of powers of these elements times a unit. This follows from the straightforward calculations:
    \[
        (u) = d \cdot D_1, \quad (v) = d \cdot D_2, \quad (\pi) = D_1 + D_2.
    \]
    Conversely, if $D = m_1 D_1 + m_2 D_2$ is in the kernel of $\varphi$, then $m_1 \equiv m_2 \pmod d$, so $m_1 = m_2 + kd$ for some $k \in \ZZ$. We can then write $D$ as
    \[
        D = (m_2 + kd)D_1 + m_2 D_2 = m_2(D_1 + D_2) + k(dD_1) = (\pi^{m_2}u^k),
    \]
    which shows that $D$ is principal. This completes the proof.
\end{proof}

\begin{remark}
    The above proof shows that $\Cl(A) \simeq \ZZ/d\ZZ$. The singularity of $\Spec A$ at the origin is an arithmetic analogue of a Du Val singularity of type $A_{d-1}$ (analytically isomorphic to the plane singularity $\CC[\![x,y,z]\!]/(xy-z^d)$). It is a standard result that the local divisor class group of an $A_n$ singularity is isomorphic to $\ZZ/(n+1)\ZZ$ (see, for example, \cite[Section 2.1]{brevik2012picard}; the original result is due to \cite{lipman1969rational}).
\end{remark}

\begin{remark}[Normalization of Coefficients]
    We may assume that the coefficients of the twisting divisor $D = \sum m_i \oo{X}_i$ are normalized, i.e., $m_i \geq 0$ for all $i$ and at least one coefficient is zero. This follows from the fact that $(\pi) = \sum_i \oo{X}_i$ is a principal Cartier divisor. We can add or subtract multiples of $(\pi)$ from $D$ without changing the isomorphism class of the line bundle $\OO_\X(D)$.
\end{remark}

Summarizing this section we have:

\begin{summary} \label{summary:contractions}
    A contraction map from the stable model $\X$ to a plane model $\X_0$ corresponds to a morphism
    \[ 
        \phi\colon \X \to \PP^2_{\OO_K}, 
    \]
    which is induced by a line bundle of the form
    \[ 
        \LL \simeq \omega_{\X/\OO_K}(D) \quad \text{where} \quad D = \sum_{i=1}^n m_i \oo{X}_i, 
    \]
    which must be generated by its global sections, and a choice of an $\OO_K$-basis for $H^0(\X, \LL)$. The coefficients $m_i$ may be chosen normalized ($m_i \geq 0$ with at least one zero) and must satisfy the congruence condition of Lemma \ref{lem:Cartier_Weil}. The plane model $\X_0$ is then obtained as the scheme-theoretic image of $\phi$.

    On the special fiber, the morphism
    \[ 
        \oo{\phi} := \phi|_{\oo{X}} \colon \oo{X} \to \PP^2_k 
    \]
    contracts $\oo{X}$ onto the special fiber of the plane model, $\X_{0,s}$, and is determined by the linear series $(\oo{\LL}, V)$, where $V$ is the 3-dimensional subspace of $H^0(\oo{X}, \oo{\LL})$ spanned by the restriction of the $\OO_K$-basis of $H^0(\X, \LL)$. By Lemma \ref{lem:global-generation_very-ample}, the base-point-freeness of this linear series is equivalent to the global generation of $\LL$ on $\X$.
\end{summary}

\subsection{\texorpdfstring{Twisting $1$-Tails}{Twisting 1-Tails}}

Having established the general theory for describing contractions of the stable model $\X$, we are now equipped to prove the implication $\ref{thm:main_theorem_i} \implies \ref{thm:main_theorem_ii}$ of our Main Theorem (\ref{thm:main_theorem}). We assume that the stable reduction $\oo{X}$ is non-hyperelliptic and, from this, construct a GIT-stable plane model $\X_0$ dominated by $\X$. This requires defining a morphism
\[
    \X \to \X_0 \subset \PP_{\OO_K}^2
\]
which, on the special fiber, contracts the $1$-tails of $\oo{X}$ to cusps and is an immersion elsewhere.

Recall from Lemma~\ref{lem:core_tail_lemma} that the stable reduction $\oo{X}$ decomposes into its core and $r$ $1$-tails:
\[
    \oo{X}_c, \quad \oo{X}_1, \dots, \oo{X}_r \quad \text{where } r \in \{0,1,2,3\}.
\]
Since we are interested in the case where $\oo{X}$ is non-hyperelliptic, its core $\oo{X}_c$ must be \emph{2-inseparable} (see Definition~\ref{def:comb_term}), as shown in Proposition \ref{prop:2-sep-hyp}.

We begin with the simplest case, where no $1$-tails are present ($r=0$). In this scenario, the stable reduction $\oo{X}$ is identical to its 2-inseparable core $\oo{X}_c$. In this case, the abstract stable model $\X$ can be realized directly as a plane model for which the notions of Deligne-Mumford stability and GIT-stability coincide.

\begin{proposition} \label{prop:main_thm_2insep}
    Assume the stable reduction $\oo{X}$ of $X$ is non-hyperelliptic and 2-inseparable. Then the stable model $\X$ can be realized as a GIT-stable plane model. Specifically, the relative dualizing sheaf $\omega_{\X/\OO_K}$ is very ample and induces a closed immersion
    \[
        \X \hookrightarrow \PP_{\OO_K}^2.
    \]
\end{proposition}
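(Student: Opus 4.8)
The plan is to take $\LL = \omega_{\X/\OO_K}$ in the framework of Summary~\ref{summary:contractions} (with trivial twisting divisor $D=0$) and show directly that it is very ample. The one genuinely new input is Corollary~\ref{cor:canonical_embedding_general}: since $\oo{X}$ is a $2$-inseparable stable curve of genus $3$ which is non-hyperelliptic by hypothesis, its canonical map $\Phi_{\oo{X}}\colon \oo{X}\to\PP^2_k$ is a closed immersion. The whole proof is then the task of transferring this statement from the special fiber to the model $\X$ by means of Lemma~\ref{lem:global-generation_very-ample}, the only real content being that the canonical sections on $\oo{X}$ lift to $\X$.

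First I would record the structural facts. Since $f\colon\X\to\Spec\OO_K$ is flat, formation of the dualizing sheaf commutes with base change, so $\omega_{\X/\OO_K}|_{\oo{X}}\simeq\omega_{\oo{X}/k}$ and $\omega_{\X/\OO_K}|_{X}\simeq\omega_{X/K}$; moreover $\omega_{\X/\OO_K}$ is invertible because $\X$ is Gorenstein, being étale-locally a hypersurface $\OO_K[\![u,v]\!]/(uv-\pi^d)$ by Proposition~\ref{prop:thickness}. Next comes the dimension count. The $\OO_K$-module $H^0(\X,\omega_{\X/\OO_K})$ is free of rank $h^0(X,\omega_{X/K}) = g = 3$ (flatness of $\Spec K\to\Spec\OO_K$), while $\oo{X}$, being a connected Gorenstein curve of arithmetic genus $3$, has $h^0(\oo{X},\omega_{\oo{X}/k}) = h^1(\oo{X},\OO_{\oo{X}}) = 3$ by duality. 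Plugging $\LL=\omega_{\X/\OO_K}$ into the exact sequence~\eqref{eq:ex_seq} forces $H^1(\X,\omega_{\X/\OO_K})[\pi] = 0$, hence that the reduction map $H^0(\X,\omega_{\X/\OO_K})\otimes_{\OO_K}k \to H^0(\oo{X},\omega_{\oo{X}/k})$ is an isomorphism: every regular differential on the special fiber lifts.

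Now I would choose an $\OO_K$-basis $s_0,s_1,s_2$ of $H^0(\X,\omega_{\X/\OO_K})$; its reduction $\bar s_0,\bar s_1,\bar s_2$ is a $k$-basis of $H^0(\oo{X},\omega_{\oo{X}/k})$, so the morphism these sections induce on $\oo{X}$ is precisely the canonical map $\Phi_{\oo{X}}$, a closed immersion by Corollary~\ref{cor:canonical_embedding_general}; in particular $\bar s_0,\bar s_1,\bar s_2$ generate $\omega_{\oo{X}/k}$. By Lemma~\ref{lem:global-generation_very-ample}(a) the sections $s_i$ then generate $\omega_{\X/\OO_K}$, defining a morphism $\phi\colon\X\to\PP^2_{\OO_K}$, and by part~(b) of the same lemma $\phi$ is a closed immersion because $\bar\phi=\Phi_{\oo{X}}$ is. On the generic fiber $\phi$ restricts to the map given by a $K$-basis of $H^0(X,\omega_{X/K})$, i.e.\ the canonical embedding of the smooth plane quartic $X$; hence $\X_0:=\phi(\X)\simeq\X$ is a plane model of $X$ whose special fiber is isomorphic to the stable curve $\oo{X}$, whose singularities are nodes ($A_1$), so $\X_0$ is GIT-stable in the sense of Definition~\ref{def:git-stability}.

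The step that requires care — and which I expect to be the only real obstacle — is the vanishing of the torsion obstruction $H^1(\X,\omega_{\X/\OO_K})[\pi]$, equivalently that the full $3$-dimensional space of canonical sections on $\oo{X}$ is hit by $H^0(\X,\omega_{\X/\OO_K})$; without it one obtains only a linear series $(\omega_{\oo{X}/k},V)$ with $\dim V=3$ inside a possibly larger $H^0$, and the clean identification with $\Phi_{\oo X}$ breaks. The dimension count settles it, but it leans on the two auxiliary identities $\omega_{\X/\OO_K}|_{\oo X}\simeq\omega_{\oo X/k}$ and $h^0(\oo X,\omega_{\oo X/k})=3$, which should be stated explicitly; everything else is an assembly of Corollary~\ref{cor:canonical_embedding_general} and Lemma~\ref{lem:global-generation_very-ample}.
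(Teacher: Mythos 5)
Your proposal is correct and follows essentially the same route as the paper: apply Corollary~\ref{cor:canonical_embedding_general} to get very ampleness of $\omega_{\oo{X}/k}$ on the special fiber, then transfer to $\X$ via Lemma~\ref{lem:global-generation_very-ample}. Your extra step checking that $h^0(\oo{X},\omega_{\oo{X}/k})=3$ forces $V=H^0(\oo{X},\omega_{\oo{X}/k})$ (equivalently $H^1(\X,\omega_{\X/\OO_K})[\pi]=0$) is a detail the paper's one-line proof leaves implicit, but it is indeed the point that makes the reduction of an $\OO_K$-basis induce the full canonical map rather than a proper sub-series.
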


\begin{proof}
    Since $\oo{X}$ is a non-hyperelliptic and 2-inseparable stable curve, its dualizing sheaf $\omega_{\oo{X}/k}$ is very ample by Corollary~\ref{cor:canonical_embedding_general}. By Lemma \ref{lem:global-generation_very-ample}, it follows that $\omega_{\X/\OO_K}$ is very ample as well. By definition, this means the global sections of $\omega_{\X/\OO_K}$ induce the stated immersion, whose image is the desired GIT-stable plane model.
\end{proof}

\begin{remark}
    Implicitly this result was mentioned in \cite[Proposition A.1]{van2025reduction}.
\end{remark}

What happens if $r > 0$? In this case, the canonical map is not a morphism because the attachment nodes of the $1$-tails are base points of the linear system $|\omega_{\oo{X}/k}|$ (see Theorem~\ref{thm:catanese_base_points}). To resolve these base points, we must twist the dualizing sheaf.

For each attachment node $x_i$, let $d_i$ be its thickness in the stable model $\X$. We define the divisor
\[
    D := \sum_{i=1}^r d_i \oo{X}_i,
\]
which is a Cartier divisor by Lemma \ref{lem:Cartier_Weil}, since each tail is attached only to the core $\oo{X}_c$, which is not being twisted. This allows us to define the line bundle
\begin{equation} \label{eq:twisted_line_bundle}
    \LL := \omega_{\X/\OO_K}(D).
\end{equation}
This new line bundle induces a map $\phi\colon \X \to \PP^2_{\OO_K}$, which is a priori only rational. Our goal is to show that $\phi$ is in fact a morphism, as stated in the following theorem:

\begin{theorem} \label{thm:main_thm_2-insep_tails}
    Let $X$ be a smooth plane quartic over $K$, and assume its stable model $\X$ is defined over $\OO_K$. Assume further that the stable reduction $\oo{X}$ has a 2-inseparable core and $r \geq 1$ $1$-tails. Let $D := \sum_{i=1}^r d_i \oo{X}_i$ be the divisor supported on the $1$-tails, where $d_i$ is the thickness of the $i$-th attachment node, and let $\LL := \omega_{\X/\OO_K}(D)$ be the corresponding twisted line bundle.

    Then $\LL$ induces a contraction morphism $\phi\colon \X \to \PP^2_{\OO_K}$ whose scheme-theoretic image, $\X_0$, is a GIT-semistable plane model of $X$.
    Furthermore, the model $\X_0$ is GIT-stable if and only if the stable reduction $\oo{X}$ is non-hyperelliptic. The geometry of the special fiber $\X_{0,s} = \phi(\oo{X})$ is determined as follows:
    \begin{enumerate}[label=\StatementListLabel]
        \item If $\oo{X}$ is non-hyperelliptic, $\X_{0,s}$ is a GIT-stable plane quartic. The induced morphism $\oo{\phi}$ on the special fiber realizes the core $\oo{X}_c$ as the normalization of $\X_{0,s}$ at its cusps and contracts each $1$-tail to a distinct cuspidal singularity.
        \item If $\oo{X}$ is hyperelliptic, $\X_{0,s}$ is a double conic supported on a smooth conic $Q \subset \PP^2_k$. The induced morphism $\oo{\phi}$ on the special fiber contracts each $1$-tail to a distinct smooth point on $Q$ and maps the core $\oo{X}_c$ two-to-one onto $Q$.
    \end{enumerate}
\end{theorem}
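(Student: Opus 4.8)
The plan is to analyze $\LL=\omega_{\X/\OO_K}(D)$ by restricting it to the components of $\oo X$, to deduce that it is globally generated so that $\phi$ is a genuine morphism, and finally to read off the geometry of $\oo\phi(\oo X)$ from the linear system $\LL$ induces on the core. \emph{Step 1 (restriction to components).} Using flat base change $\omega_{\X/\OO_K}|_{\oo X}\simeq\omega_{\oo X/k}$, the adjunction isomorphism $\omega_{\oo X/k}|_Z\simeq\omega_{Z/k}(S_Z)$, and the local structure of a node of thickness $d_i$ from Proposition~\ref{prop:thickness} together with the divisor calculations in the proof of Lemma~\ref{lem:Cartier_Weil}, one computes $\OO_\X(D)|_{\oo X_i}\simeq\OO_{\oo X_i}(-x_i)$ and $\OO_\X(D)|_{\oo X_c}\simeq\OO_{\oo X_c}(x_1+\dots+x_r)$. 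Since a $1$-tail has arithmetic genus $1$ and hence trivial dualizing sheaf $\omega_{\oo X_i/k}\simeq\OO_{\oo X_i}$ (for both elliptic and pig tails), this yields
\[
    \oo\LL|_{\oo X_i}\simeq\OO_{\oo X_i},\qquad
    \oo\LL|_{\oo X_c}\simeq\omega_{\oo X_c/k}(2D_c),\qquad D_c:=x_1+\dots+x_r,
\]
so $\oo\LL:=\LL|_{\oo X}$ has multidegree $(4;0,\dots,0)$; in particular $\oo\phi$ is constant on each tail, contracting $\oo X_i$ to a point $q_i\in\PP^2_k$.

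\textbf{The morphism $\phi$ and the subspace $V$.} Since the generic fibre is the canonical embedding and $D$ is supported on $\oo X$, $\LL|_X\simeq\omega_{X/K}$, so $H^0(\X,\LL)$ is free of rank $3$ over $\OO_K$ and its reduction spans a $3$-dimensional subspace $V\subseteq H^0(\oo X,\oo\LL)$; a Riemann--Roch/Serre-duality computation on the core gives $\dim_k H^0(\oo X,\oo\LL)=h^0(\omega_{\oo X_c/k}(2D_c))=2+r$. I would identify $V$ using the residue theory of §\ref{sec:contraction}: a global section of $\LL$ is a holomorphic differential on $X$, so applying the reciprocity law around each $x_i$ (Theorem~\ref{thm:reciprocity_laws}) together with Propositions~\ref{prop:connection_generic_residue} and~\ref{prop:connection_special_residue} shows that $V$ is cut out inside $H^0(\oo X,\oo\LL)$ by the vanishing of certain residues of the core-restriction at the $x_i$; these residue functionals satisfy one linear relation via the Residue Theorem on $\oo X_c$, which is consistent with $\dim V=3$ (this is Proposition~\ref{prop:subspace_V_W}). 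Using that $\oo X_c$ is $2$-inseparable, hence inseparable, so that $|\omega_{\oo X_c/k}|$ is base-point free (Theorem~\ref{thm:catanese_base_points}), and that the tail sections are the nowhere-vanishing constants, I would verify that $(\oo\LL,V)$ is base-point free; by Lemma~\ref{lem:global-generation_very-ample} this gives that $\LL$ is globally generated on $\X$, so $\LL$ and a choice of $\OO_K$-basis of $H^0(\X,\LL)$ define a morphism $\phi\colon\X\to\PP^2_{\OO_K}$. Its scheme-theoretic image $\X_0$ is integral and dominant over $\OO_K$, hence flat over $\OO_K$ with generic fibre $X$; so $\X_0$ is a plane model dominated by $\X$, and $\X_{0,s}$ is a degree-$4$ curve in $\PP^2_k$ (constant Hilbert polynomial).

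\textbf{The two cases.} In both, the decomposition lemma (Lemma~\ref{lem:decomp_sep_node}) applied at the $r$ attachment nodes, together with the fact that a $1$-tail is always hyperelliptic as a $1$-pointed genus-$1$ curve (Lemma~\ref{lem:always_hyperelliptic}), reduces hyperellipticity of $\oo X$ to that of the pointed core $(\oo X_c,P_c)$ with $P_c=\{x_1,\dots,x_r\}$. If $\oo X$ is non-hyperelliptic, so is $(\oo X_c,P_c)$, and I would show that $\oo\phi|_{\oo X_c}$ (given by $V$ restricted to the core) is a closed immersion away from $\{x_i\}$ and has vanishing sequence $\{0,2,3\}$ at each $x_i$: the ``no simple pole'' defining condition of $V$ rules out order $1$; base-point-freeness of $|\omega_{\oo X_c/k}|$ supplies a section of exact order $2$; and order $\ge 4$ is excluded because $\phi$ restricts to the injective canonical embedding on the generic fibre, with non-hyperellipticity of the pointed core entering via Corollary~\ref{cor:canonical_embedding_general} and the Catanese injectivity results of §\ref{sec:quasi-hyp}. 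Hence $\X_{0,s}=\oo\phi(\oo X_c)$ is a degree-$4$ plane curve whose only singularities are $r$ distinct ordinary cusps ($A_2$) at the $q_i$, with $\oo\phi|_{\oo X_c}$ the normalisation there; so $\X_{0,s}$ is GIT-stable, giving (a). If $\oo X$ is hyperelliptic, then $(\oo X_c,P_c)$ is hyperelliptic, so by Lemma~\ref{lem:unique_2_inse} and Corollary~\ref{cor:all_the_same} the core is irreducible or a binary curve and is honestly hyperelliptic, with a degree-$2$ map $f\colon\oo X_c\to\PP^1_k$ whose involution fixes each $x_i$; then $2x_i$ is a fibre of $f$, so $\oo\LL|_{\oo X_c}\simeq\omega_{\oo X_c/k}(2D_c)\simeq f^*\OO_{\PP^1_k}(2)$ and $V$ restricted to the core consists of pullbacks along $f$, so $\oo\phi|_{\oo X_c}$ factors as $f$ followed by the degree-$2$ Veronese $\PP^1_k\hookrightarrow\PP^2_k$. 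Thus $\oo\phi$ maps the core $2$-to-$1$ onto a smooth conic $Q$ and collapses each tail to a point of $Q$; comparing degrees, the degree-$4$ curve $\X_{0,s}$ supported on $Q$ is the double conic $2Q$, which is GIT-semistable but not GIT-stable (non-reduced). This gives (b), and hence ``$\X_0$ is GIT-stable $\iff\oo X$ is non-hyperelliptic''; GIT-semistability holds in both cases since cuspidal quartics and double smooth conics satisfy Definition~\ref{def:git-stability}(1).

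\textbf{Main obstacle.} The technical heart is the Step-2--Step-3 analysis at the attachment nodes: pinning down the ``liftable'' subspace $V$ via the reciprocity laws (the residue bookkeeping at nodes of thickness $d_i>1$ being delicate), and then controlling the local behaviour of $\oo\phi|_{\oo X_c}$ precisely enough to obtain exactly an $A_2$-cusp at each $q_i$ in the non-hyperelliptic case (no worse unibranch singularity, and the $q_i$ pairwise distinct) and exactly the double conic in the hyperelliptic case. Ruling out higher-order tangency of the core map at the $x_i$ is where non-hyperellipticity of the pointed core, through Catanese's characterisation of the canonical embedding, is genuinely used.
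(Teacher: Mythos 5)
Your Steps 1--2 (the restrictions of $\oo{\LL}$ to the tails and the core, the identification of the liftable subspace $V$ with the residue-condition subspace $W$ via the reciprocity laws, base-point-freeness, and the contraction of the tails) reproduce the paper's Proposition~\ref{prop:subspace_V_W} and Lemma~\ref{lem:contraction_of_tails} faithfully. The gap is in the final step, where you analyse the map $\oo{\phi}_c$ defined by the linear series $(\oo{\LL}_c, W)$ on the core. First, Corollary~\ref{cor:canonical_embedding_general} concerns the \emph{canonical} map of a $2$-inseparable stable curve, whereas $\oo{\phi}_c$ is given by the twisted sheaf $\omega_{\oo{X}_c/k}(\sum 2x_i)$ and the \emph{proper} subspace $W$ of its sections (proper as soon as $r\geq 2$); the corollary therefore cannot be invoked to conclude that $\oo{\phi}_c$ is an immersion away from the $x_i$, nor does non-hyperellipticity of the pointed core translate into injectivity of this particular series without further argument. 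Second, your exclusion of vanishing order $\geq 4$ at $x_i$ ``because $\phi$ restricts to the injective canonical embedding on the generic fibre'' is not a valid inference: the generic fibre places no constraint on the infinitesimal behaviour of $\oo{\phi}_c$ at a closed point of the special fibre. Without these two points, the conclusion that each $q_i$ is an \emph{ordinary} cusp (rather than a worse unibranch singularity) and that $\X_{0,s}$ has no further singularities remains unproved.

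The paper closes this gap by a change of target: it constructs the curve $C'$ obtained from $\oo{X}_c$ by imposing a cusp at each $x_i$ (Lemma~\ref{lem:cusp_construction}), shows that $\oo{\LL}_c\simeq\nu^*\omega_{C'/k}$ and that $\nu^*$ identifies $H^0(C',\omega_{C'/k})$ with $W$ (the residue conditions defining $W$ are exactly Rosenlicht's regularity conditions at an $A_2$-singularity, Proposition~\ref{prop:local_conditions}), so that $\oo{\phi}_c=\Phi_{C'}\circ\nu$ is the canonical map of $C'$ precomposed with the partial normalization. After proving that $\oo{X}$ is hyperelliptic if and only if $C'$ is quasi-hyperelliptic (Lemma~\ref{lem:hyp_vs_quasi-hyp_v2}, which also handles the inseparable degree-$2$ map arising when $\Char k=2$ and $r=3$, a case your sketch glosses over), both alternatives follow at once from Catanese's Theorem~\ref{thm:canonical_embedding_catanese} and Proposition~\ref{prop:hyp_vs_quasi-hyp}; the $A_2$-cusps are built into $C'$ by construction. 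If you prefer your direct route, the paper's Appendix~\ref{app} carries it out: there the ordinariness of the cusps is deduced not from the generic fibre but from the arithmetic-genus count $\sum_i\delta_{q_i}=g(\X_{0,s})-g(\oo{X}_c)=r$, which forces $\delta_{q_i}=1$ for each $i$, while injectivity and the immersion property are checked by a case-by-case Riemann--Roch analysis of length-$2$ subschemes for $r=1,2,3$.
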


For the remainder of this section, we work under the assumptions of Theorem~\ref{thm:main_thm_2-insep_tails} as we build up to its proof.

\subsubsection{\texorpdfstring{Analyzing the Line Bundle $\LL$}{Analyzing the Line Bundle L}}

The first step is to understand our line bundle $\LL$ from \eqref{eq:twisted_line_bundle} better.
On the generic fiber, it is just $\omega_{X/K}$. Let us analyze the restriction $\oo{\LL} = \omega_{\X/\OO_K}(D)|_{\oo{X}}$ to the special fiber.
For this, we look at the restrictions on the components of the special fiber, i.e., to the core and the $1$-tails:
\[
    \oo{\LL}_c := \oo{\LL}|_{\oo{X}_c}, \quad
    \oo{\LL}_i := \oo{\LL}|_{\oo{X}_i}, \quad \text{ for $i = 1, \dots, r$.}
\]

Let us first concentrate on the description away from the attachment nodes $\{x_i\}$.
Let $U_c := \oo{X}_c \setminus \{x_i\}$ be the open subset of the core component not containing any attachment nodes.
Since $U_c$ is disjoint from the support of the twisting divisor $D$, we have
\begin{equation} \label{eq:iso_core_U}
    (\oo{\LL}_c)|_{U_c} \simeq \omega_{\oo{X}_c/k}|_{U_c}.
\end{equation}

Now for $i \in \{1, \dots, r\}$ we consider $\oo{\LL}_i$ on the open set $U_i := \oo{X}_i \setminus \{x_i\}$.
In this case, however, the support of $D$ is not disjoint from $U_i$.
Consider the linearly equivalent divisor $\tilde{D}_i = D - (\pi^{d_i})$.
Its support is disjoint from $U_i$, and we have an isomorphism
\begin{equation} \label{eq:iso_mult_pi}
    \LL = \omega_{\X/\OO_K}(D) \xrightarrow{\sim} \omega_{\X/\OO_K}(\tilde{D}_i)
\end{equation}
given by multiplication by $\pi^{d_i}$. This induces an isomorphism
\begin{equation} \label{eq:iso_tail_U}
    (\oo{\LL}_i)|_{U_i} \simeq \omega_{\oo{X}_i/k}|_{U_i}.
\end{equation}

It remains to describe $\oo{\LL}$ at the attachment nodes $x_i$. For this, we may work in the completed local ring $\widehat{\OO}_{\X, x_i} \simeq \OO_K[\![u,v]\!]/(uv - \pi^{d_i})$. In this local coordinate system, we let the branch of the tail component $\oo{X}_i$ be defined by $u=0$ (with local parameter $v$), and the branch of the core component $\oo{X}_c$ be defined by $v=0$ (with local parameter $u$).
It is easy to check that the dualizing sheaf $\omega_{\X/\OO_K}$ is locally at $x_i$ generated by $u^{-1} du$ (which equals $-v^{-1} dv$). The divisor $D$ restricted to this neighborhood corresponds to $d_i \oo{X}_i$ and as we have seen in Lemma \ref{lem:Cartier_Weil}, this corresponds to the principal divisor $(u)$. Thus, the sheaf $\OO_{\X}(D)$ is locally generated by $u^{-1}$. We conclude that our line bundle $\LL = \omega_{\X/\OO_K} \otimes \OO_{\X}(D)$ is locally generated by
\[
    \omega_0 := (u^{-1} du) \otimes u^{-1} = \frac{du}{u^2} = -\frac{dv}{uv} = - \frac{dv}{\pi^{d_i}}.
\]
Thus, $\oo{\LL}$ is locally at $x_i$ generated by the restriction $\oo{\omega_0}$ of $\omega_0$ to $\oo{X}$.

The restriction $\oo{\omega_0}$ to $\oo{X}_c$, which is a local generator of $\oo{\LL}_c$ at $x_i$, is a differential with a pole of order $2$ at $x_i$. By abuse of notation, we simply write $\frac{du}{u^2}$ again. We conclude that we can extend the isomorphism \eqref{eq:iso_core_U} to
\[
    \oo{\LL}_c \simeq \omega_{\oo{X}_c/k}\left(\sum_{i=1}^r 2 x_i\right).
\]

The restriction of $\oo{\omega_0}$ to $\oo{X}_i$ is a local generator of $\oo{\LL}_i$ at $x_i$.
Under the isomorphism from \eqref{eq:iso_mult_pi}, the generator $\omega_0$ corresponds to $\tilde{\omega}_0 := -dv$. Its restriction to the special fiber is therefore a non-zero, regular differential on $\oo{X}_i$, which we simply write as $-dv$.
Thus, the isomorphism from \eqref{eq:iso_tail_U} extends to
\[
    \oo{\LL}_i \simeq \omega_{\oo{X}_i/k}, \quad \text{for $i = 1, \dots, r$.}
\]
Note that since each $1$-tail $\oo{X}_i$ has arithmetic genus 1, its dualizing sheaf is trivial, i.e., $\omega_{\oo{X}_i/k} \simeq \OO_{\oo{X}_i}$ (see \cite[Proposition 1.11]{catanese1982pluricanonical}). This holds regardless of whether the tail is smooth (an elliptic tail) or nodal (a pig tail).

Having now a clear understanding of how the restrictions $\oo{\LL}_c, \oo{\LL}_1, \dots, \oo{\LL}_r$ look, let us clarify how they ``glue together'' at the attachment nodes.
This can be described formally as follows: Let $\nu \colon \oo{X}' \to \oo{X}$ be the partial normalization at the attachment nodes $\{x_i\}$.
The normalization $\oo{X}'$ is the disjoint union of the components $\oo{X}_c, \oo{X}_1, \dots, \oo{X}_r$.
We have the standard short exact sequence relating the structure sheaf of a curve to that of its normalization:
\[
    0 \to \OO_{\oo{X}} \to \nu_* \OO_{\oo{X}'} \to \mathcal{Q} \to 0,
\]
where the cokernel $\mathcal{Q}$ is a skyscraper sheaf supported only at the attachment nodes $\{x_i\}$.
Since each $x_i$ is an ordinary node, the stalk of $\mathcal{Q}$ at such a point is a one-dimensional vector space over $k$.
Thus, $\mathcal{Q} \simeq \bigoplus_{i=1}^r k(x_i)$, where $k(x_i)$ is the residue field at the node.

Since $\oo{\LL}$ is a line bundle (and therefore flat), tensoring this exact sequence with $\oo{\LL}$ preserves exactness. Using the projection formula, $(\nu_* \OO_{\oo{X}'}) \otimes_{\OO_{\oo{X}}} \oo{\LL} \simeq \nu_*(\OO_{\oo{X}'} \otimes_{\OO_{\oo{X}'}} \nu^* \oo{\LL}) = \nu_*(\nu^* \oo{\LL})$, we deduce the exact ``gluing'' sequence for our line bundle:
\begin{equation} \label{eq:gluing_sequence}
    0 \to \oo{\LL} \to \nu_*(\nu^*\oo{\LL}) \to \oo{\LL} \otimes_{\OO_{\oo{X}}} \mathcal{Q} \to 0.
\end{equation}
The pullback sheaf $\nu^*\oo{\LL}$ lives on the normalization $\oo{X}'$; since $\oo{X}'$ is a disjoint union, $\nu^*\oo{\LL}$ is simply the collection of the restricted line bundles $(\oo{\LL}_c, \oo{\LL}_1, \dots, \oo{\LL}_r)$, each living on its own separate component. The pushforward sheaf $\nu_*(\nu^*\oo{\LL})$ brings this collection back to the original curve $\oo{X}$. A section of this sheaf is a tuple of sections, one for each component, with no compatibility required. We write this sheaf as the direct sum
\begin{equation} \label{eq:unglued_sheaf}
    \nu_*(\nu^*\oo{\LL}) = \oo{\LL}_c \oplus \bigoplus_{i=1}^r \oo{\LL}_i,
\end{equation}
with the individual sheaves understood to be extended by zero to the whole of $\oo{X}$.
The new skyscraper sheaf is $\oo{\LL} \otimes \mathcal{Q} \simeq \bigoplus_{i=1}^r \oo{\LL}|_{x_i}$, which is the direct sum of the fibers $\oo{\LL}|_{x_i} = \oo{\LL}_{x_i} \otimes k(x_i)$ of $\oo{\LL}$ at each node.

Since the restrictions come from the common sheaf $\LL$, we have an isomorphism
\[
    (\oo{\LL}_c)_{x_i} \xrightarrow{\sim} (\oo{\LL}_i)_{x_i},
\]
coming from the identification of the restrictions of a local generator of the stalk $(\oo{\LL})_{x_i}$.
In particular, this allows us to identify the fibers $\oo{\LL}_c|_{x_i} = \oo{\LL}_i|_{x_i}$.
Then in view of \eqref{eq:unglued_sheaf} and the exact sequence \eqref{eq:gluing_sequence}, we may see a section $s$ of $\oo{\LL}$ as a tuple of sections $(s_c, s_1, \dots, s_r)$ with the gluing condition
\[
    s_c(x_i) = s_i(x_i) \quad \text{for $i = 1, \dots, r$.}
\]
Here the notation $s(x_i)$ means the image of the section in the common fiber $\oo{\LL}|_{x_i}$. More concretely, representing sections locally at $x_i$ as
\begin{align*}
    s_c &= (a_{-2} + a_{-1} u + \cdots) \cdot \frac{du}{u^2}, \\
    s_i &= (b_0 + b_1 v + \cdots) \cdot dv,
\end{align*}
the identification of fibers is induced by the common local generator $\omega_0$. We identify the basis $\frac{du}{u^2}$ (the restriction of $\omega_0$ to the core) with $-dv$ (the image of $\omega_0$ under the isomorphism $\oo{\LL}_i \simeq \omega_{\oo{X}_i/k}$). Therefore, the gluing condition $s_c(x_i)=s_i(x_i)$ requires their leading coefficients to satisfy $a_{-2} = -b_0$, or equivalently, $a_{-2} + b_0 = 0$.

Let us now understand the global sections $H^0(\oo{X}, \oo{\LL})$. Since $\oo{\LL}_i \simeq \omega_{\oo{X}_i/k} \simeq \OO_{\oo{X}_i}$, we have $H^0(\oo{X}_i, \oo{\LL}_i) \simeq k$. Taking global sections of the exact sequence \eqref{eq:gluing_sequence} yields
\[
    0 \to H^0(\oo{X}, \oo{\LL}) \to H^0(\oo{X}_c, \oo{\LL}_c) \oplus k^r \to \bigoplus_{i=1}^r \oo{\LL}|_{x_i} \to 0.
\]
The last map is indeed surjective, since we can choose the zero section on the core and any combination of constant sections on the tails. This defines an $r$-dimensional vector space which surjects onto $\bigoplus_{i=1}^r \oo{\LL}|_{x_i}$. Thus, the restriction map induces an isomorphism:
\begin{equation} \label{eq:iso_to_h0_core}
    \iota\colon H^0(\oo{X}, \oo{\LL}) \xrightarrow{\sim} H^0(\oo{X}_c, \oo{\LL}_c) = H^0\left(\oo{X}_c, \omega_{\oo{X}_c/k}\left(\sum_{i=1}^r 2x_i\right)\right).
\end{equation}
Now we can use the Riemann-Roch theorem to compute the dimension:
\begin{equation} \label{eq:dimension_h0}
\begin{split}
    h^0(\oo{X}, \oo{\LL}) = h^0(\oo{X}_c, \oo{\LL}_c) &=  g(\oo{X}_c) + \deg\left(\sum_{i=1}^r 2x_i\right) - 1 \\
    &= (3-r) + 2r - 1 = r+2 = 3 + (r-1).
\end{split}
\end{equation}
Here we used that $\oo{\LL}_c$ is non-special, i.e., $h^1(\oo{X}_c, \oo{\LL}_c) = h^0(\oo{X}_c, \OO_{\oo{X}_c}(-\sum_{i=1}^r 2x_i)) = 0$, which holds since we assume $r \ge 1$.

\subsubsection{\texorpdfstring{Analyzing the Subspace $V$}{Analyzing the Subspace V}}

As discussed in Summary \ref{summary:contractions}, if $\LL$ induces a morphism, the restriction to the special fiber is determined by the linear series $(\oo{\LL}, V)$. Here, $V$ is the 3-dimensional subspace
\[
    V := \im(H^0(\X, \LL) \to H^0(\oo{X}, \oo{\LL})) \subset H^0(\oo{X}, \oo{\LL}),
\]
obtained by restricting the sections of $H^0(\X, \LL)$. We have the exact sequence (see \eqref{eq:ex_seq})
\begin{equation} \label{eq:ex_seq_v2}
    0 \to V \to H^0(\oo{X}, \oo{\LL}) \to H^1(\X, \LL)[\pi] \to 0.
\end{equation}
By our dimension calculations \eqref{eq:dimension_h0}, the $k$-vector space $H^1(\X, \LL)[\pi]$ has dimension $r-1$. In particular, $V$ is a proper subspace of $H^0(\oo{X}, \oo{\LL})$ if and only if $r \geq 2$. Our goal is now to describe this subspace more explicitly.

\begin{lemma} \label{lem:subspace_W}
    The vector subspace
    \[
        W := \{ s_c \in H^0(\oo{X}_c, \oo{\LL}_c) \ssep \Res_{x_i}(s_c) = 0 \quad \text{for } i = 1, \dots, r\}
    \]
    is three-dimensional.
\end{lemma}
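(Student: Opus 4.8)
The plan is to realize $W$ as the kernel of a ``total residue'' map on $H^0(\oo{X}_c,\oo{\LL}_c)$ and then compute its rank. Recall that $\oo{\LL}_c \simeq \omega_{\oo{X}_c/k}\bigl(\sum_{i=1}^r 2x_i\bigr)$, that the attachment points $x_1,\dots,x_r$ are \emph{smooth} points of the core $\oo{X}_c$ (being preimages of the separating nodes of $\oo{X}$ under the partial normalization), and that $h^0(\oo{X}_c,\oo{\LL}_c)=r+2$ by \eqref{eq:dimension_h0}. Since each $x_i$ is a smooth point, the classical residue $\Res_{x_i}$ is a well-defined $k$-linear functional on $H^0(\oo{X}_c,\oo{\LL}_c)$, so I would consider the map
\[
    \rho\colon H^0(\oo{X}_c,\oo{\LL}_c) \longrightarrow k^r, \qquad s_c \longmapsto \bigl(\Res_{x_1}(s_c),\dots,\Res_{x_r}(s_c)\bigr),
\]
for which $W=\ker\rho$ and hence $\dim W = (r+2)-\operatorname{rank}\rho$. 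It therefore suffices to prove $\operatorname{rank}\rho = r-1$.

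For the upper bound $\operatorname{rank}\rho\le r-1$ I would show that $\im\rho$ lies in the hyperplane $H_0:=\{(c_i)\in k^r : \sum_i c_i =0\}$. A section $s_c$ of $\oo{\LL}_c$ is a meromorphic differential on $\oo{X}_c$ which, away from the $x_i$, is a section of the dualizing sheaf $\omega_{\oo{X}_c/k}$; in particular it is such a section in a neighbourhood of every node of $\oo{X}_c$. By Proposition~\ref{prop:local_conditions}~(a) the two branch-residues of $s_c$ at each node cancel, so the only possibly nonzero residues of $s_c$ on $\oo{X}_c$ occur at the $x_i$. The residue theorem, which holds on the singular curve $\oo{X}_c$ because the total residue on $\oo{X}_c$ equals the total residue on its normalization and hence vanishes, then gives $\sum_{i=1}^r \Res_{x_i}(s_c)=0$, i.e.\ $\rho(s_c)\in H_0$.

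For the lower bound $\operatorname{rank}\rho\ge r-1$ I would construct, for each pair $i\neq j$, a differential of the third kind. Since $\oo{X}_c$ is nodal it is Gorenstein, so Riemann--Roch and Serre duality are available; using $h^0\bigl(\oo{X}_c,\OO_{\oo{X}_c}(-x_i-x_j)\bigr)=0$ (a global regular function vanishing at $x_i$ is constant, hence zero) and $\deg\bigl(\omega_{\oo{X}_c/k}(x_i+x_j)\bigr)=2g(\oo{X}_c)$, one gets
\[
    h^0\bigl(\oo{X}_c,\omega_{\oo{X}_c/k}(x_i+x_j)\bigr) = 2g(\oo{X}_c)+1-g(\oo{X}_c) = g(\oo{X}_c)+1 > g(\oo{X}_c) = h^0\bigl(\oo{X}_c,\omega_{\oo{X}_c/k}\bigr).
\]
Thus there is a section $\eta_{ij}$ of $\omega_{\oo{X}_c/k}(x_i+x_j)\subseteq\oo{\LL}_c$ which is not a regular differential; its only poles are simple poles at $x_i$ and $x_j$, and by the residue theorem applied as above $\Res_{x_i}(\eta_{ij})=-\Res_{x_j}(\eta_{ij})\neq 0$. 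After rescaling, $\rho(\eta_{ij})=e_i-e_j$, and since the vectors $e_i-e_j$ span $H_0$ we conclude $\im\rho=H_0$, hence $\operatorname{rank}\rho=\dim H_0=r-1$ and $\dim W=(r+2)-(r-1)=3$. (When $r=1$ the space $H_0$ is trivial, the construction is vacuous, and the argument degenerates to $\rho=0$, so $W=H^0(\oo{X}_c,\oo{\LL}_c)$ is already $3$-dimensional.)

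The main point requiring care is the bookkeeping at the nodes of the core: one must be sure that a section of the \emph{twisted} sheaf $\oo{\LL}_c$ genuinely restricts to a section of the untwisted dualizing sheaf in a neighbourhood of each node (so that Proposition~\ref{prop:local_conditions}~(a) applies), and that the residue theorem is invoked in its form for the possibly reducible singular curve $\oo{X}_c$ rather than for a smooth curve. Beyond that, the argument is a routine Riemann--Roch computation, uniform in $r\in\{1,2,3\}$.
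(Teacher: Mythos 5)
Your proof is correct and follows essentially the same two-step strategy as the paper: the residue theorem on the (possibly singular) core gives $\dim W\ge 3$, and a Riemann--Roch dimension count produces sections with prescribed nonvanishing residues to show the conditions cut down the dimension by exactly $r-1$. The only difference is cosmetic: you use differentials of the third kind in $H^0(\oo{X}_c,\omega_{\oo{X}_c/k}(x_i+x_j))$ to show $\im\rho$ is exactly the sum-zero hyperplane, whereas the paper compares $H^0(\oo{X}_c,\omega_{\oo{X}_c/k}(x_j+2x_r))$ with $H^0(\oo{X}_c,\omega_{\oo{X}_c/k}(2x_r))$ to exhibit the independence of the first $r-1$ residue functionals.
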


\begin{proof}
    By the Residue Theorem on $\oo{X}_c$, we have $\sum_{i=1}^r \Res_{x_i}(s_c) = 0$. This implies that the $r$ linear conditions defining $W$ are not independent; there are at most $r-1$ independent conditions. The subspace $W$ therefore has a dimension of at least $(r+2) - (r-1) = 3$.

    To show that the dimension is exactly three, it remains to prove that the first $r-1$ conditions are linearly independent. We may assume $r \geq 2$. For a fixed $j \in \{1,\dots, r-1\}$, it suffices to show the existence of a section $s \in H^0(\oo{X}_c, \oo{\LL}_c)$ such that
    \[
        \Res_{x_j}(s) \neq 0 \quad \text{ and } \quad
        \Res_{x_i}(s) = 0 \quad \text{for all } i \in \{1,\dots,r-1\} \setminus \{j\}.
    \]
    Such a section can be found by choosing an element of the subspace $H^0(\oo{X}_c, \omega_{\oo{X}_c/k}(x_j + 2x_r))$ that is not contained in the smaller subspace $H^0(\oo{X}_c, \omega_{\oo{X}_c/k}(2x_r))$. The existence of such a section is guaranteed because, by the Riemann-Roch theorem and the fact that the core $\oo{X}_c$ has genus $3-r$, the dimensions of these spaces differ:
    \[
        h^0(\oo{X}_c, \omega_{\oo{X}_c/k}(x_j + 2x_r)) = 5-r > 4-r = h^0(\oo{X}_c, \omega_{\oo{X}_c/k}(2x_r)).
    \]
\end{proof}

Essentially, the vector space $V$ can be identified with $W$. More precisely:

\begin{proposition} \label{prop:subspace_V_W}
    The restriction isomorphism $\iota$ from \eqref{eq:iso_to_h0_core} induces an isomorphism $V \simeq W$.
\end{proposition}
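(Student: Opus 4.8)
The goal is to show that the restriction isomorphism $\iota \colon H^0(\oo{X},\oo{\LL}) \xrightarrow{\sim} H^0(\oo{X}_c,\oo{\LL}_c)$ carries the subspace $V$ (sections that lift to $\X$) exactly onto the subspace $W$ (sections on the core whose residues at all attachment nodes vanish). Since both $V$ and $W$ are three-dimensional (by \eqref{eq:ex_seq_v2} together with \eqref{eq:dimension_h0}, and by Lemma~\ref{lem:subspace_W}, respectively), it suffices to prove one inclusion, say $\iota(V) \subseteq W$. So the whole problem reduces to a single claim: if $s \in H^0(\oo{X},\oo{\LL})$ is the reduction of a global section $\tilde{s} \in H^0(\X,\LL)$, then its core-component $s_c = \iota(s)$ satisfies $\Res_{x_i}(s_c) = 0$ for every attachment node $x_i$.

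**The main step.** This is where the residue theory on the model developed in Section 5.1.3 enters — in fact this proposition is the one result for which that entire theory was set up (cf.\ the remark after Theorem~\ref{thm:dualizing_sheaf_residues}). Fix an attachment node $x_i$. In the completed local ring $\widehat{\OO}_{\X,x_i} \simeq \OO_K[\![u,v]\!]/(uv-\pi^{d_i})$ there are exactly two vertical curves through $x_i$: the branch $\overline{\{y_c\}}$ along the core ($v=0$, parameter $u$) and the branch $\overline{\{y_i\}}$ along the tail ($u=0$, parameter $v$). A global section $\tilde{s}$ of $\LL = \omega_{\X/\OO_K}(D)$ may be viewed as a meromorphic differential $\eta = \tilde{s}\cdot\omega_0^{-1}\cdot\omega_0$; more precisely, writing $\tilde{s} = f\cdot\omega_0$ with $\omega_0 = du/u^2$ the local generator, we want to extract the residue of $s_c = \oo{\tilde{s}}|_{\oo{X}_c}$ on the special-fiber curve $\oo{X}_c$ at $x_i$. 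By Proposition~\ref{prop:connection_special_residue}, this special-fiber residue is (up to sign) the reduction mod $\pi$ of the arithmetic residue $\Res_{(x_i,y_c)}(\eta)$ along the core branch, where $\eta$ is the meromorphic differential on $\X$ underlying $\tilde{s}$ (after identifying $\LL$ with $\omega_{\X/\OO_K}$ on the core-neighborhood away from $D$, or directly via the local expansion). The plan is then to apply the reciprocity law around the point $x_i$ (Theorem~\ref{thm:reciprocity_laws}~\ref{thm:reciprocity_laws_i}): the sum of $\Res_{(x_i,y)}(\eta)$ over all curves through $x_i$ is zero. The curves through $x_i$ are the two vertical ones plus the horizontal ones, i.e.\ the closures of closed points $z \in X$ specializing to $x_i$. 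Since $\tilde{s}$ is a global section of $\LL$ and $\LL|_X = \omega_{X/K}$, the restriction of $\eta$ to the generic fiber $X$ is a \emph{regular} (holomorphic) differential, so $\Res_z(\eta) = 0$ for all such $z$, hence all horizontal residues vanish. Reciprocity then gives $\Res_{(x_i,y_c)}(\eta) + \Res_{(x_i,y_i)}(\eta) = 0$. Finally, one analyzes the tail-branch residue $\Res_{(x_i,y_i)}(\eta)$: using the identification $\oo{\LL}_i \simeq \omega_{\oo{X}_i/k} \simeq \OO_{\oo{X}_i}$ and the isomorphism \eqref{eq:iso_mult_pi} (multiplication by $\pi^{d_i}$) under which $\omega_0 \mapsto \tilde\omega_0 = -dv$, the differential $\eta$ is, along the tail branch, an integral differential with \emph{no pole} (the section $s_i$ restricts to a holomorphic, indeed constant, differential on $\oo{X}_i$). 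Hence its arithmetic residue along $\overline{\{y_i\}}$ reduces to zero mod $\pi$ (it may even be forced to lie in $\pi\OO_K$ by integrality; in any case its reduction vanishes). Combining, $\overline{\Res_{(x_i,y_c)}(\eta)} = -\overline{\Res_{(x_i,y_i)}(\eta)} = 0$, which by Proposition~\ref{prop:connection_special_residue} says exactly $\Res_{x_i}(s_c) = 0$. This holds for every $i$, so $s_c \in W$.

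**Wrapping up.** Having shown $\iota(V) \subseteq W$, we conclude by dimension count: $\dim_k V = 3 = \dim_k W$ and $\iota$ is injective, so $\iota(V) = W$. This gives the desired isomorphism $V \simeq W$.

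**Where the difficulty lies.** The genuine obstacle is bookkeeping the identifications carefully: the section $\tilde{s}$ of $\LL$ is \emph{not} literally a differential form, and one must pass between $\LL$, $\omega_{\X/\OO_K}$ twisted by $D$, and the meromorphic differential $\eta$ on the function field $K(X)$ in a way that is consistent along both the core branch and the tail branch of $x_i$ — recalling that on the tail side one must first apply the isomorphism \eqref{eq:iso_mult_pi} (multiplication by $\pi^{d_i}$) to land in $\omega_{\X/\OO_K}(\tilde D_i)$ whose support misses the tail, so that the reduction makes sense and is holomorphic there. One must also check that the arithmetic residue $\Res_{(x_i,y_c)}(\eta)$ — computed for the differential $\eta$ associated to $\tilde{s}$, which has at worst a double pole along the core branch — is integral (so that Proposition~\ref{prop:connection_special_residue} applies), which follows since $\tilde{s}$ is a global section of the line bundle $\LL$ and hence integral at the codimension-one point $y_c$. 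Everything else — the two reciprocity laws, the vanishing of residues of holomorphic differentials on a smooth curve — is quoted from the results already established. A final point to verify is that the residue $\Res_{x_i}$ appearing in the definition of $W$ is the correct one: on $\oo{X}_c$ the local generator of $\oo{\LL}_c$ at $x_i$ is $du/u^2$, so a section $s_c$ has a pole of order at most $2$ there, and $\Res_{x_i}(s_c)$ denotes the classical residue of this meromorphic differential on the (smooth, since $x_i$ is a smooth point of $\oo{X}_c$) curve $\oo{X}_c$ — consistent with the $W$ defined in Lemma~\ref{lem:subspace_W}.
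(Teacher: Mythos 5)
Your overall architecture matches the paper's: reduce to the inclusion $\iota(V)\subseteq W$ by the dimension count, pass to a lift $\tilde{s}\in H^0(\X,\LL)$ viewed as a meromorphic differential, kill all horizontal residues at $x_i$ using the regularity of $\tilde{s}|_X$ together with Proposition~\ref{prop:connection_generic_residue}, apply reciprocity around the point $x_i$ to reduce everything to the tail-branch residue $\Res_{(x_i,y_i)}(\tilde{s})$, and finally translate back to $\Res_{x_i}(s_c)$ via Proposition~\ref{prop:connection_special_residue}. All of that is correct and is exactly what the paper does.

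The gap is in your treatment of the tail-branch residue. You argue that, after the multiplication-by-$\pi^{d_i}$ identification \eqref{eq:iso_mult_pi}, the differential is integral along the tail with regular (indeed constant) reduction $s_i$, and conclude that $\Res_{(x_i,y_i)}(\tilde{s})$ reduces to zero. But what that identification makes integral at $y_i$ is $\pi^{d_i}\tilde{s}$, not $\tilde{s}$, so Proposition~\ref{prop:connection_special_residue} applied to it only yields $\overline{\pi^{d_i}\Res_{(x_i,y_i)}(\tilde{s})}=-\Res_{x_i}(s_i)=0$, which is vacuous since the left-hand side is automatically divisible by $\pi$. Concretely, writing $\tilde{s}=f\cdot\omega_0$ with $f=\sum c_{mn}u^mv^n\in\widehat{\OO}_{\X,x_i}$ and $\omega_0=du/u^2=-dv/\pi^{d_i}$, one computes $\Res_{(x_i,y_i)}(\tilde{s})=c_{1,0}+c_{2,1}\pi^{d_i}+\cdots$ and $\Res_{x_i}(s_c)=\overline{c_{1,0}}$, whereas the reduction $s_i$ to the tail only sees the coefficients $c_{0,n}$; no purely local argument at $x_i$ can force $\overline{c_{1,0}}=0$. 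A genuinely global input along the tail is required: the paper invokes the reciprocity law \emph{along the vertical curve} $\oo{X}_i$ (Theorem~\ref{thm:reciprocity_laws}~\ref{thm:reciprocity_laws_ii}), showing via Lemma~\ref{lem:reside_one_vertical} that the residues $\Res_{(x,y_i)}(\tilde{s})$ vanish at every point $x\neq x_i$ of the tail (since there the tail is the only vertical curve and all horizontal residues vanish), whence the sum collapses to $\Res_{(x_i,y_i)}(\tilde{s})=0$ exactly. You should replace your local argument by this second reciprocity law; without it the key vanishing is unproved.
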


\begin{proof}
    We aim to show that $\iota(V) = W$. Since both are three-dimensional vector spaces, it suffices to prove the inclusion $\iota(V) \subseteq W$.
    Let $s \in V$. By definition, there exists a lift $\tilde{s} \in H^0(\X, \LL)$. Let $s_c := \iota(s)$ be its restriction to the core. We must show that $\Res_{x_i}(s_c) = 0$ for all attachment nodes $x_i$.
    
    We can view $\tilde{s}$ as a meromorphic differential on $\X$. Let $y_i$ be the generic point of the $1$-tail $\oo{X}_i$, and for each $i$, let $y_{c,i}$ be the generic point of the irreducible component of the core $\oo{X}_c$ to which $\oo{X}_i$ attaches. The section $\tilde{s}$ is integral at $y_{c,i}$ because it is a global section of $\LL = \omega_{\X/\OO_K}(D)$ and the twisting divisor $D$ is not supported on the core. By Proposition \ref{prop:connection_special_residue}, showing $\Res_{x_i}(s_c) = 0$ is implied by showing $\Res_{(x_i, y_{c,i})}(\tilde{s}) = 0$.
    
    Since $\tilde{s}$ is a global section of $\LL$, its restriction to the generic fiber $X$ is a regular differential (a global section of $\omega_{X/K}$). By Proposition \ref{prop:connection_generic_residue}, this implies that the residue $\Res_\xi(\tilde{s})$ is zero for any horizontal flag $\xi$ on $\X$. At the attachment node $x_i$, the only vertical curves are the $i$-th tail and the corresponding component of the core. Thus, the reciprocity law around the point $x_i$ (Theorem \ref{thm:reciprocity_laws} \ref{thm:reciprocity_laws_i})) reduces to
    \[
        \Res_{(x_i, y_i)}(\tilde{s}) + \Res_{(x_i, y_{c,i})}(\tilde{s}) = 0.
    \]
    Hence, it suffices to show that $\Res_{(x_i, y_i)}(\tilde{s}) = 0$. By the reciprocity law along the vertical curve $\oo{X}_i = \overline{\{y_i\}}$ (Theorem \ref{thm:reciprocity_laws} \ref{thm:reciprocity_laws_ii}), we have
    \[
        \sum_{x \in \oo{X}_i} \Res_{(x, y_i)}(\tilde{s}) = 0.
    \]
    For any closed point $x \in \oo{X}_i$ other than the attachment node $x_i$, the only vertical curve passing through $x$ is $\oo{X}_i$ itself. By Lemma \ref{lem:reside_one_vertical} and the vanishing of residues for all horizontal flags, it follows that $\Res_{(x, y_i)}(\tilde{s}) = 0$. The sum above thus reduces to a single term, implying that $\Res_{(x_i, y_i)}(\tilde{s}) = 0$.

    This holds for all $i$, so $s_c \in W$, which completes the proof of the inclusion $\iota(V) \subseteq W$.
\end{proof}

\subsubsection{\texorpdfstring{Geometry of the Contraction Map $\phi$}{Geometry of the Contraction Map phi}}

The following lemma shows that to understand the morphism induced by $\LL$, it suffices to analyze the linear series on the core.

\begin{lemma} \label{lem:contraction_of_tails}
    The following conditions are equivalent:
    \begin{enumerate}[label=\EquivListLabel]
        \item \label{lem:contraction_of_tails_i} The line bundle $\LL$ is generated by its global sections.
        \item \label{lem:contraction_of_tails_ii} The linear series $(\oo{\LL}, V)$ on $\oo{X}$ is base-point-free.
        \item \label{lem:contraction_of_tails_iii} The linear series $(\oo{\LL}_c, W)$ on $\oo{X}_c$ is base-point-free.
    \end{enumerate}
    If these equivalent conditions hold, they induce morphisms
    \[
        \phi\colon \X \to \PP^2_{\OO_K}, \quad \oo{\phi} \colon \oo{X} \to \PP^2_k, \quad \text{and} \quad \oo{\phi}_c \colon \oo{X}_c \to \PP^2_k,
    \]
    respectively. Furthermore, the morphism $\oo{\phi}$ contracts each $1$-tail $\oo{X}_i$ to a single point.
\end{lemma}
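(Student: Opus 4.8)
The plan is to prove the chain of equivalences \ref{lem:contraction_of_tails_i} $\Leftrightarrow$ \ref{lem:contraction_of_tails_ii} $\Leftrightarrow$ \ref{lem:contraction_of_tails_iii} and then to read off the geometry of the contracted tails from the explicit gluing description of $\oo{\LL}$. For \ref{lem:contraction_of_tails_i} $\Leftrightarrow$ \ref{lem:contraction_of_tails_ii}, recall that $H^0(\X,\LL)$ is free of rank $3$ over $\OO_K$; fixing an $\OO_K$-basis $s_0,s_1,s_2$ identifies the evaluation map $H^0(\X,\LL)\otimes_{\OO_K}\OO_\X \to \LL$ with $\OO_\X^{\oplus 3}\to\LL$, so $\LL$ is globally generated exactly when $s_0,s_1,s_2$ generate it. By construction the restrictions $\oo{s}_0,\oo{s}_1,\oo{s}_2$ span $V$, and base-point-freeness of $(\oo{\LL},V)$ means precisely that these restricted sections generate $\oo{\LL}$. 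The equivalence is then the first part of Lemma~\ref{lem:global-generation_very-ample} applied to the triple $s_0,s_1,s_2$.

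For \ref{lem:contraction_of_tails_ii} $\Leftrightarrow$ \ref{lem:contraction_of_tails_iii} I would compare the two base loci point by point, using the description of a global section of $\oo{\LL}$ as a tuple $(s_c,s_1,\dots,s_r)$ subject to the gluing conditions $s_c(x_i)=s_i(x_i)$. Since each $1$-tail has arithmetic genus $1$, one has $\oo{\LL}_i \simeq \omega_{\oo{X}_i/k}\simeq \OO_{\oo{X}_i}$, so $s_i$ is a constant section whose value, by the gluing condition, equals $s_c(x_i)$. Hence for $x$ on a tail $\oo{X}_i$ the evaluation $s(x)$ equals $s_c(x_i)$ independently of $x$, while for $x$ on the core (attachment nodes included) $s(x)=s_c(x)$. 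Combining this with Proposition~\ref{prop:subspace_V_W}, which identifies $V$ with $W$ under the restriction isomorphism $\iota$ of \eqref{eq:iso_to_h0_core}, we get that $(\oo{\LL},V)$ has a base point at a point of the core iff $(\oo{\LL}_c,W)$ does, and that $(\oo{\LL},V)$ has a base point somewhere on $\oo{X}_i\setminus\{x_i\}$ iff $(\oo{\LL}_c,W)$ has a base point at $x_i$; as $x_i\in\oo{X}_c$, the two base loci are simultaneously empty.

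Assuming the equivalent conditions hold, $\phi$, $\oo{\phi}$, $\oo{\phi}_c$ are the morphisms induced by the respective (now base-point-free) linear series: $\phi$ is defined over $\OO_K$ by the generating sections $s_0,s_1,s_2$, one has $\oo{\phi}=\phi|_{\oo{X}}$, and the restriction $\oo{\phi}|_{\oo{X}_c}$ is induced by $(\oo{\LL}_c,\iota(V))=(\oo{\LL}_c,W)$, hence equals $\oo{\phi}_c$. For the contraction statement, fix a tail $\oo{X}_i$; for $x\in\oo{X}_i$, evaluating the three basis sections spanning $V$ at $x$ yields, by the previous paragraph, the same triple of values as evaluating their core-restrictions at $x_i$, so $\oo{\phi}(x)=\oo{\phi}_c(x_i)$, a well-defined point of $\PP^2_k$ by base-point-freeness of $(\oo{\LL}_c,W)$ at $x_i$. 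Therefore $\oo{\phi}$ collapses $\oo{X}_i$ to the single point $\oo{\phi}_c(x_i)$.

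The substantive input — Proposition~\ref{prop:subspace_V_W} (which rests on the reciprocity laws) together with the explicit description of $\oo{\LL}_c$ as $\omega_{\oo{X}_c/k}(\sum_i 2x_i)$ and its fibre identifications at the nodes — is already in hand, so what remains is essentially bookkeeping. The point requiring care is the consistent treatment of the attachment nodes $x_i$: a section of $W$ has vanishing residue at $x_i$, so its ``value'' there is the leading coefficient of its order-$2$ pole, and this single coefficient simultaneously governs the value of the corresponding section of $\oo{\LL}$ along the entire collapsed tail. Keeping these fibre identifications straight at the nodes, rather than any deeper difficulty, is what I expect to be the main obstacle.
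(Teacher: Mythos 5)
Your proposal is correct and follows essentially the same route as the paper: Lemma~\ref{lem:global-generation_very-ample} handles \ref{lem:contraction_of_tails_i}~$\Leftrightarrow$~\ref{lem:contraction_of_tails_ii}, the identification $\iota(V)=W$ from Proposition~\ref{prop:subspace_V_W} together with the triviality $\oo{\LL}_i\simeq\OO_{\oo{X}_i}$ (so sections are constant on each tail, with value pinned to $s_c(x_i)$ by the gluing) gives \ref{lem:contraction_of_tails_ii}~$\Leftrightarrow$~\ref{lem:contraction_of_tails_iii}, and the same constancy yields the contraction of the tails. The only cosmetic difference is that the paper invokes \cite[Lemma 8.3.29]{liu} for the contraction step where you argue directly with the evaluation of the basis sections; both are fine.
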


\begin{proof}
    The implications \ref{lem:contraction_of_tails_i} $\implies$ \ref{lem:contraction_of_tails_ii} $\implies$ \ref{lem:contraction_of_tails_iii} hold by definition and restriction, noting that $W = \iota(V)$ by Proposition \ref{prop:subspace_V_W}.

    For the converse, assume \ref{lem:contraction_of_tails_iii} holds, i.e., the linear series $(\oo{\LL}_c, W)$ on the core $\oo{X}_c$ is base-point-free. Since $\iota(V)=W$, the linear series $(\oo{\LL}, V)$ is also base-point-free on $\oo{X}_c$.
    Now consider a $1$-tail $\oo{X}_i$ attached at the node $x_i$. We have previously established that $\oo{\LL}|_{\oo{X}_i} \simeq \OO_{\oo{X}_i}$. Since $\oo{X}_i$ is a connected, proper curve, $H^0(\oo{X}_i, \OO_{\oo{X}_i}) \simeq k$, so any section of $\oo{\LL}$ restricted to $\oo{X}_i$ is constant.
    Since $x_i \in \oo{X}_c$, there exists a section $s \in V$ such that $s(x_i) \neq 0$. The restriction $s|_{\oo{X}_i}$ is a constant function. As $s(x_i) \neq 0$, this constant is non-zero, implying that $s$ is non-vanishing everywhere on $\oo{X}_i$. Thus, $(\oo{\LL}, V)$ is base-point-free on the tails as well. This establishes \ref{lem:contraction_of_tails_ii}.
    The final implication \ref{lem:contraction_of_tails_ii} $\implies$ \ref{lem:contraction_of_tails_i} follows from Lemma \ref{lem:global-generation_very-ample}.

    Finally, assume these equivalent conditions hold. They induce the morphisms $\phi$ and $\oo{\phi}$ associated with a choice of basis for $H^0(\X, \LL)$ and $V$, respectively.
    We analyze the restriction of $\oo{\phi}$ to a $1$-tail $\oo{X}_i$. The tail $\oo{X}_i$ is a connected, projective closed subscheme of $\oo{X}$. We have $\oo{\LL}|_{\oo{X}_i} \simeq \OO_{\oo{X}_i}$. By \cite[Lemma 8.3.29]{liu}, this isomorphism implies that the image $\oo{\phi}(\oo{X}_i)$ is reduced to a single point. Therefore, $\oo{\phi}$ contracts each tail $\oo{X}_i$ to a point.
\end{proof}

Therefore, we only need to concentrate on the linear series $(\oo{\LL}_c, W)$, which determines the morphism $\oo{\phi}_c$. Since this is not always a complete linear system, the analysis is not straightforward; for a more direct approach, we refer to Appendix~\ref{app}. We will instead take a key intermediate step that identifies $(\oo{\LL}_c, W)$ with the complete linear system given by the dualizing sheaf of an associated curve. This is motivated by the fact that the conditions defining the subspace $W$---vanishing residues at the attachment points for differentials with double poles---precisely match the conditions for a differential to be regular at an $A_2$ (cuspidal) singularity (see Proposition~\ref{prop:local_conditions}). This step requires the following classical construction.

\begin{lemma} \label{lem:cusp_construction}
    Let $(C_c, \{x_1, \dots, x_r\})$ be the pointed core of a stable curve $C$ of genus $3$, where $C_c$ is 2-inseparable. Then there exists a unique curve $C'$ whose only singularities are ordinary nodes and cusps, and a finite morphism $\nu\colon C_c \to C'$ such that:
    \begin{enumerate}[label=\StatementListLabel]
        \item The restriction of $\nu$ to $C_c \setminus \{x_1, \dots, x_r\}$ is an isomorphism onto its image.
        \item The image of each point, $x_i' := \nu(x_i)$, is a cuspidal singularity (an $A_2$-singularity) on $C'$.
        \item All other singularities of $C'$ are nodes, corresponding bijectively to the nodes of $C_c$.
    \end{enumerate}
    The map $\nu$ is the partial normalization of $C'$ at its cusps.
\end{lemma}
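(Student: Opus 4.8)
The plan is to build $C'$ from $C_c$ by a \emph{pinching} that contracts the first-order infinitesimal neighbourhood of each attachment point $x_i$ to a reduced point; this is the standard recipe for producing an $A_2$-singularity out of a smooth point, dual to the familiar construction of a node by gluing two points. First I would record the local input. Since $C_c$ is the core of a stable genus~$3$ curve, it is a projective, reduced, connected curve whose only singularities are ordinary nodes, and each $x_i$ is the image on the core of one branch of a separating node of $C$, hence a smooth $k$-rational point of $C_c$; fixing a uniformizer gives $\widehat{\OO}_{C_c,x_i}\simeq k[\![t]\!]$. The model computation is that $k[\![t^2,t^3]\!]=k+t^2k[\![t]\!]$ is the local ring of an $A_2$-cusp, with normalisation $k[\![t]\!]$, $\delta$-invariant $1$, conductor $t^2k[\![t]\!]$, and that it equals the fibre product $k[\![t]\!]\times_{k[t]/(t^2)}k$.

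For existence I would form the closed subscheme $Z\hookrightarrow C_c$ supported on $\{x_1,\dots,x_r\}$ with ideal sheaf $\mathfrak{m}_{x_i}^2$ at $x_i$, so that $Z\simeq\coprod_i\Spec(\OO_{C_c,x_i}/\mathfrak{m}_{x_i}^2)$, and the reduction map $Z\to Z':=\Spec(k^r)$, which is finite and a homeomorphism on underlying spaces. The curve $C'$ is then the pushout $C_c\sqcup_Z Z'$: its existence, the finiteness of $\nu\colon C_c\to C'$, the identity $|C'|=|C_c|$, and the description $\OO_{C'}=\OO_{C_c}\times_{\OO_Z}\OO_{Z'}$ follow from the classical construction of a curve from its partial normalisation with prescribed local rings (the cuspidal analogue of the gluing recalled in \cite[Chapter 3.IV]{serre1988algebraic}). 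Concretely, near $x_i$ one works on an affine open $\Spec A$ with $A$ a domain and $\mathfrak{m}\subset A$ the maximal ideal of $x_i$, sets $A':=k+\mathfrak{m}^2$, and checks directly that $A'$ is a subring with $A/A'\simeq\mathfrak{m}/\mathfrak{m}^2\simeq k$, so that $A$ is a finite $A'$-module, $A'$ is Noetherian, $\Spec A'$ is an affine curve, and the completion of $A'$ at $x_i':=\nu(x_i)$ is isomorphic to $k[\![t^2,t^3]\!]$; these charts glue with $C_c\setminus\{x_1,\dots,x_r\}$ to yield $C'$.

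The asserted properties are then immediate. By construction $\nu$ is an isomorphism over $C_c\setminus\{x_1,\dots,x_r\}$, giving (a); the completed local ring at $x_i'$ is $k[\![t^2,t^3]\!]$, so each $x_i'$ is an $A_2$-singularity, giving (b) (the $x_i'$ are distinct, being unibranch with disjoint preimages); and the singular locus of $C'$ away from $\{x_i'\}$ maps isomorphically onto the singular locus of $C_c$ away from $\{x_i\}$, i.e.\ onto the full set of nodes of $C_c$ since the $x_i$ are smooth points, giving (c). That $C'$ is proper over $k$ follows since $\nu$ is finite and surjective and $C_c$ is proper; $C'$ is reduced and connected because $\OO_{C'}\subseteq\nu_*\OO_{C_c}$ and $|C'|=|C_c|$; hence $C'$ is a projective curve. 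Finally $\nu$ is finite, birational, an isomorphism off the cusps, and over each cusp it restricts to the normalisation of a unibranch singularity (its source being regular there), so $\nu$ is precisely the partial normalisation of $C'$ at its cusps.

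For uniqueness, suppose $(C'',\nu'')$ satisfies (1)--(3). Property (a) forces $\nu''$ to be finite and birational, an isomorphism outside the cusps of $C''$, and over each cusp the normalisation of a unibranch singularity; hence $\nu''$ is the partial normalisation of $C''$ at its cusps, $\nu''$ is a homeomorphism, and $\OO_{C''}\hookrightarrow\nu''_*\OO_{C_c}$ is an equality away from the cusps. At a cusp $x_i''$ the completed local ring $R\subseteq k[\![t]\!]$ is an $A_2$-cusp local ring, hence Gorenstein with $\delta=1$, so its conductor ideal has colength $2\delta=2$ and is therefore $t^2k[\![t]\!]$; the only $k$-subalgebra of $k[\![t]\!]$ strictly between $t^2k[\![t]\!]$ and $k[\![t]\!]$ is $k+t^2k[\![t]\!]$, so $R=k+\mathfrak{m}_{x_i}^2$, matching $C'$. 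Thus $C''\cong C'$ compatibly with the maps from $C_c$. I expect the only genuine subtlety to be the input from the theory of pinchings --- verifying that $C'$ exists as a scheme and identifying its completed local rings --- with everything else being formal bookkeeping.
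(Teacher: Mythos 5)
Your proposal is correct and follows essentially the same route as the paper: both construct $C'$ by replacing the stalk at each $x_i$ with $k+\mathfrak{m}_{x_i}^2$ (the pinching construction of \cite[Chapter 3.IV]{serre1988algebraic}) and identify the completed local ring as $k[\![t^2,t^3]\!]$. Your uniqueness argument via the conductor is more careful than the paper's, which simply observes that the structure sheaf has been determined stalkwise, but this is a refinement rather than a genuinely different approach.
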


\begin{proof}
The following is a standard construction (cf. \cite[Chapter 3.IV]{serre1988algebraic}).

We construct the curve $C'$ by defining a new structure sheaf, $\OO_{C'}$, on the underlying topological space of $C_c$. The stalks of $\OO_{C'}$ are defined as follows:
\begin{itemize}
    \item For any point $p \notin \{x_1, \dots, x_r\}$, the stalk is unchanged: $(\OO_{C'})_{p} := \OO_{C_c, p}$.
    \item For each $x \in \{x_1, \dots, x_r\}$, let $\mathfrak{m}_x$ be the maximal ideal of its local ring $\OO_{C_c, x}$. We define the new stalk to be $(\OO_{C'})_{x} := k + \mathfrak{m}_x^2$.
\end{itemize}
Since each $x \in \{x_1, \dots, x_r\}$ is a smooth point on the curve $C_c$, its local ring $\OO_{C_c, x}$ is a discrete valuation ring, and we may thus choose a local uniformizer $t$. With this, the formal completions of the local rings are $\widehat{\OO}_{C_c,x} \simeq k[\![t]\!]$ and $\widehat{\OO}_{C',x} \simeq k[\![t^2, t^3]\!]$. The latter is the completed local ring of a standard cuspidal singularity.

The morphism $\nu \colon C_c \to C'$ is the identity map on the underlying topological spaces. The morphism of structure sheaves $\nu^\#\colon \OO_{C'} \to \OO_{C_c}$ is induced by the natural inclusion of stalks $\OO_{C', x} \hookrightarrow \OO_{C_c, x}$ at every point $x$. The uniqueness of the curve $C'$ as required by the lemma is clear, as its structure sheaf has been uniquely determined at every stalk.
\end{proof}

This construction is ``compatible'' with the property hyperelliptic, i.e., we have:

\begin{lemma} \label{lem:hyp_vs_quasi-hyp_v2}
    Let $C$ be a stable curve of genus $3$ with a 2-inseparable core $(C_c, \allowbreak \{x_1, \dots, x_r\})$, and let $C'$ be the associated curve with cusps at the $x_i$ as constructed in Lemma \ref{lem:cusp_construction}. Then $C'$ is canonically positive, and the following are equivalent:
    \begin{enumerate}[label=\EquivListLabel]
        \item \label{lem:hyp_vs_quasi-hyp_v2_i} $C$ is hyperelliptic
        \item \label{lem:hyp_vs_quasi-hyp_v2_ii} $(C_c, \{x_1, \dots, x_r\})$ is hyperelliptic
        \item \label{lem:hyp_vs_quasi-hyp_v2_iii} $C'$ is quasi-hyperelliptic
        \item \label{lem:hyp_vs_quasi-hyp_v2_iv} $C'$ is honestly hyperelliptic
    \end{enumerate}
\end{lemma}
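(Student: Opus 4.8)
The strategy is to route the three conditions $(ii)$, $(iii)$, $(iv)$ through the pinching map $\nu\colon C_c \to C'$ of Lemma~\ref{lem:cusp_construction}, using Catanese's results from Section~\ref{sec:quasi-hyp} on the $C'$ side and the decomposition lemmas on the $(C_c,P_c)$ side. I would first record two structural facts about $C'$. Since a cusp is unibranch, passing from $C_c$ to $C'$ changes neither the separating nodes nor the separating pairs; as $C_c$ is $2$-inseparable and any one-point decomposition of $C'$ could occur only at a node (hence at a separating node), $C'$ is \emph{very strongly connected}. A local computation at a cusp, in the completed local rings $k[\![t^2,t^3]\!]\subset k[\![t]\!]$, identifies $\nu^*\omega_{C'/k}$ with $\omega_{C_c/k}\bigl(\textstyle\sum_i 2x_i\bigr)\simeq\omega_{C_c/k}(P_c)\otimes\OO_{C_c}(P_c)$; since $(C_c,P_c)$ is stable (it is the pointed core of the stable curve $C$), $\omega_{C_c/k}(P_c)$ is ample by Proposition~\ref{prop:stable_via_canonical}, tensoring with the effective — hence nef — bundle $\OO_{C_c}(P_c)$ preserves ampleness, and ampleness descends along the finite surjection $\nu$, so $\omega_{C'/k}$ is ample, i.e.\ $C'$ is \emph{canonically positive}. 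Together with the evident facts that $C'$ is reduced, connected, projective and Gorenstein (its singularities, nodes and cusps, are all Gorenstein), this lets me apply Proposition~\ref{prop:hyp_vs_quasi-hyp}, which gives $(iii)\iff(iv)$ together with the dichotomy that an honestly hyperelliptic $C'$ must be irreducible or a binary curve.

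For $(i)\iff(ii)$ I would repeat the first paragraph of the proof of Proposition~\ref{prop:2-insep_geom_condition_hyperelliptic}: decomposing $C$ at its separating nodes (the attachment points of the $1$-tails, by Lemma~\ref{lem:core_tail_lemma}) yields the pointed core together with $r$ $1$-tails, each of which is hyperelliptic by Lemma~\ref{lem:always_hyperelliptic}~\ref{lem:always_hyperelliptic_i}, so iterating Lemma~\ref{lem:decomp_sep_node} shows $C$ is hyperelliptic iff $(C_c,P_c)$ is. It then remains to link $(ii)$ with $(iv)$, and I would split into three cases. If $r=0$ then $C=C_c=C'$ is a stable $2$-inseparable genus $3$ curve and Corollary~\ref{cor:all_the_same} yields the full equivalence at once. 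If $r\ge 1$ and $C_c$ is reducible (by the classification in Proposition~\ref{prop:classification} this is only the core \texttt{0-{}-{}-0} with $r=1$) then all four conditions fail: $(ii)$ by Lemma~\ref{lem:unique_2_inse}, since a pointed $2$-inseparable stable hyperelliptic curve must be irreducible or an \emph{unpointed} binary curve; and $(iii)$, $(iv)$ because $C'$ is reducible and carries a cusp, contradicting the dichotomy just established. The substantive case is therefore $r\ge 1$ with $C_c$ irreducible.

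In that case the engine is the universal property of the pinching: a morphism out of $C_c$ factors through $\nu$ exactly when it is ramified — automatically with index $2$ — at each $x_i$, because $\OO_{C',x_i'}=k+\mathfrak{m}_{x_i}^2\subset k[\![t]\!]$. For $(ii)\Rightarrow(iv)$, the $2$-sheeted admissible cover $\pi'\colon C_c\to T'$ of the hyperelliptic structure (Remark~\ref{rem:degree2_map}) is finite of degree $2$ with $T'\simeq\PP^1_k$ (Proposition~\ref{prop:node_classification}, as $C_c$ is $2$-inseparable) and is ramified of index $2$ at each $x_i$ — a smooth fixed point of the involution, or, in the characteristic-$2$ situation where the involution is trivial on $C_c$ (forcing $C_c\simeq\PP^1_k$, $r=3$), a point where the relative Frobenius factor of $\pi'$ is ``ramified everywhere'' — so $\pi'$ descends to a finite degree-$2$ morphism $C'\to\PP^1_k$, making $C'$ honestly hyperelliptic. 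For $(iv)\Rightarrow(ii)$, Proposition~\ref{prop:hyp_vs_quasi-hyp} supplies a finite degree-$2$ morphism $f\colon C'\to\PP^1_k$, and $g:=f\circ\nu\colon C_c\to\PP^1_k$ is then finite of degree $2$ and, by the cusp-local structure, ramified at each $x_i$; if $g$ is separable, its deck involution on the normalization swaps the two branches over every node of $C_c$ and hence descends to an involution $\sigma$ of $C_c$ fixing all $x_i$ with $C_c/\langle\sigma\rangle\simeq\PP^1_k$, so $(C_c,P_c)$ is hyperelliptic; if $g$ is inseparable it is purely inseparable of prime degree $2$, hence a universal homeomorphism, which is incompatible with $C_c$ having a node, so $C_c$ is smooth rational — forcing $r=3$ and $\Char k=2$ — and then $(\PP^1_k,\{x_1,x_2,x_3\})$ is hyperelliptic by Proposition~\ref{prop:hyperelliptic_genus2}.

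The main obstacle is the characteristic-$2$ bookkeeping. On one side, the hyperelliptic ``involution'' of $(C_c,P_c)$ may be the identity on $C_c$, precisely when $C_c\simeq\PP^1_k$ and $r=3$, so in $(ii)\Rightarrow(iv)$ the degree-$2$ map comes from a Frobenius twist rather than a genuine quotient, and one must check this twist still ``factors through the pinching''. On the other side, in $(iv)\Rightarrow(ii)$ one must genuinely exclude that $g\colon C_c\to\PP^1_k$ is purely inseparable. Everything else — the cusp-local description of $\omega_{C'/k}$, the descent of involutions through $\nu$, and the identification of ``factors through $\nu$'' with ``ramified of index $2$ at each $x_i$'' — is routine once one works in the completed local rings.
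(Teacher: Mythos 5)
Your proposal is correct, and its skeleton---routing (i)$\iff$(ii) through the decomposition lemmas, (iii)$\iff$(iv) through very strong connectedness and Proposition~\ref{prop:hyp_vs_quasi-hyp}, and (ii)$\iff$(iv) through a finite degree-two map factored across the pinching $\nu$---matches the paper's. Where you genuinely diverge is in how the factorization is obtained. The paper proves $\pi'=f\circ\nu$ by computing the invariant subring $\widehat{B}^{\sigma}\subset k[\![t]\!]$ in three subcases ($\sigma=\mathrm{id}$ via Frobenius; $\sigma\neq\mathrm{id}$ in characteristic $\neq 2$ after linearizing to $\sigma(t)=-t$; $\sigma\neq\mathrm{id}$ in characteristic $2$ via vanishing of the linear coefficient), and in the converse direction it invokes Theorem~\ref{thm:necessary_injective} to see that the involution on $C'$ fixes the cusps before lifting it to $C_c$. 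You instead isolate the universal property of the pinch---a map to a curve smooth at the image point factors through $\nu$ iff it is ramified with index $\geq 2$ at each $x_i$, because $\mathcal{O}_{C',x_i'}=k+\mathfrak{m}_{x_i}^2$---and then only need the uniform observation that a degree-two map has $e_{x_i}=2$ over a one-point fiber, which covers the tame, wild, and Frobenius cases at once; in the converse you read the ramification at $x_i$ off the factorization itself and descend the deck involution from the normalization via branch-swapping at the nodes, rather than lifting it from $C'$. This buys a characteristic-free argument with no case analysis at the cusp, at the modest cost of justifying the descent of the deck involution to the nodal curve $C_c$ (the branch-swapping argument you sketch, which is the one already used inside Lemma~\ref{lem:unique_2_inse} and Corollary~\ref{cor:all_the_same}). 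Your explicit treatment of $r=0$ via Corollary~\ref{cor:all_the_same} and of the reducible core \texttt{0-{}-{}-0} also makes visible two degenerate cases that the paper absorbs implicitly: its proof of (ii)$\Rightarrow$(iv) quietly assumes $r\geq 1$ when invoking Lemma~\ref{lem:unique_2_inse} to conclude that $C_c$ is irreducible.
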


\begin{proof}
    The pointed core $(C_c, \{x_1, \dots, x_r\})$ is stable because it is a component in the decomposition of the stable curve $C$ (see Lemma~\ref{lem:stable_decomposition}). By stability, the log-canonical sheaf $\omega_{C_c/k}(\sum x_i)$ is ample (Proposition~\ref{prop:stable_via_canonical}). Thus, for any irreducible component $Z$ of $C_c$ with $r_Z$ marked points, we have $\deg(\omega_{C_c/k}|_Z) + r_Z > 0$. Let $Z'$ be the corresponding component of $C'$. The relationship between the dualizing sheaves is given by
    \[
        \deg(\omega_{C'/k}|_{Z'}) = \deg(\omega_{C_c/k}|_Z) + 2r_Z = (\deg(\omega_{C_c/k}|_Z) + r_Z) + r_Z.
    \]
    Since the parenthesized term is positive by stability, we conclude that $\deg(\omega_{C'/k}|_{Z'}) > r_Z \ge 0$. Therefore, $C'$ is canonically positive.

    We now prove the equivalences. The equivalence $\ref{lem:hyp_vs_quasi-hyp_v2_i} \iff \ref{lem:hyp_vs_quasi-hyp_v2_ii}$ follows from the decomposition Lemma~\ref{lem:decomp_sep_node} and Lemma~\ref{lem:always_hyperelliptic}.
    Since $C_c$ is 2-inseparable, the curve $C'$ is also 2-inseparable, as the construction does not alter the nodal structure connecting its components. For a Gorenstein curve with only nodes and cusps, 2-inseparability is equivalent to being very strongly connected. Thus, the equivalence $\ref{lem:hyp_vs_quasi-hyp_v2_iii} \iff \ref{lem:hyp_vs_quasi-hyp_v2_iv}$ follows from Proposition~\ref{prop:hyp_vs_quasi-hyp}.
    It remains to show $\ref{lem:hyp_vs_quasi-hyp_v2_ii} \iff \ref{lem:hyp_vs_quasi-hyp_v2_iv}$. Let $\nu\colon C_c \to C'$ be the normalization map from Lemma~\ref{lem:cusp_construction}.

    $(\ref{lem:hyp_vs_quasi-hyp_v2_ii} \implies \ref{lem:hyp_vs_quasi-hyp_v2_iv})$: Assume $(C_c, \{x_i\})$ is hyperelliptic with involution $\sigma$.
    Since $C_c$ is 2-inseparable with $r \geq 1$ marked points, we conclude with Lemma \ref{lem:unique_2_inse} that $C_c$ is irreducible. The quotient $T := C_c/\langle\sigma\rangle$ is isomorphic to $\PP^1_k$. Let $\pi\colon C_c \to T$ be the quotient map.
    We define the associated degree-two map $\pi' = F \circ \pi\colon C_c \to T'$ as in Remark \ref{rem:degree2_map}.
    In our case, this means that if $\Char k = 2$ and $r = 3$, then $F = F_{T/k}\colon T \to T^{(2)} =: T'$ is the relative Frobenius morphism (and $\sigma$ is the identity in this case), while in all other cases $T = T'$ and $F$ is the identity map (see Remark~\ref{rem:char2_dependency}).
    The map $\pi'$ is a finite morphism of degree 2. We now show that $\pi'$ factors through $\nu$.
    
    The factorization of $\pi'$ is a local question that must be analyzed at each point $x_i$. Let $x=x_i$ and $x'=\nu(x)$. We consider the relevant local rings: $B := \OO_{C_c, x}$ at the smooth point $x$ (with maximal ideal $\mf{m}_{x}$); the subring $B' := \OO_{C', x'} = k + \mf{m}_{x}^2$ corresponding to the cusp $x'$ (see Lemma \ref{lem:cusp_construction}); and $A := \OO_{T', \pi'(x)}$ on the target curve. For the map $\pi'$ to factor through $\nu$, the image of the local homomorphism $\pi'^\#_x\colon A \to B$ must be contained in $B'$. To verify this, we pass to the completions of these rings. This step is justified because $B$ is a Noetherian local ring and the canonical map $B \to \widehat{B}$ to its completion is faithfully flat; consequently, the inclusion holds if and only if it holds for the completed rings. Let $t$ be a local uniformizer at $x$. Since $x$ is a smooth point and $k$ is algebraically closed, we have $\widehat{B} \simeq k[\![t]\!]$. The completion of the subring $B'$ is then $\widehat{B'} \simeq k[\![t^2, t^3]\!]$, which is the subring of formal power series with a vanishing linear term. The involution $\sigma$ extends uniquely to an automorphism of $\widehat{B}$.

    \begin{case}[$\sigma = \id$]
        This case occurs precisely when $\Char k=2$ and $r=3$. The core is then $C_c \simeq \PP^1_k$, and the involution must be the identity as it fixes the three attachment points. The map $\pi'$ is therefore the relative Frobenius morphism $F_{C_c/k}$. On the level of completions, the induced homomorphism $\widehat{\pi'^\#}\colon \widehat{A} \to \widehat{B}$ sends the formal parameter of $\widehat{A}$ to $t^2$, so its image is $k[\![t^2]\!]$, which is clearly a subring of $\widehat{B'} \simeq k[\![t^2, t^3]\!]$.
    \end{case}

    \begin{case}[$\sigma \neq \id$]
        Here, $\pi' = \pi$ is the quotient map by a non-trivial involution $\sigma \neq \id$. The image of $\widehat{A}$ in $\widehat{B}$ under $\widehat{\pi^\#_x}$ is the ring of invariants $\widehat{B}^{\sigma}$. We must show this ring is a subring of the cusp's completed ring, i.e., that $\widehat{B}^{\sigma} \subset \widehat{B'}$.

        \begin{subcase}[$\Char k \neq 2$]
            The involution $\sigma$ induces a linear action on the cotangent space $\mf{m}_x/\mf{m}_x^2$, corresponding to multiplication by a scalar $\lambda \in k$ with $\lambda^2=1$. If this action were trivial ($\lambda=1$), $\sigma$ would act as the identity on the entire formal neighborhood of $x$. For an automorphism of a projective, irreducible curve, this implies that $\sigma$ must be the global identity map, which contradicts our assumption that $\sigma$ is non-trivial. Therefore, the action on the cotangent space must be non-trivial, forcing $\lambda=-1$.
            
            Since $\sigma$ acts by $-1$ on the cotangent space, we have $\sigma(t) = -t + h(t)$ for some $h(t) \in (t^2)$. By replacing $t$ with the new formal parameter $t' := \frac{1}{2}(t - \sigma(t))$ if necessary (note that since $h(t) \in (t^2)$, the linear part of $t'$ is $t$, making it a valid uniformizer), we may assume the action is linearized, i.e., that $\sigma(t)=-t$. An element $g(t) = \sum c_i t^i \in \widehat{B}$ is therefore invariant under $\sigma$ if and only if $g(t) = g(-t)$, which requires all odd coefficients to be zero. The ring of invariants is thus $\widehat{B}^{\sigma} = k[\![t^2]\!]$, which is a subring of $\widehat{B'} \simeq k[\![t^2, t^3]\!]$.
        \end{subcase}

        \begin{subcase}[$\Char k = 2$]
            The induced action of $\sigma$ on the cotangent space must be the identity, so the action on $k[\![t]\!]$ is of the form $\sigma(t) = t + h(t)$ with $h(t) \in (t^2)$. Since $\sigma \neq \id$, we have $h(t) \neq 0$. Let $g(t) = \sum_{i \ge 0} c_i t^i$ be an invariant element in $\widehat{B}^{\sigma}$. The invariance condition $g(\sigma(t)) - g(t) = 0$ expands to:
            \[
                c_1 h(t) + c_2\left((t+h(t))^2 - t^2\right) + \dots = 0.
            \]
            In characteristic $2$, this becomes $c_1 h(t) + c_2 h(t)^2 + \dots = 0$. Since $k[\![t]\!]$ is an integral domain and $h(t) \neq 0$, we can divide by $h(t)$ to get $c_1 + c_2 h(t) + \dots = 0$. As $h(t) \in (t^2)$, the constant term of this power series is $c_1$, which implies $c_1=0$, so any invariant element has a vanishing linear term. Therefore, the ring of invariants is contained in the completed ring of the cusp, $\widehat{B}^{\sigma} \subset \widehat{B'} \simeq k[\![t^2, t^3]\!]$.
        \end{subcase}
    \end{case}

    In all cases, $\pi'$ factors locally through $\nu$, so it factors globally. This yields a factorization $\pi' = f \circ \nu$ for some morphism $f\colon C' \to T'$. Since $\pi'\colon C_c \to T'$ is a finite morphism of degree $2$ and the normalization map $\nu\colon C_c \to C'$ is finite and birational, the induced map $f$ must also be a finite morphism of degree 2. As $T' \simeq \PP^1_k$, this proves that $C'$ is honestly hyperelliptic.
    
    $(\ref{lem:hyp_vs_quasi-hyp_v2_iv} \implies \ref{lem:hyp_vs_quasi-hyp_v2_ii})$: Assume $C'$ is honestly hyperelliptic, admitting a finite degree-two morphism $f'\colon C' \to \PP^1_k$. Since $C'$ is very strongly connected and has $r \geq 1$ cusps by construction, it follows from Proposition \ref{prop:hyp_vs_quasi-hyp} that $C'$ is irreducible.

    We first handle the case where the morphism $f'$ is not separable. For a degree-two morphism, this implies that $f'$ must be purely inseparable and that $\Char k = 2$. The argument from Corollary \ref{cor:all_the_same} shows that $C'$ cannot have any nodes, so the map $\nu\colon C_c \to C'$ is its full normalization. The composition $f' \circ \nu\colon C_c \to \PP^1_k$ is then a purely inseparable morphism between smooth curves, which forces $g(C_c) = 0$ by \cite[Proposition 7.4.21]{liu}. Thus, the core is rational ($C_c \simeq \PP^1_k$), and $\sigma = \id_{C_c}$ is a valid hyperelliptic involution for $(C_c, \{x_i\})$. Note that since $C'$ has genus $3$, this case occurs only when $r=3$.
    
    Now, assume $f'$ is separable. It is then a Galois cover induced by a unique involution $\sigma'$ of order $2$ on $C'$. We show that this involution must fix the cusps. Since $C'$ is very strongly connected and honestly hyperelliptic, its canonical map $\Phi_{C'}$ factors through the map $f'$ (Proposition \ref{prop:hyp_vs_quasi-hyp}), which implies that $\Phi_{C'}(p) = \Phi_{C'}(\sigma'(p))$ for any point $p \in C'$. However, $C'$ is also 2-inseparable, so its canonical map is injective on the singular locus (Theorem \ref{thm:necessary_injective}). As each cusp $x'_i$ is a singular point, this injectivity forces the equality $x'_i = \sigma'(x'_i)$.

    Since $\nu\colon C_c \to C'$ is the normalization map at the cusps and $\sigma'$ fixes these points, the involution $\sigma'$ lifts uniquely to an involution $\sigma$ on $C_c$. Since $\sigma'$ fixes the unibranch cusps $x'_i$, the lifted involution $\sigma$ must fix their unique preimages $x_i$. The map $\nu$ is equivariant with respect to these involutions and therefore descends to an isomorphism on the quotient curves, $T := C_c/\langle\sigma\rangle \simeq C'/\langle\sigma'\rangle \simeq \PP^1_k$. This establishes the existence of a hyperelliptic involution $\sigma$ on the pointed curve $(C_c, \{x_i\})$, which completes the proof.
\end{proof}

With this intermediate step we can finally prove together with the results of Catanese one direction of our main result.

\begin{proof}[of Theorem \ref{thm:main_thm_2-insep_tails}]
    Let $C := \oo{X}$ be the stable reduction, $C_c := \oo{X}_c$ its core, and $\{x_1, \dots, x_r\}$ the attachment points of the $1$-tails $C_i$. Let $C'$ be the associated curve with cusps constructed in Lemma \ref{lem:cusp_construction}, and $\nu\colon C_c \to C'$ the partial normalization map. Let $x'_i = \nu(x_i)$ be the cusps on $C'$. Note that $g(C')=3$.

    \begin{claimstar}
        We have an isomorphism of sheaves $\oo{\LL}_c \simeq \omega_{C_c/k}(\sum 2x_i) \simeq \nu^* \omega_{C'/k}$, and the pullback map $\nu^*$ induces an isomorphism $H^0(C', \omega_{C'/k}) \simeq W$.
    \end{claimstar}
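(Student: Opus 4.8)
The first isomorphism $\oo{\LL}_c \simeq \omega_{C_c/k}\bigl(\sum_{i=1}^r 2x_i\bigr)$ is already in hand from the local analysis of $\LL$ carried out above, so the real content of the claim is the identification with $\nu^*\omega_{C'/k}$ together with the statement on global sections. The plan is to prove both by reducing to a local computation at the cusps, where the explicit construction of $C'$ in Lemma~\ref{lem:cusp_construction} can be matched against the Rosenlicht description of the dualizing sheaf from Proposition~\ref{prop:local_conditions}.

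\emph{Step 1 (the sheaf isomorphism).} Since nodes and cusps are Gorenstein, both $C_c$ and $C'$ carry invertible dualizing sheaves, and $\nu$ is finite, birational, and an isomorphism over $C_c\setminus\{x_1,\dots,x_r\}$; over that dense open locus all three sheaves in the claim coincide and the twisting divisor is trivial, so everything is local at each cusp $x_i':=\nu(x_i)$. I would pass to completions, where $\widehat{\OO}_{C_c,x_i}\simeq k[\![t]\!]$ and $\widehat{\OO}_{C',x_i'}\simeq k+\m_{x_i}^2\simeq k[\![t^2,t^3]\!]$ by Lemma~\ref{lem:cusp_construction}. By Rosenlicht's theorem and the cusp case of Proposition~\ref{prop:local_conditions}~(b), the stalk $\omega_{C'/k,x_i'}$ is the module of meromorphic differentials on the branch having at most a double pole and vanishing residue; this module is free of rank one over $\widehat{\OO}_{C',x_i'}$ with generator $t^{-2}\,dt$. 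Tensoring with $\widehat{\OO}_{C_c,x_i}=k[\![t]\!]$ produces $k[\![t]\!]\cdot t^{-2}\,dt$, which is exactly the stalk of $\omega_{C_c/k}(2x_i)$. Globalizing (equivalently, invoking the conductor formula $\nu^*\omega_{C'/k}\simeq\omega_{C_c/k}(\mathfrak d)$, whose conductor divisor one reads off as $\mathfrak d=\sum_i 2x_i$ directly from $\OO_{C',x_i'}=k+\m_{x_i}^2$) gives $\nu^*\omega_{C'/k}\simeq\omega_{C_c/k}\bigl(\sum_i 2x_i\bigr)\simeq\oo{\LL}_c$.

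\emph{Step 2 (the isomorphism on global sections).} Because $\nu$ is finite and birational and each cusp is unibranch, the unit of adjunction is injective and fits into a short exact sequence
\[
    0 \to \omega_{C'/k} \to \nu_*\nu^*\omega_{C'/k} \to \mathcal R \to 0,
\]
where $\mathcal R\simeq\bigoplus_{i=1}^r k(x_i')$ is a skyscraper sheaf at the cusps; by the computation of Step~1 the surjection is, at each $x_i'$, the ordinary residue $s\mapsto\Res_{x_i}(s)$ of the section regarded as a meromorphic differential on the smooth point $x_i\in C_c$. Taking global sections, using $H^0(C',\nu_*\mathcal G)=H^0(C_c,\mathcal G)$ for the affine morphism $\nu$, and applying the identification $\nu^*\omega_{C'/k}\simeq\oo{\LL}_c$ from Step~1, yields the exact sequence
\[
    0 \to H^0(C',\omega_{C'/k}) \xrightarrow{\ \nu^*\ } H^0(C_c,\oo{\LL}_c) \xrightarrow{\ (\Res_{x_i})_i\ } \bigoplus_{i=1}^r k.
\]
The kernel of the right-hand map is by definition the subspace $W$, so $\nu^*$ induces an isomorphism $H^0(C',\omega_{C'/k})\simeq W$. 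As a consistency check, both spaces are three-dimensional: the left-hand one by Riemann--Roch and duality on the genus-$3$ Gorenstein curve $C'$, the right-hand one by Lemma~\ref{lem:subspace_W}.

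\emph{Expected main obstacle.} The delicate point is Step~1: one must correctly match the abstract dualizing sheaf $\omega_{C'/k}$ with the curve $C'$ produced by hand in Lemma~\ref{lem:cusp_construction}, namely verify that $\omega_{C'/k}$ is locally free at each cusp with generator $t^{-2}\,dt$ and that its $\nu$-pullback is the twist by precisely $2x_i$ (and not some other multiple); this is where Proposition~\ref{prop:local_conditions} does the work. Once this is pinned down, Step~2 is essentially formal, the only subtlety being to recognize the residue $\Res_{x_i}$ in the definition of $W$ as the classical residue at the smooth point $x_i$ of $C_c$, which is exactly the regularity condition characterizing $\omega_{C'/k}$ at the cusp $x_i'$.
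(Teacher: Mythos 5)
Your proposal is correct and follows essentially the same route as the paper: both arguments reduce the sheaf isomorphism to a local comparison of generators at the cusps via Rosenlicht's theorem and the $A_2$ residue condition of Proposition~\ref{prop:local_conditions} (identifying $t^{-2}\,dt$ as the local generator of $\omega_{C'/k}$), and both identify the image of the injective pullback $\nu^*$ on global sections with $W$ through the vanishing-residue condition at the $x_i$. The only cosmetic difference is that you package Step~2 as a short exact sequence with skyscraper cokernel $\bigoplus_i k(x_i')$ realized by the residue maps, whereas the paper argues directly that a section lies in the image of the adjunction unit if and only if it is regular on $C'$; these are equivalent formulations of the same computation.
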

    
    \begin{proof}
        We first establish the isomorphism of invertible sheaves $\oo{\LL}_c \simeq \nu^* \omega_{C'/k}$ by comparing them locally on $C_c$.
    
        Let $U = C_c \setminus \{x_1, \dots, x_r\}$. On this open set, the restriction $\nu|_U\colon U \to \nu(U)$ is an isomorphism, and the divisor $\sum 2x_i$ is trivial. Thus, we have the required isomorphism on $U$:
        \[
            \oo{\LL}_c|_U \simeq \omega_{C_c/k}|_U \simeq (\nu|_U)^*(\omega_{C'/k}|_{\nu(U)}) = (\nu^* \omega_{C'/k})|_U.
        \]
    
        Now consider a point $x=x_i$. Let $t$ be a local uniformizer at $x$. The sheaf $\oo{\LL}_c = \omega_{C_c/k}(2x)$ is locally generated by $\omega_0 := t^{-2}dt$.
        We analyze the dualizing sheaf $\omega_{C'/k}$ at the cusp $x'=\nu(x)$. Since $C'$ is Gorenstein (having only $A_n$ singularities), $\omega_{C'/k}$ is invertible. By Rosenlicht's Theorem, $\omega_{C'/k}$ is the sheaf of regular differentials. Locally at $x'$, we identify sections of $\omega_{C'/k}$ with meromorphic differentials $\eta$ on the (partial) normalization $C_c$ that satisfy the regularity conditions for the $A_2$ singularity. By Proposition \ref{prop:local_conditions}, $\eta$ is regular at $x'$ if it has a pole of order at most $2$ at $x$ and $\Res_x(\eta)=0$.
        The differential $\omega_0 = t^{-2}dt$ satisfies these conditions. Since $\omega_{C'/k}$ is invertible, $\omega_0$ must be a local generator for $\omega_{C'/k}$ at $x'$.
        The pullback $\nu^*\omega_{C'/k}$ is locally generated by $\nu^*(\omega_0)$, which is simply $\omega_0$ viewed on $C_c$. Since the local generators agree everywhere, we conclude $\oo{\LL}_c \simeq \nu^* \omega_{C'/k}$.
    
        Now we analyze the pullback map on global sections:
        \[
            \nu^*\colon H^0(C', \omega_{C'/k}) \to H^0(C_c, \nu^* \omega_{C'/k}) \simeq H^0(C_c, \oo{\LL}_c).
        \]
        The map $\nu^*$ is induced by the unit of adjunction $\eta\colon \omega_{C'/k} \to \nu_*(\nu^*\omega_{C'/k})$. Since $\nu\colon C_c \to C'$ is a partial normalization between reduced curves, the induced map on structure sheaves $\OO_{C'} \to \nu_*\OO_{C_c}$ is injective. As $\omega_{C'/k}$ is invertible (hence locally free), the map $\eta$, which is obtained locally by tensoring the injective map on structure sheaves with $\omega_{C'/k}$, is also injective. Consequently, the induced map on global sections $\nu^* = H^0(C', \eta)$ is injective.
    
        We identify the image of $\nu^*$. A section $s_c \in H^0(C_c, \oo{\LL}_c)$ is in the image of $\nu^*$ if and only if $s_c$, viewed as a global section of $\nu_*(\nu^*\omega_{C'/k})$ via the isomorphism established above, belongs to the subsheaf $\omega_{C'/k}$ (i.e., it is in the image of $\eta$). By Rosenlicht's Theorem, this is equivalent to $s_c$ being a regular differential everywhere on $C'$.
    
        We verify this condition locally. Away from the cusps $x'_i$, $\nu$ is an isomorphism and $s_c$ is regular on $C_c$ (as $\oo{\LL}_c \simeq \omega_{C_c/k}$ there), so it is regular on $C'$. At the cusps $x'_i$, the condition for $s_c$ to be regular on $C'$ is precisely $\Res_{x_i}(s_c) = 0$, by Proposition \ref{prop:local_conditions}.
        Therefore, the image of $\nu^*$ is exactly the subspace $W = \{ s_c \in H^0(C_c, \oo{\LL}_c) \ssep \Res_{x_i}(s_c) = 0 \text{ for all } i \}$, which establishes the isomorphism.
    \end{proof}

    By the preceding claim, the linear series $(\oo{\LL}_c, W)$ is base-point-free, as it is identified with the canonical system $|\omega_{C'/k}|$ of the canonically positive, inseparable Gorenstein curve $C'$, which is itself base-point-free by Catanese's Theorem \ref{thm:catanese_base_points}. It follows from Lemma \ref{lem:contraction_of_tails} that the line bundle $\LL$ is generated by its global sections, ensuring that the map $\phi\colon \X \to \PP^2_{\OO_K}$ is a morphism. We define its scheme-theoretic image as $\X_0$, which is a plane model of $X$.

    The claim also implies that the morphism on the core, $\oo{\phi}_c$, factors through the canonical map of $C'$:
    \[
        C_c \xrightarrow{\nu} C' \xrightarrow{\Phi_{C'}} \PP^2_k.
    \]
    Since the $1$-tails are contracted to points (Lemma \ref{lem:contraction_of_tails}), the special fiber of this plane model is simply the image of the core, namely $\X_{0,s} = \Phi_{C'}(C')$.

    It remains to analyze the properties of the canonical map $\Phi_{C'}$, which we do using the results of Catanese. We rely on the key equivalence from Lemma \ref{lem:hyp_vs_quasi-hyp_v2}: the stable curve $C$ is hyperelliptic if and only if $C'$ is honestly hyperelliptic. The necessary condition to apply Catanese's theorems is that $C'$ is very strongly connected, which follows from the 2-inseparability of its core, $C_c$.

    \begin{prooflist}
        \item If $C$ is non-hyperelliptic, then $C'$ is not quasi-hyperelliptic. Since $C'$ is very strongly connected and has only $A_n$ singularities (nodes $A_1$ and cusps $A_2$), Theorem \ref{thm:canonical_embedding_catanese} applies, showing that the canonical map $\Phi_{C'}$ is a closed immersion. The image $\X_{0,s}$ is therefore isomorphic to $C'$ and is embedded as a plane quartic. Its singularities are the nodes inherited from $C_c$ and the $r$ cusps $x'_i$, so $\X_{0,s}$ is GIT-stable. The morphism on the core, $\oo{\phi}_c = \Phi_{C'} \circ \nu$, realizes $C_c$ as the normalization of $\X_{0,s}$ at its cusps. The $1$-tails are contracted to these cusps, which are distinct points because $\Phi_{C'}$ is an embedding.
        \item If $C$ is hyperelliptic, then $C'$ is honestly hyperelliptic. By Proposition \ref{prop:hyp_vs_quasi-hyp}, the canonical map $\Phi_{C'}$ is a degree-two morphism onto a rational normal curve in $\PP^2_k$, i.e., a smooth conic $Q$. Since the total degree of the model $\X_0$ is $4$, the scheme-theoretic image $\X_{0,s}$ must be the double conic $2Q$, which is GIT-semistable but not stable. In this case, the morphism on the core $\oo{\phi}_c$ maps $C_c$ two-to-one onto the conic $Q$. The $1$-tails are contracted to points on this conic, which are distinct because $\Phi_{C'}$ is injective on the singular locus of $C'$ (Theorem \ref{thm:necessary_injective}).
    \end{prooflist}
\end{proof}

\subsection{From GIT-Stable to Stable Model}

We now prove the converse direction of the Main Theorem \ref{thm:main_theorem}, i.e., $\ref{thm:main_theorem_ii} \implies \ref{thm:main_theorem_i}$. We show that the existence of a GIT-stable model implies that the stable reduction is non-hyperelliptic and that the stable model is the minimal semistable model dominating the GIT-stable model. The proof requires two preliminary lemmas: the first concerns the geometry of a GIT-stable quartic, and the second addresses the existence of a minimal model dominating a given normal model.

\begin{lemma} \label{lem:structure_GIT_stable_quartic}
Let $C \subset \PP^2_k$ be a GIT-stable plane quartic over an algebraically closed field $k$. By definition, $C$ is reduced and has only ordinary nodes ($A_1$ singularities) and ordinary cusps ($A_2$ singularities). Let $r$ be the number of cusps of $C$. Let $\nu\colon \tilde{C} \to C$ be the partial normalization of $C$ at the cusps, and let $S$ be the set of preimages of the cusps.
Then the following properties hold:
\begin{enumerate}[label=\StatementListLabel]
    \item \label{lem:structure_GIT_stable_quartic_i} Either $C$ is irreducible, in which case $r \leq 3$; or $C$ is the union of a line and a cubic, in which case $r \leq 1$ and any cusp must lie on the cubic component; or $C$ is a union of two, three, or four components of degree at most $2$ (i.e., lines and conics), in which case $r=0$.
    \item \label{lem:structure_GIT_stable_quartic_ii} $C$ and $\tilde{C}$ are 2-inseparable.
    \item \label{lem:structure_GIT_stable_quartic_iii} $C$ is canonically positive and not quasi-hyperelliptic.
    \item \label{lem:structure_GIT_stable_quartic_iv} $(\tilde{C},S)$ is a stable pointed curve of genus $3-r$, which is not hyperelliptic in the sense of Definition \ref{def:hyperelliptic}.
\end{enumerate}
\end{lemma}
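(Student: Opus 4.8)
The plan is to handle the four assertions in the order $(a)$, $(b)$, $(c)$, $(d)$, since $(a)$ and $(b)$ feed directly into the proofs of $(c)$ and $(d)$. For $(a)$ I would use only two facts: a reduced irreducible plane curve of degree $d$ satisfies $\sum_p\delta_p\le g=\binom{d-1}{2}$, so an irreducible quartic has $n+r\le 3$ where $n,r$ count its nodes and cusps (hence $r\le 3$); and a cusp, being unibranch, cannot lie at an intersection point of two distinct components, so every cusp of $C$ is a cusp of one of its irreducible components. Since lines and irreducible conics are smooth, this forces $r=0$ whenever all components have degree $\le 2$, and in the line-plus-cubic case the cusp(s) lie on the irreducible cubic, which carries at most $g=1$ singular point, so $r\le 1$. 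For $(b)$ the argument is Bézout-theoretic: a separating node would give a decomposition $C=C_1\cup C_2$ with $C_1\cap C_2=\{p\}$, where $C_1,C_2$ are plane curves of positive degrees summing to $4$; a node on two components is a transversal intersection, so $i(C_1,C_2;p)=1$, contradicting $C_1\cdot C_2=\deg C_1\cdot\deg C_2\ge 3$. Similarly a separating pair $\{p,q\}$ gives $C_1\cap C_2=\{p,q\}$ with $i(C_1,C_2;p)+i(C_1,C_2;q)=2<3\le C_1\cdot C_2$, again impossible; hence $C$ is 2-inseparable. Since the cusps are unibranch, $\nu\colon\tilde C\to C$ is a homeomorphism that changes neither the set of nodes nor which collections of nodes disconnect the curve, so $\tilde C$ is 2-inseparable as well.

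For $(c)$, adjunction for the hypersurface $C\subset\PP^2_k$ of degree $4$ gives $\omega_{C/k}\simeq\OO_{\PP^2_k}(1)|_C$, the restriction of an ample sheaf and hence ample, so $C$ is canonically positive. Moreover the restriction map $H^0(\PP^2_k,\OO(1))\to H^0(C,\OO_C(1))=H^0(C,\omega_{C/k})$ is injective (no line contains the quartic $C$) between spaces of dimension $3$ --- here $h^0(\omega_{C/k})=h^1(\OO_C)=g(C)=3$ because $C$ is connected --- hence an isomorphism. Therefore the canonical map $\Phi_C$ is simply the inclusion $C\hookrightarrow\PP^2_k$, in particular birational onto its image; since $C$ is inseparable by $(b)$, Proposition~\ref{prop:quasi-hyperelliptic} then shows $C$ is not quasi-hyperelliptic.

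For $(d)$, each cusp has a single preimage with $\delta$-invariant $1$, so $|S|=r$ (all smooth points of $\tilde C$) and $g(\tilde C)=g(C)-r=3-r$. Stability I would get from the numerical criterion of Proposition~\ref{prop:stable_via_canonical}: if $\tilde C$ is irreducible, $2g(\tilde C)-2+|S|=4-r>0$ by $(a)$; if $\tilde C$ has at least two components, 2-inseparability (part $(b)$) forces every component to meet the rest in at least three nodes --- two or fewer would produce a separating node or a separating pair --- so $2g(\tilde Z)-2+s_{\tilde Z}\ge 1>0$ for each component $\tilde Z$; thus $(\tilde C,S)$ is stable of genus $3-r$. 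For non-hyperellipticity, suppose the contrary. Gluing an elliptic tail at each of the $r$ points of $S$ produces a stable curve $\bar C$ of genus $3$ whose core, in the sense of Lemma~\ref{lem:core_tail_lemma}, is exactly $(\tilde C,S)$, and whose associated cuspidal curve in the sense of Lemma~\ref{lem:cusp_construction} is $C$ itself --- indeed at each cusp $\widehat{\OO}_{C,x_i}\simeq k[\![t^2,t^3]\!]=k+\m_{\tilde x_i}^2$, which is precisely that construction. Then Lemma~\ref{lem:hyp_vs_quasi-hyp_v2} applied to $\bar C$ shows that $C$ is quasi-hyperelliptic, contradicting $(c)$; hence $(\tilde C,S)$ is not hyperelliptic.

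The step I expect to be the main obstacle is the non-hyperellipticity in $(d)$: one must recognise that opening each cusp of $C$ is literally the construction of Lemma~\ref{lem:cusp_construction}, and build the auxiliary stable genus-$3$ curve $\bar C$ through the core-and-tails structure so that Lemma~\ref{lem:hyp_vs_quasi-hyp_v2} becomes applicable and the question collapses onto part $(c)$. Everything else is short: $(b)$ is a two-line Bézout argument, $(c)$ is immediate from adjunction and a dimension count, and the genus and stability bookkeeping in $(a)$ and $(d)$ are routine once these are in place.
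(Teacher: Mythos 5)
Your proof is correct and follows essentially the same route as the paper: the same genus/degree bookkeeping for (a), the same B\'ezout argument for (b), adjunction plus Proposition~\ref{prop:quasi-hyperelliptic} for (c), and the reduction of the non-hyperellipticity claim in (d) to Lemma~\ref{lem:hyp_vs_quasi-hyp_v2} together with part (c). The one point of divergence is the stability check in (d): you apply the combinatorial criterion of Proposition~\ref{prop:stable_via_canonical} directly, observing that 2-inseparability forces each component of a reducible $\tilde{C}$ to meet the rest in at least three nodes, whereas the paper instead proves the identity $\deg(\omega_{\tilde{C}/k}(S)|_{\tilde{Z}}) = \deg(Z) - r_Z$ and checks positivity against the case list from (a); both are valid, and yours avoids that degree computation. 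One pedantic remark: for $r=0$ the proof of Lemma~\ref{lem:hyp_vs_quasi-hyp_v2} implicitly assumes $r\ge 1$ in places, which is why the paper falls back on Corollary~\ref{cor:all_the_same} in that case --- your argument should make the same case split, though the mathematical content is identical.
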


\begin{proof}
\begin{prooflist}
    \item The arithmetic genus of any plane quartic is $g(C) = 3$. Note that a cusp is a unibranch singularity with a $\delta$-invariant of 1, and thus it must lie on a single irreducible component.

    The partial normalization $\tilde{C}$ of $C$ at its $r$ cusps is a connected curve, so its arithmetic genus $g(\tilde{C})$ is non-negative. The genus formula gives $g(C) = g(\tilde{C}) + r$, which becomes $3 = g(\tilde{C}) + r$. This immediately implies the general bound $r \leq 3$.

    For the reducible case, sharper bounds apply. Since a cusp must be a self-singularity of an irreducible component, we analyze the components by degree. A component of degree $1$ (a line) or $2$ (a conic) has an arithmetic genus of $0$ and thus cannot have a cusp, as this would imply a negative geometric genus. This observation covers all reducible configurations that do not involve a cubic component: a union of two conics (degrees $2,2$), a conic and two lines (degrees $2,1,1$), or four lines (degrees $1,1,1,1$). In these configurations, no component can have a cusp, so we must have $r=0$. The only remaining reducible possibility is the union of a line and a cubic. Here, only the cubic component can have a cusp; since a plane cubic has arithmetic genus $1$ and therefore at most one cusp, it follows that for this configuration, $r \leq 1$.
    \item We prove that $C$ is 2-inseparable by showing that any decomposition of $C$ leads to a contradiction with Bézout's theorem.

    Suppose for contradiction that $C$ is not 2-inseparable. Then it must have a separating set of nodes $S$ (where $|S|=1$ for a separating node or $|S|=2$ for a separating pair) that decomposes $C$ into two subcurves, $C_1$ and $C_2$. By Bézout's theorem, the product of the degrees of the components equals the total intersection multiplicity:
    \[
        \deg(C_1)\deg(C_2) = \sum_{p \in S} I_p(C_1, C_2) = |S|.
    \]
    since each intersection is a node with multiplicity 1.
    If $|S|=1$, then $\deg(C_1)\deg(C_2)=1$, forcing the component degrees to be $1$ and $1$, and the total degree of $C$ to be $1+1=2$. If $|S|=2$, then $\deg(C_1)\deg(C_2)=2$, forcing the component degrees to be $1$ and $2$, and the total degree of $C$ to be $1+2=3$.
    In both scenarios, the resulting total degree is less than $4$, which contradicts the fact that $C$ is a plane quartic. Therefore, $C$ can have neither separating nodes nor separating pairs and must be 2-inseparable.

    Finally, the partial normalization $\nu\colon \tilde{C} \to C$ resolves the cusps. Since cusps are unibranch singularities, this process does not alter the number of irreducible components or the nodal structure connecting them. The property of being 2-inseparable depends only on this connecting structure and is therefore preserved. Consequently, $\tilde{C}$ is also 2-inseparable.
    \item As a plane quartic, $C$ is a Gorenstein curve. By the adjunction formula for a plane curve of degree $d=4$, its dualizing sheaf is isomorphic to the hyperplane bundle: $\omega_{C/k} \simeq \OO_C(1)$. The line bundle $\OO_C(1)$ is very ample by definition, as it provides the embedding of $C$ into $\PP^2_k$.

    The very ampleness of $\omega_{C/k}$ has two direct consequences. First, a very ample sheaf is also ample, so $C$ is canonically positive. Second, since $\omega_{C/k}$ is very ample, the canonical map $\Phi_C$ is a closed immersion and, in particular, birational onto its image. By Proposition \ref{prop:quasi-hyperelliptic}, it therefore follows that $C$ is not quasi-hyperelliptic.
    \item As the partial normalization of $C$ at its cusps, $\tilde{C}$ has only nodes as singularities and is thus semistable. As established in the proof of \ref{lem:structure_GIT_stable_quartic_i}, its arithmetic genus is $g(\tilde{C}) = 3-r$, and the set of preimages $S$ has cardinality $|S|=r$.

    To verify stability, we use Proposition \ref{prop:stable_via_canonical} and show that the degree of the log-canonical sheaf, $\deg(\omega_{\tilde{C}/k}(S)|_{\tilde{Z}})$, is positive for every irreducible component $\tilde{Z}$ of $\tilde{C}$ (where $S$ is identified with the divisor $\sum_{p \in S} p$).
    \begin{claimstar}
        Let $Z$ be the irreducible component of $C$ corresponding to $\tilde{Z}$. Then
        \[
            \deg(\omega_{\tilde{C}/k}(S)|_{\tilde{Z}}) = \deg(Z) - r_Z,
        \]
        where $r_Z$ is the number of cusps on $Z$.
    \end{claimstar}
    \begin{proof}
    The normalization $\nu|_{\tilde{Z}}\colon \tilde{Z} \to Z$ resolves the $r_Z$ cusps on $Z$, so the genus formula gives $g(Z) = g(\tilde{Z}) + r_Z$. The degree of the dualizing sheaf on a curve is determined by its arithmetic genus and its connecting nodes to other components. Since the connections between components are unchanged by the partial normalization at cusps, the difference in the degrees of the restricted dualizing sheaves is simply:
    \[ \deg(\omega_{\tilde{C}/k}|_{\tilde{Z}}) - \deg(\omega_{C/k}|_Z) = (2g(\tilde{Z})-2) - (2g(Z)-2) = 2g(\tilde{Z}) - 2g(Z) = -2r_Z. \]
    The marked points on $\tilde{Z}$ are precisely the $r_Z$ preimages of the cusps on $Z$. Thus,
    \[ \deg(\omega_{\tilde{C}/k}(S)|_{\tilde{Z}}) = \deg(\omega_{\tilde{C}/k}|_{\tilde{Z}}) + |S \cap \tilde{Z}| = \deg(\omega_{C/k}|_Z) - 2r_Z + r_Z = \deg(\omega_{C/k}|_Z) - r_Z. \]
    As $C$ is a plane quartic, $\omega_{C/k} \simeq \OO_C(1)$, so $\deg(\omega_{C/k}|_Z) = \deg(Z)$. This proves the claim.
    \end{proof}
    For any irreducible component $Z$ of $C$, a direct check of the cases from \ref{lem:structure_GIT_stable_quartic_i} shows that $\deg(Z) - r_Z > 0$. Thus, $(\tilde{C},S)$ is stable.

    Finally, by \ref{lem:structure_GIT_stable_quartic_iii}, $C$ is not quasi-hyperelliptic. If $r = 0$, then $C$ and $\tilde{C}$ coincide, and it follows from the equivalence in Corollary~\ref{cor:all_the_same} that $\tilde{C}$ is not hyperelliptic. Otherwise, the curve $C$ and the pointed curve $(\tilde{C}, S)$ fit the setup of Lemma \ref{lem:hyp_vs_quasi-hyp_v2}, which states that $C$ is quasi-hyperelliptic if and only if $(\tilde{C}, S)$ is hyperelliptic. Therefore, we conclude in either case that $(\tilde{C}, S)$ is not hyperelliptic.
\end{prooflist}
\end{proof}

\begin{remark}
    The combinatorial types of the stable pointed curves $(\tilde{C}, S)$ occurring in Lemma \ref{lem:structure_GIT_stable_quartic} correspond precisely to the combinatorial types of the pointed, 2-inseparable cores of stable curves of genus 3. There are 16 such types of pointed cores.

    This number is smaller than the 27 types of genus $3$ stable curves with 2-inseparable cores listed in Proposition \ref{prop:classification}. The reason for this discrepancy is that the process of replacing a $1$-tail with a marked point on the core does not distinguish between an elliptic tail and a pig tail; both result in the same combinatorial type for the pointed core. In view of our Main Theorem, this correspondence is to be expected.
\end{remark}

\begin{lemma}[Stable Hull] \label{lem:stable_hull}
    Let $X$ be a smooth, projective, geometrically connected curve over $K$, and let $\Y$ be a normal model of $X$ over $\OO_K$. Assume that $X$ has semistable reduction over $K$ and that the special fiber $\Y_s$ is geometrically reduced\footnote{Since the residue field $k$ is assumed to be algebraically closed, the notions of ``reduced'' and ``geometrically reduced'' coincide.}.
    
    Then the stable hull of $\Y$ exists and is unique. This stable hull, denoted $\X$, is the unique minimal semistable model dominating $\Y$ via a birational morphism $\pi\colon \X \to \Y$ (i.e., any other semistable model dominating $\Y$ also dominates $\X$).
    
    Let $E_\pi$ be the exceptional locus of $\pi$ and $C_\pi$ be the strict transform of $\Y_s$ in $\X$. We have:
    
    \begin{enumerate}[label=\StatementListLabel]
        \item \label{lem:stable_hull_i} (Relative Minimality) For every irreducible component $Z$ of the exceptional locus $E_\pi$, we have 
        \[\deg \omega_{\X/\OO_K}|_Z > 0.\]
        
        \item \label{lem:stable_hull_ii} (Construction via Desingularization) Let $\rho\colon \Y_\text{reg} \to \Y$ be the minimal desingularization of $\Y$. Then $\Y_\text{reg}$ is a semistable model and, by the minimality of $\X$, it dominates $\X$ via a morphism $\Y_\text{reg} \to \X$. This morphism is precisely the contraction of all $(-2)$-curves\footnote{A $(-2)$-curve on a model $\mathcal{W}$ is an irreducible component $Z$ of the special fiber $\mathcal{W}_s$ such that $\deg \omega_{\mathcal{W}/\OO_K}|_Z = 0$.} contained in the exceptional locus of $\rho$.
        
        \item \label{lem:stable_hull_iii} (Criterion for Stability) Assuming $g(X) \ge 2$, the stable hull $\X$ is the stable model of $X$ if and only if for every irreducible component $Z'$ of the strict transform $C_\pi$, we have 
        \[\deg \omega_{\X/\OO_K}|_{Z'} > 0.\]
    \end{enumerate}
\end{lemma}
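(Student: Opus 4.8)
\emph{Strategy.} The plan is to produce the stable hull concretely as the minimal desingularization of $\Y$ followed by the contraction of its $\omega$-trivial exceptional curves, and then to read off \ref{lem:stable_hull_i}--\ref{lem:stable_hull_iii} from this description. So first I would set $\rho\colon\Y_{\mathrm{reg}}\to\Y$ to be the minimal desingularization and argue that $\Y_{\mathrm{reg}}$ is already a semistable model. Over a node of $\Y_s$, normality of $\Y$ and reducedness of $\Y_s$ force the completed local ring to be $\OO_K[\![u,v]\!]/(uv-\pi^d)$ as in Proposition~\ref{prop:thickness}, an $A_{d-1}$ singularity whose minimal resolution is a chain of $(-2)$-curves, leaving the fiber nodal. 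Over a non-nodal point of $\Y_s$ the hypothesis that $X$ already has semistable reduction over $K$ is what I would use: it rules out the regular ``additive'' local picture (which would require a ramified base change) and pins down the admissible local rings to those whose minimal resolutions insert a connected configuration of genus $\le 1$ meeting the strict transform of $\Y_s$ in a single node. In every case $(\Y_{\mathrm{reg}})_s$ is nodal, so $\Y_{\mathrm{reg}}$ is semistable. Then I would define $\X$ by contracting all $(-2)$-curves contained in the exceptional locus $E_\rho$ of $\rho$; since $\rho$ is minimal, $E_\rho$ contains no $(-1)$-curve, so this contracts exactly the $\omega$-trivial exceptional components, it is compatible with the map to $\Y$, and it yields again a normal model with reduced nodal special fiber, i.e.\ a semistable model, equipped with $\pi\colon\X\to\Y$. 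Writing $c\colon\Y_{\mathrm{reg}}\to\X$, the map $c$ contracts only $\omega$-trivial curves and is therefore crepant: $c^{*}\omega_{\X/\OO_K}\simeq\omega_{\Y_{\mathrm{reg}}/\OO_K}$. This is exactly the description in~\ref{lem:stable_hull_ii}.

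\emph{Minimality, uniqueness, and \ref{lem:stable_hull_i}.} Next I would note that any component $Z$ of $E_\pi$ is the image of a component $\tilde Z$ of $E_\rho$ which is neither a $(-1)$-curve (minimality of $\rho$) nor a $(-2)$-curve (it survived the contraction); hence $\deg\omega_{\Y_{\mathrm{reg}}/\OO_K}|_{\tilde Z}>0$, and crepancy gives $\deg\omega_{\X/\OO_K}|_{Z}>0$, which is~\ref{lem:stable_hull_i}. It follows that no component of $E_\pi$ can be contracted while preserving semistability, and no component of the strict transform $C_\pi$ can be contracted without destroying the morphism to $\Y$, so $\X$ is minimal among semistable models dominating $\Y$. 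For the full statement that \emph{every} semistable model $\X'$ over $\Y$ dominates $\X$, I would pass to $\X'_{\mathrm{reg}}$ (again semistable, and dominating $\Y$ hence dominating $\Y_{\mathrm{reg}}$ by minimality of $\rho$) and check that the induced morphism $\X'_{\mathrm{reg}}\to\X$ contracts every curve contracted by $\X'_{\mathrm{reg}}\to\X'$, so that $\X'\to\X$ exists by rigidity. Uniqueness of the stable hull is then immediate from the partial order on models, since two models each dominating the other are isomorphic.

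\emph{The stability criterion~\ref{lem:stable_hull_iii}.} Since $\X_s$ is reduced, base change for the dualizing sheaf gives $\omega_{\X/\OO_K}|_{\X_s}\simeq\omega_{\X_s/k}$, so $\deg\omega_{\X/\OO_K}|_W=\deg\omega_{\X_s/k}|_W$ for every component $W$ of $\X_s$. By Proposition~\ref{prop:stable_via_canonical} the semistable curve $\X_s$ is stable precisely when this degree is positive on every component; the components of $\X_s$ split into those lying in $E_\pi$, where positivity is automatic by~\ref{lem:stable_hull_i}, and those lying in $C_\pi$. Hence $\X_s$ is stable iff $\deg\omega_{\X/\OO_K}|_{Z'}>0$ for every component $Z'$ of $C_\pi$. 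Finally, for $g(X)\ge2$ a semistable model has stable special fiber exactly when it is \emph{the} stable model of $X$ (uniqueness of the stable model), which yields the stated equivalence.

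\emph{Main obstacle.} The only genuinely non-formal step is the claim that $\Y_{\mathrm{reg}}$ is semistable: this requires classifying the singularities that a normal model with reduced special fiber can carry once $X$ is known to have semistable reduction, together with the shape of their minimal resolutions. Everything after that is bookkeeping with $(-1)$- and $(-2)$-curves, crepancy of $c$, base change of $\omega$, and Proposition~\ref{prop:stable_via_canonical}.
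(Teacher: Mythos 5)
Your overall architecture (minimal desingularization, crepant contraction of the $(-2)$-curves, then bookkeeping with $\deg\omega$) is exactly the content of the results of Liu that the paper invokes: the paper's own proof of this lemma is essentially a citation of \cite[Theorem 2.3, Propositions 2.7 and 2.14]{liu2006stable}, plus the short argument for part \ref{lem:stable_hull_iii} that you also give correctly. The difference is that you attempt to prove the cited input from scratch, and that is where the proposal has a genuine gap.

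The gap is precisely the step you flag as ``the only genuinely non-formal step'': that $\Y_{\mathrm{reg}}$ is a semistable model. Your justification --- that the semistable reduction hypothesis ``pins down the admissible local rings to those whose minimal resolutions insert a connected configuration of genus $\le 1$ meeting the strict transform of $\Y_s$ in a single node'' --- is not an argument, and the asserted classification is false at the level of generality of the lemma. The lemma is stated for an arbitrary normal model with reduced special fiber of a curve of arbitrary genus; the singularities of $\Y_s$ can be arbitrary reduced curve singularities (already in the application one has cusps, and nothing in the hypotheses excludes tacnodes, higher $A_n$'s, or worse), the exceptional configuration over such a point has arithmetic genus equal to the $\delta$-invariant (not $\le 1$ in general), and it need not meet the strict transform in a single point (any multibranch singularity gives several intersection points). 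More importantly, you never address the two things that actually need proving: (a) that the exceptional components of $\rho$ occur with multiplicity one in $\mathrm{div}(\pi)$ on $\Y_{\mathrm{reg}}$ --- reducedness of $\Y_s$ does not by itself prevent the resolution from introducing non-reduced components --- and (b) that the resulting configuration is nodal. Both are consequences of the hypothesis that $X$ has semistable reduction over $K$, but extracting them requires comparing $\Y_{\mathrm{reg}}$ with a regular semistable model dominating $\Y$ (which must first be constructed) and analyzing the factorization between the two regular models; this is the actual content of \cite[Theorem 2.3 and Proposition 2.7]{liu2006stable}. Once that step is granted, the rest of your argument (crepancy of the contraction, part \ref{lem:stable_hull_i}, the domination of $\X$ by every semistable model over $\Y$ via rigidity, and the stability criterion \ref{lem:stable_hull_iii} via base change of $\omega$ and Proposition \ref{prop:stable_via_canonical}) is sound and matches the intended proof. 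As written, though, the proposal replaces the one hard theorem by an unproved and partly incorrect local classification, so it does not stand on its own.
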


\begin{proof}
    This is a consequence of the results in \cite{liu2006stable}. We explain how each part follows.

    The existence (after a possible extension of $K$) and uniqueness of the stable hull is the statement of \cite[Theorem 2.3]{liu2006stable}. Furthermore, by \cite[Proposition 2.7]{liu2006stable} the base field $K$ itself is sufficient if $X$ has semistable reduction and the special fiber $\Y_s$ is geometrically reduced, as we have assumed.

    Assertion \ref{lem:stable_hull_i} is \cite[Proposition 2.14 (a)]{liu2006stable}, and assertion \ref{lem:stable_hull_ii} follows from \cite[Proposition 2.14 (b)]{liu2006stable}.

    The final assertion \ref{lem:stable_hull_iii} is also a direct consequence. The semistable model $\X$ is the stable model if and only if its special fiber $\X_s$ is a stable curve. This is equivalent to $\X_s$ being canonically positive, meaning $\deg \omega_{\X/\OO_K}|_Z > 0$ for every component $Z$ of $\X_s$. By \ref{lem:stable_hull_i}, this condition is already satisfied for all components of the exceptional locus $E_\pi$. Since $\X_s$ is the union of $E_\pi$ and the strict transform $C_\pi$, the stability of $\X$ is therefore equivalent to the condition holding for the remaining components, namely those of $C_\pi$.
\end{proof}

With these two lemmas, we can now finally prove the converse of our Main Theorem \ref{thm:main_theorem}.

\begin{theorem}
    Let $X$ be a smooth plane quartic over $K$. Assume that a GIT-stable plane model $\X_0$ of $X$ and the stable model $\X$ of $X$ both exist over $K$. Then the stable reduction $\oo{X}$ is non-hyperelliptic, and the stable model $\X$ is the unique minimal semistable model dominating the GIT-stable model $\X_0$. The corresponding domination morphism $\phi\colon \X \to \X_0$ induces a morphism $\oo{\phi}$ on the special fiber that contracts the $1$-tails of $\oo{X}$ to the cusps of $\X_{0,s}$ and is an immersion elsewhere.
\end{theorem}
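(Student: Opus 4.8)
The plan is to realize the GIT-stable plane model $\X_0$ as a normal model to which the stable-hull machinery of Lemma~\ref{lem:stable_hull} applies, to show that the stable hull of $\X_0$ is already the stable model $\X$, and then to read off both the domination statement and non-hyperellipticity from the structural properties of GIT-stable quartics collected in Lemma~\ref{lem:structure_GIT_stable_quartic}. First I would check that $\X_0$ is a \emph{normal} model: being a hypersurface in $\PP^2_{\OO_K}$ it is Cohen--Macaulay, hence satisfies $(S_2)$; since its special fiber $\X_{0,s}$ is a GIT-stable quartic it is reduced, so the local ring of $\X_0$ at the generic point of any component of $\X_{0,s}$ is a one-dimensional Noetherian local domain with maximal ideal $(\pi)$, i.e.\ a discrete valuation ring, and together with regularity along the generic fiber this gives $(R_1)$, so $\X_0$ is normal. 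As $\X$ exists over $K$, the curve $X$ has semistable reduction over $K$, so Lemma~\ref{lem:stable_hull} produces the stable hull $\X'$ of $\X_0$: the unique minimal semistable model dominating $\X_0$ via a birational morphism $\pi\colon \X'\to\X_0$, with exceptional locus $E_\pi$ and strict transform $C_\pi$ of $\X_{0,s}$.

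Next I would identify $C_\pi$ with the partial normalization $\tilde C$ of $C:=\X_{0,s}$ at its $r$ cusps, in the notation of Lemma~\ref{lem:structure_GIT_stable_quartic}. Indeed $C_\pi$ is a subcurve of the semistable fiber $\X'_s$, hence has only nodes, and $C_\pi\to C$ is proper birational; such a morphism is an isomorphism over every smooth point of $C$, and over each unibranch cusp it must be the normalization, since a nodal curve cannot have a point with a single local branch mapping isomorphically onto a cusp. Writing $x_i$ also for the (smooth) preimage of the $i$-th cusp on $\tilde C$, the component $C_\pi$ then meets the exceptional locus $E_\pi$ only at the points $x_i$. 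For a component $Z'$ of $C_\pi$ carrying $r_{Z'}$ cusp-preimages and $n_{Z'}$ nodes of $\tilde C$, the base-change isomorphism $\omega_{\X'/\OO_K}|_{\X'_s}\simeq\omega_{\X'_s/k}$ together with the adjunction formula for nodal curves (as in the proof of Proposition~\ref{prop:stable_via_canonical}) gives
\[
    \deg\bigl(\omega_{\X'/\OO_K}|_{Z'}\bigr)=2g(Z')-2+n_{Z'}+r_{Z'}=\deg\bigl(\omega_{\tilde C/k}(S)|_{Z'}\bigr),
\]
which is positive because $(\tilde C,S)$ is stable by Lemma~\ref{lem:structure_GIT_stable_quartic}\,\ref{lem:structure_GIT_stable_quartic_iv}. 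Combining this with Lemma~\ref{lem:stable_hull}\,\ref{lem:stable_hull_i} (positivity of $\omega_{\X'/\OO_K}$ along $E_\pi$), the criterion Lemma~\ref{lem:stable_hull}\,\ref{lem:stable_hull_iii} shows that $\X'$ is the stable model, so $\X=\X'$. In particular the connected piece $E_i\subset E_\pi$ over the $i$-th cusp has arithmetic genus $1$ and is attached to $\tilde C$ at the single node $x_i$; these $E_i$ are exactly the $1$-tails of $\oo X$, the core is $\oo X_c=\tilde C$ (which has no separating nodes, being $2$-inseparable by Lemma~\ref{lem:structure_GIT_stable_quartic}\,\ref{lem:structure_GIT_stable_quartic_ii}), and $r$ equals the number of cusps of $\X_{0,s}$.

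Finally, since $\X=\X'$ dominates $\X_0$ via $\pi$ and $\X'$ is the unique minimal semistable model dominating $\X_0$, the domination statement follows with $\phi=\pi$; on special fibers $\oo\phi$ contracts each $1$-tail $E_i$ to the cusp $x_i$ of $\X_{0,s}$ and restricts on $\oo X_c=\tilde C$ to the normalization $\tilde C\to\X_{0,s}$ at the cusps, which is an isomorphism --- hence an immersion --- away from the $x_i$. For non-hyperellipticity, one decomposes $\oo X$ at its separating nodes $x_1,\dots,x_r$: since every $1$-tail is hyperelliptic (Lemma~\ref{lem:always_hyperelliptic}\,\ref{lem:always_hyperelliptic_i}), Lemma~\ref{lem:decomp_sep_node} reduces hyperellipticity of $\oo X$ to hyperellipticity of $(\oo X_c,P_c)=(\tilde C,S)$, exactly as in the proof of Proposition~\ref{prop:2-insep_geom_condition_hyperelliptic}; but $(\tilde C,S)$ is not hyperelliptic by Lemma~\ref{lem:structure_GIT_stable_quartic}\,\ref{lem:structure_GIT_stable_quartic_iv}, so $\oo X$ is non-hyperelliptic.

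I expect the main obstacle to be the middle step: verifying that $C_\pi$ is precisely $\tilde C$, rather than a partial normalization at some other locus, and that each of its components meets exactly one exceptional piece $E_i$ per cusp lying on it, so that $\deg(\omega_{\X'/\OO_K}|_{Z'})$ genuinely coincides with the log-canonical degree of $(\tilde C,S)$. This rests on the local behaviour of the stable hull at the cuspidal points of the special fiber; once it is pinned down, the stability criterion of Lemma~\ref{lem:stable_hull} and the reduction of hyperellipticity to the core are formal consequences of results already established.
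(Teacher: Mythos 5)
Your strategy coincides with the paper's: establish normality of $\X_0$, invoke the stable hull (Lemma~\ref{lem:stable_hull}), identify the strict transform with the partial normalization of $\X_{0,s}$ at its cusps, verify stability of the special fiber via Lemma~\ref{lem:structure_GIT_stable_quartic}~\ref{lem:structure_GIT_stable_quartic_iv} combined with relative minimality of the hull, and deduce non-hyperellipticity from the decomposition lemma. The step you leave open --- and correctly flag as the main obstacle --- is, however, a genuine gap rather than a routine verification: you must show that the exceptional locus $E_\pi$ is supported entirely over the cusps, i.e.\ that $\pi$ is an isomorphism in a neighbourhood of every node of $\X_{0,s}$. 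Your observation that $C_\pi\to C$ is finite birational, an isomorphism over smooth points, and ``the normalization over each cusp'' does not rule out the stable hull separating the two branches at a node of $\X_{0,s}$ and inserting exceptional components between them (the model $\X_0$ is in general \emph{singular} at a node of thickness $d>1$, so something nontrivial does happen there in the resolution). If such components survived, both your genus count $\sum_i g(E_i)=r$ and the identity $\deg(\omega_{\X'/\OO_K}|_{Z'})=\deg(\omega_{\tilde C/k}(S)|_{Z'})$ would fail, and with them the application of Lemma~\ref{lem:stable_hull}~\ref{lem:stable_hull_iii}.

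The paper closes this gap using Lemma~\ref{lem:stable_hull}~\ref{lem:stable_hull_ii}: the stable hull is obtained from the minimal desingularization $\X_{0,\mathrm{reg}}\to\X_0$ by contracting all $(-2)$-curves, and the minimal desingularization of a node of thickness $d$ (an arithmetic $A_{d-1}$-singularity) is a chain of $d-1$ $(-2)$-curves (see \cite[Corollary 10.3.25]{liu}); these are all contracted again, so the node is unchanged and $E_\pi$ lies over the cusps only. (One also notes that $\X_0$ is regular at smooth points of its reduced special fiber, so nothing is modified there.) With this in place, Zariski connectedness for the birational morphism to the normal scheme $\X_0$ gives $E_\pi=\bigsqcup_{i=1}^r E_i$ with $E_i\cap C_\pi=\{x_i\}$, and the rest of your argument goes through exactly as in the paper. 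Your use of Lemma~\ref{lem:stable_hull}~\ref{lem:stable_hull_iii} on the components of $C_\pi$ is a slightly more streamlined packaging than the paper's separate stability check of the pointed tails $(E_i,x_i)$ via adjunction, but the content is the same.
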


\begin{proof}
    We first verify that the hypotheses of Lemma \ref{lem:stable_hull} are met. The GIT-stable model $\X_0$ is normal because its special fiber is reduced (see \cite[Lemma 1.1]{liu1999models}). Since we assume the stable model of $X$ exists over $K$, we know that $X$ has semistable reduction. The hypotheses of Lemma \ref{lem:stable_hull} are therefore met, which guarantees the existence of a unique stable hull of $\X_0$. We denote this stable hull by $\X$ and the corresponding domination morphism by $\phi\colon \X \to \X_0$. A priori, it is not clear that this stable hull is the stable model of $X$; we now verify this.

    Let $C := \X_s$ be the special fiber of the stable hull, let $C_0 := \X_{0,s}$, and let $C_\phi$ and $E_\phi$ be the strict transform and exceptional locus of $\phi$, respectively, so that set-theoretically $C = C_\phi \cup E_\phi$. The morphism $\phi$ modifies $\X_0$ to achieve semistability by resolving only the non-semistable singularities, which are the cusps. The nodal singularities of $C_0$ are preserved, meaning $\phi$ is an isomorphism in a neighborhood of each node. To see this, recall from Lemma \ref{lem:stable_hull} \ref{lem:stable_hull_ii} how the stable hull is constructed: one first takes the minimal desingularization $\rho\colon \X_{0, \text{reg}} \to \X_0$, and then contracts all resulting $(-2)$-curves to obtain $\X$. The minimal desingularization of an arithmetic $A_{d-1}$ singularity (a node of thickness $d$) consists of a chain of $d-1$ $(-2)$-curves (see \cite[Corollary 10.3.25]{liu}). The stable hull construction contracts these exact same curves, so the net result is that the original nodal singularity is unchanged. At the cusps of $C_0$, $C$ must only have nodes. Since cusps are unibranch singularities, the strict transform $C_\phi$ must be smooth at the unique point lying over each cusp. Therefore, the induced map $\phi|_{C_\phi} \colon C_\phi \to C_0$ is the partial normalization of $C_0$ at its $r$ cusps. Let $S = \{x_1, \dots, x_r\} \subset C_\phi$ be the set of preimages of the cusps.

    Since $\X_0$ is normal, the fibers of the birational morphism $\phi$ are connected (by Zariski's Connectedness Principle, see \cite[Corollary 5.3.16]{liu}). The exceptional locus $E_\phi$ thus consists of a disjoint union of $r$ connected components, $E_\phi = \bigsqcup_{i=1}^r E_i$, where each $E_i$ is the fiber over a cusp and intersects $C_\phi$ at the single point $x_i$.

    To show that $\X$ is the stable model, we must show its special fiber $C$ is a stable curve. Since $C$ is formed by attaching the components $E_i$ to the curve $C_\phi$ at the nodes in $S$, by the decomposition lemma (Lemma \ref{lem:stable_decomposition}), $C$ is stable if and only if its pointed components, $(C_\phi, S)$ and $(E_i, x_i)$ for all $i$, are stable.
    \begin{itemize}
        \item The stability of the pointed curve, $(C_\phi, S)$, is given directly by Lemma \ref{lem:structure_GIT_stable_quartic} \ref{lem:structure_GIT_stable_quartic_iv}.
        \item To prove the stability of the pointed exceptional components, $(E_i, x_i)$, we apply Proposition \ref{prop:stable_via_canonical}. We must show that $\deg(\omega_{E_i/k}(x_i)|_Z) > 0$ for every irreducible component $Z$ of $E_i$. By the adjunction formula, the dualizing sheaf of $C$ restricts as $\omega_{C/k}|_{E_i} \simeq \omega_{E_i/k}(x_i)$. Restricting this isomorphism further to a component $Z \subset E_i$ gives $\deg(\omega_{E_i/k}(x_i)|_Z) = \deg(\omega_{C/k}|_Z)$. The latter term is positive by Lemma \ref{lem:stable_hull} \ref{lem:stable_hull_i}.
    \end{itemize}
    Since all components of its decomposition are stable, $C$ is a stable curve. By uniqueness, $\X$ is the stable model of $X$, and $C = \oo{X}$ is its stable reduction.

    The components $E_i$ are attached to $C_\phi$ at the separating nodes $x_i$. The genus formula for this decomposition is $g(\oo{X}) = g(C_\phi) + \sum_{i=1}^r g(E_i)$. By Lemma \ref{lem:structure_GIT_stable_quartic}, $g(C_\phi)=3-r$. Since $g(\oo{X})=3$, it follows that $\sum g(E_i) = r$. As we have already established that each $(E_i, x_i)$ is a stable pointed curve, its genus must be at least $1$ (see Remark \ref{rem:stable_curves_low_genus}). We must therefore have $g(E_i)=1$ for all $i$. This shows that the components $E_i$ are precisely the $1$-tails of $\oo{X}$. The remaining curve, $C_\phi$, is 2-inseparable by Lemma \ref{lem:structure_GIT_stable_quartic} \ref{lem:structure_GIT_stable_quartic_ii} and thus has no separating nodes itself. By definition, it is the core of $\oo{X}$.

    Finally, we prove that $\oo{X}$ is non-hyperelliptic. By the decomposition lemma (Lemma \ref{lem:decomp_sep_node}), if $\oo{X}$ were hyperelliptic, then its pointed core $(C_\phi, S)$ would also have to be hyperelliptic. This contradicts Lemma \ref{lem:structure_GIT_stable_quartic}  \ref{lem:structure_GIT_stable_quartic_iv}. Therefore, $\oo{X}$ is non-hyperelliptic. This completes the proof, as the construction of $\X$ as the stable hull of $\X_0$ and the analysis of the morphism $\phi$ have established all claims made in the theorem.
\end{proof}

\begin{remark}
    The contraction morphism $\phi\colon \X \to \X_0$ derived from the stable hull construction is precisely the morphism described in Theorem \ref{thm:main_thm_2-insep_tails}.
\end{remark}

\subsection{Final Remarks}

To conclude this thesis, we offer some final remarks on how our results provide insight into the computation of the stable model of a smooth plane quartic. From a practical viewpoint, one starts with a smooth plane quartic $X$ over $K$ and must first find an extension $L/K$ over which the stable model $\X$ of $X_L := X \otimes_K L$ exists. Using the methods in \cite{stern2025models}, one can compute an extension $K'/K$ such that the base-changed curve $X_{K'} := X \otimes_K K'$ admits a GIT-semistable model $\X_0$. If this model is GIT-stable, it is the \emph{unique} GIT-semistable model; otherwise, no GIT-stable model exists, and there may be multiple non-isomorphic GIT-semistable models (cf.~\cite[\S1.3, \S6.3]{stern2025models}). Our Main Theorem \ref{thm:main_theorem} provides the geometric interpretation of this dichotomy: a GIT-stable model fails to exist if and only if the stable reduction $\oo{X}$ is hyperelliptic. Thus, by testing the computed model $\X_0$ for GIT-stability, one can algorithmically determine whether the stable reduction $\oo{X}$ is hyperelliptic. Characterizing when $\oo{X}$ is a \emph{smooth} hyperelliptic curve (so-called ``good hyperelliptic reduction'') was already investigated in \cite{lercier2019stable}.

In the hyperelliptic case, it is not clear how to proceed to determine the reduction type or to compute the stable model. Proposition \ref{prop:classification_hyp_3} shows that some combinatorial types are always non-hyperelliptic and thus excluded in this scenario, but many possibilities remain. If $\oo{X}$ is assumed to be additionally smooth, the theory of \emph{toggle models} developed in \cite{lercier2019stable, lercier2021reduction} can be applied; assuming the residue characteristic is not $2$, these models can be computed using a specific set of bitangents. In \cite[Remark 2.10]{lercier2021reduction}, the authors mention that this notion can be easily extended to the case where $\oo{X}$ is irreducible and hyperelliptic, and they indicate that it may be extended to further types. In view of Theorem \ref{thm:main_thm_2-insep_tails}, it seems plausible that the theory of toggle models could be extended to all hyperelliptic reductions $\oo{X}$ whose core is 2-inseparable. It may be possible to adapt the methods in \cite{lercier2019stable} to compute the stable model in these cases. The case when the reduction $\oo{X}$ has a 2-separable core seems to be of a different nature. For example, if the reduction has type \texttt{1=1} and the nodes have thicknesses $1$ and $2$, the only possible contraction morphism $\X \to \PP^2_{\OO_K}$ is given by the dualizing sheaf $\omega_{\X/\OO_K}$, as any non-trivial twists would result in a line bundle not generated by its global sections. The resulting plane model will have as its special fiber two double lines crossing at one point. This is not a GIT-semistable model, which suggests that the theory of toggle models does not apply in this case. The table in \cite[Table 5.1]{van2025reduction} suggests a connection between a hyperelliptic stable reduction with a 2-separable core and the appearance of $A_3$-singularities on the special fiber of a GIT-semistable model. This connection is not yet entirely clear.

In the non-hyperelliptic case, our results provide a clearer path forward. Here, any GIT-semi\-stable model $\X_0$ must be GIT-stable. If its special fiber, $\X_{0,s}$, has no cusps, then by Proposition \ref{prop:main_thm_2insep}, this model is in fact the stable model. This occurs if and only if the stable reduction $\oo{X}$ has no $1$-tails; in such cases, our approach yields a direct method to compute the stable model. In the general case, the quartic $\X_{0,s}$ still largely determines the reduction type of $X$. Each cusp on $\X_{0,s}$ corresponds to a $1$-tail on the stable reduction $\oo{X}$, allowing for an almost complete determination of the combinatorial type; the only remaining ambiguity is whether a $1$-tail is an elliptic tail or a pig tail. Our Main Theorem also describes how, in theory, to obtain the stable model $\X$ from the GIT-stable model $\X_0$: it is the minimal semistable model dominating $\X_0$. Crucially, this relationship only holds over a field extension $L/K$ where both models exist. Therefore, a crucial step toward a computational algorithm is the explicit determination of this field extension $L/K$. The subsequent construction of the stable model $\X$ from the GIT-stable model $\X_0$ is then achieved via a direct weighted blow-up at each cusp. The development of a complete procedure that details how to find the required extension and perform this resolution is the subject of the companion articles \cite{quarticpaper, wewers2026explicit}.

\appendix
\section{Riemann-Roch Proofs} \label{app}

The proof of Theorem \ref{thm:main_thm_2-insep_tails} hinges on the analysis of the linear system $(\oo{\LL}_c, W)$ on the 2-in\-sep\-a\-ra\-ble core $C_c$; by Lemma \ref{lem:contraction_of_tails}, the properties of this system determine both the existence of the contraction morphism and the geometry of its image. While the proof in Chapter \ref{chap:3} identifies this system with the canonical system of an associated curve $C'$ with cusps, this appendix provides a more direct approach. We analyze $(\oo{\LL}_c, W)$ using standard Riemann-Roch calculations, which highlights precisely how the geometric conditions for hyperellipticity from Proposition \ref{prop:2-insep_geom_condition_hyperelliptic} arise. This method is particularly illustrative for the case $r=3$, where an explicit computation reveals the crucial dependence on the field's characteristic.

The proof proceeds by a case-by-case analysis for $r \geq 1$, mirroring the structure of Proposition \ref{prop:2-insep_geom_condition_hyperelliptic}. For notational simplicity, we replace the symbols $C_c$ with $C$ and $\oo{\LL}_c$ with $\LL$. Using this new notation, the crucial step in proving Theorem \ref{thm:main_thm_2-insep_tails} is the following proposition.

\begin{proposition}
    Let $r \in \{1, 2, 3\}$, and let $(C, \{x_1, \dots, x_r\})$ be a stable, 2-inseparable pointed curve of arithmetic genus $g = 3-r$ over an algebraically closed field $k$. Let $\LL := \omega_{C/k}(\sum_{i=1}^r 2x_i)$, and let
    \[
        W := \{ s \in H^0(C, \LL) \ssep \Res_{x_i}(s) = 0 \text{ for all $i = 1, \dots, r$} \}.
    \]
    By Lemma \ref{lem:subspace_W}, the vector space $W$ is 3-dimensional. Then the associated linear series $(\LL, W)$ is base-point free and induces a morphism $\phi \colon C \to \PP^2_k$.
    \begin{enumerate}[label=\StatementListLabel]
        \item If $(C, \{x_1, \dots, x_r\})$ is not hyperelliptic, then $\phi$ is a birational morphism onto a plane quartic $C'$. The map $\phi$ is an immersion away from the marked points $\{x_i\}$; it maps the nodes of $C$ to distinct nodes on $C'$ and each marked point $x_i$ to a distinct ordinary cusp on $C'$.
        \item If $(C, \{x_1, \dots, x_r\})$ is hyperelliptic, then $\phi$ is a degree-two morphism onto a smooth plane conic $Q$, and the points $\phi(x_i)$ are distinct points on $Q$.
    \end{enumerate}
\end{proposition}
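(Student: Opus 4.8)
The plan is to argue case by case on $r\in\{1,2,3\}$, where $g:=g(C)=3-r\in\{2,1,0\}$, while isolating one uniform computation for everything that happens \emph{at the marked points}. Throughout, $\deg\LL=(2g-2)+2r=4$ and $\dim_k W=3$ by Lemma~\ref{lem:subspace_W}. Since any basis of $W$ is $k$-linearly independent, $\phi(C)$ is non-degenerate in $\PP^2_k$, hence $\deg\phi(C)\ge 2$; as $\deg\phi(C)\cdot\deg(\phi\colon C\to\phi(C))=\deg\LL=4$, the only possibilities are that $\phi$ is birational onto a plane quartic or that $\phi$ is $2$-to-$1$ onto a plane conic $Q$ (a double line would make $\phi(C)$ degenerate, and a pair of lines would force $C$, a connected curve, to be dominated birationally by the disconnected normalization of two crossing lines or to have the wrong genus; so $Q$ is smooth). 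The proof therefore reduces to three things: base-point-freeness; deciding which alternative occurs, which will turn out to be governed exactly by the hyperellipticity criteria of Proposition~\ref{prop:2-insep_geom_condition_hyperelliptic}; and identifying the local picture of $\phi$ at the $x_i$ and at the remaining points.

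For the local analysis at a marked point $x=x_i$: the curve $C$ is smooth at $x$ with a local parameter $t$, and $\LL=\omega_{C/k}(\sum 2x_j)$ is generated near $x$ by $\omega_0=t^{-2}\,dt$. Writing a local section of $\LL$ as $(a_{-2}t^{-2}+a_{-1}t^{-1}+a_0+\cdots)\,dt$, the defining condition $\Res_x=0$ of $W$ is precisely $a_{-1}=0$ (cf.\ Proposition~\ref{prop:local_conditions}); so, read against the generator $\omega_0$, the sections of $W$ span in $\widehat{\OO}_{C,x}\simeq k[\![t]\!]$ exactly the subring $k\oplus t^2 k[\![t]\!]\simeq k[\![t^2,t^3]\!]$, the completed local ring of an ordinary cusp. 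Short Riemann--Roch counts (comparing $h^0$ of $\LL$, $\LL(-x_i)$, $\LL(-3x_i)$, after taking the $r$ residue constraints into account) show that $W$ surjects onto all jets realized in this subring; hence $x_i$ is not a base point, and whenever $\phi$ is a morphism its restriction to a formal neighbourhood of $x_i$ is the normalization of an ordinary cusp, i.e.\ $\phi$ produces an $A_2$-singularity at $\phi(x_i)$. Away from $\{x_1,\dots,x_r\}$ the conditions defining $W$ are locally vacuous, so base-point-freeness and separation of points and tangents there follow from $\deg\LL=4$ by routine Riemann--Roch; in particular $\phi$ is an immersion at every smooth non-marked point.

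The decisive step is the dichotomy. If $(C,P)$ is hyperelliptic, then by Lemma~\ref{lem:unique_2_inse} the curve $C$ is irreducible, and the associated degree-two morphism $f\colon C\to\PP^1_k$ (the quotient by the hyperelliptic involution, composed with the partial Frobenius of Remark~\ref{rem:degree2_map} in the case $\Char k=2$, $r=3$) is ramified over $\phi(x_i)$ for each $i$; using the hyperellipticity conditions of Proposition~\ref{prop:2-insep_geom_condition_hyperelliptic} ($\omega_C\simeq\OO_C(2x_1)$ for $r=1$; $\OO_C(2x_1)\simeq\OO_C(2x_2)$ for $r=2$; $\Char k=2$ for $r=3$) one checks that $\LL$ is the square of the $g^1_2$ underlying $f$ and that $W$ is the image of $\operatorname{Sym}^2$ of that pencil, so $\phi=\nu_2\circ f$ with $\nu_2\colon\PP^1_k\hookrightarrow\PP^2_k$ the degree-two Veronese; thus $\phi$ is $2$-to-$1$ onto the smooth conic $Q=\nu_2(\PP^1_k)$, and the $\phi(x_i)$, being images of distinct ramification points of $f$, are distinct (trivially so in the Frobenius case). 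For $r=3$ this is transparent: normalizing $x_1,x_2,x_3$ to $0,1,\infty$ on $C=\PP^1_k$ and writing sections of $\LL\simeq\OO_{\PP^1}(4)$ as $P(z)\,g$ with $g=\frac{dz}{z^2(z-1)^2}$, $\deg P\le 4$, the residue conditions at $0$ and $1$ cut out $W=\langle (1-2z)g,\ z^2 g,\ z^3(z-2)g\rangle$, so $\phi=[\,1-2z:z^2:z^3(z-2)\,]$, which is birational onto a three-cuspidal quartic when $\Char k\neq 2$ and collapses to $[\,1:z^2:z^4\,]=\nu_2\circ F$ when $\Char k=2$. Conversely, if $(C,P)$ is not hyperelliptic, a $2$-to-$1$ $\phi$ would factor as $\nu_2\circ f$ through a $g^1_2$ $|M|$ with $M^{\otimes 2}\simeq\LL$; expressing the three coordinates as squares and products of a basis of $H^0(C,M)$ and imposing the residue conditions forces $f$ to be ramified over each $\phi(x_i)$, i.e.\ $M\simeq\OO_C(2x_i)$ for all $i$, which is exactly the hyperellipticity condition of Proposition~\ref{prop:2-insep_geom_condition_hyperelliptic} in the relevant range of $r$ — a contradiction. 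Hence $\phi$ is birational in the non-hyperelliptic case; being finite (it contracts no component) and birational, $\phi$ realizes $C$ as a partial normalization of the plane quartic $C'=\phi(C)$ with $g(C')-g(C)=r$, and by the local analysis this entire genus gap is concentrated in the $r$ ordinary cusps $\phi(x_i)$, so $\phi$ is a local isomorphism elsewhere — in particular the nodes of $C$ map to ordinary nodes of $C'$ — and the $\phi(x_i)$ are pairwise distinct and distinct from the images of the nodes (else two singularity contributions would collide, breaking the genus count). This gives all assertions, and $C'$ is GIT-stable by Definition~\ref{def:git-stability}.

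The main obstacle is the non-hyperelliptic half of the dichotomy: ruling out a $2$-to-$1$ $\phi$. This requires translating the residue conditions defining $W$ into precise ramification data for the pencil $|M|$, and the translation is characteristic-sensitive; the delicate point is $r=3$, where these conditions are compatible with a degree-two map only when $\Char k=2$ (through Frobenius), matching Proposition~\ref{prop:2-insep_geom_condition_hyperelliptic} and Remark~\ref{rem:char2_dependency}. Everything else — base-point-freeness, the immersion property away from the $x_i$, separation of points, and the local cusp model at each $x_i$ — is routine Riemann--Roch bookkeeping once the normal form $k[\![t^2,t^3]\!]$ has been pinned down.
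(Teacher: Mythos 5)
Your architecture is genuinely different from the paper's. The paper classifies all \emph{problematic} length-$2$ subschemes $Z$ (those where $W \to H^0(C,\LL|_Z)$ fails to surject) case by case, using Serre duality for $r=1$, a residue/Riemann--Roch interplay for $r=2$, and an explicit basis for $r=3$; the cusp structure then falls out of a $\delta$-invariant count. You instead establish a global dichotomy (birational onto a quartic versus $2$-to-$1$ onto a conic), resolve it via the ramification of the induced $g^1_2$ at the marked points, and let the genus count $\sum\delta = g(C')-g(C)=r$ do all the separation work in the birational case. Where it works, your route is slicker: the ``$\phi$ non-immersive at $x_i$ forces $f$ ramified at $x_i$, hence $\OO_C(2x_i)\simeq M$ for all $i$'' argument recovers exactly the hyperellipticity criteria of Proposition~\ref{prop:2-insep_geom_condition_hyperelliptic} in a uniform way, and your explicit $r=3$ basis and the Frobenius degeneration in characteristic $2$ are correct.

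However, there are two genuine gaps. First, base-point-freeness is load-bearing and is only asserted. Your genus-count argument presupposes that $\phi$ is already a finite morphism, so BPF must be proved independently, and it is not ``routine Riemann--Roch'' once $W\subsetneq H^0(C,\LL)$: for $r=2$, both $W$ and $H^0(C,\mathcal{I}_p\otimes\LL)$ are $3$-dimensional hyperplanes in the $4$-dimensional $H^0(C,\LL)$, and you must rule out their coincidence, which requires exactly the kind of residue computation the paper performs (showing $\Res_{x_1}$ is nonzero on $H^0(C,\mathcal{I}_p\otimes\LL)$); the same issue arises for the $3$-jet surjectivity you invoke at the $x_i$. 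Second, the reducible case ($r=1$, the binary curve) is where the paper works hardest, and your treatment of it collapses. The degree identity $\deg\phi(C)\cdot\deg\phi=4$ does not hold componentwise-blind for reducible $C$: here $\deg\LL|_{Z_1}=3$ and $\deg\LL|_{Z_2}=1$, so one must exclude, e.g., $Z_1$ mapping $3$-to-$1$ onto a line while $Z_2$ maps onto a different line (image a pair of lines, which is a nondegenerate conic). Your parenthetical about ``the disconnected normalization of two crossing lines'' does not address this. The configuration can be excluded --- $H^0(C,\mathcal{J}_{Z_1}\otimes\LL)\simeq H^0(\PP^1_k,\OO(-2))=0$, so $W$ injects into $H^0(Z_1,\LL|_{Z_1})$ and the image of $Z_1$ is nondegenerate, hence a cubic --- but some argument of this kind (the paper uses a multirank analysis of $\Hom(\mathcal{I}_Z,\mathcal{I}_{2x_1})$) is indispensable and is missing from your proposal. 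The identification $W=f^*\operatorname{Sym}^2 H^0(M)$ in the hyperelliptic cases $r=1,2$ is likewise asserted rather than checked, though it is true and easier to supply.
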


\begin{proof}
    To determine the geometric properties of the rational map $\phi$ induced by the 3-di\-men\-sion\-al linear series $(\LL, W)$, we analyze the restriction map for zero-dimensional closed subschemes $Z \subset C$:
    \[
        \rho_Z\colon W \to H^0(C, \LL|_Z).
    \]
    The behavior of $\phi$ is determined by the surjectivity of $\rho_Z$ (see, e.g., the local criterion in \cite[\href{https://stacks.math.columbia.edu/tag/0E8R}{Section 0E8R}]{stacks-project}):
    \begin{itemize}
        \item The linear series $(\LL, W)$ is base-point-free, and thus $\phi$ is a morphism, if and only if $\rho_Z$ is surjective for all $Z$ of length 1, i.e., for all closed points.
        \item The morphism $\phi$ is a closed immersion if and only if $\rho_Z$ is surjective for all $Z$ of length $2$. This criterion ensures both injectivity and the immersion property.
    \end{itemize}
    We focus on the closed immersion criterion. Since $\dim(W)=3$ and $\dim(H^0(C, \LL|_Z))=2$ for a length-2 subscheme $Z$, surjectivity of $\rho_Z$ is equivalent to its kernel having dimension one. The kernel consists of sections in $W$ vanishing on $Z$:
    \[
        \ker(\rho_Z) = W \cap H^0(C, \mathcal{I}_Z \otimes \LL),
    \]
    where $\mathcal{I}_Z$ is the ideal sheaf of $Z$. We define a \emph{problematic subscheme} as a length-2 subscheme $Z$ for which this condition fails, i.e., for which $\dim(\ker(\rho_Z)) \ge 2$.
    
    The proof proceeds by identifying all problematic subschemes. This also establishes base-point-freeness, since the surjectivity of $\rho_Z$ for a length-2 subscheme $Z$ whose support includes a point $p$ implies that the restriction map to $p$ is also surjective, meaning $p$ is not a base point. The subsequent analysis will show that such a subscheme $Z$ exists for any point $p$. The geometric nature of $\phi$ is then determined by the configuration of these problematic subschemes, which we will show corresponds to the hyperellipticity of the pointed curve $(C, \{x_i\})$.

    \begin{prooflist}
        \item Assume $(C, \{x_1, \dots, x_r\})$ is not hyperelliptic. We will show that the only problematic subschemes are $Z=2x_i$ for $i=1,\dots,r$. Consequently, the rational map $\phi$ is a morphism that is both injective --- since $\rho_{p+q}$ is surjective for any distinct points $p, q \in C$ --- and an immersion at any point not in $\{x_i\}$. The map $\phi$ is therefore a birational morphism onto its image $C' = \phi(C)$. Since $\deg(\LL)=4$, $C'$ is a plane quartic of arithmetic genus $g(C')=3$.
    
        The difference in arithmetic genera, $g(C') - g(C) = 3 - (3-r) = r$, is accounted for by the singularities created by $\phi$. Since $\phi$ is an immersion away from the marked points $\{x_i\}$, this difference must be concentrated at the images of these points, $\phi(x_i)$. As the immersion fails at each $x_i$ (since $Z=2x_i$ is a problematic subscheme), each image point $\phi(x_i)$ is a unibranch singularity with $\delta$-invariant $\delta_{\phi(x_i)} \ge 1$. As $\phi$ is injective, these $r$ points are distinct, and the sum of their $\delta$-invariants must be $r$. Thus, $\delta_{\phi(x_i)} = 1$ for each $i$, which means each $\phi(x_i)$ is an ordinary cusp.
    
        \item Assume $(C, \{x_1, \dots, x_r\})$ is hyperelliptic, and let $\pi\colon C \to \PP^1_k$ be the associated degree-two morphism as in Remark \ref{rem:degree2_map} (note that the target is $\PP^1_k$ since $C$ is 2-inseparable). We will show that the problematic subschemes are precisely the fibers of $\pi$. If this holds, the linear system $W$ fails to separate points and tangent vectors in the fibers of $\pi$, so the morphism $\phi$ must factor through $\pi$ as $\phi = \psi \circ \pi$, for some morphism $\psi\colon \PP^1_k \to \PP^2_k$. This factorization implies that the linear series $(\LL, W)$ is the pullback of a linear series on $\PP^1_k$ defined by a vector space $U$ that induces $\psi$, i.e., $W = \pi^*(U)$.

        A degree calculation shows that $\psi$ is induced by a line bundle of degree $\deg(\LL)/\deg(\pi) \allowbreak = 4/2=2$. Since $\pi$ is a surjective morphism, the pullback map $\pi^*$ on global sections is injective. This provides an isomorphism between $U$ and its image $W$, so we have $\dim(U)=\dim(W)=3$. This forces $(\mathcal{O}(2), U)$ to be the complete linear system $|\mathcal{O}(2)|$ on $\PP^1_k$. Therefore, $\psi$ is the 2-uple Veronese embedding. The image of $\phi$ is a smooth plane conic, and $\phi$ is a degree-two morphism. Our case-by-case analysis will also verify that the points $\phi(x_i)$ are distinct.
    \end{prooflist}

    We now identify the problematic subschemes in a case-by-case analysis, applying the Riemann-Roch and Serre Duality theorems for coherent sheaves on the Gorenstein curve $C$ to verify the preceding claims.

    \begin{case}[$r=1$, irreducible]
        The curve $C$ is an irreducible curve of genus $g=2$ with a single marked point $x_1$, and $\LL = \omega_{C/k}(2x_1)$. By \eqref{eq:dimension_h0}, $h^0(C, \LL)=3$. Since $W$ is a 3-dimensional subspace of $H^0(C, \LL)$, we have $W=H^0(C, \LL)$.
    
        A length-2 subscheme $Z$ is problematic if and only if $h^0(C, \mathcal{I}_Z \otimes \LL) \ge 2$. As $\deg(\mathcal{I}_Z \otimes \LL) = 2$ and $g(C)=2$, the Riemann-Roch theorem implies this is equivalent to $h^1(C, \mathcal{I}_Z \otimes \LL) \ge 1$. By Serre Duality, this holds if and only if $\mathrm{Hom}(\mathcal{I}_Z \otimes \LL, \omega_{C/k}) \neq 0$. Substituting $\LL = \omega_{C/k}(2x_1)$, this condition becomes $\mathrm{Hom}(\mathcal{I}_Z \otimes \omega_{C/k}(2x_1), \omega_{C/k}) \neq 0$, which simplifies to $\mathrm{Hom}(\mathcal{I}_Z, \mathcal{I}_{2x_1}) \neq 0$. Since $C$ is irreducible, any non-zero homomorphism between these torsion-free, rank-1 sheaves is injective. As both sheaves have the same degree ($-2$), such a homomorphism must be an isomorphism. This isomorphism directly implies that $Z$ is an effective Cartier divisor linearly equivalent to $2x_1$. The analysis now depends on the dimension of $H^0(C, \mathcal{O}_C(2x_1))$.
    
        \begin{subcase}[Non-hyperelliptic]
            By Proposition \ref{prop:2-insep_geom_condition_hyperelliptic}, $(C, \{x_1\})$ is not hyperelliptic if and only if $h^0(C, \mathcal{O}_C(2x_1))=1$. This implies that the only effective Cartier divisor linearly equivalent to $2x_1$ is $2x_1$ itself. Thus, $Z=2x_1$ is the unique problematic subscheme.
        \end{subcase}
    
        \begin{subcase}[Hyperelliptic]
            If the pointed curve $(C, \{x_1\})$ is hyperelliptic, then by Proposition \ref{prop:2-insep_geom_condition_hyperelliptic}, we have $\omega_{C/k} \simeq \mathcal{O}_C(2x_1)$. This implies $h^0(C, \mathcal{O}_C(2x_1)) = h^0(C, \omega_{C/k}) = g(C)=2$. The linear system $|\mathcal{O}_C(2x_1)|$ therefore induces the degree-two canonical map $\pi\colon C \to \PP^1_k$. The problematic subschemes are precisely the fibers of this map.
        \end{subcase}
    \end{case}

    \begin{case}[$r=1$, reducible]
        The curve $C$ is a binary curve of genus $g=2$, composed of two smooth rational components $C_1$ and $C_2$ meeting at three nodes $N = C_1 \cap C_2$ (cf. Proposition \ref{prop:classification}). By Proposition \ref{prop:2-insep_geom_condition_hyperelliptic}, this configuration is always non-hyperelliptic. We may assume the marked point $x_1$ is a smooth point on $C_1$. As in the irreducible case, $W=H^0(C, \LL)$.
        
        The analysis proceeds exactly as in the irreducible case, where a subscheme $Z$ is problematic if and only if there exists a non-zero homomorphism $\psi\colon \mathcal{I}_Z \to \mathcal{I}_{2x_1}$. We claim that any such $\psi$ is necessarily an isomorphism. Let $\mathcal{K} = \ker \psi$ and $\mathcal{F} = \im \psi$ be its kernel and image, which fit into the exact sequence
        \[
            0 \to \mathcal{K} \to \mathcal{I}_Z \to \mathcal{F} \to 0.
        \]
        Since $g(C)=2$ and $Z$ is a subscheme of length 2, we have $\chi(\mathcal{I}_Z) = (1-g(C)) - 2 = -3$.
    
        Assume by contradiction that $\mathcal{K} \neq 0$. Since $\mathcal{I}_Z$ is torsion-free of rank $1$ (i.e., is generically locally free with multirank $(1,1)$ on $(C_1, C_2)$), if $\mathcal{K}$ had multirank $(1,1)$, then $\mathcal{F}$ would have rank 0. As $\mathcal{F}$ is torsion-free (being a subsheaf of $\mathcal{I}_{2x_1}$), this would imply $\mathcal{F}=0$, contradicting $\psi \neq 0$. Thus, $\mathcal{K}$ must have multirank $(1,0)$ or $(0,1)$.
    
        Suppose $\mathcal{K}$ has multirank $(1,0)$. Torsion-freeness implies $\mathcal{K}$ vanishes identically on $C_2$. Thus, $\mathcal{K}$ is a subsheaf of the ideal sheaf $\mathcal{J}_2$ of $C_2$ in $C$. Correspondingly, $\mathcal{F}$ has multirank $(0,1)$ and is a subsheaf of the ideal sheaf $\mathcal{J}_1$ of $C_1$ in $C$.
    
        We analyze the Euler characteristics. The ideal sheaf $\mathcal{J}_1$ is isomorphic to $\mathcal{O}_{C_2}(-N)$, viewed as a sheaf on $C$ supported on $C_2$, and similarly $\mathcal{J}_2$ is isomorphic to $\mathcal{O}_{C_1}(-N)$ supported on $C_1$. The cohomology of these sheaves on $C$ agrees with the cohomology of the corresponding sheaves on the components. Thus,
        \[
            \chi(\mathcal{J}_1) = \chi(\mathcal{J}_2) = \chi(\OO_{\PP^1_k}(-3)) = -2.
        \]
        Consider the inclusion $\mathcal{K} \hookrightarrow \mathcal{J}_2$. The cokernel $\mathcal{Q}$ is a rank 0 (skyscraper) sheaf, hence $\chi(\mathcal{Q}) \ge 0$. This implies $\chi(\mathcal{K}) = \chi(\mathcal{J}_2) - \chi(\mathcal{Q}) \leq -2$. Similarly, the inclusion $\mathcal{F} \hookrightarrow \mathcal{J}_1$ implies $\chi(\mathcal{F}) \leq -2$. Combining these inequalities yields
        \[
            \chi(\mathcal{K}) + \chi(\mathcal{F}) \leq -4,
        \]
        which contradicts $\chi(\mathcal{K}) + \chi(\mathcal{F}) = \chi(\mathcal{I}_Z)=-3$. The case where $\mathcal{K}$ has multirank $(0,1)$ is symmetric. Thus, we must have $\mathcal{K}=0$, so $\psi$ is injective.
   
        Since $\psi$ is an injective morphism between torsion-free, rank-1 sheaves with the same Euler characteristic, its cokernel is a torsion sheaf of length zero. The cokernel is therefore trivial, which means $\psi$ is an isomorphism, so $\mathcal{I}_Z \simeq \mathcal{I}_{2x_1}$. Consequently, $\mathcal{I}_Z$ is an invertible sheaf isomorphic to $\mathcal{I}_{2x_1} = \mathcal{O}_C(-2x_1)$ (as $x_1$ is smooth), which implies $Z$ is an effective Cartier divisor linearly equivalent to $2x_1$.
    
        To conclude that $Z=2x_1$, we show that $h^0(C, \mathcal{O}_C(2x_1))=1$. Let $\mathcal{M} = \mathcal{O}_C(2x_1)$. On the components $C_1$ and $C_2$, the restrictions of this sheaf are $\mathcal{M}|_{C_1} \simeq \mathcal{O}_{\mathbb{P}^1_k}(2)$ and $\mathcal{M}|_{C_2} \simeq \mathcal{O}_{\mathbb{P}^1_k}(0)$, respectively. A global section $s=(s_1,s_2)$ must therefore have a constant component $s_2=c$. The gluing condition then requires that $s_1 \in H^0(C_1, \mathcal{M}|_{C_1})$ take the value $c$ at the three distinct nodes. A rational function on $\mathbb{P}^1_k$ of degree at most $2$ that takes the same value at three distinct points must be constant. Thus $s_1=c$, and it follows that the space of global sections is 1-dimensional, i.e., $h^0(C, \mathcal{M})=1$.
    
        Consequently, $2x_1$ is the unique effective Cartier divisor in its linear equivalence class, establishing that $Z=2x_1$ is the unique problematic subscheme.
    \end{case}

    \begin{case}[$r=2$]
        The curve $C$ has genus $g=1$ and marked points $x_1, x_2$; the 2-inseparability implies that $C$ is irreducible (cf. Proposition~\ref{prop:classification}). The line bundle is $\LL = \omega_{C/k}(2x_1+2x_2)$, and we have $h^0(C, \LL) = 4$. For any section $s \in H^0(C, \LL)$, the sum of its residues must be zero. As poles can only occur at $x_1$ and $x_2$, we have $\Res_{x_1}(s) + \Res_{x_2}(s) = 0$. The subspace $W$ is therefore the kernel of either residue map, as the condition $\Res_{x_1}(s)=0$ implies $\Res_{x_2}(s)=0$.
    
        To identify problematic subschemes, let $Z$ be a length-2 subscheme and let $V_Z := H^0(C, \mathcal{I}_Z \otimes \LL)$. By the Riemann-Roch theorem, $\dim(V_Z) = 2$. A subscheme $Z$ is problematic if $\dim(W \cap V_Z) \ge 2$. Since $\dim(V_Z)=2$, this is equivalent to the condition $V_Z \subset W$. This, in turn, occurs if and only if every section in $V_Z$ has zero residue at the marked points, i.e., $\Res_{x_1}|_{V_Z}=0$ and $\Res_{x_2}|_{V_Z}=0$. We now analyze the possibilities for $Z$. 
        
        First, if $Z=2x_1$, then $V_{2x_1} = H^0(C, \LL(-2x_1)) = H^0(C, \omega_{C/k}(2x_2))$. Any section $s \in V_{2x_1}$ is a differential with at most a double pole at $x_2$, so it is regular at $x_1$, which implies $\Res_{x_1}(s)=0$. Thus $V_{2x_1} \subset W$, and $Z=2x_1$ is a problematic subscheme. By symmetry, $Z=2x_2$ is also problematic.
    
        Second, for a subscheme $Z=x_1+p$ with $p \neq x_1$, the kernel of the residue map $\Res_{x_1}$ on $V_Z = H^0(C, \mathcal{I}_p \otimes \omega_{C/k}(x_1+2x_2))$ is the subspace $H^0(C, \mathcal{I}_p \otimes \omega_{C/k}(2x_2))$, which has dimension $1$ by the Riemann-Roch theorem. Since the kernel is a proper subspace of $V_Z$, it follows that $\Res_{x_1}|_{V_Z} \neq 0$ and thus $V_Z \not\subset W$. This means $Z=x_1+p$ is not a problematic subscheme, which confirms that $\phi$ separates $x_1$ from any other point $p$. In particular, for $p=x_2$, we have $\phi(x_1) \neq \phi(x_2)$.
    
        Finally, a subscheme $Z$ disjoint from $\{x_1, x_2\}$ is problematic if and only if it is a Cartier divisor $D$ linearly equivalent to both $2x_1$ and $2x_2$. To prove this, first assume $Z$ is problematic. Then $V_Z \subset W$, so any section in $V_Z$ has a vanishing residue at $x_1$. A section in the one-dimensional subspace $H^0(C, \mathcal{I}_Z \otimes \omega_{C/k}(x_1+2x_2)) \subset V_Z$ must therefore be regular at $x_1$, implying it is a global section of $\mathcal{F} := \mathcal{I}_Z \otimes \omega_{C/k}(2x_2)$. Since $\mathcal{F}$ is a torsion-free, rank-1 coherent sheaf of degree $0$ on a genus $1$ curve with a non-zero section, it must be isomorphic to the trivial sheaf, $\mathcal{O}_C$. This implies $\mathcal{I}_Z \simeq \omega_{C/k}(-2x_2) \simeq \mathcal{O}_C(-2x_2)$, and hence $Z$ is a Cartier divisor $D$ with $D \sim 2x_2$. A symmetric argument using the vanishing residue at $x_2$ shows that $D \sim 2x_1$. Conversely, assume $D$ is a Cartier divisor such that $D \sim 2x_1$ and $D \sim 2x_2$. We show that $V_D \subset W$ by decomposing $V_D$ as a direct sum. Define the subspaces $V_1 := H^0(C, \mathcal{I}_D \otimes \LL(-2x_1))$ and $V_2 := H^0(C, \mathcal{I}_D \otimes \LL(-2x_2))$. The linear equivalences imply that $V_1 \simeq H^0(C, \omega_{C/k})$ and $V_2 \simeq H^0(C, \omega_{C/k})$, so both are one-dimensional. Their intersection $V_1 \cap V_2 = H^0(C, \omega_{C/k}(-D))$ is trivial, as the sheaf has negative degree. Since $\dim(V_D)=2$, we have the decomposition $V_D = V_1 \oplus V_2$. Any section in $V_1$ is regular at $x_1$, so its residue at $x_1$ vanishes. Any section in $V_2$ is regular at $x_2$; by the Residue Theorem, its residue at $x_1$ must also vanish. Thus, every section in $V_D$ has a vanishing residue at $x_1$, which confirms that $V_D \subset W$.
    
        The analysis of these cases now depends on whether the pointed curve $(C, \{x_1, x_2\})$ is hyperelliptic, which is by Proposition~\ref{prop:2-insep_geom_condition_hyperelliptic} equivalent to the condition $\mathcal{O}_C(2x_1) \simeq \mathcal{O}_C(2x_2)$.
    
        \begin{subcase}[Non-hyperelliptic]
            In this case, the Cartier divisors $2x_1$ and $2x_2$ are not linearly equivalent. The preceding analysis implies that no problematic subscheme disjoint from $\{x_1, x_2\}$ can exist, as its existence would require $2x_1 \sim D \sim 2x_2$. Recalling that subschemes of the form $x_i+p$ (for $p \neq x_i$) are also not problematic, we conclude that the only problematic subschemes are $2x_1$ and $2x_2$.
        \end{subcase}
        
        \begin{subcase}[Hyperelliptic]
            In this case, we have the linear equivalence $2x_1 \sim 2x_2$. The preceding analysis then shows that the problematic subschemes are precisely the effective Cartier divisors linearly equivalent to $2x_1$. The linear system $|2x_1|$ induces the degree-two map $\pi\colon C \to \PP^1_k$ associated with the hyperelliptic involution (cf. proof of Lemma~\ref{lem:always_hyperelliptic}~\ref{lem:always_hyperelliptic_i}), and its fibers are therefore exactly the problematic subschemes.
        \end{subcase}
    \end{case}

    \begin{case}[$r=3$]
    The curve $(C, \{x_i\})$ is a stable $3$-pointed curve of genus $g=0$, thus $C \simeq \PP^1_k$. The line bundle $\LL = \omega_{C/k}(2x_1+2x_2+2x_3)$ has degree 4. $W$ is the 3-dimensional subspace of the 5-dimensional space $H^0(C, \LL)$ consisting of sections with vanishing residues at the marked points.
    
    We identify problematic subschemes $Z$ of length $2$. Let $V_Z := H^0(C, \mathcal{I}_Z \otimes \LL)$. By the Riemann-Roch theorem, $\dim(V_Z)=3$. $Z$ is problematic if $\dim(W \cap V_Z) \ge 2$.

    First, consider $Z=2x_1$. We have $V_{2x_1} = H^0(C, \omega_{C/k}(2x_2+2x_3))$. Sections $s \in V_{2x_1}$ are regular at $x_1$, so $\Res_{x_1}(s)=0$. By the Residue Theorem, $\Res_{x_2}(s) + \Res_{x_3}(s) = 0$. Thus, the condition for $s \in W$ reduces to the single linear constraint $\Res_{x_2}(s)=0$. This constraint is non-trivial as the residue map $\Res_{x_2}|_{V_{2x_1}}\colon V_{2x_1} \to k$ is surjective (cf. the argument in the proof of Lemma \ref{lem:subspace_W}). Therefore, $\dim(W \cap V_{2x_1}) = \dim(V_{2x_1}) - 1 = 2$. By symmetry, $2x_1, 2x_2, 2x_3$ are always problematic subschemes.

    To analyze other subschemes, we fix an isomorphism $C \simeq \PP^1_k$ such that $x_1=0, x_2=1, x_3=\infty$. A basis for $W$ is:
    \[
        s_1 = \frac{1}{t^2} dt, \quad s_2 = \frac{1}{(t-1)^2} dt, \quad s_3 = dt.
    \]
    (These have zero residues at $0$ and $1$. At infinity, using $u=1/t$, $dt=-u^{-2}du$, the basis transforms to $s_1 = -du$, $s_2 = -(1-u)^{-2}du$, and $s_3 = -u^{-2}du$. Since none have a $u^{-1}$ term, the residues at infinity are zero.)
    
    Let $s = \sum a_i s_i$ be a generic section. For $p \notin \{0, 1, \infty\}$, $dt$ generates $\LL$ locally, and $s$ vanishes at $p$ if its coefficient function $f(t) = \frac{a_1}{t^2} + \frac{a_2}{(t-1)^2} + a_3$ vanishes at $p$. At the marked points, the basis elements serve as local generators for $\LL$. Specifically, $s_1$ generates $\LL$ at $x_1=0$. Since $s_2$ and $s_3$ vanish at $t=0$ when expressed in terms of the generator $s_1$ (i.e., the ratios $s_2/s_1$ and $s_3/s_1$ vanish at $x_1$), the section $s$ vanishes at $x_1$ if and only if $a_1=0$. Similarly, the vanishing condition at $x_2=1$ is $a_2=0$, and at $x_3=\infty$ it is $a_3=0$.

    Consider $Z=p+q$ with distinct $p, q$. $Z$ is problematic if the two vanishing conditions on $(a_i)$ are dependent. If $p,q \notin \{0,1,\infty\}$, dependence requires the vectors $(\frac{1}{p^2}, \frac{1}{(p-1)^2}, 1)$ and $(\frac{1}{q^2}, \frac{1}{(q-1)^2}, 1)$ to be proportional. This forces $p^2=q^2$ and $(p-1)^2=(q-1)^2$, implying $p=q$ regardless of the characteristic, a contradiction. If $p$ or $q$ is a marked point, the conditions (e.g., $a_1=0$ and $f(q)=0$) are also readily verified to be independent. Thus, $\dim(W \cap V_{p+q})=1$. These subschemes are never problematic, confirming that $\phi$ is injective.

    Finally, consider $Z=2p$, where $p \notin \{x_i\}$. The conditions for a section $s \in W$ to be in $V_{2p}$ are $f(p)=0$ ($\ast$) and $f'(p)=0$ ($\ast\ast$), where
    \[
        f'(t) = -2 \frac{a_1}{t^3} - 2 \frac{a_2}{(t-1)^3}.
    \]
    The analysis depends on the characteristic, which by Proposition~\ref{prop:2-insep_geom_condition_hyperelliptic} determines hyperellipticity.

    \begin{subcase}[Non-hyperelliptic, $\mathrm{char}(k) \neq 2$]
        The factor $-2$ is invertible. We check the independence of ($\ast$) and ($\ast\ast$). The matrix of coefficients (normalizing ($\ast\ast$) by dividing by $-2$) is:
        \[
            M = \begin{pmatrix} 1/p^2 & 1/(p-1)^2 & 1 \\ 1/p^3 & 1/(p-1)^3 & 0 \end{pmatrix}.
        \]
        Since $p \notin \{0, 1\}$, the rank is $2$ (e.g., the minor of the last two columns is $-1/(p-1)^3 \neq 0$). The conditions are independent, so $\dim(W \cap V_{2p})=1$. Thus, the only problematic subschemes are $2x_1, 2x_2, 2x_3$.
    \end{subcase}

    \begin{subcase}[Hyperelliptic, $\mathrm{char}(k) = 2$]
        The derivative $f'(t)$ is identically zero, so condition ($\ast\ast$) is trivial. $W \cap V_{2p}$ is defined solely by $f(p)=0$, implying $\dim(W \cap V_{2p})=2$. Thus, every subscheme of the form $Z=2p$ is problematic.

        In characteristic 2, the basis functions for the coefficients are $1/t^2$, $1/(t^2+1)$, and $1$. The linear system $W$ depends only on $t^2$ and is the pullback of the complete linear system $H^0(\PP^1_k, \mathcal{O}(2))$ under the Frobenius morphism $\pi=F\colon C \to \PP^1_k$, $t \mapsto t^2$. The fibers of $\pi$ are the subschemes $Z=2p$, coinciding with the set of problematic subschemes. \hfill\proofSymbol
    \end{subcase}
\end{case}
\end{proof}

\begin{remark}[The Image Curve for $r=3$]
    The geometry of the $r=3$ case is confirmed by the implicit equation of the image curve $\phi(C)\subset\PP^2_k = \Proj k[x,y,z]$. In the coordinates established in the proof, the morphism is explicitly given by
    \[
    \phi(t)=[(t-1)^2:t^2:t^2(t-1)^2].
    \]
    The $S_3$ action permuting the marked points $\{0, 1, \infty\}$ on $\PP^1_k$, combined with the intrinsic and symmetric definition of the linear system $W$, ensures the equation for the image curve is symmetric in $x, y, z$. Explicit computation yields:
    \[ (xy+yz+zx)^2 - 4xyz(x+y+z) = 0. \]
    
    If $\mathrm{char}(k) \neq 2$, this is the standard equation for a tricuspidal quartic (see, for instance,~\cite[p.~14]{hui} or~\cite[p.~50]{moe2015rational}) with cusps at $\phi(0) = [1:0:0]$, $\phi(1) = [0:1:0]$, and $\phi(\infty) = [0:0:1]$, consistent with our previous analysis.
    
    If $\mathrm{char}(k) = 2$, the equation reduces to $(xy+yz+zx)^2 = 0$, a double conic. This aligns with the proof's finding that $\phi$ is inseparable and factors through Frobenius. This is necessary, as a reduced, irreducible tricuspidal quartic $C'$ cannot exist in characteristic $2$. Such a curve (a plane quartic of arithmetic genus $3$) is canonically embedded by the Adjunction Formula. Proposition~\ref{prop:quasi-hyperelliptic} implies $C'$ is not quasi-hyperelliptic, but the proof of Lemma \ref{lem:hyp_vs_quasi-hyp_v2} shows it must be hyperelliptic in characteristic 2, a contradiction.
\end{remark}


{
\small
\newcommand{\etalchar}[1]{$^{#1}$}
\providecommand{\bysame}{\leavevmode\hbox to3em{\hrulefill}\thinspace}
\providecommand{\MR}{\relax\ifhmode\unskip\space\fi MR }
\providecommand{\MRhref}[2]{%
  \href{http://www.ams.org/mathscinet-getitem?mr=#1}{#2}
}
\providecommand{\href}[2]{#2}

}

\end{document}